% Author: __(AUTHORNAME)__
% organization: __(ORGANIZATIONNAME)__
% date: Mon Sep 10 2007
\documentclass{lms}%{scrartcl} %{scrbook} %scrartcl    % Comments after  % are ignored
\usepackage{amsmath,amssymb,amsfonts} % Typical maths resource packages
\usepackage{graphics,graphicx}                 % Packages to allow inclusion of graphics
\usepackage{color}                    % For creating coloured text and background
\usepackage{hyperref,fancyhdr}                 % For creating hyperlinks in cross references
\usepackage[all]{xy}
\usepackage[applemac]{inputenc}
\usepackage{url}

\newtheorem{theorem}{Theorem}[section]
\newtheorem{proposition}[theorem]{Proposition}
\newtheorem{corollary}[theorem]{Corollary}
\newtheorem{lemma}[theorem]{Lemma}
\newnumbered{definition}[theorem]{Definition}
\newnumbered{example}[theorem]{Example}
\newnumbered{remark}[theorem]{Remark}
\newnumbered{convention}[theorem]{Convention}
\newnumbered{construction}[theorem]{Construction}
\newnumbered{notation}[theorem]{Notation}
\newnumbered{open problem}[theorem]{Open Problem}

\newunnumbered{theo}{Theorem}
\newnumbered{prop}{Proposition}
\newunnumbered{defi}{Definition}
\DeclareMathOperator{\Aut}{Aut}

\DeclareMathOperator{\im}{im}
\DeclareMathOperator{\id}{id}
\DeclareMathOperator{\ev}{ev}

\DeclareMathOperator{\M}{M}

\DeclareMathOperator{\Hom}{Hom}

\DeclareMathOperator{\Diff}{Diff}

\DeclareMathOperator{\SU}{SU}

\DeclareMathOperator{\spec}{spec}

\DeclareMathOperator{\pr}{pr}

\DeclareMathOperator{\Det}{Det}
\DeclareMathOperator{\supp}{supp}

\title[On Noncommutative Principal Torus Bundles]% end with percent
 {\bf A Geometric Approach to\\Noncommutative Principal Torus Bundles} % This is the full title of the paper
% Use lowercase letters in title except for proper names
% Avoid equations in title if possible
% Do not use the \thanks{} command; use \extraline{} instead (see below).

\author{Stefan Wagner}

%\dedication{A dedication can be included here}

%Insert `2000 Mathematics Subject Classification' numbers here:
\classno{46L87, 55R10 (primary), 37B05, 17A60 (secondary)}
%Please refer to {\tt http://www.ams.org/msc/} for a list of codes}

\extraline{We thank the \emph{Studienstiftung des deutschen Volkes} for a doctoral scholarship supporting this work.}

\begin{document}

\maketitle

\begin{abstract}
A (smooth) dynamical system with transformation group $\mathbb{T}^n$ is a triple $(A,\mathbb{T}^n,\alpha)$, consisting of a unital locally convex algebra $A$, the $n$-torus $\mathbb{T}^n$ and a group homomorphism \mbox{$\alpha:\mathbb{T}^n\rightarrow\Aut(A)$}, which induces a (smooth) continuous action of $\mathbb{T}^n$ on $A$. In this paper we present a new, geometrically oriented approach to the noncommutative geometry of principal torus bundles based on such dynamical systems. Our approach is inspired by the classical setting: In fact, after recalling the definition of a trivial noncommutative principal torus bundle, we introduce a convenient (smooth) localization method for noncommutative algebras and say that a dynamical system $(A,\mathbb{T}^n,\alpha)$ is called a noncommutative principal $\mathbb{T}^n$-bundle, if localization leads to a trivial noncommutative principal $\mathbb{T}^n$-bundle. We prove that this approach extends the classical theory of principal torus bundles and present a bunch of (non-trivial) noncommutative examples.
\end{abstract}

%\pagenumbering{arabic}

\thispagestyle{empty}

\tableofcontents

\section{Introduction}

The correspondence between geometric spaces and commutative algebras is a familiar and basic idea of algebraic geometry. Noncommutative Topology started with the famous Gelfand-Naimark Theorems: Every commutative C*-algebra is the algebra of continuous functions vanishing at infinity on a locally compact space and vice versa. In particular, a noncommutative C*-algebra may be viewed as ``the algebra of continuous functions vanishing at infinity'' on a ``quantum space''. The aim of Noncommutative Geometry is to develop the basic concepts of Topology, Measure Theory and Differential Geometry in algebraic terms and then to generalize the corresponding classical results to the setting of noncommutative algebras. The question whether there is a way to translate the geometric concept of a fibre bundle to Noncommutative Geometry is quite interesting in this context. In the case of vector bundles a refined version of the Theorem of Serre and Swan \cite{Swa62} gives the essential clue: The category of vector bundles over a manifold $M$ is equivalent to the category of finitely generated projective modules over $C^{\infty}(M)$. It is therefore reasonable to consider finitely generated projective modules over an arbitrary algebra as ``noncommutative vector bundles". The case of principal bundles is so far not treated in the same satisfactory way. From a geometrical point of view it is not sufficiently well understood what a ``noncommutative principal bundle" should be. Still, there are several approaches towards the noncommutative geometry of principal bundles: For example, there is a well-developed abstract algebraic approach known as Hopf--Galois extensions which uses the theory of Hopf algebras (cf. \cite{Sch04} or \cite[Chapter VII]{Haj04}). Another topologically oriented approach can be found in \cite{ENOO09}; here the authors use $C^*$-algebraic methods to develop a theory of principal noncommutative torus bundles based on \mbox{Green' s} Theorem (cf. \cite[Corollary 15]{Green77}). Furthermore, the authors of \cite{BHMS07} introduce $C^*$-algebraic analogs of freeness and properness, since by a classical result (of Differential Geometry) having a free and proper action of a Lie Group $G$ on a manifold $P$ is equivalent saying that $P$ carries the structure of a principal bundle with structure group $G$. In \cite{Wa11a} we have developed a geometrically oriented approach to the noncommutative geometry of principal bundles based on dynamical systems and the representation theory of the corresponding transformation groups.\\

As is well-known from classical Differential Geometry, the relation between locally and globally defined objects is important for many constructions and applications. For example, a principal bundle $(P,M,G,q,\sigma)$ can be considered as a geometric object that is glued together from local pieces which are trivial, i.e., which are of the form $U\times G$ for some open subset $U$ of $M$. Thus, a natural step towards a geometrically oriented theory of ``noncommutative principal torus bundles" is to describe the \emph{trivial} objects first, i.e., to determine and to classify the trivial noncommutative principal torus bundles. This was done in \cite{Wa11b}:

\begin{definition}\label{Trivial NCP T^n-Bundles}
(Trivial noncommutative principal torus bundles). A (smooth) dynamical system $(A,\mathbb{T}^n,\alpha)$ is called a (\emph{smooth}) \emph{trivial noncommutative principal $\mathbb{T}^n$-bundle}, if each isotypic component $A_{\bf k}$, ${\bf k}\in\mathbb{Z}^n$, contains an invertible element.
\end{definition}

\noindent
This definition is inspired by the following observation: A principal $\mathbb{T}^n$-bundle $(P,M,\mathbb{T}^n,q,\sigma)$ is trivial if and only if it admits a trivialization map. Such a trivialization map consists basically of $n$ smooth functions $f_i:P\rightarrow\mathbb{T}$ satisfying $f_i(\sigma(p,z))=f_i(p)\cdot z_i$ for all $p\in P$ and $z\in\mathbb{T}^n$. From an algebraical  point of view this condition means that each isotypic component of the (naturally) induced dynamical system $(C^{\infty}(P),\mathbb{T}^n,\alpha)$ contains an invertible element. Conversely, each trivial noncommutative principal $\mathbb{T}^n$-bundle of the form $(C^{\infty}(P),\mathbb{T}^n,\alpha)$ induces a trivial principal $\mathbb{T}^n$-bundle of the form $(P,P/\mathbb{T}^n,\mathbb{T}^n,\pr,\sigma)$. The crucial point here is to verify the freeness of the induced action of $\mathbb{T}^n$ on $P$. An important class of examples, which will also show up in this paper, is provided by the so-called noncommutative tori:

\begin{example}\label{NC n-tori as NCPTB}
(a) (Noncommutative $n$-tori) Let $\theta$ be a real skew-symmetric $n\times n$ matrix. The \emph{noncommutative $n$-torus} $A^n_{\theta}$ is the universal unital C*-algebra generated by unitaries $U_1,\ldots,U_n$ with
$$U_rU_s=\exp(2\pi i\theta_{rs})U_sU_r\,\,\,\text{for all}\,\,\,1\leq r,s\leq n.$$
Moreover, there is a continuous action $\alpha$ of $\mathbb{T}^n$ on $A^n_{\theta}$ by algebra automorphisms, which is on generators given by 
$$\alpha(t).U^{\bf k}:=t.U^{\bf k}:=t^{\bf k}\cdot U^{\bf k}\,\,\,\text{for}\,\,\,{\bf k}\in\mathbb{Z}^n,$$
where $U^{\bf k}:=U^{k_1}_1\cdots U^{k_n}_n$. In particular, $(A^n_{\theta})_{\bf k}=\mathbb{C}\cdot U_{\bf k}$ shows that the triple $(A^n_{\theta},\mathbb{T}^n,\alpha)$ is a trivial noncommutative principal $\mathbb{T}^n$-bundle.

(b) The \emph{smooth noncommutative n-torus} $\mathbb{T}^n_{\theta}$ is the unital subalgebra of smooth vectors for the previous action. Its elements are given by (norm-convergent) sums
\[a=\sum_{{\bf k}\in\mathbb{Z}^n}a_{\bf k}U^{\bf k},\,\,\,\text{with}\,\,\,(a_{\bf k})_{{\bf k}\in\mathbb{Z}^n}\in S(\mathbb{Z}^n).
\]Further, a deeper analysis shows that the induced action of $\mathbb{T}^n$ on $\mathbb{T}^n_{\theta}$ is smooth. Thus, the triple $(\mathbb{T}^n_{\theta},\mathbb{T}^n,\alpha)$ is a smooth trivial noncommutative principal $\mathbb{T}^n$-bundle. 
\end{example}

\noindent
Another hint for the quality of this definition for trivial noncommutative principal torus bundles is the observation that they have a natural counterpart in the theory of Hopf--Galois extensions: In fact, up to a suitable completion, they correspond to the so-called cleft $\mathbb{C}[\mathbb{Z}^n]$-comodule algebras; loosely speaking, being cleft is a triviality condition in the theory of Hopf--Galois extensions:

\begin{remark}
(Realtion to cleft Hopf--Galois extensions). An algebra $A$ is a $\mathbb{C}[\mathbb{Z}^n]$-comodule algebra if and only if $A$ is a $\mathbb{Z}^n$-graded algebra (cf. \cite[Lemma 4.8]{BlMo}). Moreover, we conclude from \cite[Example 2.1.4]{Sch04} that a $\mathbb{Z}^n$-graded algebra $A=\bigoplus_{{\bf k}\in\mathbb{Z}^n}A_{\bf k}$ is a Hopf--Galois extension (of $A_{\bf 0}$) if and only if $A$ is strongly graded, i.e., $A_{\bf k}A_{\bf k'}=A_{\bf k+k'}$ for all ${\bf k},{\bf k'}\in\mathbb{Z}^n$. Now, a short calculation shows that a $\mathbb{C}[\mathbb{Z}^n]$-comodule algebra $A$ is cleft if and only if each grading space $A_{\bf k}$ contains an invertible element. For more background on Hopf--Galois extensions, in particular for the definition of cleft extensions, we refer to \cite[Section 2.2]{Sch04} or \cite[Appendix A]{Wa11a}.
\end{remark}

In view of the previous discussion, it is the next natural step is to work out a convenient localization method for non-commutative algebras or, more generally, for dynamical systems. For this we first note that the idea of localization comes from Algebraic Geometry: Given a point $x$ in some affine variety $X$, one likes to investigate the nature of $X$ in an arbitrarily small neighbourhood of $x$ in the Zariski topology. Now, small neighbourhoods of $x$ in $X$ correspond to large algebraic subsets $Y$. For example, let $Y$ be the zero set of some algebraic function $f$ on $X$, which does not vanish at $x$. Then the affine ring $\mathbb{K}[(X\backslash Y)]$ is obtained from $\mathbb{K}[X]$ by adjoining a multiplicative inverse for $f$, i.e., taking the coproduct of $\mathbb{K}[X]$ with the free ring in one generator and dividing out the ideal $I_f:=\langle 1-ft\rangle$; this is called \emph{inverting} $f$. 

\begin{proposition}\label{K(X)_f=O(X_f)}
\emph{(}cf. \cite[Chapter II, Proposition 2.2 (b)]{Har87}\emph{)} Let $X$ be an affine variety and $f$ an element in the coordinate ring $\mathbb{K}[X]$. If $X_f:=\{x\in X:\,f(x)\neq0\}$, then the map 
\[\mathbb{K}[X]_f\rightarrow \mathcal{O}_X(X_f),\,\,\,\frac{g}{f^n}\mapsto\left(x\mapsto\frac{g(x)}{f^n(x)}\right)
\]is an isomorphism of rings.
\end{proposition}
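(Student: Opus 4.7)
The plan is to verify in turn that the map $\Phi\colon \mathbb{K}[X]_f \to \mathcal{O}_X(X_f)$, $g/f^n \mapsto (x \mapsto g(x)/f^n(x))$, is a well-defined ring homomorphism, is injective, and is surjective; only the last step has real geometric content and will be the main obstacle. For well-definedness, if $g/f^n = g'/f^m$ in $\mathbb{K}[X]_f$, then there is some $k$ with $f^k(f^m g - f^n g') = 0$ in $\mathbb{K}[X]$, and since $f(x) \neq 0$ on $X_f$ the factor $f^k(x)$ is invertible pointwise, yielding $g(x)/f^n(x) = g'(x)/f^m(x)$; compatibility with the ring operations is then pointwise. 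For injectivity, if $\Phi(g/f^n) = 0$, then $g$ vanishes on $X_f$, so $fg$ vanishes at every point of $X$ and hence equals $0$ in the coordinate ring $\mathbb{K}[X]$; therefore $g/f^n = fg/f^{n+1} = 0$ in $\mathbb{K}[X]_f$.

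\medskip

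Surjectivity is the heart of the matter. Given $h \in \mathcal{O}_X(X_f)$, the definition of regularity provides, for each $x \in X_f$, a principal open neighbourhood $D(h_i) \subseteq X_f$ and elements $a_i, h_i \in \mathbb{K}[X]$ with $h = a_i/h_i$ on $D(h_i)$. Noetherianity of $\mathbb{K}[X]$ lets me extract a finite subcover $D(h_1), \ldots, D(h_r)$ of $X_f$; the covering condition $V(h_1,\ldots,h_r) \subseteq V(f)$ together with Hilbert's Nullstellensatz produces $N \geq 1$ and $b_1, \ldots, b_r \in \mathbb{K}[X]$ with
\[
f^N = \sum_{i=1}^r b_i h_i.
\]
On each overlap $D(h_i) \cap D(h_j) = D(h_i h_j)$ the two local representations agree, so $h_i h_j(a_i h_j - a_j h_i)$ vanishes on all of $X$ and therefore equals $0$ in $\mathbb{K}[X]$. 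The standard trick of replacing each pair $(a_i, h_i)$ by $(a_i h_i, h_i^2)$ — which does not alter the function $a_i/h_i$ represents on $D(h_i)$ — upgrades this to the global identity $a_i h_j = a_j h_i$ in $\mathbb{K}[X]$. Setting $g := \sum_i b_i a_i$ then gives $g h_j = \sum_i b_i a_i h_j = \sum_i b_i h_i a_j = f^N a_j$, so $g/f^N = a_j/h_j = h$ on $D(h_j)$; as $j$ varies this establishes $\Phi(g/f^N) = h$ on all of $X_f$.

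\medskip

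The principal obstacle is exactly this surjectivity step. The two indispensable classical inputs are the Noetherian property of $\mathbb{K}[X]$ (to reduce to a finite cover) and Hilbert's Nullstellensatz (to convert that finite cover into a single global denominator $f^N$ via a partition-of-unity-like identity). The delicate piece of bookkeeping is the replacement $(a_i,h_i) \mapsto (a_i h_i, h_i^2)$, which is what turns pointwise agreement on $D(h_i h_j)$ into an honest equation in $\mathbb{K}[X]$ and allows the local fractions to be glued into a single global fraction with denominator a power of $f$.
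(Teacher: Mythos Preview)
The paper does not give its own proof of this proposition; it merely cites \cite[Chapter II, Proposition 2.2 (b)]{Har87} as background from algebraic geometry and immediately moves on to explain why the analogous statement fails for $C^{\infty}(M)$. Your argument is correct and is precisely the standard Hartshorne proof: well-definedness and injectivity are immediate from the fact that elements of $\mathbb{K}[X]$ are determined by their values on closed points, and surjectivity uses Noetherianity plus the Nullstellensatz to produce the algebraic partition of unity $f^{N}=\sum_i b_i h_i$, together with the $(a_i,h_i)\mapsto(a_ih_i,h_i^{2})$ trick to turn local compatibility into a global identity. There is nothing to compare here, since the paper defers entirely to the reference.
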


\noindent
This construction is not valid for the algebra of smooth functions on some manifold $M$. Indeed, if $f\in C^{\infty}(M)$ and $M_f:=\{m\in M:\,f(m)\neq 0\}$, then not every smooth function $g:M_f\rightarrow\mathbb{C}$ is of the form $\frac{h}{f^n}$ for $h\in C^{\infty}(M)$ and some $n\in\mathbb{N}$. This is due to the fact that there are ``too many " smooth functions. Another important remark in this context is that the natural restriction map $r_U:C^{\infty}(M)\rightarrow C^{\infty}(U)$, $f\mapsto f_{\mid U}$ is in general neither injective nor surjective.\\

In Section \ref{SLNS} we present a construction which will fix this problem. To be more precise, we present an appropriate method of localizing (possibly noncommutative) algebras in a smooth way, while in Section 3 we are concerned with the problem of calculating the spectrum of such a localized algebra.\\

Section 4 is devoted to discussing algebra bundles $q:\mathbb{A}\rightarrow M$ with a possibly infinite-dimensional fibre $A$ over a finite-dimensional manifold $M$. In particular, we show how to endow the corresponding space $\Gamma\mathbb{A}$ of sections with a topology that turns it into a (unital) locally convex algebra. The general philosophy here is use the existing results for mapping spaces and reduce the occurring questions of continuity to mapping spaces.\\

In Section 5 we finally prove a smooth analogue of Proposition \ref{K(X)_f=O(X_f)} for the algebra $C^{\infty}(M)$ of smooth functions on some manifold $M$. In fact, we even show that if $A$ is a unital Fr\'echet algebra, $(\mathbb{A},M,A,q)$ an algebra bundle and $f\in C^{\infty}(M,\mathbb{R})$, then the smooth localization of $\Gamma\mathbb{A}$ with respect to $f$ is isomorphic (as a unital Fr\'echet algebra) to $\Gamma\mathbb{A}_{M_f}$. Here, \mbox{$M_f:=\{m\in M:\,f(m)\neq0\}$}. This result means that it is possible to reconstruct $C^{\infty}(U)$ by localization out of data from $C^{\infty}(M)$. The crucial idea is to embed $U$ in an appropriated way as a closed submanifold into $\mathbb{R}\times M$ and, of course, the methods we use come from differential geometry, like the regular value theorem (also known as the submersion theorem). Thus, it is not straightforward if similar results also hold in the context of $C^*$-algebras.\\

Section 6 is dedicated to describing the spectrum of the algebra of sections of an algebra bundle with finite-dimensional commutative fibre. We obtain a beautiful result which connects algebra with geometry:

\begin{theo}
{\it Let $A$ be a finite-dimensional unital commutative algebra. Further, let $(\mathbb{A},M,A,q)$ be an algebra bundle. If $\dim A=n$, then the spectrum $\Gamma_{\Gamma\mathbb{A}}$ is an $n$-fold covering of $M$.}
\end{theo}

In Section \ref{ACNCPT^nB} we use the ideas of Sections \ref{SLNS} to introduce a method of localizing dynamical systems $(A,G,\alpha)$ with respect to elements of $C_A^G$, i.e., with respect to elements of the commutative fixed point algebra of the induced action of $G$ on the center $C_A$ of $A$. In fact, this construction turns out to be the starting point for our approach to noncommutative principal torus bundles: Given a principal bundle $(P,M,G,q,\sigma)$ and an open subset $U$ of $M$ such that $P_U:=q^{-1}(U)$ is trivial, we show that the localization of the induced smooth dynamical system $(C^{\infty}(P),G,\alpha)$ around ``$U$'' leads to the smooth dynamical system $(C^{\infty}(P_U),G,\alpha)$ which is in turn nautrally isomorphic to the smooth dynamical system $(C^{\infty}(U\times G),G,\alpha)$ coming from the trivial principal bundle $(U\times G,U,G,\pr_U,\sigma_G)$.\\

The main goal of Section \ref{a geometric approach to NCPB} is to present a geometrically oriented approach to the noncommutative geometry of principal torus bundles. As already mentioned a trivial noncommutative principal torus bundle is a dynamical system $(A,\mathbb{T}^n,\alpha)$ with the additional property that each isotypic component contains an invertible element. In view of the previous discussion, our main idea is inspired by the classical setting: Loosely speaking, a dynamical system $(A,\mathbb{T}^n,\alpha)$ is called a noncommutative principal $\mathbb{T}^n$-bundle, if it is ``locally" a trivial noncommutative principal $\mathbb{T}^n$-bundle, i.e., Section \ref{ACNCPT^nB} enters the picture. We prove that this approach extends the classical theory of principal torus bundles and a present some noncommutative examples. Indeed, we first show that each trivial noncommutative principal torus bundle carries the structure of a noncommutative principal torus bundle in its own right. We further show that examples are provided by sections of algebra bundles with trivial noncommutative principal torus bundle as fibre, sections of algebra bundles which are pull-backs of principal torus bundles and sections of trivial equivariant algebra bundles. At the end of this section we present a very concrete example.\\

Another advantage of our approach is that it seems to have a nice classification theory. Indeed, in \cite[Section 4]{Wa11b} we gave a complete classification of trivial noncommutative principal torus bundles (up to a suitable completion). Thus it is, of course, a natural ambition to work out a classification theory for (non-trivial) noncommutative principal torus bundles. The final section is therefore devoted to hints and ideas for an appropriate classification theory for (non-trivial) noncommutative principal torus bundles.\\ 

Appendix A is devoted to some results concerning the projective tensor product of locally convex spaces. In Appendix B we discuss some useful results on the smooth exponential law which will be used several times within this paper.

\section*{Preliminaries and Notations} All manifolds appearing in this paper are assumed to be finite-dimensional, paracompact, second countable and smooth. For the necessary background on (principal) bundles and vector bundles we refer to \cite{KoNo63}. All algebras are assumed to be complex if not mentioned otherwise. Given an algebra $A$, we write $\Gamma_A:=\Hom_{\text{alg}}(A,\mathbb{C})\backslash\{0\}$ (with the topology of pointwise convergence on $A$) for the spectrum of $A$ and $\Aut(A)$ for the corresponding group of automorphisms in the category of $A$. Moreover, a dynamical system is a triple $(A,G,\alpha)$, consisting of a unital locally convex algebra $A$, a topological group $G$ and a group homomorphism $\alpha:G\rightarrow\Aut(A)$, which induces a continuous action of $G$ on $A$.

\section*{Acknowledgements} We thank K.-H. Neeb and Helge Gl\"ockner for fruitful and inspiring discussions. Moreover, we thank Christoph Zellner for proofreading of parts of this paper.

\section{Smooth Localization of Noncommutative Spaces}\label{SLNS}

From the viewpoint of Noncommutative Differential Geometry, one might ask whether Proposition \ref{K(X)_f=O(X_f)} is still true for the algebra of smooth function on a manifold $M$. Unfortunately, if $f$ is an element in $C^{\infty}(M)$ and $M_f:=\{m\in M:\,f(m)\neq 0\}$, then not every smooth function $g:M_f\rightarrow\mathbb{K}$ is of the form $\frac{h}{f^n}$ for $h\in C^{\infty}(M)$ and some $n\in\mathbb{N}$. In this section we provide a construction which will fix this problem. For the following propositions we recall the smooth compact open topology for smooth vector-valued function spaces of Definition \ref{smooth compact open topology} and the projective tensor product topology of Definition \ref{proj. tensor product III}.

\begin{proposition}\label{C(M,A) loc. con. algebra}
If $M$ is a manifold and $A$ a locally convex algebra, then $C^{\infty}(M,A)$ is a locally convex algebra.
\end{proposition}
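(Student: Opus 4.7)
The assertion has two parts: (i) pointwise multiplication makes $C^{\infty}(M,A)$ an algebra, and (ii) this multiplication is jointly continuous with respect to the smooth compact open topology. For (i), I would observe that since $A$ is a locally convex algebra, its multiplication $m_A: A\times A\to A$ is continuous bilinear, hence smooth in the Michal--Bastiani sense. Therefore, for $f,g\in C^{\infty}(M,A)$, the pointwise product $fg=m_A\circ(f,g)$ is a composition of smooth maps and is itself smooth. Unitality, associativity, and distributivity are inherited pointwise from $A$.

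For (ii), I would work locally. Recall that the smooth compact open topology on $C^{\infty}(M,A)$ is generated by seminorms of the form
\[
\|f\|_{\varphi,K,\alpha,p}\ :=\ \sup_{x\in K}\, p\bigl(\partial^{\alpha}(f\circ\varphi^{-1})(x)\bigr),
\]
where $(\varphi,U)$ is a chart of $M$, $K\subset\varphi(U)$ is compact, $\alpha$ a multi-index, and $p$ a continuous seminorm on $A$. Because $m_A$ is continuous bilinear, for every continuous seminorm $p$ on $A$ there is a continuous seminorm $q$ on $A$ with $p(ab)\le q(a)q(b)$ for all $a,b\in A$. Applying the classical Leibniz rule componentwise in the chart gives
\[
\partial^{\alpha}\bigl((fg)\circ\varphi^{-1}\bigr)\ =\ \sum_{\beta\le\alpha}\binom{\alpha}{\beta}\, m_A\!\left(\partial^{\beta}(f\circ\varphi^{-1}),\ \partial^{\alpha-\beta}(g\circ\varphi^{-1})\right),
\]
so that
\[
\|fg\|_{\varphi,K,\alpha,p}\ \le\ \sum_{\beta\le\alpha}\binom{\alpha}{\beta}\, \|f\|_{\varphi,K,\beta,q}\, \|g\|_{\varphi,K,\alpha-\beta,q}.
\]
This is a finite submultiplicative-type estimate in terms of continuous seminorms on each factor, which yields joint continuity of multiplication on $C^{\infty}(M,A)\times C^{\infty}(M,A)$.

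I expect the main subtlety to lie not in the Leibniz rule itself but in making sure the Leibniz formula is actually justified for $A$-valued smooth maps in the sense used by the paper (i.e., that partial derivatives of the composed map $m_A\circ(f,g)$ decompose as stated via the continuous bilinear structure of $m_A$). This is the standard chain/product rule for Michal--Bastiani calculus with continuous bilinear pairings, but I would cite it explicitly rather than re-derive it. Once that ingredient is in place, the proof reduces to the elementary estimate above and the fact that the smooth compact open topology is generated by the seminorms $\|\cdot\|_{\varphi,K,\alpha,p}$, which is recorded in the cited Definition \ref{smooth compact open topology}.
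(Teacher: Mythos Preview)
Your argument is correct, but it follows a different route than the paper. The paper proceeds functorially: it equips each iterated tangent space $T^nA$ with the locally convex algebra structure induced by $T^n(m_A)$, observes that $T^n(fg)=T^nf\cdot T^ng$ by functoriality of $T$, and concludes that the defining embedding $C^{\infty}(M,A)\hookrightarrow\prod_{n\ge 0}C(T^nM,T^nA)$ is an algebra homomorphism; continuity of multiplication then follows from continuity of the multiplication on the product (a cited fact). Your approach is more hands-on: you work in charts, invoke the Leibniz rule for the continuous bilinear pairing $m_A$, and produce explicit submultiplicative seminorm estimates.

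Both are valid. The paper's method is coordinate-free and sidesteps any explicit derivative bookkeeping, at the price of appealing to a cited result about continuity of multiplication on products of function algebras. Your method is more elementary and yields concrete estimates, but relies on the equivalence between the topology of Definition~\ref{smooth compact open topology} (stated via iterated tangent maps) and the chart-based seminorm description you use. That equivalence is standard for finite-dimensional $M$, but since it is not the literal content of Definition~\ref{smooth compact open topology}, you should cite it explicitly (e.g., from Gl\"ockner's lecture notes) rather than treat it as immediate.
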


\begin{proof} 
To prove the claim we just have to verify that the multiplication in $C^{\infty}(M,A)$ is continuous:

(i) For this we first note that the tangent space $TA$ of $A$ carries a natural locally convex algebra structure given by the tangent functor $T$, i.e., defined by 
\[(a,v)(a',v'):=(aa',av'+va').
\]If $m_A:A\times A\rightarrow A$ is the multiplication of $A$, then $T(m_A):TA\times TA\cong T(A\times A)\rightarrow TA$ is the multiplication of $TA$. Iterating this process, we obtain a locally convex algebra structure on $T^nA$ for each $n\in\mathbb{N}$.

(ii) For elements $f,g\in C^{\infty}(M,A)$ the functoriality of $T$ implies that
\[T^n(fg)=T^n(m_A\circ(f,g))=T^n(m_A)\circ T^n(f,g)=T^n(m_A)\circ(T^n f,T^n g)=T^n f\cdot T^n g.
\]In particular, we conclude that the embedding
\[C^{\infty}(M,A)\hookrightarrow\prod_{n\in\mathbb{N}_0}C(T^nM,T^nA),\,\,\,f\mapsto(T^nf)_{n\in\mathbb{N}_0},
\]is a morphism of algebras. By \cite[Proposition D.5.7]{Wa11a}, the multiplication on the product algebra on the right is continuous, which implies the continuity of the multiplication on $C^{\infty}(M,A)$.
\end{proof}

\begin{remark}\label{proj. tensor product IV}
If $M$ is a manifold and $E$ a complete locally convex space, then the locally convex space $C^{\infty}(M,E)$ is complete (cf. \cite[Proposition 4.2.15 (a))]{Gl05}. Moreover, a short observation shows that the subspace generated by the image of the bilinear map $p:C^{\infty}(M)\times E\rightarrow C^{\infty}(M,E),\,\,\,(f,e)\mapsto f\cdot e$ consists of functions whose image is contained in a finite dimensional subspace of $E$. Writing $C_{\text{\emph{fin}}}^{\infty}(M,E)$ for this subspace, we conclude from an example in [Gro55], Chapter 2, \S 3.3, Theorem 13 that the map
\[\phi:C^{\infty}(M)\otimes E\rightarrow C_{\text{\emph{fin}}}^{\infty}(M,E),\,\,\,f\otimes e\mapsto f\cdot e
\]is an isomorphism of locally convex spaces and can therefore be extended to an isomorphism $\Phi:C^{\infty}(M)\widehat{\otimes}E\rightarrow C^{\infty}(M,E)$ of complete locally convex spaces. The crucial point here is to use the fact that $C^{\infty}(M)$ is a nuclear space. Note that this isomorphism means that the smooth compact open topology and the projective tensor product topology coincides on $C^{\infty}(M)\otimes E$.
\end{remark}

\begin{proposition}\label{proj. tensor product for algebras}
If $M$ is a manifold and $A$ is a complete locally convex algebra, then there is a unique locally convex algebra structure on the locally convex space $C^{\infty}(M)\widehat{\otimes}A$ for which $C^{\infty}(M)\widehat{\otimes}A$ and $C^{\infty}(M,A)$ are isomorphic as complete locally convex algebras.
\end{proposition}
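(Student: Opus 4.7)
The plan is to transport the algebra structure on $C^{\infty}(M,A)$ along the isomorphism $\Phi$ of Remark \ref{proj. tensor product IV} and then verify uniqueness by a density argument.

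First, by Proposition \ref{C(M,A) loc. con. algebra} the space $C^{\infty}(M,A)$ is a locally convex algebra, and since $A$ is complete the same holds for $C^{\infty}(M,A)$ by the cited result of Gl\"ockner; it is therefore a complete locally convex algebra. Remark \ref{proj. tensor product IV} produces an isomorphism of complete locally convex spaces
\[
\Phi\colon C^{\infty}(M)\widehat{\otimes}A\longrightarrow C^{\infty}(M,A),\qquad f\otimes a\mapsto f\cdot a.
\]
I would then define a multiplication on $C^{\infty}(M)\widehat{\otimes}A$ by pulling back the pointwise multiplication on $C^{\infty}(M,A)$, i.e.
\[
m_{\widehat{\otimes}}(x,y):=\Phi^{-1}\bigl(\Phi(x)\cdot\Phi(y)\bigr).
\]
Since $\Phi$ and $\Phi^{-1}$ are continuous and the multiplication on $C^{\infty}(M,A)$ is continuous, $m_{\widehat{\otimes}}$ is a continuous bilinear map; associativity and the unit axiom are transported from $C^{\infty}(M,A)$. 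By construction $\Phi$ is then an isomorphism of complete locally convex algebras.

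For uniqueness, I would observe that on elementary tensors the multiplication is forced to be
\[
(f\otimes a)\cdot(g\otimes b)=fg\otimes ab,
\]
because $\Phi(f\otimes a)\cdot\Phi(g\otimes b)$ is the function $m\mapsto f(m)g(m)\,ab=\Phi(fg\otimes ab)(m)$. Hence any locally convex algebra structure on $C^{\infty}(M)\widehat{\otimes}A$ for which $\Phi$ is a morphism of algebras agrees with $m_{\widehat{\otimes}}$ on the algebraic tensor product $C^{\infty}(M)\otimes A$. Since $C^{\infty}(M)\otimes A$ is dense in $C^{\infty}(M)\widehat{\otimes}A$ and any such multiplication is (jointly) continuous, the two structures coincide on the whole completed tensor product.

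The one point requiring some care, and the step I would expect to be the main technical obstacle, is the continuity of $m_{\widehat{\otimes}}$: strictly speaking, continuity of a bilinear map is not automatic from continuity on the algebraic tensor product, so the cleanest route is to obtain $m_{\widehat{\otimes}}$ directly as the composition
\[
C^{\infty}(M)\widehat{\otimes}A\times C^{\infty}(M)\widehat{\otimes}A\xrightarrow{\Phi\times\Phi}C^{\infty}(M,A)\times C^{\infty}(M,A)\xrightarrow{m}C^{\infty}(M,A)\xrightarrow{\Phi^{-1}}C^{\infty}(M)\widehat{\otimes}A,
\]
which is continuous as a composition of continuous maps; this bypasses any subtlety about extending a bilinear map from the algebraic tensor product to its completion.
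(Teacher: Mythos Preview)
Your argument is correct, but it takes a slightly different route from the paper's. The paper first invokes Proposition~\ref{AoB as l.c. algebra} to see that $C^{\infty}(M)\otimes A$ carries its intrinsic projective tensor product multiplication, then extends this multiplication continuously to the completion (using a general extension result for continuous bilinear maps on Fr\'echet/complete spaces), and finally verifies the identity $\Phi\circ\widehat{m}=m\circ(\Phi\times\Phi)$ on the dense subalgebra $C^{\infty}(M)\otimes A$, appealing to the principle of extension of identities. Your approach instead transports the multiplication from $C^{\infty}(M,A)$ along $\Phi$, so continuity is immediate from the composition and no separate extension argument is needed. What the paper's route buys is that one sees directly that the resulting algebra structure coincides with the canonical one on the completed projective tensor product; your route is shorter and makes uniqueness more explicit, but the identification with the canonical tensor product multiplication is then a consequence of your density argument rather than the starting point.
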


\begin{proof}
According to Proposition \ref{AoB as l.c. algebra}, $C^{\infty}(M)\otimes A$ is a locally convex algebra. Thus, \cite[Corollary D.1.7]{Wa11a} applied to the multiplication map of $C^{\infty}(M)\otimes A$ implies that $C^{\infty}(M)\widehat{\otimes}A$ is also a locally convex algebra. If $\widehat{m}$ denotes the multiplication map of $C^{\infty}(M)\widehat{\otimes}A$ and $m$ the multiplication map of the locally convex algebra $C^{\infty}(M,A)$ (cf. Proposition \ref{C(M,A) loc. con. algebra}), then it remains to verify the identity
%To prove the proposition, we have to verify the identity
\begin{align}
\Phi\circ\widehat{m}=m\circ(\Phi\times\Phi),\label{verify identity}
\end{align}
where $\Phi:C^{\infty}(M)\widehat{\otimes}A\rightarrow C^{\infty}(M,A)$ denotes the isomorphism of locally convex spaces stated in Remark \ref{proj. tensor product IV}: In fact, an easy observation shows that (\ref{verify identity}) holds on $C^{\infty}(M)\otimes A$ and thus the claim follows from the principle of extension of identities.
\end{proof}

The next definition is crucial for the aim of this paper since it is the beginning of a smooth localization method:

\begin{definition}\label{sm. loc.}
(Smooth localization of algebras). For $n\in\mathbb{N}$ and a unital locally convex algebra $A$ we write
\[A\{t_1,\ldots,t_n\}:=C^{\infty}(\mathbb{R}^n,A)
\]for the unital locally convex algebra of smooth $A$-valued functions on $\mathbb{R}^n$ (cf. Proposition \ref{C(M,A) loc. con. algebra}). We further define for each $1\leq i\leq n$ and $a\in A$ a smooth $A$-valued function on $\mathbb{R}^n$ by 
\[f^i_a:\mathbb{R}^n\rightarrow A,\,\,\,(t_1,\ldots,t_n)\mapsto 1_A-t_ia.
\]If $a_1,\ldots,a_n\in A$, then we write $I_{a_1,\ldots,a_n}:=\overline{\langle f^1_{a_1},\ldots,f^n_{a_n}\rangle}$ for the closure of the two-sided ideal generated by these functions. Finally, we write
\[A_{\{a_1,\ldots,a_n\}}:=A\{t_1,\ldots,t_n\}/I_{a_1,\ldots,a_n}
\]for the corresponding locally convex quotient algebra and
\[\pi_{\{a_1,\ldots,a_n\}}:A\{t_1,\ldots,t_n\}\rightarrow A_{\{a_1,\ldots,a_n\}}
\]for the corresponding continuous quotient homomorphism. The algebra $A_{\{a_1,\ldots,a_n\}}$ is called the (\emph{smooth}) \emph{localization} of $A$ with respect to $a_1,\ldots,a_n$.
\end{definition}

We continue with a bunch of remarks on the smooth localization of an algebra:

\begin{remark}
For $n\in\mathbb{N}$ and a complete locally convex algebra $A$, Proposition \ref{proj. tensor product for algebras}, applied to $M=\mathbb{R}^n$, implies that
\[C^{\infty}(\mathbb{R}^n)\widehat{\otimes}A\cong C^{\infty}(\mathbb{R}^n,A)
\]as complete locally convex algebras. This result corresponds to the classical picture in commutative algebra of adjoining $n$ indeterminate elements to a ring $R$ by taking the coproduct of $R$ and the free $\mathbb{K}$-algebra in $n$ generators: 
\[\mathbb{K}[t_1,\ldots,t_n]\otimes R\cong R[t_1,\ldots,t_n].
\]In particular, the algebra $C^{\infty}(\mathbb{R}^n,A)$ may be thought of the outcome of adjoining $n$ indeterminates to the algebra $A$ in a smooth way.
\end{remark}

\begin{remark}\label{rem to sm. loc.}
Unlike in ordinary commutative algebra, where every element of the ``classical localization'' $A_a$ of $A$ with respect to $a\in A$ can be written in the form $\frac{x}{a^n}$ for some $x\in A$ and $n\in\mathbb{N}$, we do, not even in the case $n=1$, have an explicit description of the elements of $A_{\{a_1,\ldots,a_n\}}$. Moreover, the outcome is, in general, quite different from the classical localization procedure; for example, if one localizes an algebra $A$ with respect to a non-zero quasi-nilpotent element $a$, i.e., an element with $\spec(a)=\{0\}$ which is not nilpotent, then $A_{\{a\}}=0$ since the function $f_a$ is invertible in $A\{t\}$. On the other hand we have $0\neq\frac{a}{1}\in A_a$. We will point out another difference in Corollary \ref{C(M)_f=C(M_f)}. Nevertheless, by the universal property of the classical localization procedure, we have a canonical homomorphism from $A_a$ to $A_{\{a\}}$.
\end{remark}

\begin{remark}\label{inverting 1 and 0} 
Let $A$ be an arbitrary (possibly noncommutative) unital algebra. Then localizing $A$ with respect to 0 leads to the zero-algebra, i.e.,
$A_0={\bf0}$ and $A_{\{0\}}={\bf0}$. On the other hand, if $a$ in $A$ is an invertible element, then localization of $A$ with respect to $a$ changes nothing, i.e., there is a canonical isomorphism between $A$ and $A_a$. This is not clear at all for the smooth localization $A_{\{a\}}$.
\end{remark}

\begin{remark}\label{A_z frechet again}
(Fr\'echet algebras). If $A$ is a unital Fr\'{e}chet algebra, then the same holds for $A_{\{a_1,\ldots,a_n\}}$. Indeed, this assertion follows from the fact that the quotient of a Fr\'{e}chet space by a closed subspace is again Fr\'{e}chet (cf. \cite[\S 3.5]{Bou55}). 
\end{remark}

\begin{proposition}\label{inverting 1}
Let $A$ be a complete unital locally convex algebra. Then the map
\[\ev_{1}:A\{t\}\rightarrow A,\,\,\,f\mapsto f(1)
\]is a surjective morphism of \emph{(}complete\emph{)} unital locally convex algebras with kernel 
\[I_{1_A}:=(t-1)\cdot A\{t\}.
\]
\end{proposition}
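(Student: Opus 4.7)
The plan is to break the proof into three pieces: continuity and the algebra-homomorphism property of $\ev_1$, surjectivity, and the exact identification of the kernel via a vector-valued Hadamard lemma.

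First I would verify that $\ev_1: C^\infty(\mathbb{R},A)\to A$ is a morphism of complete unital locally convex algebras. Continuity is immediate since point evaluation at $1\in\mathbb{R}$ is continuous in the smooth compact-open topology on $C^\infty(\mathbb{R},A)$; and $\ev_1$ is multiplicative and unital because, in view of Proposition \ref{C(M,A) loc. con. algebra}, the algebra structure on $A\{t\}$ is the pointwise one coming from $A$. Surjectivity is trivial: for any $a\in A$ the constant function $c_a:\mathbb{R}\to A$, $t\mapsto a$, lies in $A\{t\}$ and satisfies $\ev_1(c_a)=a$.

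Next I would show $\ker(\ev_1)=(t-1)\cdot A\{t\}$. The inclusion $(t-1)\cdot A\{t\}\subseteq\ker(\ev_1)$ is automatic, since for every $g\in A\{t\}$ the function $(t-1)g(t)$ vanishes at $t=1$. The content of the statement is the converse: given $f\in A\{t\}$ with $f(1)=0$, produce $g\in A\{t\}$ with $f=(t-1)g$. The natural candidate is the vector-valued Hadamard integral
\[
g(t)\;:=\;\int_{0}^{1}f'\bigl(1+s(t-1)\bigr)\,ds,
\]
where the derivative $f'$ is the $A$-valued derivative of $f$ (which exists and is smooth because $f\in C^\infty(\mathbb{R},A)$). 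A direct change of variables $u=1+s(t-1)$ (for $t\neq 1$) combined with the fundamental theorem of calculus for $A$-valued smooth paths yields
\[
(t-1)\,g(t)\;=\;\int_{1}^{t}f'(u)\,du\;=\;f(t)-f(1)\;=\;f(t),
\]
and at $t=1$ both sides equal $0$. Hence $f=(t-1)g$ and $f\in(t-1)\cdot A\{t\}$.

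The main subtlety, and the place where the hypothesis that $A$ is complete enters, is to justify that $g$ really lies in $C^\infty(\mathbb{R},A)$. The Riemann integral of a continuous $A$-valued function on the compact interval $[0,1]$ exists because $A$ is complete; and smoothness of $g$ in the parameter $t$ follows from the standard theorem on differentiation under the integral sign for smooth parameter-dependent integrals of $A$-valued maps (see e.g.\ the $A$-valued calculus used implicitly in Proposition \ref{C(M,A) loc. con. algebra}). Once $g\in A\{t\}$ is secured, the equality $f=(t-1)g$ finishes the proof and simultaneously shows that the (a priori only algebraic) ideal $(t-1)\cdot A\{t\}$ is closed, being the kernel of the continuous homomorphism $\ev_1$. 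I expect the only nontrivial point to be this justification of smoothness of $g$, which is a routine but essential appeal to vector-valued calculus.
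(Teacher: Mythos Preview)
Your proof is correct and follows essentially the same route as the paper: continuity and surjectivity of $\ev_1$ are handled directly, and the kernel identification uses the identical Hadamard integral $g(t)=\int_0^1 f'(1+s(t-1))\,ds$ to factor $f(t)=(t-1)g(t)$. If anything, you are more explicit than the paper about why completeness of $A$ is needed for the integral and why $g$ is smooth.
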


\begin{proof}
(i) According to \cite[Proposition I.2]{NeWa07}, the evaluation map
\[\ev_{\mathbb{R}}:C^{\infty}(\mathbb{R},A)\times\mathbb{R}\rightarrow A,\,\,\,(f,r)\mapsto f(r)
\]is smooth. In particular, the map $\ev_{1}$ is continuous. Further, a short observation shows that $\ev_{1}$ is surjective and a homomorphism of algebras. Therefore, $\ev_{1}$ is a surjective morphism of (complete) unital locally convex algebras.

(ii) It remains to determine the kernel $I:=\ker\ev_{1}$: Clearly, $I_{1_A}\subseteq I$. Therefore, let $f\in A\{t\}$ with $f(1)=0$. We define a smooth $A$-valued function on $\mathbb{R}$ by
\[g:\mathbb{R}\rightarrow A,\,\,\,g(t):=\int^1_0f'(s(t-1)+1)\,ds.
\]Now, an easy calculation leads to
\[f(t)=(t-1)\cdot g(t),
\]i.e., to $f\in I_{1_A}$ and thus $I_{1_A}=I$.
\end{proof}

\begin{corollary}\label{inverting 1 iso}
In the situation of Proposition \ref{inverting 1}, the map
\[\varphi:A_{\{1_A\}}\rightarrow A,\,\,\,f+I_{1_A}\mapsto f(1)
\]is an isomorphism of \emph{(}complete\emph{)} locally convex algebras.
\end{corollary}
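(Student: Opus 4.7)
The plan is to invoke Proposition~\ref{inverting 1} to identify $\varphi$ with the map induced on the quotient by $\ev_1$ via the first isomorphism theorem, and then to exhibit an explicit continuous inverse. Since $\ev_1: A\{t\} \to A$ is a continuous surjective morphism of (complete) unital locally convex algebras with kernel exactly $I_{1_A}$, the universal property of the quotient topology provides a unique continuous algebra morphism $\varphi: A_{\{1_A\}} \to A$ with $\varphi \circ \pi_{\{1_A\}} = \ev_1$, and on representatives this is $f + I_{1_A} \mapsto f(1)$. Bijectivity is then immediate: surjectivity is inherited from $\ev_1$, and injectivity from the kernel computation.

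To upgrade $\varphi$ to a topological isomorphism I would construct a continuous inverse. Let $\iota: A \to A\{t\} = C^\infty(\mathbb{R}, A)$ send $a$ to the constant function $c_a$. This map is continuous for the smooth compact-open topology, since all higher derivatives of $c_a$ vanish and the zeroth derivative gives back $a$, so every defining seminorm of $C^\infty(\mathbb{R}, A)$ pulls back to a continuous seminorm (in fact a seminorm) on $A$. Composing with the quotient homomorphism, I set $\psi := \pi_{\{1_A\}} \circ \iota : A \to A_{\{1_A\}}$, which is then continuous.

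It then remains to check that $\varphi$ and $\psi$ are mutually inverse. On the one hand, $\varphi(\psi(a)) = \ev_1(c_a) = a$, so $\varphi \circ \psi = \id_A$. On the other hand, for any $f \in A\{t\}$ the computation carried out in part (ii) of the proof of Proposition~\ref{inverting 1} (with $f$ replaced by $f - f(1)$) gives $f(t) - f(1) = (t-1) \cdot g(t)$ for the smooth function $g(t) := \int_0^1 (f - f(1))'(s(t-1)+1)\, ds$, and hence $f - c_{f(1)} \in (t-1) \cdot A\{t\} = I_{1_A}$. Therefore $\pi_{\{1_A\}}(f) = \pi_{\{1_A\}}(c_{f(1)}) = \psi(f(1)) = \psi(\varphi(\pi_{\{1_A\}}(f)))$, so $\psi \circ \varphi = \id_{A_{\{1_A\}}}$.

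No serious obstacle is expected: the only points that go beyond Proposition~\ref{inverting 1} are the continuity of $\iota$ (a routine consequence of the definition of the smooth compact-open topology, essentially because the embedding of constants is a continuous section of every point evaluation) and the fact that $f - c_{f(1)}$ lies in $I_{1_A}$ (which is precisely the content of the integral identity already established in Proposition~\ref{inverting 1}). In effect, the corollary merely repackages that proposition as a quotient isomorphism together with the existence of a continuous linear splitting of $\ev_1$.
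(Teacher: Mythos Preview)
Your proof is correct and follows essentially the same approach as the paper: invoke Proposition~\ref{inverting 1} to see that $\varphi$ is a continuous bijective algebra morphism, then exhibit $\psi:=\pi_{\{1_A\}}\circ\iota$ (with $\iota$ the inclusion of constants) as a continuous inverse. The paper is simply terser, noting that ``a short observation'' shows $\psi$ is the continuous inverse; you spell out the continuity of $\iota$ and both composites, though once $\varphi$ is known to be bijective the verification of $\varphi\circ\psi=\id_A$ alone already forces $\psi=\varphi^{-1}$.
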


\begin{proof}
In view of Proposition \ref{inverting 1} and the definition of the quotient topology, the map $\varphi$ is a bijective morphism of locally convex algebras. Further, a short observation shows that the map 
\[\psi:=\pi_{\{1_A\}}\circ i:A\rightarrow A_{\{1_A\}},
\]where $i:A\rightarrow A\{t\}$ denotes the canonical inclusion, is a continuous inverse of the map $\varphi$.
\end{proof}

%\begin{remark}
%(a) We will mainly be interested in the case $n=1$.%,2,3$. 

%(b) If $A=C^{\infty}(M)$ is the algebra of smooth functions on some manifold $M$, then Lemma \ref{smooth exp law} implies that $A\{t_1,\ldots,t_n\}\cong C^{\infty}(\mathbb{R}^n\times M)$. In particular, $A\{t\}=C^{\infty}(\mathbb{R}\times M)$.
%\end{remark}

In the forthcoming chapter we will need the following property of $A_{\{a\}}$:

\begin{lemma}\label{A_{a...} is a (left) A-modul}
For all $a_1,\ldots,a_n\in A$ there is a continuous $A$-bimodule structure on $A_{\{a_1,\ldots,a_n\}}$, given for all $a,a'\in A$ and $f\in A\{t_1,\ldots,t_n\}$ by 
\[a.[f].a':=[afa']=afa'+I_{a_1,\ldots,a_n}.
\]
\end{lemma}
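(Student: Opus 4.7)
The plan is to obtain the bimodule structure on $A_{\{a_1,\ldots,a_n\}}$ by descending the obvious $A$-bimodule structure on $A\{t_1,\ldots,t_n\}$ to the quotient. First I would identify $A$ with the subalgebra of constant functions inside $A\{t_1,\ldots,t_n\}=C^\infty(\mathbb{R}^n,A)$; by Proposition \ref{C(M,A) loc. con. algebra} this inclusion is a continuous unital algebra homomorphism, so via the multiplication of $A\{t_1,\ldots,t_n\}$ the algebra $A\{t_1,\ldots,t_n\}$ becomes a topological $A$-bimodule, with the bimodule action given pointwise by $(a.f.a')(t)=af(t)a'$ for $a,a'\in A$ and $f\in A\{t_1,\ldots,t_n\}$.

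Next I would check that this action descends to the quotient. Since $I_{a_1,\ldots,a_n}$ is by definition a closed two-sided ideal of the algebra $A\{t_1,\ldots,t_n\}$, left and right multiplication by the constant functions $a,a'$ map $I_{a_1,\ldots,a_n}$ into itself. Consequently the formula $a.[f].a':=[afa']$ is independent of the representative $f$, and therefore defines an $A$-bimodule structure on $A_{\{a_1,\ldots,a_n\}}$. Associativity and the module axioms are inherited from those of $A\{t_1,\ldots,t_n\}$, so there is nothing to verify separately.

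For continuity I would argue as follows. The multiplication map
\[
m:A\{t_1,\ldots,t_n\}\times A\{t_1,\ldots,t_n\}\times A\{t_1,\ldots,t_n\}\longrightarrow A\{t_1,\ldots,t_n\},\quad (f,g,h)\mapsto fgh,
\]
is continuous by Proposition \ref{C(M,A) loc. con. algebra}. Composing $m$ with the continuous inclusion $A\hookrightarrow A\{t_1,\ldots,t_n\}$ in the first and third argument, and with the continuous quotient map $\pi_{\{a_1,\ldots,a_n\}}$ after the second argument, yields a continuous map
\[
A\times A\{t_1,\ldots,t_n\}\times A\longrightarrow A_{\{a_1,\ldots,a_n\}},\quad (a,f,a')\mapsto [afa'],
\]
which by construction factors through $A\times A_{\{a_1,\ldots,a_n\}}\times A$. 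Since $A_{\{a_1,\ldots,a_n\}}$ carries the quotient topology, this factorisation is continuous, which gives the desired joint continuity of the bimodule action.

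Because every step is essentially bookkeeping over the fact that $I_{a_1,\ldots,a_n}$ is a closed two-sided ideal and that the multiplication on $A\{t_1,\ldots,t_n\}$ is jointly continuous, I do not expect a genuine obstacle here; the only point requiring a little care is the passage from the separate continuity of left/right multiplications $L_a,R_{a'}$ to joint continuity of the triple action, which is precisely what the argument using the three-fold multiplication map above is designed to handle.
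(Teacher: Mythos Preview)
Your argument is correct and matches the paper's approach: the paper's proof simply says the claim follows from the continuity of the pointwise $A$-bimodule structure on $A\{t_1,\ldots,t_n\}$ together with the definition of the quotient topology, and you have spelled out exactly these details. The only phrase worth sharpening is your final step: the factored map is continuous not merely because $A_{\{a_1,\ldots,a_n\}}$ carries the quotient topology, but because the quotient map $\pi_{\{a_1,\ldots,a_n\}}$ of locally convex spaces is \emph{open}, so $\id_A\times\pi_{\{a_1,\ldots,a_n\}}\times\id_A$ is open and surjective and hence itself a quotient map (the paper uses this same device explicitly in Propositions~\ref{A_z is algebra} and~\ref{T^n action on spec again}).
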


\begin{proof}
The claim follows from the continuity of the $A$-bimodule structure on $A\{t_1,\ldots,t_n\}$, given for all $a,a'\in A$ and $f\in A\{t_1,\ldots,t_n\}$ by 
\[(a.f.a')(t_1,\ldots,t_n):=af(t_1,\ldots,t_n)a',
\]and the definition of the quotient topology.
\end{proof}

\begin{remark}
(Another localization method). Another interesting localization method can be found in \cite[Chapter 3]{NaSa03}. Indeed, given a commutative $\mathbb{R}$-algebra $A$ and an open subset $U$ of the \emph{real spectrum} $\Hom_{\text{alg}}(A,\mathbb{R})\backslash\{0\}$ of $A$ (with the topology of pointwise convergence on $A$),
%\[\Gamma_{A,\text{r}}:=\Hom_{\text{alg}}(A,\mathbb{R})\backslash\{0\}
%\]
the authors of \cite{NaSa03} define $A_U$ to be the ring of fractions of $A$ with respect to the multiplicative subset of all elements in $A$ without zeroes in $U$, i.e., elements in $A_U$ are (equivalence classes of) fractions $\frac{a}{s}$, where $a,s\in A$ and $s(\chi):=\chi(s)\neq 0$ for all $\chi\in U$. In particular, they show that if $A=C^{\infty}(\mathbb{R}^n,\mathbb{R})$, then $A_U=C^{\infty}(U,\mathbb{R})$ for each open subset $U$ of $\mathbb{R}^n$ (cf. \cite[Chapter 3, Example 3.3]{NaSa03}). Of course, an important restriction of this approach is the commutativity of the algebra $A$.
\end{remark}

%In Section \ref{SLNS} we have developed a convenient method of localizing unital (locally convex) algebras in a smooth way. Our construction in Definition \ref{sm. loc.} is inspired by classical localization methods appearing in algebraic geometry. Indeed, given a unital locally convex algebra $A$, we consider the algebra $C^{\infty}(\mathbb{R},A)$ of smooth $A$-valued functions on $\mathbb{R}$ In particular, we proved a smooth version of Proposition \ref{K(X)_f=O(X_f)}, i.e., that the map

\section{The Spectrum of $A_{\{a_1,\ldots,a_n\}}$}\label{the spectrum of A_a section}

Before coming up with concrete examples of smoothly localized algebras, we will first be concerned with the problem of calculating the spectrum of $A_{\{a_1,\ldots,a_n\}}$ for some unital locally convex algebra $A$ and elements $a_1,\ldots,a_n\in A$. In the following we write $\Gamma^{\text{cont}}_A$ for the set of continuous characters of $A$. We start with discussing the spectrum of the algebra of smooth functions on a manifold:

\begin{lemma}\label{spec of C(M,K) set}
If $M$ is a manifold, then each character $\chi:C^{\infty}(M)\rightarrow\mathbb{C}$ is an evaluation in some point $m\in M$.
\end{lemma}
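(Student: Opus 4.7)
The plan is to localize the character $\chi$ to a compact set and then to a single point, using a smooth proper exhaustion function on $M$. First, I would invoke the standard fact that a paracompact, second-countable smooth manifold admits a smooth proper function $h \colon M \to [0,\infty)$. Set $c := \chi(h)$ and decompose $c = a + ib$ with $a,b \in \mathbb{R}$. Consider the nonnegative smooth function $\phi := (h-a)^2 + b^2$. A short calculation using the multiplicativity and $\mathbb{C}$-linearity of $\chi$ shows $\chi(\phi) = 0$. If $b \neq 0$, then $\phi \geq b^2 > 0$ is invertible in $C^{\infty}(M)$, giving $\chi(\phi)\chi(\phi^{-1}) = 1$, a contradiction. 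Hence $c \in \mathbb{R}$ and the zero set $K := h^{-1}(c) = \phi^{-1}(0)$ is compact by properness of $h$.

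The next step is a \emph{localization property}: $\chi(g) = 0$ for every $g \in C^{\infty}(M)$ that vanishes on an open neighbourhood $V$ of $K$. The key observation is that, by properness of $h$ and a nested-intersection argument, for sufficiently small $\varepsilon > 0$ the compact level set $h^{-1}([c-\varepsilon, c+\varepsilon])$ is contained in $V$; hence $(h-c)^2 \geq \varepsilon^2 > 0$ on $\supp g$. Defining $\tilde g := g/(h-c)^2$ on $\{(h-c)^2 > \varepsilon^2/2\}$ and extending by $0$ on $V$ yields a smooth function with $g = (h-c)^2 \cdot \tilde g$, whence $\chi(g) = \chi((h-c)^2)\chi(\tilde g) = 0$.

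With the localization property in hand, I would argue by contradiction that $\chi = \ev_m$ for some $m \in K$. If not, then for every $x \in K$ there exists a real-valued $f_x \in C^{\infty}(M)$ with $\chi(f_x) \neq f_x(x)$ (take a real or imaginary part of any witness). Writing $\chi(f_x) = a_x + ib_x$, the real-valued function $g_x := (f_x - a_x)^2 + b_x^2$ lies in $\ker\chi$, is $\geq 0$, and satisfies $g_x(x) > 0$; by continuity $g_x$ is strictly positive on an open neighbourhood $V_x$ of $x$. Compactness of $K$ yields a finite subcover $V_{x_1},\ldots,V_{x_k}$, and the sum $G := g_{x_1} + \cdots + g_{x_k}$ lies in $\ker\chi$, is nonnegative, and strictly positive on $V := \bigcup_j V_{x_j}$. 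Pick a smooth bump $\beta$ with $\beta \equiv 1$ on an open neighbourhood of $K$ and $\supp \beta \subset V$. Then $H := G\beta + (1-\beta)$ is strictly positive everywhere on $M$ (distinguishing points in and outside $V$). Since $1 - \beta$ vanishes near $K$, the localization property gives $\chi(1 - \beta) = 0$, hence $\chi(\beta) = 1$ and $\chi(H) = \chi(G)\chi(\beta) + \chi(1-\beta) = 0$; but $H$ is invertible in $C^{\infty}(M)$, forcing $1 = \chi(H)\chi(H^{-1}) = 0$, a contradiction.

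The main obstacle is the smoothness of the factorization $g = (h-c)^2 \tilde g$ in the second step: one has to verify that the piecewise definition of $\tilde g$ glues across the boundary between $V$ (where $g \equiv 0$) and the region where $(h-c)^2$ is bounded below. This is resolved by observing that $V$ together with $\{(h-c)^2 > \varepsilon^2/2\}$ cover $M$ (again using properness of $h$) and that the two local definitions agree on their overlap, both being $0$ where $g = 0$. Once this is settled, the subsequent compactness-plus-bump-function argument is routine, and no continuity assumption on $\chi$ is used — the result genuinely applies to \emph{all} characters as stated.
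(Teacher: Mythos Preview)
Your argument is correct. The paper itself does not supply a proof of this lemma; it simply cites \cite[Corollary 4.3.2]{Wa11a}, so there is no in-paper argument to compare against. Your approach---localizing $\chi$ to a compact level set of a proper exhaustion function via the factorization $g=(h-c)^2\tilde g$, and then running a finite-cover/bump argument on that compact set---is the standard route to this classical result (sometimes called \emph{Milnor's exercise}).

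Two minor points worth making explicit, though neither is a genuine gap. First, the nonemptiness of $K=h^{-1}(c)$ is needed for the final step to bite; this follows either from your localization argument applied with $V=\emptyset$, or more directly from the observation that for any $f\in C^{\infty}(M)$ and any $\lambda\notin f(M)$ the function $f-\lambda$ is invertible, forcing $\chi(f)\in f(M)$. Second, when you reduce to a real-valued witness $f_x$ you implicitly use that if $\chi(u)=u(x)$ and $\chi(v)=v(x)$ for the real and imaginary parts of $f$, then $\chi(f)=f(x)$; this is immediate from $\mathbb{C}$-linearity and justifies passing to a real-valued $f_x$ even though $\chi$ need not be $*$-preserving a priori. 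With these remarks your proof is complete and self-contained, and, as you note, uses no continuity hypothesis on $\chi$.
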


\begin{proof}
A proof of this statement can be found in \cite[Corollary 4.3.2]{Wa11a}.
\end{proof}

The next proposition shows that the correspondence between $M$ and $\Gamma_{C^{\infty}(M)}$ is actually a topological isomorphism:

\begin{proposition}\label{spec of C(M) top}
Let $M$ be a manifold. Then the map
\begin{align}
\Phi_M:M\rightarrow \Gamma_{C^{\infty}(M)},\,\,\,m\mapsto \delta_m\notag.
\end{align}
is a homeomorphism.
\end{proposition}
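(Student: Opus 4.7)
The plan is to split the claim into four standard sub-tasks: surjectivity, injectivity, continuity of $\Phi_M$, and continuity of its inverse. Surjectivity is already handed to us by Lemma \ref{spec of C(M,K) set}, since every continuous character of $C^{\infty}(M)$ is point evaluation. Injectivity reduces to the statement that $C^{\infty}(M)$ separates the points of $M$: for $m \neq m'$, Hausdorffness of $M$ together with the existence of smooth bump functions (available since $M$ is a paracompact, second countable, finite-dimensional smooth manifold) yields an $f \in C^{\infty}(M)$ with $f(m) = 1$ and $f(m') = 0$, so $\delta_m \neq \delta_{m'}$.

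Continuity of $\Phi_M$ is essentially tautological. The topology on $\Gamma_{C^{\infty}(M)}$ is initial with respect to the evaluation maps $\ev_f \colon \chi \mapsto \chi(f)$ for $f \in C^{\infty}(M)$, so $\Phi_M$ is continuous iff $\ev_f \circ \Phi_M = f$ is continuous for every $f \in C^{\infty}(M)$, which holds by definition.

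The main obstacle is the continuity of $\Phi_M^{-1}$, equivalently, that $\Phi_M$ is an open (or closed) map onto its image. I would argue this by a net argument: assume $\Phi_M(m_\alpha) \to \Phi_M(m)$ in $\Gamma_{C^{\infty}(M)}$ and suppose, for contradiction, that $m_\alpha \not\to m$ in $M$. Then there is an open neighbourhood $U$ of $m$ and a subnet $(m_\beta)$ with $m_\beta \notin U$ for all $\beta$. Using a smooth bump function $f \in C^{\infty}(M)$ with $f(m) = 1$ and $\supp(f) \subseteq U$, one obtains $\delta_{m_\beta}(f) = 0$ for every $\beta$, while $\delta_m(f) = 1$. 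This contradicts the fact that subnets of convergent nets converge to the same limit in the topology of pointwise convergence, so $m_\alpha \to m$ as desired.

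The only non-trivial input is thus the existence of enough smooth bump functions on $M$, which is standard for paracompact finite-dimensional smooth manifolds. I would expect the proof to be short, with bump functions doing double duty for both injectivity and the openness of $\Phi_M$.
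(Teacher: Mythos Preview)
Your proof is correct and follows essentially the same approach as the paper: surjectivity via Lemma~\ref{spec of C(M,K) set}, injectivity via smooth functions separating points, continuity of $\Phi_M$ from the definition of the pointwise topology, and bump functions to handle the inverse. The only stylistic difference is in the last step: rather than your net-by-contradiction argument, the paper proves openness of $\Phi_M$ directly by observing that for any open $U\ni m_0$ and bump function $h$ with $h(m_0)\neq 0$ and $\supp(h)\subset U$, the set $\delta_h^{-1}(\mathbb{C}^\times)$ is an open neighbourhood of $\delta_{m_0}$ contained in $\Phi_M(U)$---same bump-function idea, slightly more direct packaging.
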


\begin{proof}
(i) The surjectivity of $\Phi$ follows from Lemma \ref{spec of C(M,K) set}.
To show that $\Phi$ is injective, choose elements $m\neq m'$ of $M$. Since $M$ is manifold, there exists a function $f$ in $C^{\infty}(M)$ with $f(m)\neq f(m')$. Then 
\[\delta_m(f)=f(m)\neq f(m')=\delta_{m'}(f)
\]implies that $\delta_m\neq\delta_{m'}$, i.e., $\Phi$ is injective. 

(ii) Next, we show that $\Phi$ is continuous: Let $m_n\rightarrow m$ be a convergent sequence in $M$. Then we have 
\[\delta_{m_n}(f)=f(m_n)\rightarrow f(m)=\delta_m(f)\,\,\,\text{for all}\,\,\,f\,\,\,\text{in}\,\,\,C^{\infty}(M),
\]i.e., $\delta_{m_n}\rightarrow\delta_m$ in the topology of pointwise convergence. Hence, $\Phi$ is continuous. 

(iii) We complete the proof by showing that $\Phi$ is an open map: For this let $U$ be an open subset of $M$, $m_0$ in $U$ and $h$ a smooth real-valued function with $h(m_0)\neq 0$ and $\supp(h)\subset U$. Since the map
\[\delta_h:\Gamma_{C^{\infty}(M)}\rightarrow\mathbb{C},\,\,\,\delta_m\mapsto h(m)
\]is continuous, a short calculations shows that $\Phi(U)$ is a neighbourhood of $m_0$ containing the open subset $\delta_h^{-1}(\mathbb{C}^{\times})$. Hence, $\Phi$ is open.
\end{proof}

The following observation is well-known, but by a lack of a reference, we give the proof:

\begin{lemma}\label{spectrum of tensor products}
Let $A$ and $B$ be two unital locally convex algebras. Then the map
\[\Phi:\Gamma^{\emph{\text{cont}}}_A\times\Gamma^{\emph{\text{cont}}}_B\rightarrow\Gamma^{\emph{\text{cont}}}_{A\otimes B},\,\,\,(\chi_A,\chi_B)\rightarrow \chi_A\otimes\chi_B
\]is a homeomorphism.
\end{lemma}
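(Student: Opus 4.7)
The proof is essentially a classical verification using the universal property of the tensor product together with the fact that the topology in question on spectra is pointwise convergence. My plan is to split the argument into three steps: construct the inverse explicitly, then deduce bijectivity and bicontinuity in parallel.

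First, I would check that $\Phi$ is well-defined. By the universal property of the algebraic tensor product, there is a unique linear map $\chi_A\otimes\chi_B:A\otimes B\to\mathbb{C}$ satisfying $(\chi_A\otimes\chi_B)(a\otimes b)=\chi_A(a)\chi_B(b)$. A quick check on elementary tensors and extension by linearity shows that it is an algebra homomorphism, and since $(\chi_A\otimes\chi_B)(1\otimes 1)=1$, it is nonzero. Continuity with respect to the (projective) tensor product topology follows from the fact that $(a,b)\mapsto \chi_A(a)\chi_B(b)$ is a continuous bilinear map $A\times B\to\mathbb{C}$ and thus, by the universal property of the projective tensor product topology, extends to a continuous linear map on $A\otimes B$.

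Next, I would construct a candidate inverse. Given $\chi\in\Gamma^{\text{cont}}_{A\otimes B}$, set
\[
\chi_A(a):=\chi(a\otimes 1_B),\qquad \chi_B(b):=\chi(1_A\otimes b).
\]
Both are continuous algebra homomorphisms since the inclusions $\iota_A:A\hookrightarrow A\otimes B,\ a\mapsto a\otimes 1_B$ and $\iota_B:B\hookrightarrow A\otimes B,\ b\mapsto 1_A\otimes b$ are continuous homomorphisms of unital algebras, and $\chi_A(1_A)=\chi(1_{A\otimes B})=1$ (using that any nonzero character on a unital algebra is unital), so $\chi_A$ and $\chi_B$ are nonzero. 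The computation
\[
\chi(a\otimes b)=\chi\bigl((a\otimes 1_B)(1_A\otimes b)\bigr)=\chi_A(a)\chi_B(b)=(\chi_A\otimes\chi_B)(a\otimes b)
\]
together with linearity shows $\chi=\chi_A\otimes\chi_B$, so $\Phi$ is surjective. Injectivity is then immediate: if $\Phi(\chi_A,\chi_B)=\Phi(\chi_A',\chi_B')$, evaluate at $a\otimes 1_B$ and $1_A\otimes b$ to recover each factor.

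Finally, for the homeomorphism property, recall that both spectra carry the topology of pointwise convergence. For continuity of $\Phi$, suppose $(\chi_A^\lambda,\chi_B^\lambda)\to(\chi_A,\chi_B)$; then for every elementary tensor $a\otimes b$, the product $\chi_A^\lambda(a)\chi_B^\lambda(b)$ converges to $\chi_A(a)\chi_B(b)$, and by linearity convergence extends to all of $A\otimes B$. For continuity of $\Phi^{-1}$, the formulas $\chi_A=\chi\circ\iota_A$ and $\chi_B=\chi\circ\iota_B$ make clear that pointwise convergence of $\chi^\lambda$ on $A\otimes B$ forces pointwise convergence of the corresponding factors on $A$ and $B$ respectively. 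The whole argument is routine; the only mild subtlety worth spelling out carefully is that the joint continuity of the bilinear evaluation $(a,b)\mapsto\chi_A(a)\chi_B(b)$ is what guarantees that $\chi_A\otimes\chi_B$ is continuous with respect to whichever compatible tensor product topology is used on $A\otimes B$.
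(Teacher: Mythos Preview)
Your proof is correct and follows essentially the same route as the paper: well-definedness via continuity of the bilinear map (the paper invokes its Lemma~\ref{continuity of maps EoF to E'oF'} instead of the universal property directly), bijectivity via the factorization $a\otimes b=(a\otimes 1_B)(1_A\otimes b)$ and restriction to the embedded copies of $A$ and $B$, and bicontinuity from pointwise convergence. In fact you supply more detail than the paper, which leaves the verification of continuity of $\Phi$ and $\Phi^{-1}$ as an easy exercise to the reader.
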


\begin{proof}
(i) We first note that Lemma \ref{continuity of maps EoF to E'oF'} implies that the map $\Phi$ is well-defined. To show that the map $\Phi$ is bijective, we recall that each simple tensor $a\otimes b$ can be written as $(a\otimes 1_B)\cdot(1_A\otimes b)$. Therefore, every continuous character $\chi:A\otimes B\rightarrow\mathbb{C}$ is uniquely determined by the elements of the form $(a\otimes 1_B)$ and $(1_A\otimes b)$ for $a\in A$ and $b\in B$. In particular, the restriction of $\chi$ to the subalgebra $A\otimes 1_B$, resp., $1_A\otimes B$ corresponds to a continuous character of $A$, resp., $B$, i.e.,
\[\chi=\chi_A\otimes\chi_B\,\,\,\text{for}\,\,\,\chi_A\in\Gamma_A\,\,\,\text{and}\,\,\,\chi_B\in\Gamma_B.
\]Hence, $\Phi$ is surjective. A similar argument shows that the map $\Phi$ is injective.

(ii) Finally, we leave it as an easy exercise to the reader to verify the continuity of $\Phi$ and its inverse $\Phi^{-1}$.
\end{proof}

\begin{remark}\label{CIA stuff}
(Continuous inverse algebras). (a) A unital locally convex algebra $A$ is called continuous inverse algebra, or CIA for short, if its group of units $A^{\times}$ is an open subset in $A$ and the inversion map $\iota:A^{\times}\rightarrow A^{\times},\,\,\,a\mapsto a^{-1}$ is continuous at $1_A$. They are encountered in K-theory and noncommutative geometry, usually as dense unital subalgebras of $C^*$-algebras. 

(b) For a compact manifold $M$, the Fr\'echet algebra of smooth functions $C^{\infty}(M)$ is the prototype of such a continuous inverse algebra. More generally, if $M$ is a compact manifold and $A$ a continuous inverse algebra, then $C^{\infty}(M,A)$, equipped with the smooth compact open topology, is a continuous inverse algebra (cf. \cite[Proposition 7.1]{Gl02}). The example $C^{\infty}(\mathbb{R},\mathbb{R})$ shows that, if $A$ is a continuous inverse algebra, $C^{\infty}(M,A)$ need not have an open unit group.

(c) If $A$ is a commutative CIA, then each character $\chi:A\rightarrow\mathbb{C}$ is continuous. Moreover, given a maximal proper ideal $I$ in $A$, then $I$ is the kernel of some character $\chi:A\rightarrow\mathbb{C}$. A reference for the last two statements is \cite[Chapter 2]{Bi04}.
\end{remark}

We are now ready to prove the following theorem on continuous characters of smooth vector-valued function spaces:

\begin{theorem}\label{spectrum of  C(M,A)}
Let $M$ be a manifold and $A$ be a unital locally convex algebra. Further let $B:=C^{\infty}(M,A)$. Then the following assertions hold:
\begin{itemize}
\item[(a)]
If $M$ is compact and $A$ is a CIA, then each maximal ideal of $B$ is closed.
\item[(b)]
If $A$ is complete, then each continuous character $\varphi:B\rightarrow\mathbb{C}$ is an evaluation homomorphism 
\[\chi\circ\delta_m:B\rightarrow\mathbb{C},\,\,\,f\mapsto \chi(f(m))
\]for some $m\in M$ and $\chi\in\Gamma^{\emph{\text{cont}}}_A$.
\end{itemize}
\end{theorem}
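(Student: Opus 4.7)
For part (a) my plan is to exploit the continuous inverse algebra structure transferred from $A$ to $B$. By Remark \ref{CIA stuff}(b), compactness of $M$ combined with $A$ being a CIA forces $B=C^{\infty}(M,A)$ to be a CIA, so its group of units $B^{\times}$ is open and hence $B\setminus B^{\times}$ is closed. Let $I$ be a maximal (necessarily proper) two-sided ideal of $B$. Then $1_B\notin I$, so $I\cap B^{\times}=\emptyset$, i.e.\ $I\subseteq B\setminus B^{\times}$. Taking closures, $\overline{I}$ remains a proper two-sided ideal of $B$ by continuity of addition and multiplication (Proposition \ref{C(M,A) loc. con. algebra}), and $I\subseteq\overline{I}\subsetneq B$. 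Maximality of $I$ then forces $\overline{I}=I$.

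For part (b) my plan is to identify $B\cong C^{\infty}(M)\,\widehat{\otimes}\,A$ via Proposition \ref{proj. tensor product for algebras} (which uses completeness of $A$) and to transport the character $\varphi$ through this isomorphism. A continuous character $\varphi:B\rightarrow\mathbb{C}$ restricts to a continuous character on the subalgebra $C^{\infty}(M)\otimes A$ equipped with the projective tensor topology. By Lemma \ref{spectrum of tensor products} this restriction factors as $\chi_M\otimes\chi$ for continuous characters $\chi_M\in\Gamma^{\text{cont}}_{C^{\infty}(M)}$ and $\chi\in\Gamma^{\text{cont}}_{A}$, and Lemma \ref{spec of C(M,K) set} yields $\chi_M=\delta_m$ for a unique $m\in M$. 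Using the explicit isomorphism $\Phi$ of Remark \ref{proj. tensor product IV}, one checks on elementary tensors that
\[
\varphi(f\cdot a)=f(m)\,\chi(a)=\chi\bigl(f(m)\,a\bigr)=(\chi\circ\delta_m)(f\cdot a)\qquad(f\in C^{\infty}(M),\,a\in A),
\]
so $\varphi$ and $\chi\circ\delta_m$ agree on $C^{\infty}_{\text{fin}}(M,A)$. Since the point evaluation $\delta_m:C^{\infty}(M,A)\rightarrow A$ is continuous (as already used in Proposition \ref{inverting 1}) and $\chi$ is continuous, the composition $\chi\circ\delta_m$ is a continuous character of $B$. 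By Remark \ref{proj. tensor product IV} the image of $\Phi$ on the algebraic tensor product, $C^{\infty}_{\text{fin}}(M,A)$, is dense in $C^{\infty}(M,A)$, so the principle of extension of identities gives $\varphi=\chi\circ\delta_m$ on all of $B$.

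The main technical hurdle I expect is the careful bookkeeping around Lemma \ref{spectrum of tensor products}: the lemma is formulated for the algebraic tensor product with its natural algebra structure, so I must justify that the restriction $\varphi|_{C^{\infty}(M)\otimes A}$ is continuous in the projective tensor topology (automatic, as this is the subspace topology inherited from $C^{\infty}(M)\,\widehat{\otimes}\,A$), and that the resulting factors $\chi_M$ and $\chi$ are continuous on $C^{\infty}(M)$ and $A$ themselves. The latter follows because the canonical inclusions $f\mapsto f\otimes 1_A$ and $a\mapsto 1\otimes a$ into the tensor product are continuous, so continuity of $\chi_M\otimes\chi$ on the tensor product descends to continuity of each factor. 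Everything else reduces to the density and identification statements already established in Section \ref{SLNS}.
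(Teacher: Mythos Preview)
Your proposal is correct and follows essentially the same route as the paper: for (a) you use that $B$ is a CIA so $B^{\times}$ is open, whence closures of proper ideals stay proper; for (b) you restrict $\varphi$ to the dense subalgebra $C^{\infty}(M)\otimes A\subseteq C^{\infty}(M)\,\widehat{\otimes}\,A\cong B$, apply Lemma~\ref{spectrum of tensor products} to factor it as $\delta_m\otimes\chi$, and extend by density. Your final paragraph on continuity of the tensor factors is in fact redundant, since Lemma~\ref{spectrum of tensor products} already delivers $\chi_M\in\Gamma^{\text{cont}}_{C^{\infty}(M)}$ and $\chi\in\Gamma^{\text{cont}}_A$.
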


\begin{proof}
(a) We first note that Remark \ref{CIA stuff} (ii) implies that $B$ is a CIA. Hence, the unit group $B^{\times}=C^{\infty}(M,A^{\times})$ is an open subset of $B$. If $I\subseteq A$ is a maximal ideal, then $I$ intersects $B^{\times}$ trivially, and since $B^{\times}$ is open, the same holds for the closure $\bar{I}$. Hence, $\bar{I}$ also is a proper ideal, so that the maximality of $I$ implies that $I$ is closed.

(b) According to Proposition \ref{proj. tensor product for algebras}, we know that $C^{\infty}(M)\widehat{\otimes}A$ and $C^{\infty}(M,A)$ are isomorphic as complete unital locally convex algebras. In particular, $C^{\infty}(M)\otimes A$ is a dense unital subalgebra of $C^{\infty}(M,A)$, and thus each (continuous) character $\varphi:B\rightarrow\mathbb{C}$ restricts to a character on $C^{\infty}(M)\otimes A$, which is, by Lemma \ref{spectrum of tensor products}, an evaluation in some point $(m,\chi)\in M\times\Gamma^{\text{cont}}_A$. This character is continuous with respect to the projective tensor product topology and can therefore be uniquely extended to a continuous character on $B$.
\end{proof}

\begin{corollary}\label{spectrum of C(M,A) A CIA} 
Let $M$ be a manifold and $A$ be a complete CIA. If $B:=C^{\infty}(M,A)$, then each continuous character $\varphi:B\rightarrow\mathbb{K}$ is an evaluation homomorphism 
\[\chi\circ\delta_m:B\rightarrow\mathbb{C},\,\,\,f\mapsto \chi(f(m))
\]for some $m\in M$ and $\chi\in\Gamma_A$. In particular, if $M$ is compact, then each character of $B$ is an evaluation homomorphism.
\end{corollary}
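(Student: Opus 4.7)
The plan is to derive the Corollary from Theorem \ref{spectrum of  C(M,A)}(b) by showing that, in this setting, every character is automatically continuous, so that the distinction between $\Gamma_A$ and $\Gamma^{\mathrm{cont}}_A$ disappears and the ``in particular'' clause reduces to the first assertion.

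The key auxiliary fact I would establish is that on any CIA $R$, every character $\chi : R \to \mathbb{C}$ is continuous. Let $I := \ker \chi$; since $R/I \cong \mathbb{C}$, the ideal $I$ is maximal among two-sided ideals, and since $\chi(R^\times) \subseteq \mathbb{C}^\times$ we have $I \cap R^\times = \emptyset$. The closure $\bar I$ is again a two-sided ideal by separate continuity of multiplication. If $1 \in \bar I$, a net in $I$ would converge to $1$ and, by openness of $R^\times$ around $1$, would eventually lie in $R^\times$, contradicting $I \cap R^\times = \emptyset$. Hence $\bar I$ is a proper two-sided ideal containing $I$, and maximality forces $\bar I = I$. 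With $I$ closed, the induced map $R/I \to \mathbb{C}$ is a continuous linear bijection between one-dimensional Hausdorff topological vector spaces, so $\chi$ is continuous.

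Given this, the first assertion of the Corollary is immediate: Theorem \ref{spectrum of  C(M,A)}(b) writes any continuous character $\varphi$ of $B$ as $\chi \circ \delta_m$ with $m \in M$ and $\chi \in \Gamma^{\mathrm{cont}}_A$, and the auxiliary fact applied to the CIA $A$ yields $\chi \in \Gamma_A$. For the ``in particular'' statement, Remark \ref{CIA stuff}(b) makes $B = C^\infty(M,A)$ a CIA whenever $M$ is compact, so the auxiliary fact applied to $B$ shows that every character of $B$ is continuous, after which the first assertion supplies the evaluation form.

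The main obstacle is really just the automatic continuity step for possibly non-commutative CIAs, since the paper's Remark \ref{CIA stuff}(c) formally invokes only the commutative case. The argument above bridges this gap and relies only on the standard principle that in any locally convex algebra the closure of a two-sided ideal is again a two-sided ideal, together with the openness of the unit group in a CIA.
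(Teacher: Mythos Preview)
Your proof is correct and follows essentially the same route as the paper: invoke Theorem~\ref{spectrum of  C(M,A)}(b) together with automatic continuity of characters on a CIA, and for the compact case use Remark~\ref{CIA stuff}(b) to see that $B$ itself is a CIA. The paper simply cites Remark~\ref{CIA stuff}(c) for automatic continuity, whereas you supply a direct argument that also covers the non-commutative case; this is a genuine (if minor) improvement, since Remark~\ref{CIA stuff}(c) as stated only treats commutative CIAs, and your closed-maximal-ideal argument removes that restriction cleanly.
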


\begin{proof}
The first part of the Corollary immediately follows from Theorem \ref{spectrum of  C(M,A)} (b) and Remark \ref{CIA stuff} (c), which states that every character of $A$ is continuous. If $M$ is additionally compact, then Remark \ref{CIA stuff} (b) implies that $B$ is a (complete) CIA. Therefore, the second assertion (again) follows from Remark \ref{CIA stuff} (c).
\end{proof}

\begin{remark}
It would be nice to find a purely algebraic proof of Corollary \ref{spectrum of C(M,A) A CIA}, which shows that, even in the non-compact case, every (!) character of $B$ is an evaluation homomorphism. By Lemma \ref{spec of C(M,K) set} this is, for example, true for $A\in\{\mathbb{R},\mathbb{C}\}$ and should also work for function algebras on compact spaces, i.e., for commutative C*-algebras. For further investigations on this question, we refer to the paper \cite{Ou07}.
\end{remark}

\begin{corollary}
If $M$ is compact and $A$ a complete commutative CIA, then each maximal proper ideal of $B:=C^{\infty}(M,A)$ is the kernel of an evaluation homomorphism
\[\chi\circ\delta_m:B\rightarrow\mathbb{C},\,\,\,f\mapsto \chi(f(m))
\]for some $m\in M$ and $\chi\in\Gamma_A$.
\end{corollary}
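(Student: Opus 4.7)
The proof should be a short assembly of results already established in the excerpt, not a new piece of analysis. The plan is the following.

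First I would observe that under the hypotheses, $B=C^{\infty}(M,A)$ is itself a complete commutative continuous inverse algebra. Completeness comes from Remark \ref{proj. tensor product IV} (which identifies $B$ with the complete space $C^{\infty}(M)\widehat{\otimes}A$), commutativity is inherited pointwise from $A$, and the CIA property is exactly the content of Remark \ref{CIA stuff}(b), which requires the compactness of $M$. Thus $B$ is a complete commutative CIA in its own right.

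Next, I would invoke Remark \ref{CIA stuff}(c): in a commutative CIA every maximal proper ideal $I$ is the kernel of some character $\varphi:B\to\mathbb{C}$, and any such character is automatically continuous. So any maximal proper ideal $I\subseteq B$ arises as $I=\ker\varphi$ for some continuous character $\varphi\in\Gamma_B^{\text{cont}}$.

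Finally, I would apply Corollary \ref{spectrum of C(M,A) A CIA}, which classifies the continuous characters of $B$: any such $\varphi$ is of the form $\chi\circ\delta_m$ for some $m\in M$ and $\chi\in\Gamma_A$. Combining this with the previous step yields the claim $I=\ker(\chi\circ\delta_m)$.

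There is really no substantial obstacle here; every ingredient (closedness of maximal ideals, CIA structure of $B$, and the classification of continuous characters) has already been set up in Section 3. The only mild point to double-check is that compactness of $M$ is used twice -- once to guarantee that $B$ is a CIA (via Remark \ref{CIA stuff}(b)) so that Remark \ref{CIA stuff}(c) applies, and once implicitly through Corollary \ref{spectrum of C(M,A) A CIA} to identify each character with an evaluation.
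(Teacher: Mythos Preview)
Your proof is correct and follows essentially the same route as the paper: establish that $B$ is a commutative CIA via Remark~\ref{CIA stuff}(b), use Remark~\ref{CIA stuff}(c) to write any maximal proper ideal as the kernel of a character, and then invoke Corollary~\ref{spectrum of C(M,A) A CIA} to identify that character as an evaluation homomorphism. Your additional remarks on completeness and the double use of compactness are accurate but not needed for the argument to go through.
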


\begin{proof}
According to Remark \ref{CIA stuff} (b), $B$ is a commutative CIA. Hence, Remark \ref{CIA stuff} (c) implies that each maximal proper ideal of $B$ is the kernel of a character of $B$. Therefore, the claim follows from Corollary \ref{spectrum of C(M,A) A CIA}.
\end{proof}

The following theorem shows that, under a certain condition on the continuous spectrum of $A$, the bijection of Theorem \ref{spectrum of C(M,A)} (b) becomes an isomorphism of topological spaces:

\begin{theorem}\label{spec of C(M,A) top}
Let $M$ be a manifold and $A$ be a complete unital locally convex algebra. Further, let $B:=C^{\infty}(M,A)$ and assume that $\Gamma^{\emph{\text{cont}}}_A$ is locally equicontinuous. Then the map
\[\Phi:M\times\Gamma^{\emph{\text{cont}}}_A\rightarrow\Gamma^{\emph{\text{cont}}}_B,\,\,\,(m,\chi)\mapsto\chi\circ\delta_m
\]is a homeomorphism.
\end{theorem}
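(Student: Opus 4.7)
The plan is to verify bijectivity, continuity, and continuity of the inverse of $\Phi$ separately, reducing each step to Theorem \ref{spectrum of  C(M,A)}(b), Proposition \ref{spec of C(M) top}, and the hypothesis on $\Gamma^{\text{cont}}_A$.

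\emph{Bijectivity.} Surjectivity is immediate from Theorem \ref{spectrum of  C(M,A)}(b). For injectivity, suppose $\chi_1 \circ \delta_{m_1} = \chi_2 \circ \delta_{m_2}$. Evaluating both sides on functions of the form $f \cdot 1_A$ with $f \in C^{\infty}(M)$ yields $f(m_1) = f(m_2)$ for every such $f$, whence $m_1 = m_2$ by Proposition \ref{spec of C(M) top}; testing subsequently on constant $A$-valued functions $a \in A \subset B$ gives $\chi_1 = \chi_2$.

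\emph{Continuity of $\Phi$.} Let $(m_\lambda, \chi_\lambda) \to (m, \chi)$ be a convergent net in $M \times \Gamma^{\text{cont}}_A$ and fix $f \in B$. I split
\[
\chi_\lambda(f(m_\lambda)) - \chi(f(m)) = \chi_\lambda\bigl(f(m_\lambda) - f(m)\bigr) + \bigl(\chi_\lambda(f(m)) - \chi(f(m))\bigr).
\]
The second summand tends to $0$ since $\chi_\lambda \to \chi$ pointwise on $A$, and $f(m_\lambda) \to f(m)$ in $A$ by continuity of $f$. For the first summand, local equicontinuity of $\Gamma^{\text{cont}}_A$ provides a neighborhood $V$ of $\chi$ whose members form an equicontinuous family on $A$; eventually $\chi_\lambda \in V$, and equicontinuity forces $\chi_\lambda(f(m_\lambda) - f(m)) \to 0$. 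Hence $\Phi(m_\lambda, \chi_\lambda)(f) \to \Phi(m, \chi)(f)$ for every $f \in B$, which is continuity of $\Phi$ in the topology of pointwise convergence on $\Gamma^{\text{cont}}_B$.

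\emph{Continuity of $\Phi^{-1}$.} Given a convergent net $\varphi_\lambda := \chi_\lambda \circ \delta_{m_\lambda} \to \chi \circ \delta_m$ in $\Gamma^{\text{cont}}_B$, evaluating $\varphi_\lambda$ on functions $f \cdot 1_A$ for $f \in C^{\infty}(M)$ gives $\delta_{m_\lambda} \to \delta_m$ in $\Gamma_{C^{\infty}(M)}$, so Proposition \ref{spec of C(M) top} yields $m_\lambda \to m$ in $M$; evaluating on constant functions $a \in A$ gives $\chi_\lambda(a) \to \chi(a)$ for every $a$, i.e.\ $\chi_\lambda \to \chi$ in $\Gamma^{\text{cont}}_A$. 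Hence $(m_\lambda, \chi_\lambda) \to (m, \chi)$ in the product topology, completing the proof.

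I expect the main obstacle to be the continuity of $\Phi$: without the local equicontinuity hypothesis, one faces a genuine double-limit problem for $\chi_\lambda(f(m_\lambda))$ that need not resolve, and controlling the variation of $\chi_\lambda$ on the shrinking set $\{f(m_\lambda) - f(m)\}$ uniformly in $\lambda$ is precisely what the assumption on $\Gamma^{\text{cont}}_A$ enables. The injectivity argument and the inverse direction, by contrast, only require us to probe $\Phi$ on the two obvious subalgebras $C^{\infty}(M)\cdot 1_A$ and $1_M\cdot A$ of $B$.
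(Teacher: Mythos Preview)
Your proof is correct and follows essentially the same approach as the paper: bijectivity via Theorem~\ref{spectrum of  C(M,A)}(b) and testing on the subalgebras $C^{\infty}(M)\cdot 1_A$ and $A\hookrightarrow B$, continuity of $\Phi^{-1}$ by the same restriction argument, and continuity of $\Phi$ via the identical splitting $\chi_\lambda(f(m_\lambda))-\chi(f(m))=\chi_\lambda(f(m_\lambda)-f(m))+(\chi_\lambda-\chi)(f(m))$ controlled by local equicontinuity. The only difference is cosmetic: you argue with nets while the paper phrases the continuity of $\Phi$ in $\epsilon$-neighborhood language, but the two are direct translations of one another.
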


\begin{proof}
%We divide the proof of this theorem into three parts:
(i) Since $A$ is unital, $C^{\infty}(M)$ is a central subalgebra of $B$. Moreover, we note that $A$ is embedded in $B$ as the constant-valued functions. Hence, $\Phi(m,\chi)=\Phi(m',\chi')$ implies that $m=m'$ and $\chi=\chi'$, i.e., $\Phi$ is injective. Further, Theorem \ref{spectrum of C(M,A)} (b) implies that the map $\Phi$ is surjective. 

(ii) Next, we prove the continuity of the inverse map $\Phi^{-1}$: For this we choose a convergent net $\varphi_i=\chi_i\circ\delta_{m_i}\rightarrow\chi\circ\delta_m=\varphi$ in $\Gamma^{\text{cont}}_B$. Because $C^{\infty}(M)$ is a central subalgebra of $B$, we conclude that $\delta_{m_i}\rightarrow \delta_m$ in $\Gamma_{C^{\infty}(M)}$. By Proposition \ref{spec of C(M) top}, the last condition is equivalent to $m_i\rightarrow m$ in $M$. Since $A$ is embedded in $B$ as the constant-valued functions, we get $\chi_i\rightarrow\chi$ in $A$.

(iii) To prove continuity of $\Phi$, let $(m_0,\chi_0)\in M\times\Gamma^{\text{cont}}_A$, $\epsilon>0$ and $f\in B$. We first choose an equicontinuous neighbourhood $V$ of $\chi_0$ in $\Gamma^{\text{cont}}_A$ such that
\[V\subseteq\left\{\chi\in\Gamma^{\text{cont}}_A:\,\vert(\chi-\chi_0)(f(m_0))\vert<\frac{\epsilon}{2}\right\}.
\]Next, we choose a neighbourhood $W$ of $f(m_0)$ in $A$ such that 
\[\vert\chi(a-f(m_0))\vert<\frac{\epsilon}{2}
\]for all $a\in W$ and $\chi\in V$. Finally, we choose a neighbourhood $U$ of $m_0$ in $M$ such that $f(m)\in W$ for all $m\in U$. Then $(m,\chi)\in U\times V$ implies $f(m)\in W$ and $\chi\in V$ and therefore
\[\vert \chi(f(m)-f(m_0))\vert<\frac{\epsilon}{2}\,\,\,\text{and}\,\,\,\vert(\chi-\chi_0)(f(m_0))\vert<\frac{\epsilon}{2}.
\]It follows that
\begin{align}
&\vert\Phi(m,\chi)(f)-\Phi(m_0,\chi_0)(f)\vert=\vert\chi(f(m))-\chi_0(f(m_0))\vert\notag\\
&\leq\vert(\chi-\chi_0)(f(m_0))\vert+\vert \chi(f(m)-f(m_0))\vert<\epsilon\notag
\end{align}
for all $(m,\chi)\in U\times V$.
\end{proof}

\begin{remark}\label{equicont}
(Sources of algebras with equicontinuous spectrum). (a) The spectrum $\Gamma_A$ of each CIA $A$ is equicontinuous. In fact, let $U$ be a balanced $0$-neighbourhood such that $U\subseteq 1_A-A^{\times}$. Then $\vert\Gamma_A(U)\vert<1$ (cf. the proof of \cite[Lemma 2.2.4]{Bi04}).

(b) Moreover, if $A$ is a \emph{$\rho$-seminormed} algebra, then \cite[Corollary 7.3.9]{Ba00} implies that $\Gamma^{\text{cont}}_A$ is equicontinuous.
\end{remark}

\begin{definition}\label{D(a)}
For a unital algebra $A$ and $a\in A$ we write $D(a):=\{\chi\in\Gamma_A:\,\chi(a)\neq 0\}$ for the set of characters which do not vanish on $a$.
Moreover, if $a_1,\ldots,a_n\in A$, then we define 
\[D(a_1,\ldots,a_n):=\bigcap^n_{i=1}D(a_i).
\]
\end{definition}

\begin{lemma}\label{D(a) is open}
Each set of the form $D(a_1,\ldots,a_n)$ is an open subset of $\Gamma_A$.
\end{lemma}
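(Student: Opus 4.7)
The plan is very short since the statement is essentially a continuity observation combined with the definition of the topology on $\Gamma_A$. Recall that $\Gamma_A$ carries the topology of pointwise convergence on $A$, so for every $a\in A$ the evaluation map
\[
\ev_a:\Gamma_A\to\mathbb{C},\quad \chi\mapsto\chi(a)
\]
is continuous by definition of the topology. This is the only input needed.

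First I would treat the case of a single element. Since $\mathbb{C}^{\times}=\mathbb{C}\setminus\{0\}$ is open in $\mathbb{C}$, continuity of $\ev_a$ yields
\[
D(a)=\{\chi\in\Gamma_A:\chi(a)\neq0\}=\ev_a^{-1}(\mathbb{C}^{\times}),
\]
which is therefore open in $\Gamma_A$. Then for the general case I simply observe that
\[
D(a_1,\ldots,a_n)=\bigcap_{i=1}^n D(a_i)
\]
is a finite intersection of open sets and hence open.

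There is no real obstacle here; the only thing worth double-checking is that the definition of $\Gamma_A$ in the Preliminaries really endows it with the topology of pointwise convergence (it does, as stated explicitly in the preliminaries), so that evaluation at any $a$ is continuous. No completeness, local convexity, or commutativity hypothesis on $A$ is needed.
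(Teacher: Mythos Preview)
Your proof is correct and follows essentially the same approach as the paper: reduce to the single-element case via the finite intersection $D(a_1,\ldots,a_n)=\bigcap_{i=1}^n D(a_i)$, then observe that $D(a)=\ev_a^{-1}(\mathbb{C}^{\times})$ is open by continuity of the evaluation map with respect to the topology of pointwise convergence.
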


\begin{proof}
By the definition of $D(a_1,\ldots,a_n)$, it suffices to show that each set of the form $D(a)$ is open. Therefore, let $a\in A$ be arbitrary and note that the function $\ev_a:\Gamma_A\rightarrow\mathbb{C}$, $\chi\mapsto\chi(a)$ is continuous. The claim now follows from $D(a)=(\ev_a)^{-1}(\mathbb{C}^{\times})$.
\end{proof}

Next we want to describe the spectrum of $A_{\{a\}}$ for a complete locally convex algebra $A$ and an element $a\in A$. We first need the following proposition:

\begin{proposition}\label{spectrum of quotient algebras}
Let $A$ be a unital locally convex algebra and let $I$ be a closed two-sided ideal of $A$. Further, let $\pi:A\rightarrow A/I$ denote the quotient homomorphism. Then the map
\[\Phi:\left\{\chi\in\Gamma^{\emph{cont}}_A:\,\chi(I)=\{0\}\right\}\rightarrow \Gamma^{\emph{cont}}_{A/I},\,\,\,\chi\mapsto \left(a+I\mapsto \chi(a)\right)
\]is a homeomorphism with inverse given by
\[\Psi:\Gamma^{\emph{cont}}_{A/I}\rightarrow\left\{\chi\in\Gamma^{\emph{cont}}_A:\,\chi(I)=\{0\}\right\},\,\,\,\chi\mapsto \chi\circ\pi.
\]
\end{proposition}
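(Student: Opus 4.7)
The strategy is to verify directly that $\Phi$ and $\Psi$ are well-defined mutually inverse continuous maps between the indicated sets. The entire proof rests on the universal property of the quotient topology: a linear map $f\colon A/I \to \mathbb{C}$ is continuous if and only if $f\circ\pi$ is continuous. Everything else is routine checking.

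First I would address well-definedness. For $\Psi$, if $\chi \in \Gamma^{\text{cont}}_{A/I}$, then $\chi\circ\pi$ is continuous and multiplicative as a composition of such maps, is not identically zero (since $\chi$ and $\pi$ are surjective onto $\mathbb{C}$ and $A/I$ respectively), and satisfies $(\chi\circ\pi)(I) = \chi(\{0\}) = \{0\}$, so $\Psi$ lands in the left-hand set. For $\Phi$, given $\chi \in \Gamma^{\text{cont}}_A$ with $\chi(I) = \{0\}$, the map $\tilde\chi\colon A/I \to \mathbb{C}$, $a+I \mapsto \chi(a)$, is a well-defined algebra homomorphism since $I \subseteq \ker\chi$. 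The continuity of $\tilde\chi$ follows from the universal property of the quotient topology applied to $\tilde\chi\circ\pi = \chi$, which is continuous by assumption. A direct computation gives $\Phi\circ\Psi = \id$ and $\Psi\circ\Phi = \id$, so $\Phi$ is a bijection.

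It remains to verify that both $\Phi$ and $\Psi$ are continuous with respect to the topologies of pointwise convergence. For $\Psi$, suppose $\chi_i \to \chi$ in $\Gamma^{\text{cont}}_{A/I}$, meaning $\chi_i(a+I) \to \chi(a+I)$ for every $a \in A$. Then for each $a \in A$ we have
\[
\Psi(\chi_i)(a) = \chi_i(\pi(a)) = \chi_i(a+I) \to \chi(a+I) = \Psi(\chi)(a),
\]
so $\Psi(\chi_i) \to \Psi(\chi)$ pointwise on $A$. For $\Phi$, suppose $\chi_i \to \chi$ in the subspace $\{\psi \in \Gamma^{\text{cont}}_A : \psi(I) = \{0\}\}$; then for every $a+I \in A/I$,
\[
\Phi(\chi_i)(a+I) = \chi_i(a) \to \chi(a) = \Phi(\chi)(a+I),
\]
which gives convergence in $\Gamma^{\text{cont}}_{A/I}$.

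The only subtle point is the continuity of the induced character $\tilde\chi$ on $A/I$, which is handled cleanly by the universal property of the quotient topology. No further obstacle arises.
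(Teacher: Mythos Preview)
Your proof is correct and follows essentially the same approach as the paper, which simply remarks that the continuity of the quotient map makes $\Phi$ well-defined and that a short calculation shows $\Phi$ and $\Psi$ are continuous and mutually inverse. You have merely supplied the details of that calculation, correctly isolating the universal property of the quotient topology as the one point that needs a word of justification.
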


\begin{proof}
Since the quotient homomorphism $\pi:A\rightarrow A/I$ becomes continuous with respect to the quotient topology, the map $\Phi$ is well-defined. Now, a short calculation shows that both maps $\Phi$ and $\Psi$ are continuous and inverse to each other. 
\end{proof}

\begin{theorem}\label{spec A_a}
Let $A$ be a complete unital locally convex algebra such that $\Gamma^{\emph{\text{cont}}}_A$ is locally equicontinuous. If $a_1,\ldots,a_n\in A$, then the map
\[\Phi_{\{a_1,\ldots,a_n\}}:D(a_1,\ldots,a_n)\rightarrow\Gamma^{\emph{cont}}_{A_{\{a_1,\ldots,a_n\}}},\,\,\,\Phi_{\{a_1,\ldots,a_n\}}(\chi)([f]):=\chi\left(f\left(\frac{1}{\chi(a_1)},\ldots,\frac{1}{\chi(a_n)}\right)\right)
\]is a homeomorphism.
\end{theorem}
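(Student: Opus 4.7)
The plan is to factor $\Phi_{\{a_1,\ldots,a_n\}}$ as a composition of three homeomorphisms obtained from earlier results. First I would apply Proposition \ref{spectrum of quotient algebras} to the closed two-sided ideal $I:=I_{a_1,\ldots,a_n}$ of $A\{t_1,\ldots,t_n\}$, which identifies $\Gamma^{\text{cont}}_{A_{\{a_1,\ldots,a_n\}}}$ homeomorphically with
\[ Z:=\{\varphi\in\Gamma^{\text{cont}}_{A\{t_1,\ldots,t_n\}}:\varphi(I)=\{0\}\}. \]
Since $A\{t_1,\ldots,t_n\}=C^{\infty}(\mathbb{R}^n,A)$ and $\Gamma^{\text{cont}}_A$ is locally equicontinuous by hypothesis, Theorem \ref{spec of C(M,A) top} then supplies a homeomorphism
\[ \Psi:\mathbb{R}^n\times\Gamma^{\text{cont}}_A\to\Gamma^{\text{cont}}_{A\{t_1,\ldots,t_n\}},\qquad (s,\chi)\mapsto\chi\circ\delta_s. \]
The whole task thus reduces to describing $\Psi^{-1}(Z)$ explicitly.

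To this end I would argue that a continuous character annihilates $I$ if and only if it annihilates the algebraic two-sided ideal generated by $f^1_{a_1},\ldots,f^n_{a_n}$ (by continuity of the character and closedness of $\{0\}\subseteq\mathbb{C}$), and, since $\mathbb{C}$-valued characters are multiplicative, if and only if it annihilates the generators themselves. For $\varphi=\chi\circ\delta_s$ the vanishing conditions read
\[ 0=\chi(f^i_{a_i}(s))=\chi(1_A-s_ia_i)=1-s_i\chi(a_i),\qquad i=1,\ldots,n, \]
forcing $\chi(a_i)\neq 0$, i.e.\ $\chi\in D(a_1,\ldots,a_n)$, together with $s_i=1/\chi(a_i)$. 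Consequently
\[ \Psi^{-1}(Z)=\{\bigl((1/\chi(a_1),\ldots,1/\chi(a_n)),\chi\bigr):\chi\in D(a_1,\ldots,a_n)\}, \]
a graph over $D(a_1,\ldots,a_n)$ inside $\mathbb{R}^n\times\Gamma^{\text{cont}}_A$.

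Projection onto the second factor is a continuous bijection $\Psi^{-1}(Z)\to D(a_1,\ldots,a_n)$, and its inverse $\chi\mapsto((1/\chi(a_i))_i,\chi)$ is continuous because each evaluation $\ev_{a_i}:\Gamma^{\text{cont}}_A\to\mathbb{C}$ is continuous by the very definition of the topology of pointwise convergence on $\Gamma^{\text{cont}}_A$ and inversion is continuous on $\mathbb{C}^{\times}$. Composing the three homeomorphisms
\[ D(a_1,\ldots,a_n)\ \cong\ \Psi^{-1}(Z)\ \xrightarrow{\ \Psi\ }\ Z\ \cong\ \Gamma^{\text{cont}}_{A_{\{a_1,\ldots,a_n\}}} \]
and unfolding the formulas produces precisely the map $\Phi_{\{a_1,\ldots,a_n\}}$ in the statement.

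I expect the only piece of genuine content to be the middle paragraph, where the closure-of-ideal condition is converted to a pointwise equation on $(s,\chi)$. Reducing from the closure to the generators is immediate for continuous characters, and passing from the two-sided ideal to its generators uses only multiplicativity; once this reduction is done, the rest is bookkeeping with the homeomorphisms already supplied by Proposition \ref{spectrum of quotient algebras} and Theorem \ref{spec of C(M,A) top}.
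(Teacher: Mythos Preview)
Your proposal is correct and follows essentially the same approach as the paper: both factor $\Phi_{\{a_1,\ldots,a_n\}}$ through Proposition~\ref{spectrum of quotient algebras} and Theorem~\ref{spec of C(M,A) top}, reduce the vanishing on $I_{a_1,\ldots,a_n}$ to the equations $s_i\chi(a_i)=1$, and identify the resulting graph with $D(a_1,\ldots,a_n)$. Your middle paragraph is in fact slightly more explicit than the paper about why annihilating the closed ideal is equivalent to annihilating the generators, but otherwise the arguments coincide.
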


\begin{proof}
%The proof of this theorem is divided into three parts:
(i) A short observation shows that the map
\[\phi_{\{a_1,\ldots,a_n\}}:D(a_1,\ldots,a_n)\rightarrow\mathbb{R}^n\times\Gamma^{\text{cont}}_A,\,\,\,\chi\mapsto\left(\frac{1}{\chi(a_1)},\ldots,\frac{1}{\chi(a_n)},\chi\right)
\]is clearly a homeomorphism onto its image. 

(ii) To proceed we show that the set $X:=\im(\phi_{\{a_1,\ldots,a_n\}})$ is homeomorphic to $\Gamma^{\text{cont}}_{A_{\{a_1,\ldots,a_n\}}}$: For this we first put $B:=A\{t_1,\ldots,t_n\}=C^{\infty}(\mathbb{R}^n,A)$. Then Proposition \ref{spectrum of quotient algebras} implies that the space $\Gamma^{\text{cont}}_{A_{\{a_1,\ldots,a_n\}}}$ is homeomorphic to $\{\chi\in\Gamma^{\text{cont}}_B:\,\chi(I_{a_1,\ldots,a_n})=\{0\}\}$. Furthermore, Theorem \ref{spec of C(M,A) top} implies that this last set is homeomorphic to
\[\left\{(r_1,\ldots,r_n,\chi)\in\mathbb{R}^n\times\Gamma^{\text{cont}}_A:\chi\circ\delta_{(r_1,\ldots,r_n)}(I_{a_1,\ldots,a_n})=\{0\}\right\}.
\]Next, we observe that the condition $\chi\circ\delta_{(r_1,\ldots,r_n)}(I_{a_1,\ldots,a_n})=\{0\}$ is equivalent to $r_i\chi(a_i)=1$ for all $1\leq i\leq n$, i.e., we have
\[\left\{(r_1,\ldots,r_n,\chi)\in\mathbb{R}^n\times\Gamma^{\text{cont}}_A:\delta_{(r_1,\ldots,r_n,\chi)}(I_{a_1,\ldots,a_n})=\{0\}\right\}=X.
\]The corresponding homeomorphism $\varphi_{\{a_1,\ldots,a_n\}}:X\rightarrow\Gamma^{\text{cont}}_{A_{\{a_1,\ldots,a_n\}}}$ is given by
\[\varphi_{\{a_1,\ldots,a_n\}}\left(\left(\frac{1}{\chi(a_1)},\ldots,\frac{1}{\chi(a_n)},\chi\right)\right)([f]):=\chi\left(f\left(\frac{1}{\chi(a_1)},\ldots,\frac{1}{\chi(a_n)}\right)\right).
\]

(iii) The claim now follows from $\Phi_{\{a_1,\ldots,a_n\}}=\varphi_{\{a_1,\ldots,a_n\}}\circ\phi_{\{a_1,\ldots,a_n\}}$.
\end{proof}

\section{The Space of Sections of an Algebra Bundle as a Locally Convex Algebra}\label{SSABLCA}

In this section we are dealing with algebra bundles $q:\mathbb{A}\rightarrow M$ with a possibly infinite-dimensional fibre $A$ over a finite-dimensional manifold $M$ and show how to endow the corresponding space $\Gamma\mathbb{A}$ of sections with a topology that turns it into a (unital) locally convex algebra.

\begin{definition}\label{algebra bundle}
(Algebra bundles). Let $A$ be a locally convex algebra. An \emph{algebra bundle} (with fibre $A$) is a quadruple $(\mathbb{A},M,A,q)$, consisting of an infinite-dimensional manifold $\mathbb{A}$, a manifold $M$ and a smooth map $q:\mathbb{A}\rightarrow M$, with the following property: All fibres $\mathbb{A}_m$, $m\in M$, carry algebra structures, and each point $m\in M$ has an open neighbourhood $U$ for which there exists a diffeomorphism
\[\varphi_U:U\times A\rightarrow q^{-1}(U)=\mathbb{A}_U,
\]satisfying $q\circ\varphi_U=p_U$ and all maps
\[\varphi_{U,x}:A\rightarrow\mathbb{A}_x,\,\,\,a\mapsto\varphi_U(x,a)
\]are algebra isomorphisms.
\end{definition}

\begin{definition}\label{sections of algebra bundles}
(Sections of algebra bundles). Let $(\mathbb{A},M,A,q)$ be an algebra bundle. Then the corresponding space
\[\Gamma\mathbb{A}:=\{s\in C^{\infty}(M,\mathbb{A}):q\circ s=\id_M\}
\]of \emph{smooth sections} carries the structure of an algebra. Indeed, given two sections $s_1,s_2\in\Gamma\mathbb{A}$ and $\lambda\in\mathbb{K}$, the vector space structure on $\Gamma\mathbb{A}$ is defined by
\[(s_1+s_2)(m):=s_1(m)+s_2(m)\,\,\,\text{and}\,\,\,(\lambda s)(m):=\lambda s(m).
\]Moreover, the product $(s_1\cdot s_2)(m):=s_1(m)\cdot s_2(m)$ defines a multiplication map on $\Gamma\mathbb{A}$. If $A$ has a unit, then the unit of $\Gamma\mathbb{A}$ is given by the section ${\bf 1}(x):=1_{\mathbb{A}_x}$. In this case $C^{\infty}(M)$ is a unital subalgebra of $\Gamma\mathbb{A}$.
\end{definition}

Our next goal is to endow $\Gamma\mathbb{A}$ with a topology. %For the following results recall that each algebra bundle $(\mathbb{A},M,A,q)$ admits a finite bundle atlas $(\varphi_i,U_i)_{i\in I}$ (cf. Remark \ref{finite open cover}).

\begin{construction}\label{top on space of sections}
(A topology on $\Gamma\mathbb{A}$). Let $(\mathbb{A},M,A,q)$ be an algebra bundle and $(\varphi_i,U_i)_{i\in I}$ a bundle atlas for $(\mathbb{A},M,A,q)$. We endow $\Gamma\mathbb{A}$ with the initial topology $\mathcal{O}_I$ generated by the maps
\[\Phi_i:\Gamma\mathbb{A}\rightarrow C^{\infty}(U_i,A),\,\,\,s\mapsto s_i:=\pr_A\circ\varphi_i^{-1}\circ s_{\mid U_i}
\]for $i\in I$. Here, the right-hand side carries the smooth compact open topology from Definition \ref{smooth compact open topology}. This topology turns $\Gamma\mathbb{A}$ into a locally convex algebra (cf. Proposition \ref{C(M,A) loc. con. algebra}). We will see soon that the topology on $\Gamma\mathbb{A}$ does not depend on the particular choice of the bundle atlas $(\varphi_i,U_i)_{i\in I}$. 
\end{construction}

\begin{lemma}\label{locally defines sections}
Let $(\mathbb{A},M,A,q)$ be an algebra bundle and $(\varphi_i,U_i)_{i\in I}$ a bundle atlas for $(\mathbb{A},M,A,q)$. Moreover, let $\varphi_{ji}:=\varphi_j^{-1}\circ\varphi_i$ \emph{(}on $(U_i\cap U_j)\times A$\emph{)} and $s_i:=\Phi_i(s)$ for $i,j\in I$ and $s\in\Gamma\mathbb{A}$ \emph{(}cf. Construction \ref{top on space of sections}\emph{)}. Then the following assertions hold:
\begin{itemize}
\item[(a)]
If $s\in\Gamma\mathbb{A}$ and $\widehat{s_i}(x):=(x,s_i(x))$ for $i\in I$ and $x\in U_i$, then $\widehat{s_j}=\varphi_{ji}\circ\widehat{s_i}$ for each $i,j\in I$.
\item[(b)]
Conversely, if $(s_i)_{i\in I}\in\prod_{i\in I}C^{\infty}(U_i,A)$ satisfies $\widehat{s_j}=\varphi_{ji}\circ\widehat{s_i}$ for each $i,j\in I$, then the map 
\[s:M\rightarrow\mathbb{A},\,\,\,s(x):=(\varphi_i\circ \widehat{s_i})(x),\,\,\,x\in U_i,
\]defines a section of the bundle $(\mathbb{A},M,A,q)$ with $\Phi_i(s)=s_i$ for each $i\in I$.
\end{itemize}
\end{lemma}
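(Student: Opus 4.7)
The plan is to unpack the defining relations of a bundle chart and track the section through the local trivializations; both parts then reduce to straightforward compatibility calculations.

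For part (a), I would start by observing that for $s\in\Gamma\mathbb{A}$ and $x\in U_i$, the condition $q\circ s=\id_M$ combined with $q\circ\varphi_i=p_{U_i}$ forces $\varphi_i^{-1}(s(x))$ to lie in $\{x\}\times A$. Hence $\varphi_i^{-1}(s(x))=(x,s_i(x))=\widehat{s_i}(x)$ by the very definition of $s_i=\pr_A\circ\varphi_i^{-1}\circ s_{\mid U_i}$. Equivalently, $s(x)=\varphi_i(\widehat{s_i}(x))$ on $U_i$. Applying the same identity with $j$ in place of $i$ on $U_i\cap U_j$, I get $\varphi_j(\widehat{s_j}(x))=\varphi_i(\widehat{s_i}(x))$, and applying $\varphi_j^{-1}$ yields $\widehat{s_j}=\varphi_{ji}\circ\widehat{s_i}$, which is the cocycle compatibility claimed.

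For part (b), I would verify in turn: (i) well-definedness of $s$, (ii) the section property $q\circ s=\id_M$, (iii) smoothness of $s$, and (iv) the identity $\Phi_i(s)=s_i$. Well-definedness on $U_i\cap U_j$ follows from $\widehat{s_j}=\varphi_{ji}\circ\widehat{s_i}=\varphi_j^{-1}\circ\varphi_i\circ\widehat{s_i}$: applying $\varphi_j$ on both sides gives $\varphi_j\circ\widehat{s_j}=\varphi_i\circ\widehat{s_i}$, so the two local recipes agree. The section property is immediate from $q\circ\varphi_i=p_{U_i}$: for $x\in U_i$, $q(s(x))=q(\varphi_i(x,s_i(x)))=p_{U_i}(x,s_i(x))=x$. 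Smoothness is local, and on each $U_i$ the map $s$ is the composition of the smooth map $x\mapsto(x,s_i(x))$ with the diffeomorphism $\varphi_i$. Finally, $\Phi_i(s)(x)=\pr_A(\varphi_i^{-1}(s(x)))=\pr_A(x,s_i(x))=s_i(x)$.

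None of the steps presents a real obstacle; this is essentially a bookkeeping exercise in the language of fibre bundles, and the main care needed is to avoid confusing $\widehat{s_i}:U_i\to U_i\times A$ with its $A$-component $s_i:U_i\to A$ and to keep the direction of the transition diffeomorphism $\varphi_{ji}=\varphi_j^{-1}\circ\varphi_i$ straight. Smoothness of the locally defined pieces is guaranteed by the assumption $s_i\in C^{\infty}(U_i,A)$ together with the fact that $\varphi_i$ is a diffeomorphism onto the open set $\mathbb{A}_{U_i}$, so gluing to a global smooth section is legitimate.
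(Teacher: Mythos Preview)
Your argument is correct and follows essentially the same route as the paper: both parts are handled by unwinding the chart relations $s(x)=\varphi_i(\widehat{s_i}(x))$ and checking compatibility on overlaps via $\varphi_{ji}=\varphi_j^{-1}\circ\varphi_i$. The paper's proof is terser---it calls part (a) ``a simple calculation involving the bundle charts'' and for (b) only displays the well-definedness identity $\varphi_j\circ\widehat{s_j}=\varphi_j\circ\varphi_{ji}\circ\widehat{s_i}=\varphi_i\circ\widehat{s_i}$, declaring the remaining properties obvious---so your version simply spells out the details the paper omits.
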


\begin{proof}
(a) This is just a simple calculation involving the bundle charts $(\varphi_i,U_i)_{i\in I}$.

(b) Let $x\in U_i\cap U_j$. Then
\[(\varphi_j\circ \widehat{s_j})(x)=(\varphi_j\circ \varphi_{ji}\circ\widehat{s_i})(x)=(\varphi_i\circ \widehat{s_i})(x)
\]shows that the map $s$ is well-defined. It obviously defines a section of the bundle $(\mathbb{A},M,A,q)$ with $\Phi_i(s)=s_i$ for each $i\in I$.
\end{proof}

\begin{proposition}\label{imPhi}
Suppose we are in the situation of Lemma \ref{locally defines sections}. Further, consider the algebra homomorphism
\[\Phi_I:(\Gamma\mathbb{A},\mathcal{O}_I)\rightarrow\prod_{i\in I}C^{\infty}(U_i,A),\,\,\,\Phi_I(s):=(\Phi_i(s))_{i\in I}=(s_i)_{i\in I}.
\]Then the following assertions hold:
\begin{itemize}
\item[(a)]
We have
\[\im(\Phi_I)=\{(s_i)_{i\in I}:\,(\forall i,j\in I)\,\widehat{s_j}=\varphi_{ji}\circ\widehat{s_i}\}.
\]
\item[(b)]
The algebra homomorphism $\Phi_I$ is a topological embedding with closed image.
\end{itemize}
\end{proposition}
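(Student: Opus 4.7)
The plan is to deduce part (a) directly from Lemma \ref{locally defines sections}, and to analyse part (b) in three steps: injectivity, the embedding property, and closedness of the image.

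For (a), Lemma \ref{locally defines sections}(a) yields the inclusion ``$\subseteq$'', while Lemma \ref{locally defines sections}(b) supplies the reverse inclusion by constructing, for each compatible family $(s_i)_{i\in I}$, a global section $s\in\Gamma\mathbb{A}$ with $\Phi_i(s)=s_i$ for all $i\in I$.

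For (b), continuity of $\Phi_I$ is immediate from the definition of $\mathcal{O}_I$, which is by construction the coarsest topology on $\Gamma\mathbb{A}$ making each component $\Phi_i=\pr_i\circ\Phi_I$ continuous. Injectivity follows from the fact that the $U_i$ cover $M$: if $\Phi_I(s)=\Phi_I(s')$, then $s$ and $s'$ agree on each $U_i$ and hence on $M$. To upgrade this to a topological embedding, I would invoke the universal property of initial topologies: $\mathcal{O}_I$ coincides with the pullback along the injective map $\Phi_I$ of the product topology on $\prod_{i\in I}C^\infty(U_i,A)$, so $\Phi_I$ is a homeomorphism onto its image equipped with the subspace topology.

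The main substantive step is showing that $\im(\Phi_I)$ is closed in $\prod_{i\in I}C^\infty(U_i,A)$. Using (a), I would write
\[
\im(\Phi_I)=\bigcap_{i,j\in I}\,\bigcap_{x\in U_i\cap U_j}\bigl\{(s_k)_{k\in I}:\widehat{s_j}(x)=\varphi_{ji}(\widehat{s_i}(x))\bigr\}.
\]
For fixed $i,j\in I$ and $x\in U_i\cap U_j$, the point-evaluations $\ev_x:C^\infty(U_\ell,A)\to A$ (for $\ell=i,j$) are continuous with respect to the smooth compact open topology, and the slice $\varphi_{ji,x}:=\varphi_{ji}(x,\,\cdot\,):A\to A$ is a continuous algebra isomorphism. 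Each set inside the intersection is therefore the preimage of the diagonal of $A\times A$ under the continuous map $(s_k)_{k\in I}\mapsto(s_j(x),\,\varphi_{ji,x}(s_i(x)))$ and hence closed; intersecting over all pairs $(i,j)$ and all $x\in U_i\cap U_j$ shows that $\im(\Phi_I)$ is closed.

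The step I expect to require most care is the identification of $\mathcal{O}_I$ with the subspace topology pulled back along $\Phi_I$; once this is justified via the universal property of initial topologies, the embedding assertion is automatic, and the closedness of the image reduces to the essentially routine observation that a pointwise cocycle condition is preserved under continuous evaluation.
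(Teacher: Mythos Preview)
Your proof is correct and follows essentially the same outline as the paper's: part (a) via Lemma~\ref{locally defines sections}, injectivity from the cover, and the embedding property from the very definition of the initial topology $\mathcal{O}_I$. The only genuine difference is in the closedness argument for (b): the paper takes a net $(s_\alpha)$ in $\Gamma\mathbb{A}$ with $\Phi_I(s_\alpha)\to(s_i)_{i\in I}$ and checks directly that the compatibility relation $\widehat{s_j}(x)=\varphi_{ji}\circ\widehat{s_i}(x)$ passes to the limit, whereas you exhibit $\im(\Phi_I)$ as an intersection over $i,j,x$ of preimages of the diagonal of $A\times A$ under continuous evaluation-type maps. These are two phrasings of the same fact (continuity of point evaluation and of the transition isomorphisms $\varphi_{ji,x}$); your version is slightly more structural and makes explicit the implicit use of Hausdorffness of $A$ (needed for the diagonal to be closed), while the paper's net argument is marginally more direct. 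Neither buys anything the other does not.
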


\begin{proof}
(a) The first assertion is a direct consequence of Lemma \ref{locally defines sections}.

(b) Clearly, the map $\Phi_I$ is injective. Moreover, the topology $O_I$ just defined in Construction \ref{top on space of sections} turns it into a topological embedding. To see that $\im(\Phi_I)$ is closed, let $(s_{\alpha})_{\alpha\in\Lambda}$ be a net in $\Gamma\mathbb{A}$ such that $\Phi_I(s_{\alpha})$ converges to some $(s_i)_{i\in I}\in\prod_{i\in I}C^{\infty}(U_i,A)$. Then $\lim_{\alpha}s_{\alpha,i}(x)=s_i(x)$ for each $i\in I$ and $x\in U_i$. From this we conclude that
\[\widehat{s_j}(x)=\lim_{\alpha}\widehat{s_{\alpha,j}}(x)=\lim_{\alpha}(\varphi_{ji}\circ\widehat{s_{\alpha,i}})(x)=(\varphi_{ji}\circ\widehat{s_i})(x)
\]for all $i,j\in I$ and $x\in U_i\cap U_j$ and thus that $(s_i)_{i\in I}\in\im\Phi_I$ by part (a).
\end{proof}

\begin{remark}\label{sco topology=initial topology}
(Smooth compact open topology$=$initial topology). Given a manifold $M$ and a locally convex algebra $A$, the topology $\mathcal{O}_I$ on $C^{\infty}(M,A)$ induced by an atlas $(\varphi_i,U_i)_{i\in I}$ of $M$ (consider $C^{\infty}(M,A)$ as the sections of the trivial bundle $(M\times A,M,A,\pr_M)$ and use Construction \ref{top on space of sections}) coincides with the smooth compact opnen topology traditionally considered on $C^{\infty}(M,A)$ (cf. Definition \ref{smooth compact open topology}). A very nice proof of this statement (and more background on topologies on function spaces) can be found in \cite[Proposition 4.19 (d)]{Gl04}.
\end{remark}

\begin{theorem}\label{topology unique}
Let $(\mathbb{A},M,A,q)$ be an algebra bundle and $(\varphi_i,U_i)_{i\in I}$ a bundle atlas for $(\mathbb{A},M,A,q)$. Further, let $(\psi_j,V_j)_{j\in J}$ be another bundle atlas for $(\mathbb{A},M,A,q)$. Then the identity map
\[\id:(\Gamma\mathbb{A},\mathcal{O}_I)\rightarrow(\Gamma\mathbb{A},\mathcal{O}_J)
\]is an isomorphism of locally convex algebras. In particular, the topology on $\Gamma\mathbb{A}$ does not depend on the particular choice of the bundle atlas. 
\end{theorem}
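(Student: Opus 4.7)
The strategy is to prove that the identity map is continuous in both directions. By the evident symmetry between the two atlases it suffices to treat the direction $\id:(\Gamma\mathbb{A},\mathcal{O}_I)\to(\Gamma\mathbb{A},\mathcal{O}_J)$. The universal property of the initial topology $\mathcal{O}_J$ reduces the problem to showing, for each $j\in J$, that the map
$$\Psi_j:(\Gamma\mathbb{A},\mathcal{O}_I)\to C^{\infty}(V_j,A),\qquad s\mapsto\pr_A\circ\psi_j^{-1}\circ s_{\mid V_j},$$
is continuous. Multiplicativity and the fact that $\id$ is an algebra isomorphism are clear, so only continuity is at issue.

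Next, I would invoke Remark \ref{sco topology=initial topology} applied to the trivial bundle $V_j\times A\to V_j$: the smooth compact open topology on $C^{\infty}(V_j,A)$ coincides with the initial topology induced by the open cover $(U_i\cap V_j)_{i\in I}$ of $V_j$ via the restriction maps $r_{ij}:C^{\infty}(V_j,A)\to C^{\infty}(U_i\cap V_j,A)$. Thus it is enough to show that $r_{ij}\circ\Psi_j$ is $\mathcal{O}_I$-continuous whenever $U_i\cap V_j\neq\emptyset$. Writing the transition diffeomorphism as $\psi_j^{-1}\circ\varphi_i=(\id,\tilde{\psi}_{ji})$ on $(U_i\cap V_j)\times A$, with $\tilde{\psi}_{ji}:(U_i\cap V_j)\times A\to A$ smooth and fibrewise an algebra isomorphism, a direct calculation (in the spirit of Lemma \ref{locally defines sections}(a) applied to the combined bundle atlas) yields
$$(r_{ij}\circ\Psi_j)(s)(x)=\tilde{\psi}_{ji}\bigl(x,\Phi_i(s)(x)\bigr)\qquad\text{for all }x\in U_i\cap V_j.$$

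This exhibits $r_{ij}\circ\Psi_j$ as a composition of three maps: the defining map $\Phi_i:(\Gamma\mathbb{A},\mathcal{O}_I)\to C^{\infty}(U_i,A)$, the restriction $C^{\infty}(U_i,A)\to C^{\infty}(U_i\cap V_j,A)$, and the push-forward $F\mapsto \tilde{\psi}_{ji}\circ(\id,F)$ from $C^{\infty}(U_i\cap V_j,A)$ to itself. The first is continuous by the very definition of $\mathcal{O}_I$, and the second is continuous because restriction along an open embedding is continuous for the smooth compact open topology. The step I expect to be the main obstacle is the continuity of the push-forward by $\tilde{\psi}_{ji}$: this is a parameter-dependent composition with a smooth $A$-valued map, and its continuity with respect to the smooth compact open topologies is a non-trivial instance of composition continuity which I would deduce from the smooth exponential law collected in Appendix B. Once this is in place, the continuity of $\id:(\Gamma\mathbb{A},\mathcal{O}_I)\to(\Gamma\mathbb{A},\mathcal{O}_J)$ follows, and exchanging the roles of the two atlases yields continuity of the inverse, proving that $\id$ is an isomorphism of locally convex algebras and hence that $\mathcal{O}_I$ is independent of the chosen bundle atlas.
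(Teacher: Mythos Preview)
Your proposal is correct and follows essentially the same approach as the paper: reduce via the universal property of the initial topology to continuity of each $\Psi_j$, then use Remark~\ref{sco topology=initial topology} to further reduce to the compositions $r_{ij}\circ\Psi_j$, and finally factor these through $\Phi_i$ followed by a push-forward along the smooth transition map $g_{ji}=\tilde{\psi}_{ji}$. The paper packages the last step as a single product map $\prod_{i\in I}C^{\infty}(U_i,A)\to\prod_{i\in I}C^{\infty}(V_j\cap U_i,A)$ and invokes Proposition~\ref{imPhi}(b) for the embedding, whereas you treat each component separately; your version is slightly more explicit in isolating the non-trivial continuity of the push-forward as the key analytic input.
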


\begin{proof}
The universal property of the initial topology $\mathcal{O}_J$ implies that the identity map $\id$ is continuous if and only if the maps
\[\Phi_j:(\Gamma\mathbb{A},\mathcal{O}_I)\rightarrow C^{\infty}(V_j,A),\,\,\,s\mapsto s_j
\]are continuous for each $j\in J$. Therefore, we fix $j\in J$ and note that the continuity of $\Phi_j$ follows from Proposition \ref{imPhi} (b), Remark \ref{sco topology=initial topology} and the continuity of the map
\[\prod_{i\in I}C^{\infty}(U_i,A)\rightarrow\prod_{i\in I} C^{\infty}(V_j\cap U_i,A)\,\,\,(s_i)_{i\in I}\mapsto (g_{ji}\circ(\id_{V_j\cap U_i}\times(s_i)_{\mid V_j}))_{i\in I},
\]where $g_{ji}:(V_j\cap U_i)\times A\rightarrow A$ denotes the smooth map defined by the transition function $\psi_j^{-1}\circ\varphi_i$. A similar argument shows that the ``inverse" map $\id^{-1}$ is continuous. Thus, $\id$ is an isomorphism of locally convex algebras.
\end{proof}

\begin{corollary}\label{A frechet GammaA frechet}
Let $A$ be a Fr\'echet algebra and $(\mathbb{A},M,A,q)$ an algebra bundle. Then $\Gamma\mathbb{A}$ carries a unique structure of a Fr\'echet algebra, when endowed with the topology of Construction \ref{top on space of sections}.
\end{corollary}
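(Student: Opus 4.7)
The plan is to exhibit $\Gamma\mathbb{A}$ as a closed subalgebra of a countable product of Fréchet algebras, and then to invoke the uniqueness statement of Theorem \ref{topology unique} to get independence of the atlas. I will use Proposition \ref{imPhi} as the main bridge between sections and mapping spaces.

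First, I would exploit the standing assumption that every manifold is second countable and paracompact: this lets me choose a countable bundle atlas $(\varphi_i,U_i)_{i\in\mathbb{N}}$ for $(\mathbb{A},M,A,q)$ (take any trivializing open cover and extract a countable subcover, using second countability of $M$). Next, for each $i$ the space $C^\infty(U_i,A)$ is a Fréchet algebra: by Proposition \ref{C(M,A) loc. con. algebra} it is a locally convex algebra, its smooth compact open topology is generated by the countably many seminorms coming from supremum of derivatives on a compact exhaustion of $U_i$ paired with a countable fundamental system of continuous seminorms on $A$, and completeness is the content of Remark \ref{proj. tensor product IV} (and \cite[Proposition 4.2.15(a)]{Gl05}). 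A countable product of Fréchet algebras is again a Fréchet algebra, so
\[
\prod_{i\in\mathbb{N}}C^\infty(U_i,A)
\]
is a Fréchet algebra.

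Now I would apply Proposition \ref{imPhi}(b): the map $\Phi_I$ is a topological embedding of $(\Gamma\mathbb{A},\mathcal{O}_I)$ as a closed subalgebra of $\prod_{i\in\mathbb{N}}C^\infty(U_i,A)$. Since closed subalgebras of Fréchet algebras are Fréchet, it follows that $(\Gamma\mathbb{A},\mathcal{O}_I)$ is a Fréchet algebra. Finally, the uniqueness claim reduces to observing that, by Theorem \ref{topology unique}, the topology $\mathcal{O}_I$ does not depend on the particular (countable) bundle atlas chosen, so any two such Fréchet algebra structures on $\Gamma\mathbb{A}$ coincide.

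I do not anticipate a serious obstacle: every step reduces to a result already in the excerpt or to standard permanence properties of the class of Fréchet spaces (closed subspaces, countable products). The only point that requires a moment's care is the existence of a \emph{countable} trivializing atlas; this is where the background assumption that $M$ is paracompact and second countable is essential, and without it the argument via countable products would break down.
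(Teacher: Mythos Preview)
Your proof is correct and follows essentially the same approach as the paper: choose a countable bundle atlas, embed $\Gamma\mathbb{A}$ via $\Phi_I$ as a closed subalgebra of the countable product $\prod_{i}C^{\infty}(U_i,A)$ of Fr\'echet algebras (Proposition~\ref{imPhi}(b)), and deduce uniqueness from Theorem~\ref{topology unique}. Your write-up simply spells out in more detail the standard permanence properties (countable products, closed subspaces) and the reason a countable atlas exists, which the paper leaves implicit.
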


\begin{proof}
If $(\varphi_i,U_i)_{i\in I}$ is a countable bundle atlas for $(\mathbb{A},M,A,q)$, then Proposition \ref{imPhi} (b) implies that $\Gamma\mathbb{A}$ carries the structure of a Fr\'echet algebra, since the right-hand side is a Fr\'echet algebra and the image of $\Phi_I$ is closed. That this structure is unique is now a consequence of Theorem \ref{topology unique}.
\end{proof}

The following remark can be used to verify that certain maps to $\Gamma\mathbb{A}$ are smooth. We recall that the space $\prod_{i\in I}C^{\infty}(U_i,A)$ carries the structure of an infinite-dimensional manifold:

\begin{remark}\label{smooth structure on sections}
(Verifying smoothness on $\Gamma\mathbb{A}$). If $(\mathbb{A},M,A,q)$ is an algebra bundle and $(\varphi_i,U_i)_{i\in I}$ a bundle atlas for $(\mathbb{A},M,A,q)$, then the map
\[\Phi_I:\Gamma\mathbb{A}\rightarrow\prod_{i\in I}C^{\infty}(U_i,A),\,\,\,\Phi_I(s):=(\Phi_i(s))_{i\in I}=(s_i)_{i\in I}
\]is a smooth embedding. Indeed, Proposition \ref{imPhi} (b) implies that the map $\Phi_I$ is continuous and linear. Hence, $\Phi_I$ is smooth. Since $\Phi_I$ has closed image, we conclude from \cite[Lemma 2.2.7]{Gl05}, that a map $f:N\rightarrow\Gamma\mathbb{A}$ from a manifold $N$ to $\Gamma\mathbb{A}$ is smooth if and only if the composition $\Phi_I\circ f$ is smooth.%That this smooth structure is independent from the particular choice of the finite bundle atlas follows from Theorem \ref{topology unique}, since the identity map is linear and continuous and thus a diffeomorphism.
\end{remark}

\section{Smooth Localization of Sections of Algebra Bundles}\label{SLSAB}

In this section we finally want to prove a smooth analogue of Proposition \ref{K(X)_f=O(X_f)} for the algebra $C^{\infty}(M)$ of smooth functions on some manifold $M$. In fact, we show that if $A$ is a unital Fr\'echet algebra, $(\mathbb{A},M,A,q)$ an algebra bundle and $f\in C^{\infty}(M,\mathbb{R})$, then the smooth localization of $\Gamma\mathbb{A}$ with respect to $f$ (cf. Definition \ref{sm. loc.}) is isomorphic (as a unital Fr\'echet algebra) to $\Gamma\mathbb{A}_{M_f}$, where $M_f:=\{m\in M:\,f(m)\neq 0\}$. To be more precise, we show that the map
\[\phi_f:\Gamma\mathbb{A}_{\{f\}}\rightarrow\Gamma\mathbb{A}_{M_f},\,\,\,[F]\mapsto F\circ\left(\frac{1}{f}\times\id_{M_f}\right)
\]is an isomorphism of unital Fr\'echet algebras. We start with a very useful lemma of Hadamard:

\begin{lemma}\label{Hadamards lemma}
\emph{(}Hadamard's lemma\emph{)}. Let $E$ be a complete locally convex space. Further, let $U$ be an open convex subset of $\mathbb{R}^k\times\mathbb{R}^{n-k}$ containing $0$ and $f:U\rightarrow E$ be a smooth function that vanishes on $U\cap(\{0\}\times\mathbb{R}^{n-k})$. If $\pr_j:\mathbb{R}^n\rightarrow\mathbb{R}$ denotes the projection to the j-th factor, then
\[f=\sum_{j=1}^{k}g_j\cdot\pr_j
\]holds for suitable smooth functions $g_j:U\rightarrow E$.
\end{lemma}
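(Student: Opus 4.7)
The natural approach is the vector-valued fundamental theorem of calculus applied along horizontal line segments. For each $(x, y) \in U$ with $x = (x_1, \ldots, x_k) \in \mathbb{R}^k$, $y \in \mathbb{R}^{n-k}$, and each $j \in \{1, \ldots, k\}$, I would define
\[
g_j(x, y) := \int_0^1 (\partial_j f)(tx, y) \, dt,
\]
where $\partial_j$ denotes partial differentiation in the $j$-th coordinate of $\mathbb{R}^k$. Convexity of $U$ together with $0 \in U$ ensures that the segment $\{(tx, y) : t \in [0, 1]\}$ stays inside $U$, so the integrand is a smooth $E$-valued function on the compact interval $[0, 1]$; since $E$ is complete, this Riemann integral defines a genuine element of $E$.

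Given the $g_j$, the identity $f = \sum_{j=1}^k g_j \cdot \pr_j$ is then an immediate consequence of applying the fundamental theorem of calculus to the smooth $E$-valued curve $\gamma : [0, 1] \to E$, $\gamma(t) := f(tx, y)$. The chain rule gives $\gamma'(t) = \sum_{j=1}^k x_j (\partial_j f)(tx, y)$, so
\[
\sum_{j=1}^k \pr_j(x, y) \cdot g_j(x, y) = \int_0^1 \gamma'(t) \, dt = f(x, y) - f(0, y),
\]
and the hypothesis that $f$ vanishes on $U \cap (\{0\} \times \mathbb{R}^{n-k})$ forces $f(0, y) = 0$, yielding the desired decomposition.

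The remaining task, and what I expect to be the main technical obstacle, is verifying that each $g_j$ is smooth on $U$. In the Banach case this is routine differentiation under the integral sign, but in a general complete locally convex space it requires the smooth exponential law recalled in Appendix B. Concretely, I would apply this law to the auxiliary smooth map $H_j : [0, 1] \times U \to E$, $H_j(t, x, y) := (\partial_j f)(tx, y)$: integration against the compact factor $[0, 1]$ sends a smooth map on the product to a smooth map on $U$, giving $g_j = \int_0^1 H_j(t, \cdot) \, dt \in C^\infty(U, E)$. Completeness of $E$ is used precisely here, to guarantee that these parameter-dependent integrals genuinely land in $E$ rather than in a larger completion.
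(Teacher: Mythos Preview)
Your argument is essentially identical to the paper's: both define $g_j(x,y)=\int_0^1(\partial_j f)(tx,y)\,dt$ and recover $f$ via the fundamental theorem of calculus applied to the curve $t\mapsto f(tx,y)$. The paper does not spell out the smoothness of the $g_j$ at all, so your additional remarks on differentiation under the integral sign go beyond what the paper provides; note, however, that your sentence ``convexity of $U$ together with $0\in U$ ensures that the segment $\{(tx,y):t\in[0,1]\}$ stays inside $U$'' is not literally correct for arbitrary convex $U$ (one needs $(0,y)\in U$, which convexity and $0\in U$ alone do not guarantee), though the paper's proof shares this tacit assumption and in the intended application one may always shrink to a product neighbourhood.
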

\begin{proof}
For $x=(x_1,\ldots,x_n)$ in $U$ we define $y:=(x_1,\ldots,x_k,0)$ and $z:=(0,x_{k+1},\ldots,x_n)$. We further define a smooth $E$-valued curve $h$ on $[0,1]$ by
\[h:[0,1]\rightarrow E,\,\,\, h(t):=f(z+ty).
\]Then $h'(t)=\sum_{j=1}^k\frac{\partial f}{\partial x_j}(z+ty)\cdot x_j$, and for $g_j(x):=\int^1_0\frac{\partial f}{\partial x_j}(z+ty)dt$ we obtain
\[f(x)=f(x)-f(z)=h(1)-h(0)=\int^1_0h'(t)dt=\sum^k_{j=1}g_j(x)\cdot x_j,
\]i.e., $f=\sum_{j=1}^{k}g_j\cdot\pr_j$ as desired.
\end{proof}

%We now come to the main theorem of this section:

\begin{theorem}\label{manifold mod sub=sub}
\emph{(}A factorization theorem for algebra bundles\emph{)}. Let $A$ be a complete locally convex algebra and $(\mathbb{A},M,A,q)$ an algebra bundle. Further, let $h=(h_1,\ldots,h_k):M\rightarrow\mathbb{R}^k$ be a smooth function with $0\in\mathbb{R}^k$ as a regular value. If $H:=h^{-1}(0)$ is the corresponding closed submanifold of $M$, then the restriction map $R_H:\Gamma\mathbb{A}\rightarrow \Gamma\mathbb{A}_H$, $s\mapsto s_{\mid H}$ is a surjective morphism of locally convex algebras. Its kernel $\ker(R_H)$ is equal to
\[\langle h_1,\ldots,h_k\rangle:=\{h_1\cdot s_1+\ldots+h_k\cdot s_k:\,s_1,\ldots s_k\in\Gamma\mathbb{A}\}.
\]In particular, $\langle h_1,\ldots,h_k\rangle$ is a closed two-sided ideal of $\Gamma\mathbb{A}$.
\end{theorem}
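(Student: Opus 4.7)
The plan is to handle the three assertions in sequence. Continuity of $R_H$ and the algebra homomorphism property are routine: fixing a bundle atlas $(\varphi_i,U_i)_{i\in I}$ of $(\mathbb{A},M,A,q)$ whose restriction to $H$ gives a bundle atlas of $(\mathbb{A}_H,H,A,q)$, the map $R_H$ is realized, via the topological embeddings of Proposition \ref{imPhi}, as the product of the restriction maps $C^{\infty}(U_i,A)\to C^{\infty}(U_i\cap H,A)$, which are continuous for the smooth compact open topology; the pointwise definition of the algebraic operations gives the homomorphism property.

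For surjectivity, given $\tilde s\in\Gamma\mathbb{A}_H$, the regular value theorem yields, for each $x\in H$, an open trivializing neighbourhood $U_x$ of $x$ together with coordinates $(y_1,\ldots,y_n)$ on $U_x$ in which $(h_1,\ldots,h_k)$ is the projection onto the first $k$ coordinates. Under the trivialization, $\tilde s|_{U_x\cap H}$ corresponds to a smooth $A$-valued function on the slice $\{y_1=\cdots=y_k=0\}$, and its pullback along the last $n-k$ coordinates defines a smooth section $s_x$ of $\mathbb{A}_{U_x}$ extending $\tilde s|_{U_x\cap H}$. Adjoining the open set $M\setminus H$ equipped with the zero section, the family $\{U_x\}_{x\in H}$ together with $M\setminus H$ forms an open cover of $M$, and a subordinate smooth partition of unity $\{\rho_\alpha\}$ patches the local extensions into a global section $s:=\sum_\alpha\rho_\alpha s_\alpha\in\Gamma\mathbb{A}$ with $R_H(s)=\tilde s$.

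The main content is the kernel computation. The inclusion $\langle h_1,\ldots,h_k\rangle\subseteq\ker R_H$ is immediate since each $h_i$ vanishes on $H$. Conversely, let $s\in\ker R_H$, and cover $M$ by open sets $\{U_\alpha\}$ of two types: either (i) $U_\alpha\subset M\setminus H$, or (ii) $U_\alpha$ is a trivializing convex coordinate chart in which $h$ coincides with the projection onto the first $k$ coordinates, such charts existing at points of $H$ again by the regular value theorem. On a chart of type (i), the function $\sum_j h_j^2$ is nowhere-vanishing and smooth, so $g_i^\alpha:=h_i\, s|_{U_\alpha}/\sum_j h_j^2$ defines a section of $\mathbb{A}_{U_\alpha}$ with $s|_{U_\alpha}=\sum_i h_i g_i^\alpha$. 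On a chart of type (ii), the trivialization turns $s|_{U_\alpha}$ into a smooth $A$-valued function on an open convex subset of $\mathbb{R}^n$ vanishing on $\{y_1=\cdots=y_k=0\}$, and Lemma \ref{Hadamards lemma} furnishes sections $g_1^\alpha,\ldots,g_k^\alpha$ of $\mathbb{A}_{U_\alpha}$ with $s|_{U_\alpha}=\sum_i h_i g_i^\alpha$. Picking a partition of unity $\{\rho_\alpha\}$ subordinate to $\{U_\alpha\}$ and setting $t_i:=\sum_\alpha\rho_\alpha g_i^\alpha$ (extended by zero outside $U_\alpha$), the identity $s=\sum_\alpha\rho_\alpha\, s|_{U_\alpha}=\sum_\alpha\rho_\alpha\sum_i h_i g_i^\alpha=\sum_i h_i t_i$ shows $s\in\langle h_1,\ldots,h_k\rangle$. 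Finally, $\langle h_1,\ldots,h_k\rangle$ is a two-sided ideal because the $h_i$ lie in the central subalgebra $C^{\infty}(M)\subseteq\Gamma\mathbb{A}$, and it is closed as the kernel of the continuous homomorphism $R_H$.

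The main obstacle is the Hadamard step on charts of type (ii): one must check that the $A$-valued functions $g_j$ produced by Lemma \ref{Hadamards lemma} translate, under the bundle trivialization, into genuinely smooth sections of $\mathbb{A}_{U_\alpha}$ (this uses the completeness of $A$ and the smooth structure on $\Gamma\mathbb{A}$ from Section \ref{SSABLCA}), and that treating charts of types (i) and (ii) uniformly permits a single partition of unity to glue the local decompositions into an exact global decomposition of $s$ rather than one valid only modulo higher-order error in $h$.
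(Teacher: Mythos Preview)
Your proof is correct and follows essentially the same strategy as the paper's: local extension via submersion charts plus partition of unity for surjectivity, and Hadamard's lemma in submersion charts at points of $H$ combined with a division argument away from $H$, again glued by a partition of unity, for the kernel. The only cosmetic difference is that on charts of type (i) you divide by $\sum_j h_j^2$ to get a uniform formula, whereas the paper simply picks one index $j$ with $h_j\neq 0$ near the point and writes $s|_U=h_j\cdot(s|_U/h_j|_U)$; both work equally well, and your worries in the final paragraph are already resolved by the argument you give.
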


\begin{proof}
The proof of this theorem is devided into three parts:
 
(i) To show that the map $R_H$ is a surjective morphism of locally convex algebras, we first note that $R_H$ is obviously linear, multiplicative and continuous as a restriction map. Thus, it remains to prove its surjectivity: For this we have to show that any section $s:H\rightarrow\mathbb{A}_H$ can be extended to a global section on $M$. Locally this can be done, since $H$ looks like $\mathbb{R}^{n-k}$ in $\mathbb{R}^n$ (for $n=\dim M$) and $s$ like a smooth $A$-valued function on $\mathbb{R}^{n-k}$. To get a global extension we have to use a partition of unity. 
 
(ii) Next, we show the equality of the kernel of $R_H$ and the ideal $\langle h_1,\ldots,h_k\rangle$. Since $R_H$ is continuous, this will in particular imply that $\langle h_1,\ldots,h_k\rangle$ is a closed ideal of $\Gamma\mathbb{A}$: We immediately verify that $\langle h_1,\ldots,h_k\rangle\subseteq\ker(R_H)$. For the other inclusion, let $s\in\Gamma\mathbb{A}$ with $R_H(s)=0$, i.e., $s\equiv 0$ on $H$. We claim that each $m\in M$ has an open $m$-neighbourhood $U$ such that
\[s_{\mid U}=\sum^k_{j=1}h_j\cdot s^U_j
\]holds for suitable sections $s^U_j:U\rightarrow\mathbb{A}_U$. If this is the case, we choose an open cover $(U_i)_{i\in I}$ of $M$ such that 
\[s_{\mid U_i}=\sum^k_{j=1}h_j\cdot s^i_j
\]holds for suitable sections $s^i_j:U_i\rightarrow\mathbb{A}_{U_i}$ and a partition of unity $(\psi_i,U_i)_{i\in I}$ subordinated to this cover. Since each section $\psi_i\cdot s^i_j:U_i\rightarrow\mathbb{A}_{U_i}$ can be extended to a section on $M$ (by defining it to be zero outside $\supp(\psi_i)$), we conclude that
\[s=\sum_{i\in I}\psi_i\cdot s=\sum_{i\in I}\left(\sum_{j=1}^k\psi_i\cdot h_j\cdot s^ i_j\right)=\sum^k_{j=1}h_j\left(\sum_{i\in I}\psi_i\cdot s^ i_j\right)\in\langle h_1,\ldots,h_k\rangle.
\]

(iii) We now prove the claim of part (ii). For this, let first be $m\in M\backslash H$. Then there exists $j\in\{1,\ldots,k\}$ and an open $m$-neighbourhood $U$ such that $h_j\neq 0$ on $U$. Therefore, we can write
\[s_{\mid U}=h_j\cdot\frac{s_{\mid U}}{{h_j}_{\mid U}}.
\]Now, let $m\in H$. By the Implicit Function Theorem we may choose a chart $(\varphi,U)$ around $m$ such that $h\circ\varphi^{-1}:\mathbb{R}^n\rightarrow\mathbb{R}^k$ looks like the projection onto the first $k$-coordinates and $s\circ\varphi^{-1}$ like a smooth $A$-valued function on $\mathbb{R}^n$. If we write $\mathbb{R}^n=\mathbb{R}^k\times\mathbb{R}^{n-k}$ and use Lemma \ref{Hadamards lemma}, then we conclude that
\[s\circ\varphi^{-1}=\sum_{j=1}^k(h_j\circ\varphi^{-1})\cdot (s^U_j\circ\varphi^{-1})
\]for suitable sections $s^ U_j:U\rightarrow \mathbb{A}_U$. In particular, from this we immediately obtain the desired representation
\[s_{\mid U}=\sum_{j=1}^k h_j\cdot s^U_j.
\]
\end{proof}

\begin{lemma}\label{algebra section 0}
If $(\mathbb{A},M,A,q)$ is an algebra bundle and $N$ a manifold, then the following assertions hold:
\begin{itemize}
\item[(a)]
If $(\varphi_i,U_i)_{i\in I}$ is a bundle atlas for $(\mathbb{A},M,A,q)$, then the algebra map
\[\widehat{\Phi}_I:C^{\infty}(N,\Gamma\mathbb{A})\rightarrow\prod_{i\in I}C^{\infty}(N\times U_i,A),\,\,\,F\mapsto(\Phi_i\circ F)_{i\in I}
\]is a topological embedding with closed image given by
\[\im(\widehat{\Phi}_I)=\{(F_i)_{i\in I}:\,(\forall i,j\in I, n\in N)\,\widehat{F_j(n)}=\varphi_{ji}\circ\widehat{F_i(n)}\}
\]
\emph{(}cf. Lemma \ref{locally defines sections} and Lemma \ref{smooth exp law} \emph{)}.
\item[(b)]
If $\pr_M:N\times M\rightarrow M$ denotes the canonical projection onto $M$ and $\pr_M^{*}(\mathbb{A})$ the corresponding pull-back bundle, then the map
\[\Psi:C^{\infty}(N,\Gamma\mathbb{A})\rightarrow\Gamma\pr_M^{*}(\mathbb{A}),\,\,\,\Psi(F)(n,m):=(n,m,F(n)(m))
\]is an isomorphism of unital locally convex algebras.
\end{itemize}
\end{lemma}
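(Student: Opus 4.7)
The plan is to deduce both assertions from Proposition \ref{imPhi} (the characterization of $\Gamma\mathbb{A}$ as a closed subalgebra of $\prod_{i\in I}C^{\infty}(U_i,A)$) together with the smooth exponential law (Lemma \ref{smooth exp law}).

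For part (a), I would factor $\widehat{\Phi}_I$ as
\begin{align*}
C^{\infty}(N,\Gamma\mathbb{A})&\xrightarrow{(\Phi_I)_{*}}C^{\infty}\!\left(N,\prod_{i\in I}C^{\infty}(U_i,A)\right)\\
&\cong\prod_{i\in I}C^{\infty}(N,C^{\infty}(U_i,A))\cong\prod_{i\in I}C^{\infty}(N\times U_i,A),
\end{align*}
where $(\Phi_I)_{*}$ denotes post-composition with $\Phi_I$, the first isomorphism interchanges $C^{\infty}(N,-)$ with the product, and the second is the smooth exponential law applied factor-wise. Since $\Phi_I$ is a topological embedding with closed image by Proposition \ref{imPhi} (b), Remark \ref{smooth structure on sections} shows that smoothness of a map into $\Gamma\mathbb{A}$ can be tested after post-composition with $\Phi_I$; this promotes $(\Phi_I)_{*}$ to a topological embedding with closed image. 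The characterization of $\operatorname{im}(\widehat{\Phi}_I)$ is then just Proposition \ref{imPhi} (a) applied pointwise in $n\in N$.

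For part (b), the strategy is to recognize both $C^{\infty}(N,\Gamma\mathbb{A})$ and $\Gamma\pr_M^{*}(\mathbb{A})$ as the same closed subalgebra of $\prod_{i\in I}C^{\infty}(N\times U_i,A)$. The pull-back bundle $\pr_M^{*}(\mathbb{A})$ carries a canonical bundle atlas $(\id_N\times\varphi_i,\,N\times U_i)_{i\in I}$ whose transition functions are $\id_N\times\varphi_{ji}$, so Proposition \ref{imPhi} applied to $\pr_M^{*}(\mathbb{A})$ embeds $\Gamma\pr_M^{*}(\mathbb{A})$ as a closed subalgebra of $\prod_{i\in I}C^{\infty}(N\times U_i,A)$ cut out by \emph{exactly} the compatibility condition that describes $\operatorname{im}(\widehat{\Phi}_I)$ in part (a). Under these two embeddings the map $\Psi$ becomes the identity on the common image, since for $F\in C^{\infty}(N,\Gamma\mathbb{A})$ the $i$-th local representative of $\Psi(F)$ at $(n,m)$ is precisely $\Phi_i(F(n))(m)$. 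Hence $\Psi$ is a homeomorphism, and it is an algebra isomorphism because multiplication is fibrewise on both sides.

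The main obstacle I anticipate is careful bookkeeping: verifying that the image condition for sections of $\pr_M^{*}(\mathbb{A})$ literally matches the one produced by $\widehat{\Phi}_I$ requires tracking how the pull-back bundle charts relate to those of $\mathbb{A}$ and how the smooth exponential law intertwines the two points of view. Once this identification is in place, nothing further about the locally convex topologies needs to be analyzed, and both statements follow cleanly from Proposition \ref{imPhi}.
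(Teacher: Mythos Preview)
Your proposal is correct and follows essentially the same route as the paper: part (a) is obtained by pushing Proposition \ref{imPhi} through $C^{\infty}(N,-)$ via the smooth exponential law (the paper invokes an external lemma from \cite{Gl05} for this step, which encapsulates exactly your factorization argument), and part (b) is proved by identifying both sides with the same closed subalgebra of $\prod_{i\in I}C^{\infty}(N\times U_i,A)$ using the pull-back atlas and part (a). The only cosmetic difference is that the paper phrases (b) as ``bijectivity plus bicontinuity'' rather than ``same image under two embeddings,'' but the content is identical.
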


\begin{proof}
(a) The first assertion immediately follows from [Gl05], Lemma 4.2.6 applied to Proposition \ref{imPhi}.

(b) For the second assertion we first note that both spaces, $C^{\infty}(N,\Gamma\mathbb{A})$ and $\Gamma\pr_M^{*}(\mathbb{A})$, carry the structure of a locally convex algebra and that the map $\Psi$ is an algebras homomorphism. To see that $\Psi$ is bijective it is enough to note that each section $s$ of the pull-back bundle $\pr_M^{*}(\mathbb{A})$ has the form
\[s:N\times M\rightarrow\pr_M^{*}(\mathbb{A}),\,\,\,s(n,m)=(n,m,F_s(n,m))
\]for some smooth function $F_s:N\times M\rightarrow\mathbb{A}$ satisfying $q\circ F_s(n,\cdot)=\id_M$ for each $n\in N$, i.e., for some $F_s\in C^{\infty}(N,\Gamma\mathbb{A})$. It therefore remains to show that the map $\Psi$ is bicontinuous: For this let $(\varphi_i,U_i)_{i\in I}$ be a bundle atlas for $(\mathbb{A},M,A,q)$. Then the bicontinuity of $\Psi$ follows from the definition of the topology on $\Gamma\pr_M^{*}(\mathbb{A})$ (which is induced from the bundle atlas $(\varphi_i,U_i)_{i\in I}$) and part (a) of the lemma.
\end{proof}

\begin{proposition}\label{algebra section 1}
Let $A$ be a unital locally convex algebra and $(\mathbb{A},M,A,q)$ an algebra bundle. If $f_1,\ldots,f_k:M\rightarrow\mathbb{R}$ are smooth functions and $M_{f_1,\ldots, f_k}:=\bigcap_{j=1}^k M_{f_j}$, then the map
\[\Phi_{f_1,\ldots, f_k}:C^{\infty}(\mathbb{R}^k,\Gamma\mathbb{A})\rightarrow\Gamma\mathbb{A}_{M_{f_1,\ldots, f_k}},\,\,\,F\mapsto F\circ\left(\frac{1}{f_1}\times\cdots\times\frac{1}{f_k}\times\id_{M_{f_1,\ldots, f_k}}\right)
\]is a surjective morphism of unital locally convex algebras. Its kernel $\ker(\Phi_{f_1,\ldots, f_k})$ is equal to
\[I_{f_1,\ldots, f_k}:=\{(1-t_1f_1)\cdot g_1+\ldots+(1-t_kf_k)\cdot g_k:\,g_1,\ldots g_k\in C^{\infty}(\mathbb{R}^k,\Gamma\mathbb{A})\}.
\]In particular, $I_{f_1,\ldots, f_k}$ is a closed two-sided ideal of $C^{\infty}(\mathbb{R}^k,\Gamma\mathbb{A})$.
\end{proposition}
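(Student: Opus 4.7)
The plan is to reduce Proposition 5.3 to the factorization theorem (Theorem \ref{manifold mod sub=sub}) applied to the pull-back bundle $\pr_M^*\mathbb{A}$ over $\mathbb{R}^k\times M$ together with the smooth map $h=(h_1,\ldots,h_k)\colon\mathbb{R}^k\times M\to\mathbb{R}^k$, $h_j(t_1,\ldots,t_k,m):=1-t_jf_j(m)$. First I would identify $C^\infty(\mathbb{R}^k,\Gamma\mathbb{A})$ with $\Gamma\pr_M^*\mathbb{A}$ via the isomorphism $\Psi$ of Lemma \ref{algebra section 0}(b); under this identification an element $F\in C^\infty(\mathbb{R}^k,\Gamma\mathbb{A})$ corresponds to the section $(t,m)\mapsto(t,m,F(t)(m))$, and the elements $(1-t_jf_j)\in C^\infty(\mathbb{R}^k,\Gamma\mathbb{A})$, viewed as central sections via the unit of $A$, correspond to the scalar functions $h_j$ (acting on sections by pointwise multiplication).

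Next I verify that $0$ is a regular value of $h$. At any point $(t,m)\in h^{-1}(0)$ one has $t_jf_j(m)=1$, hence $f_j(m)\neq 0$, and the $k\times k$-block $\bigl(\partial h_j/\partial t_i\bigr)_{i,j}=-\diag(f_1(m),\ldots,f_k(m))$ is invertible. Thus $H:=h^{-1}(0)$ is a closed submanifold of $\mathbb{R}^k\times M$, and the projection $\pr_M$ restricts to a diffeomorphism $H\to M_{f_1,\ldots,f_k}$, with inverse $m\mapsto\bigl(1/f_1(m),\ldots,1/f_k(m),m\bigr)$. Since $\pr_M$ precomposed with this inverse equals $\id_{M_{f_1,\ldots,f_k}}$, the restricted bundle $(\pr_M^*\mathbb{A})_H$ is canonically isomorphic to $\mathbb{A}_{M_{f_1,\ldots,f_k}}$, inducing a canonical isomorphism $\Gamma(\pr_M^*\mathbb{A})_H\cong\Gamma\mathbb{A}_{M_{f_1,\ldots,f_k}}$ of unital locally convex algebras.

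Applying Theorem \ref{manifold mod sub=sub} to the algebra bundle $\pr_M^*\mathbb{A}$ and the map $h$ gives a surjective morphism $R_H\colon\Gamma\pr_M^*\mathbb{A}\to\Gamma(\pr_M^*\mathbb{A})_H$ of unital locally convex algebras with kernel $\langle h_1,\ldots,h_k\rangle$. Composing $\Psi$ with $R_H$ and the canonical isomorphism above, one computes directly that $F$ maps to the section $m\mapsto F(1/f_1(m),\ldots,1/f_k(m))(m)$, which coincides with $\Phi_{f_1,\ldots,f_k}(F)$. Transporting the kernel back through $\Psi^{-1}$, the ideal $\langle h_1,\ldots,h_k\rangle$ corresponds precisely to $I_{f_1,\ldots,f_k}$, and closedness is inherited from Theorem \ref{manifold mod sub=sub}.

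The main bookkeeping obstacle I expect is verifying that pointwise multiplication in $\Gamma\pr_M^*\mathbb{A}$ by the scalar function $h_j$ agrees, under $\Psi$, with the operation $g\mapsto(1-t_jf_j)\cdot g$ in $C^\infty(\mathbb{R}^k,\Gamma\mathbb{A})$; this is a routine but important compatibility that ensures the kernel identification $\langle h_1,\ldots,h_k\rangle\leftrightarrow I_{f_1,\ldots,f_k}$, after which everything else follows by transport of structure.
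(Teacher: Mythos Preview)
Your proposal is correct and follows essentially the same route as the paper: identify $C^{\infty}(\mathbb{R}^k,\Gamma\mathbb{A})\cong\Gamma\pr_M^*(\mathbb{A})$ via Lemma~\ref{algebra section 0}(b), verify that $0$ is a regular value of $h(t,m)=(1-t_jf_j(m))_j$ so that $H=h^{-1}(0)\cong M_{f_1,\ldots,f_k}$, and then apply Theorem~\ref{manifold mod sub=sub} to $R_H$. The paper does exactly this, with your regular-value computation and kernel-transport bookkeeping left implicit.
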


\begin{proof}
The proof of this proposition is divided into three parts:

(i) We first note that $0\in\mathbb{R}^k$ is a regular value for the function 
\[h:\mathbb{R}^k\times M\rightarrow\mathbb{R}^k,\,\,\,(t_1,\ldots,t_k,m)\mapsto(1-t_1f_1(m),\ldots,1-t_kf_k(m))
\]In particular, 
\[H:=h^{-1}(0)=\left\{\left(\frac{1}{f_1(x)},\ldots,\frac{1}{f_k(x)},x\right):\,x\in M_{f_1,\ldots, f_k}\right\}
\]is a closed submanifold of $\mathbb{R}^k\times M$. Further, we note that $H$ is diffeomorphic to $M_{f_1,\ldots, f_k}$.

(ii) Next, Lemma \ref{algebra section 0} (b) applied to the algebra bundle $(\mathbb{A},M,A,q)$ and $N=\mathbb{R}^k$ implies that the map
\[\Psi:C^{\infty}(\mathbb{R}^k,\Gamma\mathbb{A})\rightarrow\Gamma\pr_M^{*}(\mathbb{A}),\,\,\,\Psi(F)((r_1,\ldots,r_k),m):=((r_1,\ldots,r_k),m,F(r_1,\ldots,r_k)(m))
\]is an isomorphism of unital locally convex algebras. A similar argument shows that the space $\Gamma\left(\pr_M^{*}(\mathbb{A})\right)_H$ of sections of the pull-back bundle $\pr_M^{*}(\mathbb{A})$ restricted to the submanifold $H$ is isomorphic (as a unital locally convex algebra) to $\Gamma\mathbb{A}_{M_{f_1,\ldots, f_s}}$. 

(iii) The assertions of the proposition now follow from Theorem \ref{manifold mod sub=sub} applied to the restriction map
\[R_H:\Gamma\pr_M^{*}(\mathbb{A})\rightarrow\Gamma\left(\pr_M^{*}(\mathbb{A})\right)_H,\,\,\,s\mapsto s_{\mid H}
\]
\end{proof}

\begin{theorem}\label{algebra section 2}
\emph{(}Smooth localization of sections of algebra bundles\emph{)}. Suppose we are in the situation of Proposition \ref{algebra section 1}. If $A$ is a unital Fr\'{e}chet algebra, then the map $\Phi_{f_1,\ldots, f_k}$ is open. Moreover, the induced map
\[\phi_{f_1,\ldots, f_k}:\Gamma\mathbb{A}_{\{{f_1,\ldots, f_k}\}}\rightarrow\Gamma\mathbb{A}_{M_{{f_1,\ldots, f_k}}},\,\,\,[F]\mapsto F\circ\left(\frac{1}{f_1}\times\cdots\times\frac{1}{f_k}\times\id_{M_{f_1,\ldots, f_k}}\right)
\]is an isomorphism of unital Fr\'echet algebras \emph{(}cf. Definition \ref{sm. loc.} for the definition of $\Gamma\mathbb{A}_{\{f_1,\ldots, f_k\}}$\emph{)}.
\end{theorem}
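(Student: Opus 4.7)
The strategy is to reduce to the open mapping theorem for Fréchet spaces, once the kernel of $\Phi_{f_1,\ldots,f_k}$ has been identified with the defining ideal of the smooth localization from Definition \ref{sm. loc.}.

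My first step is to check that the ideal $I_{f_1,\ldots,f_k}$ of Proposition \ref{algebra section 1} coincides with the closed two-sided ideal $\overline{\langle 1-t_1f_1,\ldots,1-t_kf_k\rangle}$ defining $\Gamma\mathbb{A}_{\{f_1,\ldots,f_k\}}$. The key observation is that each element $1-t_jf_j$ lies in the central subalgebra $C^\infty(\mathbb{R}^k)\otimes C^\infty(M)$ of $C^\infty(\mathbb{R}^k,\Gamma\mathbb{A})$, since $C^\infty(M)$ is a central unital subalgebra of $\Gamma\mathbb{A}$ (its elements act fibrewise as scalar multiples of the unit). Centrality collapses the abstract two-sided ideal generated by these elements to $\bigl\{\sum_j(1-t_jf_j)\cdot g_j:g_j\in C^\infty(\mathbb{R}^k,\Gamma\mathbb{A})\bigr\}=I_{f_1,\ldots,f_k}$, which is already closed by Proposition \ref{algebra section 1}. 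Hence the two ideals agree, and the first isomorphism theorem, combined with the surjectivity furnished by Proposition \ref{algebra section 1}, produces a continuous bijective unital algebra homomorphism $\phi_{f_1,\ldots,f_k}$.

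Next I verify that the Fréchet hypothesis propagates to all spaces in sight. Since $M$ is paracompact and second countable, we may choose a countable bundle atlas, and Corollary \ref{A frechet GammaA frechet} then gives that $\Gamma\mathbb{A}$ and $\Gamma\mathbb{A}_{M_{f_1,\ldots,f_k}}$ are Fréchet algebras. For $C^\infty(\mathbb{R}^k,\Gamma\mathbb{A})$ the smooth compact-open topology is generated by the countable family of seminorms $g\mapsto\sup_{x\in K_n}p_m(\partial^\alpha g(x))$, indexed by a compact exhaustion $(K_n)$ of $\mathbb{R}^k$, a fundamental system $(p_m)$ of continuous seminorms on $\Gamma\mathbb{A}$, and multiindices $\alpha$; completeness is standard, so this space is Fréchet as well. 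The open mapping theorem applied to the continuous linear surjection $\Phi_{f_1,\ldots,f_k}$ now shows that it is open. Openness, combined with the definition of the quotient topology, forces the induced bijection $\phi_{f_1,\ldots,f_k}$ to be a homeomorphism, and by Remark \ref{A_z frechet again} its domain is Fréchet, so $\phi_{f_1,\ldots,f_k}$ is an isomorphism of unital Fréchet algebras.

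The conceptually delicate step is the ideal identification: one must recognise that centrality of the generators $1-t_jf_j$ collapses the abstract closure $\overline{\langle 1-t_jf_j\rangle}$ onto the explicit sum of principal ideals produced in Proposition \ref{algebra section 1}. Once this is in place, no further differential-geometric input is needed: everything else is a routine combination of the first isomorphism theorem with the open mapping theorem for Fréchet spaces.
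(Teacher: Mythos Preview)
Your proof is correct and follows essentially the same route as the paper: verify that domain and target are Fr\'echet, apply the Open Mapping Theorem to the surjection $\Phi_{f_1,\ldots,f_k}$, and deduce that the induced bijection $\phi_{f_1,\ldots,f_k}$ is a topological isomorphism. Your explicit check that the closed two-sided ideal of Definition~\ref{sm. loc.} coincides with the kernel computed in Proposition~\ref{algebra section 1} (via centrality of the generators $1-t_jf_j$) is a point the paper leaves implicit in its overloaded notation, so your write-up is in fact more careful here.
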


\begin{proof}
If $A$ is a unital Fr\'{e}chet algebra, then so are $\Gamma\mathbb{A}$, $\Gamma\mathbb{A}_{M_{{f_1,\ldots, f_k}}}$ (cf. Corollary \ref{A frechet GammaA frechet}) and $C^{\infty}(\mathbb{R}^k,\Gamma\mathbb{A})$. In particular, the Open Mapping Theorem (cf. \cite[Chapter III, Section 2.2]{Sch99}) implies that the map $\Phi_{f_1,\ldots, f_k}$ is open. Since the quotient of a Fr\'{e}chet space by a closed subspace is again Fr\'{e}chet (cf. \cite[\S 3.5]{Bou55}), we can use the Open Mapping Theorem again to see that the map $\phi_{f_1,\ldots, f_k}$ is an isomorphism of unital Fr\'echet algebras.
\end{proof}

Although $C^{\infty}(M)_{\{f\}}$ is not the ring of fractions with some power of $f$ as denominator, Theorem \ref{algebra section 2} yields the desired smooth analogue of Proposition \ref{K(X)_f=O(X_f)}:

\begin{corollary}\label{C(M)_f=C(M_f)}
Let $M$ be a manifold and $A$ a unital Fr\'echet algebra. Further, let $f_1,\ldots,f_k:M\rightarrow\mathbb{R}$ be smooth functions and $M_{f_1,\ldots, f_k}:=\bigcap_{j=1}^s M_{f_j}$. Then the map
\[\phi_{f_1,\ldots, f_k}:C^{\infty}(M,A)_{\{f_1,\ldots, f_k\}}\rightarrow C^{\infty}(M_{f_1,\ldots, f_k},A),\,\,\,[F]\mapsto F\circ\left(\frac{1}{f_1}\times\cdots\times\frac{1}{f_k}\times\id_{M_{f_1,\ldots, f_k}}\right)
\]is an isomorphism of unital Fr\'{e}chet algebras.
\end{corollary}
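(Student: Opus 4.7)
The plan is to obtain this corollary as an immediate specialization of Theorem \ref{algebra section 2} to the \emph{trivial} algebra bundle over $M$ with fibre $A$. All of the work has been done in Proposition \ref{algebra section 1} and Theorem \ref{algebra section 2}; what remains is to identify the spaces of sections with the appropriate mapping spaces.

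First I would take $\mathbb{A}:=M\times A$ with $q:=\pr_M$, which is clearly an algebra bundle in the sense of Definition \ref{algebra bundle} (a single global trivialization $\varphi=\id_{M\times A}$ suffices). Under this identification, the space $\Gamma\mathbb{A}$ of smooth sections is naturally $C^{\infty}(M,A)$ as a unital locally convex algebra: a section $s:M\to M\times A$ is determined by its second component, which is an arbitrary element of $C^{\infty}(M,A)$, and Remark \ref{sco topology=initial topology} guarantees that the topology on $\Gamma\mathbb{A}$ constructed in \ref{top on space of sections} agrees with the usual smooth compact open topology on $C^{\infty}(M,A)$. Moreover, the restricted bundle $\mathbb{A}_{M_{f_1,\ldots,f_k}}$ is the trivial bundle $M_{f_1,\ldots,f_k}\times A$, so $\Gamma\mathbb{A}_{M_{f_1,\ldots,f_k}}\cong C^{\infty}(M_{f_1,\ldots,f_k},A)$ by the same argument.

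Next I would invoke Theorem \ref{algebra section 2}: since $A$ is a unital Fr\'echet algebra, it furnishes an isomorphism
\[
\phi_{f_1,\ldots,f_k}:\Gamma\mathbb{A}_{\{f_1,\ldots,f_k\}}\longrightarrow \Gamma\mathbb{A}_{M_{f_1,\ldots,f_k}},\qquad [F]\mapsto F\circ\Bigl(\tfrac{1}{f_1}\times\cdots\times\tfrac{1}{f_k}\times\id_{M_{f_1,\ldots,f_k}}\Bigr),
\]
of unital Fr\'echet algebras. Transporting this through the two identifications above yields precisely the map asserted in the corollary, between $C^{\infty}(M,A)_{\{f_1,\ldots,f_k\}}$ and $C^{\infty}(M_{f_1,\ldots,f_k},A)$.

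There is no substantive obstacle in this proof; the only point worth checking carefully is that the formulas on the two sides of the identification $\Gamma\mathbb{A}\cong C^{\infty}(M,A)$ are compatible with the definition of smooth localization from \ref{sm. loc.}, i.e.\ that the ideal $I_{f_1,\ldots,f_k}\subseteq C^{\infty}(\mathbb{R}^k,\Gamma\mathbb{A})$ corresponds under the canonical isomorphism $C^{\infty}(\mathbb{R}^k,C^{\infty}(M,A))\cong C^{\infty}(\mathbb{R}^k\times M,A)$ (or equivalently via Proposition \ref{proj. tensor product for algebras}) to the ideal used to define $C^{\infty}(M,A)_{\{f_1,\ldots,f_k\}}$. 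This is immediate since the generators $1-t_jf_j$ are mapped to themselves under the identification.
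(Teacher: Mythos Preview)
Your proposal is correct and follows exactly the paper's approach: the paper's proof is the single sentence ``The assertion directly follows from Theorem \ref{algebra section 2} applied to the trivial algebra bundle $(M\times A,A,M,q_M)$,'' and you have simply spelled out the identifications (via Remark \ref{sco topology=initial topology}) that make this application go through.
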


\begin{proof}
The assertion directly follows from Theorem \ref{algebra section 2} applied to the trivial algebra bundle $(M\times A,A,M,q_M)$.
\end{proof}

We now present a very useful and well-known theorem of analysis:

\begin{theorem}\label{whitneys theorem}
\emph{(}Whitneys Theorem\emph{)}. Any open subset $U$ of a manifold $M$ is the complement of the zeroset of a smooth function $f:M\rightarrow\mathbb{R}$, i.e.,
\[U=M_f:=\{m\in M:\,f(m)\neq 0\}\,\,\,\text{for some}\,\,\,f\in C^{\infty}(M,\mathbb{R}).
\]Such a function $f$ is called a $U$-defining function.
\end{theorem}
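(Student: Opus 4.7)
The statement asserts, for every open subset $U \subseteq M$, the existence of a function $f \in C^\infty(M, \mathbb{R})$ with $f^{-1}(0) = M \setminus U$. My plan is to construct such an $f$ as an infinite sum
\[
f = \sum_{n=1}^\infty c_n \psi_n,
\]
where each $\psi_n$ is a smooth bump function with compact support inside $U$ and the positive scalars $c_n$ are tuned to force the series to converge in $C^\infty(M)$.

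First I would use paracompactness, second countability, and local compactness of $M$ to produce a countable family $(V_n)_{n \in \mathbb{N}}$ of precompact open subsets of $U$ with $\overline{V_n} \subseteq U$ and $\bigcup_n V_n = U$ (for instance, by pulling back open balls in a countable atlas of $U$ whose chart domains are precompact in $U$). Then, by the standard Euclidean bump-function construction glued via a smooth Urysohn-type argument, I would pick, for each $n$, a function $\psi_n \in C^\infty(M, [0,1])$ with $\psi_n \equiv 1$ on $V_n$, $\psi_n \geq 0$ everywhere, and $\supp(\psi_n)$ a compact subset of $U$. In particular, every $\psi_n$ vanishes identically on $M \setminus U$.

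Since $\supp(\psi_n)$ is compact, the norms
\[
N_n^k := \sup\{|D^\alpha \psi_n(m)| : m \in M,\ |\alpha| \leq k\}
\]
(computed in a fixed finite subatlas covering $\supp(\psi_n)$) are finite. Choosing $c_n := 2^{-n}/(1 + N_n^n)$ and defining $f := \sum_n c_n \psi_n$, on every compact $K \subseteq M$ and for every $k \in \mathbb{N}_0$ the tail $\sum_{n \geq k} c_n \psi_n$ is dominated in the $C^k$-seminorm by $\sum_{n \geq k} 2^{-n}$. Consequently the series converges in every $C^k$-seminorm and defines an element $f \in C^\infty(M, \mathbb{R})$. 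Term by term, $f \geq 0$, $f$ vanishes on $M \setminus U$, and any $m \in U$ lies in some $V_{n_0}$, so $f(m) \geq c_{n_0} \psi_{n_0}(m) = c_{n_0} > 0$. This gives the desired equality $U = M_f$.

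The main obstacle is that the family $(\supp \psi_n)_{n \in \mathbb{N}}$ can easily accumulate towards $\partial U$, so it is generally \emph{not} locally finite in $M$, and hence the unweighted sum $\sum \psi_n$ need not even converge pointwise on $M$. This is exactly what compels the rapid decay of the coefficients $c_n$: it converts countable cover data on the open set $U$ into a single globally smooth function on $M$ with the prescribed zero set, which is the whole content of Whitney's theorem in this form.
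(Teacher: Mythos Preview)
Your construction is correct and complete. The paper itself gives only a two-line sketch: it cites the case $M=\mathbb{R}^n$ from an exercise in Guillemin--Pollack and then invokes a partition-of-unity argument to pass to a general manifold. By contrast, you build $f$ directly on $M$ as a weighted sum $\sum_n c_n\psi_n$ of compactly supported bumps with rapidly decaying coefficients, which is essentially the Euclidean argument carried out intrinsically rather than first in charts and then glued; this is more self-contained since it does not treat the $\mathbb{R}^n$ case as a separate black box. One small technical remark: when you define $N_n^k$ ``in a fixed finite subatlas covering $\supp(\psi_n)$,'' it is cleanest to fix one countable locally finite atlas on $M$ once and for all and compute every $N_n^k$ in that atlas, so that the very same seminorms control both the numbers $N_n^k$ and the Fr\'echet topology on $C^\infty(M)$; otherwise the chart-transition constants relating your subatlases to the charts defining the topology could in principle depend on $n$ and spoil the estimate.
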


\begin{proof}
A proof for the case $M=\mathbb{R}^n$ can be found in [GuPo74], Exercise 1.5.11. A combination with a partition-of-unity argument proves the general case.
\end{proof}

\begin{corollary}\label{algebra section 3}
If $A$ is a unital Fr\'echet algebra, $(\mathbb{A},M,A,q)$ an algebra bundle and $f_U$ a $U$-defining function corresponding to an open subset $U$ of $M$, then the map
\[\phi_U:\Gamma\mathbb{A}_{\{f_U\}}\rightarrow\Gamma\mathbb{A}_U,\,\,\,[F]\mapsto F\circ\left(\frac{1}{f_U}\times\id_U\right)
\]is an isomorphism of unital Fr\'echet algebras.
\end{corollary}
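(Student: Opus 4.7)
The plan is to recognize this corollary as the special case $k=1$ of Theorem \ref{algebra section 2}, with the single function $f_1 := f_U$. The role of Whitney's Theorem \ref{whitneys theorem} is only to guarantee the existence of $f_U$ and, more importantly, to identify the subset $M_{f_U} := \{m \in M : f_U(m) \neq 0\}$ with the prescribed open subset $U$; this is exactly the defining property of a $U$-defining function.

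Once this identification is made, I would invoke Theorem \ref{algebra section 2} directly: since $A$ is a unital Fr\'echet algebra and $f_U \in C^{\infty}(M,\mathbb{R})$, the theorem provides an isomorphism
\[
\phi_{f_U} : \Gamma\mathbb{A}_{\{f_U\}} \longrightarrow \Gamma\mathbb{A}_{M_{f_U}}, \qquad [F] \mapsto F \circ\left(\tfrac{1}{f_U} \times \id_{M_{f_U}}\right),
\]
of unital Fr\'echet algebras. Substituting $M_{f_U} = U$ yields precisely the map $\phi_U$ in the statement, and the result follows.

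There is essentially no obstacle here, since all the real work (the factorization theorem for algebra bundles \ref{manifold mod sub=sub}, the identification of sections of pull-back bundles via Lemma \ref{algebra section 0}, and the Open Mapping Theorem argument that promotes the algebraic surjection to a topological quotient) is already packaged inside Theorem \ref{algebra section 2}. The only point worth making explicit is that a $U$-defining function is just a smooth real-valued function on $M$ whose non-vanishing set equals $U$, so the hypotheses of Theorem \ref{algebra section 2} are satisfied verbatim with $k = 1$ and $f_1 = f_U$.
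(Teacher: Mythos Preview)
Your proposal is correct and matches the paper's own proof, which simply states that the assertion is a direct consequence of Theorem~\ref{algebra section 2}. Your added explanation of why the hypotheses are satisfied (the identification $M_{f_U}=U$ via Whitney's Theorem~\ref{whitneys theorem} and the specialization $k=1$, $f_1=f_U$) is exactly the unpacking the paper leaves implicit.
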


\begin{proof}
This assertion is a direct consequence of Theorem \ref{algebra section 2}.
\end{proof}

\section{The Spectrum of the Space of Sections of an Algebra Bundle\\with Finite-Dimensional Commutative Fibre}\label{TSSSABFDCF}

In the last two sections we were concerned with sections of algebra bundles. The goal of this section is to describe the spectrum of the algebra of sections of an algebra bundle with finite-dimensional commutative fibre. We show that if $(\mathbb{A},M,A,q)$ is such an algebra bundle, then the spectrum of the corresponding algebra $\Gamma\mathbb{A}$ of sections is an $n$-fold covering of $M$ (here $n=\dim A_{\text{ss}}$).

\begin{theorem}\label{Wedderburn-Artin}
\emph{(}Wedderburn-Artin\emph{)}. Each finite-dimensional semisimple unital algebra is isomorphic to a direct product of matrix algebras, i.e., to
\[\M_{k_1}(\mathbb{C})\times\cdots\M_{k_n}(\mathbb{C})\,\,\,\text{for some}\,\,\,k_1,\ldots k_n\in\mathbb{N}.
\]Conversely, each finite direct product of matrix algebras is a semisimple unital algebra.
\end{theorem}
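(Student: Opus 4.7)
The plan is to prove the forward direction by module-theoretic decomposition, exploiting the fact that the ground field is algebraically closed, and then to dispatch the converse by a direct verification that matrix algebras have no nonzero Jacobson radical.

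First I would view $A$ as a left module over itself. By the assumption that $A$ is semisimple and finite-dimensional, the regular module $_AA$ is a finite direct sum of simple left ideals, say $A = L_1 \oplus \cdots \oplus L_N$. Grouping the $L_i$'s into isomorphism classes, I would rewrite this as $A = V_1^{n_1} \oplus \cdots \oplus V_r^{n_r}$, where $V_1,\ldots,V_r$ is a complete list of the isomorphism classes of simple left $A$-modules appearing in $A$, each occurring with multiplicity $n_i$.

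Next, I would invoke the ring isomorphism $A^{\mathrm{op}} \cong \End_A({}_A A)$ given by right multiplication (sending $a$ to the map $x \mapsto xa$). Computing the right-hand side using the decomposition above yields
\[
\End_A(A) \cong \End_A\left(\bigoplus_{i=1}^r V_i^{n_i}\right) \cong \prod_{i=1}^r \End_A(V_i^{n_i}) \cong \prod_{i=1}^r \M_{n_i}(D_i),
\]
where $D_i := \End_A(V_i)$ is a division algebra by Schur's lemma, and the cross terms vanish because $\Hom_A(V_i,V_j) = 0$ for $i \neq j$. The crucial use of $\mathbb{C}$ enters here: since $V_i$ is finite-dimensional over the algebraically closed field $\mathbb{C}$ and $D_i$ acts on it as a division $\mathbb{C}$-algebra, a standard eigenvalue argument forces $D_i = \mathbb{C}$. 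Passing to opposites (and using $\M_n(\mathbb{C})^{\mathrm{op}} \cong \M_n(\mathbb{C})$ via transposition), I obtain the desired isomorphism $A \cong \M_{k_1}(\mathbb{C}) \times \cdots \times \M_{k_r}(\mathbb{C})$ with $k_i = n_i$.

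For the converse, it suffices to check that a finite product of matrix algebras $\M_{k_1}(\mathbb{C}) \times \cdots \times \M_{k_n}(\mathbb{C})$ is semisimple. Each factor $\M_{k_i}(\mathbb{C})$ decomposes as a direct sum of $k_i$ copies of the simple module $\mathbb{C}^{k_i}$ (the columns), so the regular module is semisimple, and semisimplicity is preserved under finite products. The main obstacle, if one insists on a fully self-contained account, is the identification $D_i = \mathbb{C}$; this is the one place where finite-dimensionality over $\mathbb{C}$ (rather than over an arbitrary field) is genuinely used and without it one only recovers the more general Wedderburn decomposition with division algebra entries.
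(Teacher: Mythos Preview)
Your argument is correct and follows the standard module-theoretic route to Wedderburn--Artin: decompose the regular module into simples, identify $A^{\mathrm{op}}$ with $\End_A({}_AA)$, apply Schur's lemma, and then use the algebraic closedness of $\mathbb{C}$ to force the division rings $D_i$ down to $\mathbb{C}$. The converse is handled cleanly as well.

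As for comparison with the paper: there is nothing to compare. The paper does not give a proof at all; it simply cites \cite[Theorem 2.4.3]{DrKi94} and moves on. Your write-up is therefore strictly more informative than what appears in the paper, and since the theorem is used only as a black box (to deduce Corollary~\ref{TSSSABFDCF1} that a finite-dimensional commutative semisimple algebra is a product of copies of $\mathbb{C}$), any correct proof serves equally well here.
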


\begin{proof}
For the proof of this statement we refer, for example, to \cite[Theorem 2.4.3]{DrKi94}.
\end{proof}

\begin{corollary}\label{TSSSABFDCF1}
Each finite-dimensional semisimple unital algebra which is commutative is isomorphic to a direct product of copies of $\mathbb{C}$.
\end{corollary}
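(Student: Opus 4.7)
The plan is to derive this corollary directly from the Wedderburn--Artin theorem stated just above. First, I would apply Theorem \ref{Wedderburn-Artin} to the given finite-dimensional semisimple unital algebra $A$, obtaining an isomorphism of algebras
\[
A \cong \M_{k_1}(\mathbb{C}) \times \cdots \times \M_{k_n}(\mathbb{C})
\]
for some positive integers $k_1, \ldots, k_n$.

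Next, I would invoke the hypothesis that $A$ is commutative. Since the isomorphism above is one of algebras, the right-hand side must also be commutative. A direct product of algebras is commutative if and only if each factor is commutative, so each $\M_{k_i}(\mathbb{C})$ must be commutative. The key observation is then that $\M_{k}(\mathbb{C})$ is commutative if and only if $k = 1$; this is immediate from the standard computation $E_{12} E_{21} \neq E_{21} E_{12}$ for $k \geq 2$, where $E_{ij}$ denotes the elementary matrix with a single $1$ in position $(i,j)$.

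Thus $k_i = 1$ for all $i$, and $\M_1(\mathbb{C}) = \mathbb{C}$, which gives $A \cong \mathbb{C}^n$, i.e., a direct product of $n$ copies of $\mathbb{C}$. There is no serious obstacle here; the only content is the observation that matrix algebras of size $\geq 2$ fail to be commutative, so that commutativity forces every Wedderburn factor to collapse to $\mathbb{C}$.
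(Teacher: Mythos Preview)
Your argument is correct and follows exactly the approach the paper indicates: the paper's proof is simply the one-line statement that the corollary is a direct consequence of Theorem~\ref{Wedderburn-Artin}, and you have spelled out the obvious details of that deduction.
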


\begin{proof}
This assertion is a direct consequence of Theorem \ref{Wedderburn-Artin}.
\end{proof}

If a unital algebra $A$ is not semisimple, it is quite natural to measure the defect for $A$ not being semisimple. This can be done with the so-called Jacobson radical:

\begin{definition}\label{The Jacobson radical}
(The Jacobson radical). The \emph{Jacobson radical} $A_{\text{n}}$ of a finite-dimensional unital algebra $A$ can be defined as
\begin{itemize}
\item[(a)]
the largest nilpotent ideal of $A$.
\item[(b)]
the intersection of all maximal ideals of $A$.
\item[(c)]
the smallest ideal of $A$ such that the quotient algebra $A/A_{\text{n}}$ is semisimple.
\end{itemize}
In fact, for the equivalence of these definition we refer to \cite[Section 3.1]{DrKi94}.
\end{definition}

\begin{lemma}\label{TSSSABFDCF2}
Let $A$ be a finite-dimensional unital algebra. Further, let $A_{\emph{\text{n}}}$ be the Jacobson radical of $A$ and $A_{\emph{\text{ss}}}:=A/A_{\emph{\text{n}}}$ the corresponding semisimple unital quotient algebra. Then the following assertions hold:
\begin{itemize}
\item[\emph{(a)}]
Each algebra automorphism $\phi\in\Aut(A)$ leaves $A_{\emph{\text{n}}}$ invariant, i.e., restricts to an algebra automorphism $\phi_{\emph{\text{n}}}$ of $A_{\emph{\text{n}}}$. We write 
\[\pi_{\emph{\text{n}}}:\Aut(A)\rightarrow\Aut(A_{\emph{\text{n}}}),\,\,\,\phi\mapsto\phi_{\emph{\text{n}}}
\]for the corresponding group homomorphism.
\item[\emph{(b)}]
In particular, each algebra automorphism $\phi\in\Aut(A)$ descends to an algebra automorphism $\phi_{\emph{\text{ss}}}$ of $A_{\emph{\text{ss}}}$. We write 
\[\pi_{\emph{\text{ss}}}:\Aut(A)\rightarrow\Aut(A_{\emph{\text{ss}}}),\,\,\,\phi\mapsto\phi_{\emph{\text{ss}}}
\]for the corresponding group homomorphism.
\end{itemize}
\end{lemma}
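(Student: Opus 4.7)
The plan is to exploit the intrinsic characterization of the Jacobson radical $A_{\text{n}}$ given in Definition \ref{The Jacobson radical}, which makes no reference to a choice of generators or basis, and is therefore automatically preserved by any algebra automorphism. Once (a) is established, (b) is just the standard passage of a map to a quotient, and the homomorphism property in both cases is formal.

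For part (a), fix $\phi\in\Aut(A)$. The image $\phi(A_{\text{n}})$ is a two-sided ideal of $A$ because $\phi$ is an algebra automorphism (it sends two-sided ideals to two-sided ideals, using $\phi^{-1}$ to pull back elements of $A$). Moreover, if $A_{\text{n}}^{N}=0$ for some $N\in\mathbb{N}$, then $\phi(A_{\text{n}})^{N}=\phi(A_{\text{n}}^{N})=0$, so $\phi(A_{\text{n}})$ is itself a nilpotent two-sided ideal. By the characterization of $A_{\text{n}}$ as the \emph{largest} nilpotent ideal (Definition \ref{The Jacobson radical} (a)) we get $\phi(A_{\text{n}})\subseteq A_{\text{n}}$; applying the same argument to $\phi^{-1}$ gives the reverse inclusion, hence $\phi(A_{\text{n}})=A_{\text{n}}$. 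Therefore the restriction $\phi_{\text{n}}:=\phi_{\mid A_{\text{n}}}$ is a well-defined algebra automorphism of $A_{\text{n}}$. The map $\pi_{\text{n}}:\Aut(A)\rightarrow\Aut(A_{\text{n}})$, $\phi\mapsto\phi_{\text{n}}$, is a group homomorphism, since $(\phi\circ\psi)_{\mid A_{\text{n}}}=\phi_{\mid A_{\text{n}}}\circ\psi_{\mid A_{\text{n}}}$ as a consequence of the invariance of $A_{\text{n}}$ under $\psi$.

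For part (b), the invariance $\phi(A_{\text{n}})=A_{\text{n}}$ obtained in (a) implies that the composition
\[
A\stackrel{\phi}{\longrightarrow}A\stackrel{\pi}{\longrightarrow}A/A_{\text{n}}=A_{\text{ss}}
\]
vanishes on $A_{\text{n}}$ and therefore factors through a unique algebra homomorphism $\phi_{\text{ss}}:A_{\text{ss}}\rightarrow A_{\text{ss}}$ satisfying $\phi_{\text{ss}}\circ\pi=\pi\circ\phi$. Applying the same construction to $\phi^{-1}$ yields an inverse for $\phi_{\text{ss}}$, so $\phi_{\text{ss}}\in\Aut(A_{\text{ss}})$. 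The map $\pi_{\text{ss}}:\Aut(A)\rightarrow\Aut(A_{\text{ss}})$, $\phi\mapsto\phi_{\text{ss}}$, is a group homomorphism by the uniqueness of the factorization: both $(\phi\circ\psi)_{\text{ss}}$ and $\phi_{\text{ss}}\circ\psi_{\text{ss}}$ satisfy the defining relation of the factorization of $\pi\circ\phi\circ\psi$ through $\pi$, and must therefore coincide.

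There is no real obstacle here; the only conceptual input is the coordinate-free characterization of $A_{\text{n}}$ recorded in Definition \ref{The Jacobson radical}. One could equally well use the characterization as the intersection of all maximal ideals (an automorphism permutes maximal ideals), and all three characterizations lead to the same one-line argument.
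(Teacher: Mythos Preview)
Your proof is correct and follows essentially the same approach as the paper: you use the intrinsic characterization of $A_{\text{n}}$ from Definition \ref{The Jacobson radical} (a) (or equivalently (b), as you note at the end) to show invariance under automorphisms, and then pass to the quotient. The paper's own proof is a two-line pointer to Definition \ref{The Jacobson radical} (a) or (b) for part (a) and to part (a) for part (b); your version simply spells out the details.
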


%defined by
%\[\phi_{\emph{\text{h}}}:A_{\emph{\text{h}}}\rightarrow A_{\emph{\text{h}}},\,\,\,a+A_{\emph{\text{n}}}\mapsto\phi(a)+A_{\emph{\text{n}}}.
%\]We write 
%\[\pi_{\emph{\text{h}}}:\Aut(A)\rightarrow\Aut(A_{\emph{\text{h}}}),\,\,\,\phi\mapsto\phi_{\emph{\text{h}}}
%\]for the corresponding group homomorphism.

\begin{proof}
(a) The first assertion follows from the Definition of $A_{\text{n}}$ (cf. Definition \ref{The Jacobson radical} (a) or (b)).

(b) The second assertion is a consequence of part (a).
\end{proof}

\begin{proposition}\label{TSSSABFDCF3}
Suppose we are in the situation of Lemma \ref{TSSSABFDCF2}. Further, let $(\mathbb{A},M,A,q)$ be an algebra bundle and $(\varphi_i,U_i)_{i \in I}$ be a bundle atlas for $(\mathbb{A},M,A,q)$. If $U_{ij}:=U_i\cap U_j$ for $i,j\in I$ and $g_{ij}\in C^{\infty}(U_{ij},\Aut(A))$ are the corresponding transition functions, then the following assertions hold:
\begin{itemize}
\item[(a)]
There exists an algebra \emph{(}sub-\emph{)} bundle $(\mathbb{A}_{\emph{\text{n}}},M,A_{\emph{\text{n}}},q_{\emph{\text{n}}})$ of $(\mathbb{A},M,A,q)$ with bundle charts $\varphi^ {\emph{\text{n}}}_i:U_i\times A_{\emph{\text{n}}}\rightarrow(\mathbb{A}_{\emph{\text{n}}})_{U_i}$ such that
\[((\varphi^{\emph{\text{n}}}_i)^{-1}\circ\varphi^{\emph{\text{n}}}_j)(x,a_{\emph{\text{n}}})=(x,g^{\emph{\text{n}}}_{ij}(x).a_{\emph{\text{n}}}),\,\,\,\text{where}\,\,\,g^ {\emph{\text{n}}}_{ij}:=\pi_{\emph{\text{n}}}\circ g_{ij}\in C^{\infty}(U_{ij},\Aut(A_{\emph{\text{n}}})).
\]
\item[(b)]
There exists an algebra bundle $(\mathbb{A}_{\emph{\text{ss}}},M,A_{\emph{\text{ss}}},q_{\emph{\text{ss}}})$ with bundle charts \mbox{$\varphi^{\emph{\text{ss}}}_i:U_i\times A_{\emph{\text{ss}}}\rightarrow(\mathbb{A}_{\emph{\text{ss}}})_{U_i}$} such that
\[((\varphi^{\emph{\text{ss}}}_i)^{-1}\circ\varphi^{\emph{\text{ss}}}_j)(x,a_{\emph{\text{ss}}})=(x,g^{\emph{\text{ss}}}_{ij}(x).a_{\emph{\text{ss}}}),\,\,\,\text{where}\,\,\,g^ {\emph{\text{ss}}}_{ij}:=\pi_{\emph{\text{ss}}}\circ g_{ij}\in C^{\infty}(U_{ij},\Aut(A_{\emph{\text{ss}}})).
\]In particular, the algebra bundle $(\mathbb{A}_{\emph{\text{ss}}},M,A_{\emph{\text{ss}}},q_{\emph{\text{ss}}})$ can be realized as the quotient algebra bundle of $(\mathbb{A},M,A,q)$ with respect to $(\mathbb{A}_{\emph{\text{n}}},M,A_{\emph{\text{n}}},q_{\emph{\text{n}}})$. The corresponding quotient map is given by
\[p:\mathbb{A}\rightarrow\mathbb{A}_{\emph{\text{ss}}},\,\,\,p(a):=a+(\mathbb{A}_{\emph{\text{n}}})_m\,\,\,\text{for}\,\,\,a\in\mathbb{A}_m.
\]
\item[(c)]
%The short exact sequence
%\begin{align}
%0\longrightarrow A_{\text{n}}\longrightarrow A\longrightarrow A_{\text{ss}}\longrightarrow 0\label{ses of algebras1}
%\end{align}
%of algebras leads to the short exact sequence
We obtain a short exact sequence
\begin{align}
0\longrightarrow\Gamma\mathbb{A}_{\emph{\text{n}}}\longrightarrow\Gamma\mathbb{A}\stackrel{\Gamma p}{\longrightarrow} \Gamma\mathbb{A}_{\emph{\text{ss}}}\longrightarrow 0\notag%\label{ses of algebras2}
\end{align}
of \emph{(}Fr\'echet-\emph{)} algebras. Here, the second arrow denotes the natural inclusion and 
\[(\Gamma p(s))(m):=p(s(m))\,\,\,\text{for}\,\,\,s\in\Gamma\mathbb{A}\,\,\,\text{and}\,\,\,m\in M.
\]
\end{itemize}
\end{proposition}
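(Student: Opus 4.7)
The plan is to construct $\mathbb{A}_{\text{n}}$ and $\mathbb{A}_{\text{ss}}$ fiber-by-fiber, using Lemma \ref{TSSSABFDCF2} to argue that the ambient transition cocycle $(g_{ij})$ of $(\mathbb{A},M,A,q)$ simultaneously restricts to the radical subbundle and descends to the semisimple quotient bundle; the short exact sequence in part (c) will then follow from a standard partition-of-unity gluing.

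For part (a) I set $(\mathbb{A}_{\text{n}})_m$ equal to the Jacobson radical of the (finite-dimensional) fiber algebra $\mathbb{A}_m$. Since each $\varphi_{i,x}\colon A\to\mathbb{A}_x$ is an algebra isomorphism, it sends $A_{\text{n}}$ bijectively onto $(\mathbb{A}_{\text{n}})_x$, so $\varphi_i^{\text{n}} := \varphi_i|_{U_i\times A_{\text{n}}}$ is a bijection onto $(\mathbb{A}_{\text{n}})_{U_i} := \bigsqcup_{x\in U_i}(\mathbb{A}_{\text{n}})_x$. The transition $\varphi_j^{-1}\circ\varphi_i$ acts on the fiber by $g_{ij}(x)\in\Aut(A)$, which preserves $A_{\text{n}}$ by Lemma \ref{TSSSABFDCF2}(a); the restricted cocycle is $g_{ij}^{\text{n}} = \pi_{\text{n}}\circ g_{ij}$. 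Its smoothness reduces to smoothness of $\pi_{\text{n}}$, which is automatic since $\pi_{\text{n}}$ is the restriction to $\Aut(A)$ of the linear map $\phi\mapsto\phi|_{A_{\text{n}}}$ defined on the subspace of $\End(A)$ stabilising $A_{\text{n}}$. Thus the $\varphi_i^{\text{n}}$ form a smooth bundle atlas and give $\mathbb{A}_{\text{n}}$ the required algebra subbundle structure. Part (b) is parallel with $A_{\text{ss}} := A/A_{\text{n}}$ in place of $A_{\text{n}}$: the fiber is $(\mathbb{A}_{\text{ss}})_m := \mathbb{A}_m/(\mathbb{A}_{\text{n}})_m$, the chart $\varphi_i^{\text{ss}}$ is the passage to quotients in the second factor of $\varphi_i$, and $g_{ij}^{\text{ss}} = \pi_{\text{ss}}\circ g_{ij}$ is smooth by the same linear-map argument. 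The fiberwise quotient $p\colon\mathbb{A}\to\mathbb{A}_{\text{ss}}$ reads in local coordinates as $\id_{U_i}$ times the linear quotient $A\to A_{\text{ss}}$ and is therefore smooth and fiberwise surjective.

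For part (c), $\Gamma p\colon\Gamma\mathbb{A}\to\Gamma\mathbb{A}_{\text{ss}}$ is a continuous algebra homomorphism by the functoriality of the atlas-topology of Construction \ref{top on space of sections}, and its kernel is $\Gamma\mathbb{A}_{\text{n}}$ since $\ker p_m = (\mathbb{A}_{\text{n}})_m$ at every point. The real content is therefore surjectivity. I fix a linear splitting $\iota\colon A_{\text{ss}}\to A$ of the quotient (possible because $A$ is finite-dimensional) and, for any $s'\in\Gamma\mathbb{A}_{\text{ss}}$, form the local lifts
\[
s_i := \varphi_i\circ(\id_{U_i}\times\iota)\circ(\varphi_i^{\text{ss}})^{-1}\circ s'|_{U_i}\in\Gamma\mathbb{A}_{U_i},
\]
which satisfy $p\circ s_i = s'|_{U_i}$ by the defining compatibility $p\circ\varphi_i = \varphi_i^{\text{ss}}\circ(\id_{U_i}\times q)$, where $q\colon A\to A_{\text{ss}}$ denotes the quotient map. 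Choosing a partition of unity $(\psi_i)_{i\in I}$ subordinate to $(U_i)_{i\in I}$, I set $s := \sum_i\psi_i\cdot s_i\in\Gamma\mathbb{A}$ with each summand extended by zero outside $\supp(\psi_i)\subseteq U_i$; fiberwise $\mathbb{C}$-linearity of $p$ then gives $\Gamma p(s) = \sum_i\psi_i\cdot s'|_{U_i} = s'$. Since $A$ is finite-dimensional and hence Fr\'echet, Corollary \ref{A frechet GammaA frechet} upgrades all three section algebras to Fr\'echet algebras.

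The main obstacle I anticipate is the surjectivity step in (c): (a) and (b) reduce to the automatic smoothness of $\pi_{\text{n}}$ and $\pi_{\text{ss}}$ as restrictions/quotients of linear maps on $\End(A)$, while (c) genuinely requires both a vector-space splitting of $A\to A_{\text{ss}}$ and a partition-of-unity gluing. Once the lift exists, the exactness of the sequence is immediate from the fiberwise identifications.
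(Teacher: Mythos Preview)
Your proposal is correct and follows essentially the same route as the paper. The paper's own proof is extremely terse: for (a) and (b) it simply invokes Lemma~\ref{TSSSABFDCF2} together with Steenrod's cocycle construction of bundles, and for (c) it says only that the short exact sequence ``follows from an easy calculation''; you have unpacked exactly these steps, and in particular supplied the partition-of-unity surjectivity argument for $\Gamma p$ that the paper leaves implicit.
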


\begin{proof}
(a),(b) The first two assertions are a consequence of Lemma \ref{TSSSABFDCF2} and \cite[Part I, Section 3, Theorem 3.2]{St51}.

(c) The third assertion follows from an easy calculation and part (a) and (b).
\end{proof}

\begin{remark}\label{finite open cover}
(Finite open cover property). If $(\mathbb{A},M,A,q)$ is an algebra bundle over a compact manifold $M$, then it is obviously true that there exists a \emph{finite} open cover $\{U_i\}_{1\leq i\leq m}$ of $M$ such that each $\mathbb{A}_{U_i}:=q^{-1}(U_i)$ is trivial. Note that this property is also true for \emph{arbitrary} manifolds: 

Indeed \cite[Theorem 7.5.16]{Con93} states that if $(\mathbb{A},M,A,q)$ is an algebra bundle over a manifold $M$ of dimension $n$, then there is a cover $\{U_i\}_{1\leq i\leq n+1}$ of $M$, such that $\mathbb{A}_{U_i}:=q^{-1}(U_i)$ is trivial. 
\end{remark}

\begin{proposition}\label{TSSSABFDCF4}
Let $(\mathbb{A}_{\emph{\text{n}}},M,A_{\emph{\text{n}}},q_{\emph{\text{n}}})$ be the algebra \emph{(}sub-\emph{)} bundle of Proposition \ref{TSSSABFDCF3} \emph{(}a\emph{)}. Then $\Gamma\mathbb{A}_{\emph{\text{n}}}$ consists of nilpotent elements, i.e., for each $s\in \Gamma\mathbb{A}_{\emph{\text{n}}}$ there exists a natural number $n\in\mathbb{N}$ such that $s^n=0$.
\end{proposition}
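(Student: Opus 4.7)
The plan is to exploit the fact that the Jacobson radical $A_{\text{n}}$ of a finite-dimensional algebra is, by Definition \ref{The Jacobson radical} (a), the largest \emph{nilpotent} ideal of $A$. This means there exists an $N\in\mathbb{N}$ (depending only on $A$) such that $A_{\text{n}}^N=0$, i.e., \emph{any} product of $N$ elements of $A_{\text{n}}$ vanishes. The key observation is that this $N$ is uniform across all fibres of $\mathbb{A}_{\text{n}}$, because each fibre is algebra-isomorphic to $A_{\text{n}}$.

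More precisely, I would first fix $N$ with $A_{\text{n}}^N=0$ and then fix a bundle atlas $(\varphi^{\text{n}}_i,U_i)_{i\in I}$ for $(\mathbb{A}_{\text{n}},M,A_{\text{n}},q_{\text{n}})$ as provided by Proposition \ref{TSSSABFDCF3} (a). For each $m\in M$, choose $i\in I$ with $m\in U_i$; then the map
\[
\varphi^{\text{n}}_{i,m}:A_{\text{n}}\longrightarrow(\mathbb{A}_{\text{n}})_m,\qquad a\mapsto\varphi^{\text{n}}_i(m,a),
\]
is an algebra isomorphism, so it carries the identity $A_{\text{n}}^N=0$ over to $((\mathbb{A}_{\text{n}})_m)^N=0$.

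Now let $s\in\Gamma\mathbb{A}_{\text{n}}$ be arbitrary. Since the algebra structure on $\Gamma\mathbb{A}_{\text{n}}$ is defined pointwise (cf. Definition \ref{sections of algebra bundles}), we have
\[
s^N(m)=s(m)\cdot s(m)\cdots s(m)\in((\mathbb{A}_{\text{n}})_m)^N=\{0\}
\]
for every $m\in M$. Hence $s^N$ is the zero section, which proves that every section of $\mathbb{A}_{\text{n}}$ is nilpotent of order at most $N$.

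I do not expect any real obstacle: the statement is essentially a pointwise consequence of the nilpotency of the Jacobson radical, and the uniform bound $N$ comes for free from the fact that all fibres are algebra-isomorphic to the same finite-dimensional algebra $A_{\text{n}}$. The only point worth stating carefully is that the pointwise multiplication on $\Gamma\mathbb{A}_{\text{n}}$ is precisely the algebra structure used for ``$s^N$'', so the identity $s^N(m)=s(m)^N$ is immediate from Definition \ref{sections of algebra bundles}.
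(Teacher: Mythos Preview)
Your proof is correct and rests on the same core fact as the paper's: the Jacobson radical $A_{\text{n}}$ is a nilpotent ideal, so $A_{\text{n}}^N=0$ for a single $N$ that then works uniformly for every fibre. The paper reaches the conclusion by invoking the finite-cover property (Remark~\ref{finite open cover}) and the embedding $\Phi_I:\Gamma\mathbb{A}_{\text{n}}\hookrightarrow\prod_{i\in I}C^{\infty}(U_i,A)$ of Proposition~\ref{imPhi}, then noting that each local representative $s_i$ takes values in $A_{\text{n}}$ and hence satisfies $s_i^N=0$. Your route is slightly more direct: you work pointwise at the fibre level via the chart isomorphisms $\varphi^{\text{n}}_{i,m}$, which makes it transparent that the uniform exponent comes from $A_{\text{n}}$ alone and that no finiteness of the cover is actually needed. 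Both arguments are the same in spirit; yours just strips away the detour through the global embedding.
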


\begin{proof}
In view of Remark \ref{finite open cover} and Proposition \ref{imPhi}, $\Gamma\mathbb{A}_{\text{n}}$ can be embedded into a finite product of the form $\prod_{i\in I}C^{\infty}(U_i,A)$. Since each function $s_i\in C^{\infty}(U_i,A)$ appearing in the image $\Phi_I(s)=(s_i)_{i\in I}$ of an element $s\in\Gamma\mathbb{A}_{\text{n}}$ takes values in $A_{\text{n}}$, there exists a natural number $n\in\mathbb{N}$ such that $\Phi_I(s^n)=(s^n_i)_{i\in I}=(0)_{i\in I}$. Thus, we obtain $s^n=0$ as desired.
\end{proof}

\begin{theorem}\label{TSSSABFDCF5}
Let $A$ be a finite-dimensional unital algebra and $(\mathbb{A},M,A,q)$ be an algebra bundle. Further, let $(\mathbb{A}_{\emph{\text{ss}}},M,A_{\emph{\text{ss}}},q_{\emph{\text{ss}}})$ be the algebra bundle of Proposition \ref{TSSSABFDCF3} \emph{(}b\emph{)}. Then the map
\[\Phi:\Gamma_{\Gamma\mathbb{A}}\rightarrow\Gamma_{\Gamma\mathbb{A}_{\emph{\text{ss}}}},\,\,\,\Phi(\chi)(\overline{s}):=\chi(s),
\]where $s\in\Gamma\mathbb{A}$ with $\Gamma p(s)=\overline{s}$, is a well-defined homeomorphism with inverse given by
\[\Psi:\Gamma_{\Gamma\mathbb{A}_{\emph{\text{ss}}}}\rightarrow\Gamma_{\Gamma\mathbb{A}},\,\,\,\Psi(\chi)(s):=\chi(\overline{s})
\]for $\overline{s}=\Gamma p(s)$.
\end{theorem}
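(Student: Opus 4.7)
The plan is to exploit the surjective algebra homomorphism $\Gamma p \colon \Gamma\mathbb{A} \twoheadrightarrow \Gamma\mathbb{A}_{\text{ss}}$ from Proposition \ref{TSSSABFDCF3}(c) together with the fact (Proposition \ref{TSSSABFDCF4}) that its kernel $\Gamma\mathbb{A}_{\text{n}}$ consists entirely of nilpotent elements. The general philosophy is: since characters $\chi \colon B \to \mathbb{C}$ always annihilate nilpotent elements (if $x^n = 0$ then $\chi(x)^n = 0$, hence $\chi(x) = 0$), every character of $\Gamma\mathbb{A}$ factors uniquely through the quotient $\Gamma p$, so pulling back along $\Gamma p$ should give a bijection between the two spectra.

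First I would verify that $\Psi$ is well-defined: for $\chi \in \Gamma_{\Gamma\mathbb{A}_{\text{ss}}}$, the composition $\chi \circ \Gamma p$ is manifestly a unital algebra homomorphism $\Gamma\mathbb{A} \to \mathbb{C}$, and it is nonzero because $\Gamma p$ is surjective. Next I would verify that $\Phi$ is well-defined: given $\chi \in \Gamma_{\Gamma\mathbb{A}}$ and $\overline{s} \in \Gamma\mathbb{A}_{\text{ss}}$, any two lifts $s, s'$ of $\overline{s}$ differ by an element of $\Gamma\mathbb{A}_{\text{n}}$, which is nilpotent by Proposition \ref{TSSSABFDCF4}; hence $\chi(s-s') = 0$ by the observation above, so $\Phi(\chi)(\overline{s}) := \chi(s)$ is unambiguous. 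Multiplicativity and unitality of $\Phi(\chi)$ are inherited from $\chi$ by choosing compatible lifts, and $\Phi(\chi) \ne 0$ because $\Phi(\chi)(\overline{1}) = \chi(1) = 1$.

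That $\Phi$ and $\Psi$ are mutual inverses is a one-line substitution in each direction: $\Psi(\Phi(\chi))(s) = \Phi(\chi)(\overline{s}) = \chi(s)$, and $\Phi(\Psi(\chi))(\overline{s}) = \Psi(\chi)(s) = \chi(\Gamma p(s)) = \chi(\overline{s})$. Finally, since both spectra carry the topology of pointwise convergence, bicontinuity is essentially a tautology: a net $\chi_i \to \chi$ in $\Gamma_{\Gamma\mathbb{A}}$ means $\chi_i(s) \to \chi(s)$ for every $s \in \Gamma\mathbb{A}$, which is precisely $\Phi(\chi_i)(\overline{s}) \to \Phi(\chi)(\overline{s})$ for every $\overline{s}$; the argument for $\Psi$ is symmetric.

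There is no real obstacle here; the entire content of the theorem sits in Proposition \ref{TSSSABFDCF4} (that $\Gamma\mathbb{A}_{\text{n}}$ consists of nilpotents), which in turn relies on the finite open cover property of Remark \ref{finite open cover} and the embedding from Proposition \ref{imPhi}. Once that structural fact is in hand, the theorem reduces to the standard categorical observation that characters of $B$ vanishing on a nil ideal $J$ are in natural bijection with characters of $B/J$.
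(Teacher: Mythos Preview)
Your proposal is correct and follows essentially the same route as the paper: the paper's proof likewise reduces well-definedness of $\Phi$ to the fact that characters annihilate the nilpotent elements of $\Gamma\mathbb{A}_{\text{n}}$ (Proposition~\ref{TSSSABFDCF4}), via the identical computation $0=\chi(s^n)=\chi(s)^n$, and then dispatches the remaining verifications (mutual inverses, bicontinuity) with the phrase ``a short calculation.'' Your write-up simply spells out those short calculations explicitly.
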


\begin{proof}
(i) We first show that $\Phi$ is well-defined. For this we have to verify that each character $\chi\in\Gamma_{\Gamma\mathbb{A}}$ vanishes on $\Gamma\mathbb{A}_{\text{n}}$ (cf. Proposition \ref{TSSSABFDCF3} (c)): Indeed, if $\chi\in\Gamma_{\Gamma\mathbb{A}}$ and $s\in\Gamma\mathbb{A}_{\text{n}}$, then Proposition \ref{TSSSABFDCF4} implies that there exists a natural number $n\in\mathbb{N}$ such that $s^n=0$. Hence, $\chi(s)=0$ follows from $0=\chi(s^n)=\chi(s)^n$.

(ii) Now, a short calculation shows that both maps $\Phi$ and $\Psi$ are continuous and inverse to each other. 
\end{proof}

In the following we describe the spectrum of the algebra of sections of an algebra bundle with finite-dimensional semisimple commutative fibre:

\begin{lemma}\label{TSSSABFDCF6}
Let $M$ be a manifold and $A=\mathbb{C}^n$ for some $n\in\mathbb{N}$. Then the map
\[\Phi:M\times\{1,\ldots,n\}\rightarrow\Gamma_{C^{\infty}(M,A)},\,\,\,\Phi(m,k)(F)=F_k(m),
\]where $F=(F_1,\ldots,F_n)\in C^{\infty}(M,A)$, is a homeomorphism.
\end{lemma}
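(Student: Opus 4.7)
The plan is to reduce the computation of $\Gamma_{C^{\infty}(M,\mathbb{C}^{n})}$ to the already known spectrum of $C^{\infty}(M)$ by exploiting the primitive idempotent decomposition of $\mathbb{C}^{n}$. Let $e_{1},\ldots,e_{n}$ denote the standard basis of $\mathbb{C}^{n}$, viewed as pairwise orthogonal central idempotents with $e_{1}+\cdots+e_{n}=1$, and let $\tilde{e}_{k}\in C^{\infty}(M,\mathbb{C}^{n})$ be the constant section with value $e_{k}$. Then $F\mapsto(F_{1},\ldots,F_{n})$ identifies $C^{\infty}(M,\mathbb{C}^{n})$ with the direct product algebra $\prod_{k=1}^{n}C^{\infty}(M)$, and each $F_{k}=F\tilde{e}_{k}$ lies in the summand $\tilde{e}_{k}C^{\infty}(M,\mathbb{C}^{n})\cong C^{\infty}(M)$.

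First I would establish bijectivity of $\Phi$. For any character $\chi$, the idempotent relation $\chi(\tilde{e}_{k})^{2}=\chi(\tilde{e}_{k})$ forces $\chi(\tilde{e}_{k})\in\{0,1\}$; together with $\sum_{k}\tilde{e}_{k}=1$ and $\tilde{e}_{k}\tilde{e}_{l}=\delta_{kl}\tilde{e}_{k}$, this means exactly one index $k$ satisfies $\chi(\tilde{e}_{k})=1$. Consequently $\chi(F)=\chi(F\tilde{e}_{k})$, so $\chi$ restricts to a character of the subalgebra $\tilde{e}_{k}C^{\infty}(M,\mathbb{C}^{n})$, which is isomorphic to $C^{\infty}(M)$. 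By Lemma \ref{spec of C(M,K) set} this restriction is $\delta_{m}$ for a unique $m\in M$, yielding $\chi(F)=F_{k}(m)=\Phi(m,k)(F)$. Injectivity of $\Phi$ is then routine: evaluation on $\tilde{e}_{l}$ recovers $k$, and $C^{\infty}(M)$ separates points of $M$.

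Next I would handle the topological part. Continuity of $\Phi$ is automatic: if $(m_{\alpha},k_{\alpha})\to(m,k)$ in $M\times\{1,\ldots,n\}$, then discreteness of the second factor forces $k_{\alpha}=k$ eventually, and smoothness of each coordinate function gives $F_{k}(m_{\alpha})\to F_{k}(m)$ for all $F$. For continuity of $\Phi^{-1}$, evaluation at each $\tilde{e}_{l}$ along a convergent net of characters shows that the index $k$ stabilizes, and then evaluation at sections $g\cdot\tilde{e}_{k}$ with $g\in C^{\infty}(M)$ reduces convergence in $M$ to the case already handled by Proposition \ref{spec of C(M) top}.

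I do not expect a genuine obstacle; the only subtle point is the passage from arbitrary (not necessarily continuous) characters of $C^{\infty}(M,\mathbb{C}^{n})$ to characters of $C^{\infty}(M)$, for which Lemma \ref{spec of C(M,K) set} supplies exactly the needed input. This lets me avoid invoking Corollary \ref{spectrum of C(M,A) A CIA} or Theorem \ref{spec of C(M,A) top}, which would otherwise force either a compactness hypothesis on $M$ or a separate treatment of continuous versus algebraic characters.
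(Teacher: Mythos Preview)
Your argument is correct and complete. The route you take, however, differs from the paper's. The paper dispatches the lemma in one line by invoking Theorem~\ref{spec of C(M,A) top} applied to $A=\mathbb{C}^{n}$ (together with the trivial observation that the spectrum of $\mathbb{C}$ is a point, hence $\Gamma_{\mathbb{C}^{n}}\cong\{1,\ldots,n\}$), whereas you bypass that theorem entirely and instead exploit the primitive idempotent decomposition $\mathbb{C}^{n}=\bigoplus_{k}\mathbb{C}e_{k}$ to reduce directly to Lemma~\ref{spec of C(M,K) set} and Proposition~\ref{spec of C(M) top}.

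What each approach buys: the paper's proof is shorter once the machinery of Theorem~\ref{spec of C(M,A) top} is in place, but that theorem is stated only for the \emph{continuous} spectrum $\Gamma^{\text{cont}}$, so strictly speaking it yields $M\times\{1,\ldots,n\}\cong\Gamma^{\text{cont}}_{C^{\infty}(M,\mathbb{C}^{n})}$ rather than the full algebraic spectrum $\Gamma_{C^{\infty}(M,\mathbb{C}^{n})}$ asserted in the lemma. Your idempotent argument closes this gap cleanly: it shows that \emph{every} character of $C^{\infty}(M,\mathbb{C}^{n})$ factors through a single summand $C^{\infty}(M)$, where Lemma~\ref{spec of C(M,K) set} guarantees automatic continuity without any compactness assumption on $M$. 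You were right to flag this point in your last paragraph; your proof is in this respect more self-contained than the paper's one-liner.
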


\begin{proof}
The assertion immediately follows from Theorem \ref{spec of C(M,A) top} and the observation that \mbox{$\Gamma_{\mathbb{C}}=\{\id_{\mathbb{C}}\}$}.
\end{proof}

\begin{lemma}\label{TSSSABFDCF7}
For $n\in\mathbb{N}$ let $I_n:=\{1,\ldots,n\}$ and $A=\mathbb{C}^n$. Then the following assertions hold:
\begin{itemize}
\item[(a)]
We have $\Diff(I_n)=S_n$, where $S_n$ denotes the symmetric group of order $n$. 
\item[(b)]
The map
\[\iota:S_n\rightarrow\Aut(A),\,\,\,\iota(\sigma)(z_1,\ldots,z_n):=(z_{\sigma^{-1}(1)},\ldots,z_{\sigma^{-1}(n)})
\]is an isomorphism of groups.
\item[(c)]
Let $M$ be a manifold. If we identify $A$ with $C^{\infty}(I_n)$, then the map
\[\kappa_M:C^{\infty}(M,A)\rightarrow C^{\infty}(M\times I_n,),\,\,\,f\mapsto f^{\wedge},
\]where
\[f^{\wedge}:M\times I_n\rightarrow \mathbb{C},\,\,\,f^{\wedge}(m,k):=f(m)(k)
\]is an isomorphism of unital Fr\'echet algebras.
\end{itemize}
\end{lemma}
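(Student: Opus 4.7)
The plan is to handle the three parts in order, reducing everything to standard facts once one observes that $I_n$ is a zero-dimensional manifold (a finite discrete set).

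For part (a), I would argue that since $I_n=\{1,\ldots,n\}$ is a zero-dimensional manifold, any smooth map $I_n\to I_n$ is simply a set-theoretic map (every map from a discrete space is smooth), so $\Diff(I_n)$ coincides with the set of bijections $I_n\to I_n$, which is precisely $S_n$. This part is essentially a triviality once the smooth structure on $I_n$ is understood.

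For part (b), I would first check that $\iota$ is a group homomorphism. For $\sigma,\tau\in S_n$ and $z=(z_1,\ldots,z_n)$, a direct computation gives
\[
\iota(\sigma)\bigl(\iota(\tau)(z)\bigr)_i = \iota(\tau)(z)_{\sigma^{-1}(i)} = z_{\tau^{-1}(\sigma^{-1}(i))} = z_{(\sigma\tau)^{-1}(i)} = \iota(\sigma\tau)(z)_i,
\]
and each $\iota(\sigma)$ is clearly an algebra automorphism of $A=\mathbb{C}^n$ (it permutes coordinates). Injectivity of $\iota$ is immediate, since $\iota(\sigma)=\id$ forces $\sigma=\id$ by testing against the standard basis vectors $e_j$. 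For surjectivity, I would use the fact that every algebra automorphism of $\mathbb{C}^n$ permutes the set of minimal (nonzero) idempotents $\{e_1,\ldots,e_n\}$: indeed, these are exactly the primitive idempotents of $A$ and they are characterized purely algebraically, so an automorphism $\phi\in\Aut(A)$ induces a permutation $\sigma\in S_n$ with $\phi(e_j)=e_{\sigma(j)}$; linearity then forces $\phi=\iota(\sigma)$.

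For part (c), the key observation is that $I_n$ is discrete, so $C^\infty(I_n)=\mathbb{C}^n$ (as a Fr\'echet algebra with the product topology) and $C^\infty(M\times I_n,\mathbb{C})\cong C^\infty(M,\mathbb{C})^n$ (since a smooth function on a disjoint union of $n$ copies of $M$ is the same as an $n$-tuple of smooth functions on $M$). From there, $\kappa_M$ is just the canonical identification
\[
C^\infty(M,\mathbb{C}^n) \;\cong\; C^\infty(M,\mathbb{C})^n \;\cong\; C^\infty(M\times I_n,\mathbb{C}),
\]
and both isomorphisms are simultaneously algebraic (pointwise multiplication corresponds under both) and topological (the smooth compact-open topology on $C^\infty(M,\mathbb{C}^n)$ agrees with the product topology of $n$ copies of $C^\infty(M,\mathbb{C})$, and the latter agrees with $C^\infty(M\times I_n,\mathbb{C})$ via the smooth exponential law, cf.\ Remark \ref{proj. tensor product IV}). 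Unital-ness is preserved since $\kappa_M$ sends the constant function $(1,\ldots,1)$ to the constant function $1$ on $M\times I_n$. The only thing to verify with any care is the bicontinuity, but this reduces to the observation just noted that finite products and the smooth exponential law commute with the smooth compact-open topology; there is no genuine obstacle. The principal (and essentially only) conceptual point is recognizing that $I_n$ carries the discrete smooth structure, after which all three statements are almost formal.
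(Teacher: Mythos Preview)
Your proposal is correct and takes essentially the same approach as the paper: parts (a) and (b) are treated as elementary (the paper simply leaves them as an exercise, while you supply the details), and part (c) is reduced to the smooth exponential law with $N=I_n$. One small slip: the reference you want for the exponential law is Lemma~\ref{smooth exp law} in the appendix, not Remark~\ref{proj. tensor product IV} (which concerns the projective tensor product identification $C^\infty(M)\widehat{\otimes}E\cong C^\infty(M,E)$ rather than the exponential law).
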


\begin{proof}
(a),(b) The proof of the first two statements is left as an easy exercise to the reader.

(c) The last assertion follows from Lemma \ref{smooth exp law} applied to $N=I_n$.
\end{proof}

\begin{proposition}\label{TSSSABFDCF8}
Let $A$ be a finite-dimensional commutative unital algebra and \mbox{$n:=\dim A_{\emph{\text{ss}}}$}. Further, let $(\mathbb{A},M,A,q)$ be an algebra bundle and $(\varphi_i,U_i)_{i \in I}$ be a bundle atlas for $(\mathbb{A},M,A,q)$. If $U_{ij}:=U_i\cap U_j$ for $i,j\in I$ and $g^{\emph{\text{ss}}}_{ij}\in C^{\infty}(U_{ij},\Aut(A_{\emph{\text{ss}}}))$ are the corresponding transition functions of $(\mathbb{A}_{\emph{\text{ss}}},M,A_{\emph{\text{ss}}},q_{\emph{\text{ss}}})$ \emph{(}cf. Proposition \ref{TSSSABFDCF3}\emph{)}, then there exists a fibre bundle $(\widehat{M},M,I_n,p)$, i.e., an $n$-fold covering $p:\widehat{M}\rightarrow M$ with bundle charts $\psi_i:U_i\times I_n\rightarrow\widehat{M}_{U_i}$ such that
\[(\psi_i^{-1}\circ\psi_j)(x,k)=(x,h_{ij}(x).a),\,\,\,\text{where}\,\,\,h_{ij}:=\iota^{-1}\circ g^{\emph{\text{ss}}}_{ij}\in C^{\infty}(U_{ij},\Diff(I_n))
\]\emph{(}cf. Lemma \ref{TSSSABFDCF7}\emph{)}.
\end{proposition}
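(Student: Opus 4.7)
The plan is to translate the cocycle given by the transition functions $g^{\text{ss}}_{ij}$ of the semisimple quotient bundle $(\mathbb{A}_{\text{ss}},M,A_{\text{ss}},q_{\text{ss}})$ into a $\Diff(I_n)$-valued cocycle via the isomorphism $\iota$ of Lemma \ref{TSSSABFDCF7}, and then to reconstruct $\widehat{M}$ as the associated fibre bundle with fibre $I_n$.

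First I would note that, since $A$ is a finite-dimensional commutative unital algebra, Corollary \ref{TSSSABFDCF1} provides an algebra isomorphism $A_{\text{ss}}\cong\mathbb{C}^n$ with $n=\dim A_{\text{ss}}$; from now on I identify these. Combined with Lemma \ref{TSSSABFDCF7}(b), the group $\Aut(A_{\text{ss}})$ is identified via $\iota$ with the finite (hence discrete) group $S_n=\Diff(I_n)$. Set $h_{ij}:=\iota^{-1}\circ g^{\text{ss}}_{ij}\colon U_{ij}\to\Diff(I_n)$. Since $\iota$ is a group isomorphism, the cocycle identity $g^{\text{ss}}_{ij}\cdot g^{\text{ss}}_{jk}=g^{\text{ss}}_{ik}$ on $U_{ijk}$ (which holds because $(\varphi^{\text{ss}}_i,U_i)$ is a bundle atlas for $\mathbb{A}_{\text{ss}}$ by Proposition \ref{TSSSABFDCF3}(b)) transfers verbatim to $h_{ij}\cdot h_{jk}=h_{ik}$, and smoothness of $h_{ij}$ is automatic: continuity of $g^{\text{ss}}_{ij}$ with values in the \emph{discrete} space $\Aut(A_{\text{ss}})$ forces $h_{ij}$ to be locally constant.

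Next I would apply the standard fibre bundle reconstruction theorem to the cocycle $(h_{ij})$ with typical fibre $I_n$ and structure group $\Diff(I_n)=S_n$: define
\[
\widehat{M}:=\Big(\bigsqcup_{i\in I}U_i\times I_n\Big)\Big/\!\sim,\quad (x,k)_j\sim(x,h_{ij}(x)(k))_i\ \text{for}\ x\in U_{ij},
\]
with projection $p\colon\widehat{M}\to M$ induced by $(x,k)_i\mapsto x$ and bundle charts $\psi_i\colon U_i\times I_n\to\widehat{M}_{U_i}$ given by the inclusion of $U_i\times I_n$ into the disjoint union, followed by the quotient map. By construction $(\psi_i^{-1}\circ\psi_j)(x,k)=(x,h_{ij}(x).k)$, which is precisely the transition law demanded in the statement. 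Since each fibre $p^{-1}(x)$ is in bijection with $I_n$ and $I_n$ is discrete, $p$ is a smooth $n$-fold covering map.

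The few routine checks are that the equivalence relation is well defined (which is equivalent to the cocycle identity verified above) and that the resulting smooth structure on $\widehat{M}$ is independent of the choices, and these are standard arguments for fibre bundles built from cocycles (e.g., via the reconstruction theorem in Part I, \S 3 of \cite{St51}, which was already used in the proof of Proposition \ref{TSSSABFDCF3}). I do not expect a real obstacle: the only conceptual step is the identification $\Aut(A_{\text{ss}})=S_n$, and this is provided by Corollary \ref{TSSSABFDCF1} together with Lemma \ref{TSSSABFDCF7}(b); everything else is formal transport of a cocycle through a group isomorphism followed by the associated fibre bundle construction.
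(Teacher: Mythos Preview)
Your proposal is correct and follows exactly the same approach as the paper: the paper's proof consists of a single sentence invoking Lemma \ref{TSSSABFDCF7}(b) together with the fibre bundle reconstruction theorem \cite[Part I, Section 3, Theorem 3.2]{St51}, and you have simply unpacked those two ingredients in detail (including the helpful remark that the transition functions are locally constant because $\Aut(A_{\text{ss}})\cong S_n$ is discrete).
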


\begin{proof}
The assertion of this proposition follows from Lemma \ref{TSSSABFDCF7} (b) and \cite[Part I, Section 3, Theorem 3.2]{St51}.
\end{proof}

\begin{proposition}\label{TSSSABFDCF9}
Suppose we are in the situation of Proposition \ref{TSSSABFDCF8}. Further, consider the algebra homomorphism
\[\Psi_I:C^{\infty}(\widehat{M})\rightarrow\prod_{i\in I}C^{\infty}(U_i\times I_n),\,\,\,\Psi_I(f):=(f_i)_{i\in I}
\]with $f_i:=f_{\mid \widehat{M}_{U_i}}\circ\psi_i$. Then the following assertions hold:
\begin{itemize}
\item[(a)]
If $\psi_{ij}:=\psi_i^{-1}\circ\psi_j$ \emph{(}on $U_{ij}\times I_n$\emph{)}, then
\[\im(\Psi_I)=\{(f_i)_{i\in I}:\,(\forall i,j\in I)\,f_j=\psi_{ij}\circ f_i\}.
\]
\item[(b)]
The algebra homomorphism $\Psi_I$ is a topological embedding with closed image.
\end{itemize}
\end{proposition}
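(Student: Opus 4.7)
The plan is to follow the strategy used in the proof of Proposition~\ref{imPhi}, since the two statements have identical structure: a space of global objects is embedded into a product of local spaces indexed by a trivialising cover, with image cut out by a cocycle compatibility condition. Here the role of ``sections of $\mathbb{A}$'' is played by ``smooth functions on $\widehat{M}$'', and the role of the bundle cocycle $\varphi_{ji}$ is played by $\psi_{ij}=\psi_i^{-1}\circ\psi_j$.

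For part (a), the inclusion ``$\subseteq$'' is a direct calculation. If $f\in C^{\infty}(\widehat{M})$, then on $U_{ij}\times I_n$ one computes
\[
f_j=f_{\mid\widehat{M}_{U_j}}\circ\psi_j=\bigl(f_{\mid\widehat{M}_{U_i}}\circ\psi_i\bigr)\circ\bigl(\psi_i^{-1}\circ\psi_j\bigr)=f_i\circ\psi_{ij},
\]
which is the stated cocycle compatibility. For the reverse inclusion, given a compatible family $(f_i)_{i\in I}$ I would reconstruct $f:\widehat{M}\to\mathbb{C}$ locally by the formula $f(y):=f_i\bigl(\psi_i^{-1}(y)\bigr)$ for $y\in\widehat{M}_{U_i}$. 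The compatibility is precisely the statement that these local definitions agree on the overlaps $\widehat{M}_{U_i}\cap\widehat{M}_{U_j}$, smoothness is a local property inherited from each $f_i\circ\psi_i^{-1}$, and $\Psi_I(f)=(f_i)_{i\in I}$ by construction.

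For part (b), three things are to be verified. Injectivity of $\Psi_I$ is immediate because $\widehat{M}=\bigcup_{i\in I}\widehat{M}_{U_i}$. Continuity holds since each component map $f\mapsto f_{\mid\widehat{M}_{U_i}}\circ\psi_i$ is the composite of the continuous restriction $C^{\infty}(\widehat{M})\to C^{\infty}(\widehat{M}_{U_i})$ with pull-back along the diffeomorphism $\psi_i$. Closedness of the image follows at once from the characterisation in (a): $\im(\Psi_I)$ is the joint zero set of the continuous affine maps $(g_k)_k\mapsto g_j-g_i\circ\psi_{ij}$ inside $\prod_{i\in I}C^{\infty}(U_i\times I_n)$, where evaluation at any point of $U_{ij}\times I_n$ is continuous.

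The main, mild obstacle is upgrading injectivity and continuity to the statement that $\Psi_I$ is a topological \emph{embedding}. For this I would invoke Remark~\ref{sco topology=initial topology}, which shows that the smooth compact-open topology on $C^{\infty}(\widehat{M})$ coincides with the initial topology generated by chart restrictions associated with any atlas of $\widehat{M}$. Since the family $\{\psi_i\}$ provides such an atlas of $\widehat{M}$ up to the discrete product factor $I_n$, the initial topology induced by the coordinate maps of $\Psi_I$ agrees with the ambient smooth compact-open topology on $C^{\infty}(\widehat{M})$. Consequently $\Psi_I$ is a topological embedding onto its closed image, and the proposition is established. Everything beyond the invocation of Remark~\ref{sco topology=initial topology} is a direct transcription of the argument used in Proposition~\ref{imPhi}.
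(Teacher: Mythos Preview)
Your proposal is correct and follows essentially the same approach as the paper, whose proof reads in its entirety ``The proof of this proposition is similar to the proof of Proposition~\ref{imPhi}.'' You have simply spelled out that analogy in detail, correctly invoking Remark~\ref{sco topology=initial topology} to identify the smooth compact-open topology on $C^{\infty}(\widehat{M})$ with the initial topology induced by the chart maps (the one point where the situation differs slightly from Proposition~\ref{imPhi}, where the initial topology was taken as the \emph{definition}); incidentally, your computation $f_j=f_i\circ\psi_{ij}$ also quietly corrects the order of composition in the statement.
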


\begin{proof}
The proof of this proposition is similar to the proof of Proposition \ref{imPhi}.
\end{proof}

\begin{proposition}\label{TSSSABFDCF10}
Suppose we are in the situation of Proposition \ref{TSSSABFDCF8}. Further, let 
\[\Phi_I:\Gamma\mathbb{A}_{\emph{\text{ss}}}\rightarrow\prod_{i\in I}C^{\infty}(U_i,A_{\emph{\text{ss}}}),\,\,\,\Phi_I(s):=(\Phi_i(s))_{i\in I}=(s_i)_{i\in I}
\]be the embedding of Proposition \ref{imPhi} applied to the algebra bundle $(\mathbb{A}_{\emph{\text{ss}}},M,A_{\emph{\text{ss}}},q_{\emph{\text{ss}}})$ and $\Psi_I$ the embedding of Proposition \ref{TSSSABFDCF9}. Then the map
\[\Omega_I:\Gamma\mathbb{A}_{\emph{\text{ss}}}\rightarrow C^{\infty}(\widehat{M}),\,\,\,\Omega_I:=\Psi_I^{-1}\circ\left(\prod_{i\in I}\kappa_{U_i}\right)\circ\Phi_I
\]is an isomorphism of unital Fr\'echet algebras. Here, $\kappa_{U_i}$ denotes the map of Lemma \ref{TSSSABFDCF7} \emph{(}c\emph{)} applied to $M=U_i$ and $A_{\emph{\text{ss}}}\cong\mathbb{C}^n$.
\end{proposition}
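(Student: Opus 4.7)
The plan is to read $\Omega_I$ as the restriction of the componentwise isomorphism $\prod_{i \in I} \kappa_{U_i}$ to the two closed subalgebras cut out by the appropriate cocycle conditions inside $\prod_{i \in I} C^\infty(U_i, A_{\text{ss}})$ and $\prod_{i \in I} C^\infty(U_i \times I_n)$. Since $\Phi_I$ and $\Psi_I$ are already known to be topological embeddings onto closed subalgebras (Proposition \ref{imPhi}, Proposition \ref{TSSSABFDCF9}) and each $\kappa_{U_i}$ is an isomorphism of unital Fréchet algebras (Lemma \ref{TSSSABFDCF7} (c)), everything will reduce to checking that $\prod_{i} \kappa_{U_i}$ carries $\im(\Phi_I)$ bijectively onto $\im(\Psi_I)$.

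The first step is to verify that $\Omega_I$ is well-defined, i.e., that $(\prod_i \kappa_{U_i}) \circ \Phi_I$ lands in $\im(\Psi_I)$. Let $s \in \Gamma \mathbb{A}_{\text{ss}}$ and write $s_i := \Phi_i(s)$. By Proposition \ref{imPhi}, one has $s_j(x) = g^{\text{ss}}_{ji}(x).s_i(x)$ on $U_{ij}$. Identifying $A_{\text{ss}}$ with $\mathbb{C}^n = C^\infty(I_n)$ and unwinding $\iota$ from Lemma \ref{TSSSABFDCF7} (b), the action of $\iota(\sigma)$ on $A_{\text{ss}}$ is precomposition with $\sigma^{-1}$. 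Writing $\tilde{s}_i := \kappa_{U_i}(s_i) \in C^\infty(U_i \times I_n)$ and using the cocycle identity $g^{\text{ss}}_{ji}(x)^{-1} = g^{\text{ss}}_{ij}(x)$, the above condition transports to
\[
\tilde{s}_j(x, k) \;=\; \tilde{s}_i\bigl(x,\, h_{ij}(x).k\bigr),
\]
which is precisely the cocycle condition defining $\im(\Psi_I)$ in Proposition \ref{TSSSABFDCF9}. Running the same translation in reverse shows that $\prod_i \kappa_{U_i}^{-1}$ sends $\im(\Psi_I)$ back into $\im(\Phi_I)$, so $\Omega_I$ admits a well-defined two-sided inverse.

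Multiplicativity and unitality of $\Omega_I$ are automatic, since $\Phi_I$, $\Psi_I$, and each $\kappa_{U_i}$ are morphisms of unital algebras. Continuity of $\Omega_I$ follows because $\Phi_I$ and each $\kappa_{U_i}$ are continuous and $\Psi_I$ is a topological embedding, so $\Psi_I^{-1}$ is continuous on its closed image; continuity of the inverse is proved symmetrically. By Corollary \ref{A frechet GammaA frechet} and Corollary \ref{TSSSABFDCF1} both algebras are Fréchet, so one may alternatively invoke the Open Mapping Theorem for the continuity of $\Omega_I^{-1}$.

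The only step requiring genuine work is the cocycle translation: one must carefully match the conjugation-by-$\iota$ of the $\Aut(A_{\text{ss}})$-cocycle $g^{\text{ss}}_{ij}$ with the $\Diff(I_n) = S_n$-cocycle $h_{ij}$ defining $\widehat{M}$, keeping track of where $\sigma$ versus $\sigma^{-1}$ enter through $\kappa_{U_i}$. This bookkeeping is forced by the defining relation $h_{ij} = \iota^{-1} \circ g^{\text{ss}}_{ij}$ of Proposition \ref{TSSSABFDCF8}, and once it is verified the proposition is a direct repackaging of the previously established embedding results.
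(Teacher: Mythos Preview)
Your proof is correct and follows exactly the approach of the paper: reduce everything to showing that $\prod_i \kappa_{U_i}$ carries $\im(\Phi_I)$ onto $\im(\Psi_I)$, which amounts to translating the $\Aut(A_{\text{ss}})$-cocycle condition into the $S_n$-cocycle condition via Lemma~\ref{TSSSABFDCF7}. The paper's own proof is extremely terse (it merely cites Lemma~\ref{TSSSABFDCF7} (a) and (b) for this translation), whereas you have spelled out the bookkeeping with $\iota$, $h_{ij}$, and the inversion $g^{\text{ss}}_{ji}{}^{-1}=g^{\text{ss}}_{ij}$ explicitly; this is a welcome elaboration rather than a different argument.
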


\begin{proof}
In order to prove the assertion, it is enough to verify that the image of the map $\left(\prod_{i\in I}\kappa_{U_i}\right)\circ\Phi_I$ is equal to $\im(\Psi_I)$. In fact, this immediately follows from Lemma \ref{TSSSABFDCF7} (a) and (b).
\end{proof}

\begin{theorem}\label{TSSSABFDCF11}
\emph{(}The spectrum as an $n$-fold covering\emph{)}. Let $A$ be a finite-dimensional commutative unital algebra. Further, let $(\mathbb{A},M,A,q)$ be an algebra bundle and $(\varphi_i,U_i)_{i \in I}$ be a bundle atlas for $(\mathbb{A},M,A,q)$. If $\widehat{M}$ is as in Proposition \ref{TSSSABFDCF8} and $\delta_{\widehat{m}}$ denotes the usual evaluation map in some $\widehat{m}\in\widehat{M}$, then the map
\[\Xi_I:\widehat{M}\rightarrow\Gamma_{\Gamma\mathbb{A}},\,\,\,\Xi_I(\widehat{m})(s):=\delta_{\widehat{m}}(\Omega_I(\overline{s}))
\]is a homeomorphism. Here, $\Omega_I$ is the map of Proposition \ref{TSSSABFDCF10} and $\overline{s}=\Gamma p(s)$ \emph{(}cf. Proposition \ref{TSSSABFDCF3}\emph{)}. In particular, if $n:=\dim A_{\emph{\text{ss}}}$, then the spectrum $\Gamma_{\Gamma\mathbb{A}}$ is an $n$-fold covering of $M$. 
\end{theorem}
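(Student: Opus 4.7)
The plan is to reduce the problem step-by-step to previously established homeomorphisms, so that $\Xi_I$ becomes the composition of three known bijections of topological spaces. Throughout, I use that the $n$-fold covering structure on $\widehat{M}\to M$ is already supplied by Proposition~\ref{TSSSABFDCF8}, so once $\Xi_I$ is shown to be a homeomorphism the ``covering'' conclusion is immediate.

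First I would pass from $\Gamma\mathbb{A}$ to its semisimple quotient bundle. By Theorem~\ref{TSSSABFDCF5}, the map
\[
\Phi:\Gamma_{\Gamma\mathbb{A}}\longrightarrow\Gamma_{\Gamma\mathbb{A}_{\text{ss}}},\qquad\Phi(\chi)(\overline{s})=\chi(s),
\]
is a homeomorphism. Hence it suffices to exhibit a homeomorphism $\widehat{M}\to\Gamma_{\Gamma\mathbb{A}_{\text{ss}}}$ whose composition with $\Phi^{-1}$ equals $\Xi_I$. Next, Proposition~\ref{TSSSABFDCF10} tells us that
\[
\Omega_I:\Gamma\mathbb{A}_{\text{ss}}\longrightarrow C^{\infty}(\widehat{M})
\]
is an isomorphism of unital Fr\'echet algebras; hence its pull-back
\[
\Omega_I^{*}:\Gamma_{C^{\infty}(\widehat{M})}\longrightarrow\Gamma_{\Gamma\mathbb{A}_{\text{ss}}},\qquad\chi\mapsto\chi\circ\Omega_I
\]
is a homeomorphism (both source and target carry the topology of pointwise convergence, and $\Omega_I$ being a topological isomorphism ensures bicontinuity).

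Then I would apply Proposition~\ref{spec of C(M) top} to the manifold $\widehat{M}$: the evaluation map
\[
\Phi_{\widehat{M}}:\widehat{M}\longrightarrow\Gamma_{C^{\infty}(\widehat{M})},\qquad\widehat{m}\mapsto\delta_{\widehat{m}},
\]
is a homeomorphism. Composing the three, I set $\widetilde{\Xi}_I:=\Phi^{-1}\circ\Omega_I^{*}\circ\Phi_{\widehat{M}}$, which is automatically a homeomorphism $\widehat{M}\to\Gamma_{\Gamma\mathbb{A}}$. An unwinding of the definitions shows, for $s\in\Gamma\mathbb{A}$ with $\overline{s}=\Gamma p(s)$,
\[
\widetilde{\Xi}_I(\widehat{m})(s)=\Phi^{-1}(\delta_{\widehat{m}}\circ\Omega_I)(s)=(\delta_{\widehat{m}}\circ\Omega_I)(\overline{s})=\delta_{\widehat{m}}(\Omega_I(\overline{s})),
\]
so $\widetilde{\Xi}_I=\Xi_I$. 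In particular $\Xi_I$ is well-defined and a homeomorphism.

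The only point requiring genuine verification (rather than stitching together previous results) is the identity $\Xi_I=\Phi^{-1}\circ\Omega_I^{*}\circ\Phi_{\widehat{M}}$, and here the main obstacle is bookkeeping: one has to trace an element $s\in\Gamma\mathbb{A}$ through the chain $s\mapsto\overline{s}=\Gamma p(s)\mapsto\Omega_I(\overline{s})\in C^{\infty}(\widehat{M})\mapsto\delta_{\widehat{m}}(\Omega_I(\overline{s}))$ and match it to the formula defining $\Xi_I$; the definitions of $\Phi$ from Theorem~\ref{TSSSABFDCF5} and of $\Omega_I$ from Proposition~\ref{TSSSABFDCF10} make this routine. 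For the final sentence of the theorem, since $\widehat{M}\to M$ is an $n$-fold covering by Proposition~\ref{TSSSABFDCF8}, the homeomorphism $\Xi_I$ transfers this covering structure to $\Gamma_{\Gamma\mathbb{A}}$, completing the proof.
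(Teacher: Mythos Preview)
Your proof is correct and follows essentially the same approach as the paper: both factor $\Xi_I$ as the composition of the evaluation homeomorphism $\Phi_{\widehat{M}}$ from Proposition~\ref{spec of C(M) top}, the pull-back along $\Omega_I$ from Proposition~\ref{TSSSABFDCF10}, and the inverse of the homeomorphism from Theorem~\ref{TSSSABFDCF5} (the paper writes this last map as $\Psi$, which is precisely your $\Phi^{-1}$). The only cosmetic difference is notation; your added remarks on well-definedness and the covering conclusion are correct and slightly more explicit than the paper's version.
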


\begin{proof}
We first note that the map $\Xi_I$ is well-defined. Next, let 
\[\Phi_{\widehat{M}}:\widehat{M}\rightarrow \Gamma_{C^{\infty}(\widehat{M})},\,\,\,\widehat{m}\mapsto \delta_{\widehat{m}}\notag
\]be the homeomorphism of Proposition \ref{spec of C(M) top} and
\[\Gamma_{\Omega_I}:\Gamma_{C^{\infty}(\widehat{M})}\rightarrow\Gamma_{\Gamma\mathbb{A}_{\emph{\text{ss}}}},\,\,\,\Gamma_{\Omega_I}(\delta_{\widehat{m}}):=\delta_{\widehat{m}}\circ\Omega_I
\]the homeomorphism induced by the map $\Omega_I$ of Proposition \ref{TSSSABFDCF10}. If $\Psi$ is the homeomorphsim of Theorem \ref{TSSSABFDCF5}, then a short observation shows that $\Xi_I=\Psi\circ\Gamma_{\Omega_I}\circ\Phi_{\widehat{M}}$. From this we conclude that $\Xi$ is a homeomorphism as it is a composition of homeomorphisms.
\end{proof}

\begin{corollary}\label{TSSSABFDCF12}
Suppose we are in the situation of Theorem \ref{TSSSABFDCF11} with $A=\mathbb{C}^n$. Then the map
\[\Xi_I:\widehat{M}\rightarrow\Gamma_{\Gamma\mathbb{A}},\,\,\,\Phi(\widehat{m})(s):=(s_i)_k(m),
\]for $i\in I$ with $m=p(\widehat{m})\in U_i$, $k\in I_n$ with $\psi_i(m,k)=\widehat{m}$ and $s_i=((s_i)_1,\ldots,(s_i)_n)$, is a homeomorphism.

\end{corollary}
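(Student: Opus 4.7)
The plan is to recognize this corollary as a direct unpacking of Theorem \ref{TSSSABFDCF11} in the special case $A=\mathbb{C}^n$, and to verify that the resulting description of $\Xi_I(\widehat{m})(s)$ matches the stated formula by tracing through the chain of identifications $\Phi_I$, $\kappa_{U_i}$, $\Psi_I^{-1}$, and $\Psi\circ\Gamma_{\Omega_I}\circ\Phi_{\widehat{M}}$.

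First I would observe that when $A=\mathbb{C}^n$ the algebra $A$ is already finite-dimensional, commutative and semisimple, so the Jacobson radical vanishes: $A_{\text{n}}=0$ and $A_{\text{ss}}=A$. Consequently, the bundle $(\mathbb{A}_{\text{n}},M,A_{\text{n}},q_{\text{n}})$ of Proposition \ref{TSSSABFDCF3}(a) is trivial, the quotient map $p$ of Proposition \ref{TSSSABFDCF3}(b) is the identity on fibres, and $\Gamma p:\Gamma\mathbb{A}\rightarrow\Gamma\mathbb{A}_{\text{ss}}$ is an isomorphism. Hence $\overline{s}=s$ under this identification, and Theorem \ref{TSSSABFDCF11} reduces the statement to showing that $\delta_{\widehat{m}}(\Omega_I(s))=(s_i)_k(m)$ for the described choice of $i$ and $k$.

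Next I would unravel $\Omega_I=\Psi_I^{-1}\circ\bigl(\prod_{i\in I}\kappa_{U_i}\bigr)\circ\Phi_I$. By the defining property of $\Psi_I$ (Proposition \ref{TSSSABFDCF9}), an element $f\in C^{\infty}(\widehat{M})$ is characterized by its restrictions $f_i=f_{\mid\widehat{M}_{U_i}}\circ\psi_i\in C^{\infty}(U_i\times I_n)$; so for $\widehat{m}=\psi_i(m,k)$ one has $f(\widehat{m})=f_i(m,k)$. Applied to $f=\Omega_I(s)$, this yields $\delta_{\widehat{m}}(\Omega_I(s))=\kappa_{U_i}(\Phi_i(s))(m,k)=\Phi_i(s)(m)(k)=s_i(m)(k)$, where the last equality uses the identification $A_{\text{ss}}=\mathbb{C}^n\cong C^{\infty}(I_n)$ from Lemma \ref{TSSSABFDCF7}(c). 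Writing $s_i(m)=((s_i)_1(m),\ldots,(s_i)_n(m))$ gives $s_i(m)(k)=(s_i)_k(m)$, which is the formula claimed.

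Finally, the fact that $\Xi_I$ is a homeomorphism is inherited directly from Theorem \ref{TSSSABFDCF11}, because $\Xi_I$ is literally the same map there, only expressed via an explicit formula in local coordinates. Independence of the chosen $i\in I$ with $m\in U_i$ (and of the corresponding $k$) is automatic from well-definedness in Theorem \ref{TSSSABFDCF11}, but it can also be checked directly from the cocycle compatibility $\psi_{ij}$ of Proposition \ref{TSSSABFDCF9}(a) together with the image description in Proposition \ref{imPhi}(a). The only thing to be careful about is the bookkeeping of the three isomorphisms $\Psi\circ\Gamma_{\Omega_I}\circ\Phi_{\widehat{M}}$ composing correctly; I do not expect any genuine obstacle, as every step is a direct translation through identifications already established.
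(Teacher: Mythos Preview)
Your proposal is correct and takes essentially the same approach as the paper, which simply records that the assertion is a direct consequence of Theorem \ref{TSSSABFDCF11}. You have merely made explicit the unpacking of the formula $\delta_{\widehat{m}}(\Omega_I(\overline{s}))$ in the semisimple case $A=\mathbb{C}^n$ (where $\overline{s}=s$), which is exactly what the paper leaves implicit.
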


\begin{proof}
This assertion is a direct consequence of Theorem \ref{TSSSABFDCF11}.
\end{proof}

\section{Smooth Localization of Dynamical Systems}\label{ACNCPT^nB}

In this section we present a method of localizing dynamical systems $(A,G,\alpha)$ with respect to elements of the commutative fixed point algebra of the induced action of $G$ on the center $C_A$ of $A$. In fact, this construction turns out to be the starting point for our approach to noncommutative principal torus bundles.

\begin{notation} 
Let $(A,G,\alpha)$ be a dynamical system and $C_A$ be the center of $A$. If $c\in C_A, g\in G$ and $a\in A$, then the equation $\alpha(g,c)a=a\alpha(g,c)$ shows that the map $\alpha$ restricts to a continuous action of $G$ on $C_A$ by algebra automorphisms. In the following we write $Z$ for the fixed point algebra of the induced action of $G$ on $C_A$, i.e., $Z=C_A^G$. We further write $A_{\{z\}}$ for the smooth localization of $A$ with respect to $z\in Z$ and 
\[\pi_{\{z\}}:C^{\infty}(\mathbb{R},A)\rightarrow A_{\{z\}}
\]for the corresponding continuous quotient homomorphism (cf. Definition \ref{sm. loc.}). 
\end{notation}

\begin{lemma}\label{A_z is locally convex space again}
%Suppose that a topological group $G$ acts continuously on a unital locally convex algebra $A$. Further, let $B:=A^G$ be the corresponding fixed point algebra, $Z:=C_A\cap B$ and $z\in Z$. 
%Let $(A,G,\alpha)$ be a dynamical system, $Z:=C_A^G$ and $z\in Z$. Then, 
With the notations from above, the following assertion holds:
%\begin{itemize}
%\item[\emph{(a)}]
The map 
\[\rho_{\{z\}}:A_{\{z\}}\times Z\rightarrow A_{\{z\}},\,\,\,([f],z)\mapsto [z\cdot f]
\]defines on $A_{\{z\}}$ the structure of a right locally convex $Z$-module.
%\item[\emph{(b)}]
%Let $\widehat{Z}$ be the completion of $Z$. Then, the map $\rho$ can be extended to a continuous bilinear map 
%\[\widehat{\rho}:\widehat{A}_{\{z\}}\times\widehat{Z}\rightarrow\widehat{A}_{\{z\}}.
%\]In particular, $\widehat{\rho}$ defines on $\widehat{A}_{\{z\}}$ the structure of a right locally convex $\widehat{Z}$-module.
%\end{itemize}
\end{lemma}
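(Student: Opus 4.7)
The strategy is to deduce the desired right $Z$-module structure on $A_{\{z\}}$ from the continuous $A$-bimodule structure already produced in Lemma~\ref{A_{a...} is a (left) A-modul}, exploiting the central and commutative nature of $Z$. There is essentially no serious obstacle; the content is a book-keeping consequence of that lemma together with the defining properties of $Z$.

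First, I would observe that, because $Z \subseteq C_A$ is central in $A$, the left and right actions of any $z' \in Z$ on $A_{\{z\}}$ inherited from Lemma~\ref{A_{a...} is a (left) A-modul} coincide. Indeed, for $f \in A\{t\} = C^{\infty}(\mathbb{R},A)$ and $z' \in Z$, pointwise commutativity gives $z'f(s) = f(s)z'$ for every $s \in \mathbb{R}$, so $z' \cdot f = f \cdot z'$ in $A\{t\}$, and passing to the quotient yields $z'.[f] = [f].z'$ in $A_{\{z\}}$. In particular, the formula $[f].z' := [z'f]$ agrees with the restriction of the right $A$-action to $Z$, so $\rho_{\{z\}}$ is well defined.

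Second, I would verify the right-module axioms. Additivity in both slots and the unit law $[f].1_Z = [f]$ follow directly from Lemma~\ref{A_{a...} is a (left) A-modul}. For associativity $([f].z'_1).z'_2 = [f].(z'_1 z'_2)$, note that the left-hand side equals $[z'_2 z'_1 f]$ while the right-hand side equals $[(z'_1 z'_2)f]$; the two agree because $Z = C_A^G$ is commutative.

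Finally, joint continuity of the bilinear map $\rho_{\{z\}} \colon A_{\{z\}} \times Z \to A_{\{z\}}$ is inherited from the continuity of the $A$-bimodule action in Lemma~\ref{A_{a...} is a (left) A-modul}, precomposed with the continuous inclusion $Z \hookrightarrow A$. This completes the verification that $A_{\{z\}}$, equipped with $\rho_{\{z\}}$, is a right locally convex $Z$-module.
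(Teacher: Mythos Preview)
Your proof is correct and follows essentially the same approach as the paper: both arguments deduce the right $Z$-module structure directly from the continuous $A$-bimodule structure of Lemma~\ref{A_{a...} is a (left) A-modul}, using that $Z$ is commutative (and central). The paper's proof is simply a one-line citation of that lemma together with the commutativity of $Z$, while you have spelled out the routine verifications explicitly.
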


\begin{proof}
Since $Z$ is commutative, the assertion directly follows from Lemma \ref{A_{a...} is a (left) A-modul}.
%(b) The second statement is a consequence of Theorem \ref{completion of top rings I}.
\end{proof}

\begin{proposition}\label{A_z is algebra}
The locally convex space $A_{\{z\}}$ becomes a locally convex algebra, when equipped with the multiplication
\[m_{\{z\}}:A_{\{z\}}\times A_{\{z\}}\rightarrow A_{\{z\}},\,\,\,m([f],[f']):=[f\cdot f'].
\]
\end{proposition}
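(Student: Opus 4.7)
The plan is to realize Proposition \ref{A_z is algebra} as an instance of the general fact that the quotient of a locally convex algebra with jointly continuous multiplication by a closed two-sided ideal is again a locally convex algebra. The ingredients for this are all already in place: $A\{t\} = C^\infty(\mathbb{R},A)$ is a unital locally convex algebra by Proposition \ref{C(M,A) loc. con. algebra} (applied to $M=\mathbb{R}$), and by Definition \ref{sm. loc.} the subspace $I_{\{z\}} = \overline{\langle f_z^1 \rangle}$ is, by construction, a closed two-sided ideal of $A\{t\}$.

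First, I would check that $m_{\{z\}}$ is well-defined. Given representatives with $f_1 - f_2 \in I_{\{z\}}$ and $f_1' - f_2' \in I_{\{z\}}$, the identity
\[
f_1 f_1' - f_2 f_2' = f_1(f_1' - f_2') + (f_1 - f_2)f_2'
\]
shows that $f_1 f_1' - f_2 f_2' \in I_{\{z\}}$, by the two-sidedness of $I_{\{z\}}$. Hence $[f\cdot f']$ depends only on the classes $[f], [f']$. The associativity, distributivity, and unitality of $m_{\{z\}}$ then descend directly from the corresponding properties of the pointwise multiplication $m$ on $A\{t\}$.

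The main point is the joint continuity of $m_{\{z\}}$. Here I would use that the quotient homomorphism
\[
\pi_{\{z\}} : A\{t\} \longrightarrow A_{\{z\}}
\]
is a continuous linear surjection between locally convex spaces; in particular, since $A\{t\}$ is a topological abelian group and $I_{\{z\}}$ is a closed subgroup, the map $\pi_{\{z\}}$ is also open. Consequently $\pi_{\{z\}} \times \pi_{\{z\}}$ is a continuous, open, surjective map, and therefore a topological quotient map. The identity
\[
m_{\{z\}} \circ (\pi_{\{z\}} \times \pi_{\{z\}}) = \pi_{\{z\}} \circ m
\]
holds by construction of $m_{\{z\}}$, and its right-hand side is continuous as a composition of the continuous maps $m$ and $\pi_{\{z\}}$. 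The universal property of the quotient topology then yields the continuity of $m_{\{z\}}$.

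No genuine obstacle arises; the only step that is worth making explicit is the use of the openness of $\pi_{\{z\}} \times \pi_{\{z\}}$ to pass from the (a priori only) joint continuity of $m$ on $A\{t\} \times A\{t\}$ to the joint continuity of the induced multiplication on the quotient. Everything else is bookkeeping on representatives.
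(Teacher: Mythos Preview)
Your proof is correct and follows essentially the same approach as the paper: both use the identity $m_{\{z\}}\circ(\pi_{\{z\}}\times\pi_{\{z\}})=\pi_{\{z\}}\circ m$ together with the fact that $\pi_{\{z\}}\times\pi_{\{z\}}$ is continuous, surjective and open to deduce the continuity of $m_{\{z\}}$. You have simply made explicit the well-definedness check and the reason why $\pi_{\{z\}}$ is open, which the paper leaves implicit.
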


\begin{proof}
This assertion is an easy consequence of the construction of $A_{\{z\}}$. Indeed, if $m$ denotes the (continuous) multiplication map of the algebra $C^{\infty}(\mathbb{R},A)$, then the continuity of $m_{\{z\}}$ follows from $m_{\{z\}}\circ(\pi_{\{z\}}\times\pi_{\{z\}})=\pi_{\{z\}}\circ m$ and the fact that the map $\pi_{\{z\}}\times\pi_{\{z\}}$ is continuous, surjective and open.
\end{proof}

%\begin{proposition}
%The locally convex space $\widehat{A}_{\{z\}}$ carries the structure of a locally convex algebra.
%\end{proposition}

%\begin{proof}
%\,\,\,The corollary immediately follows from Proposition \ref{A_z is algebra} and Corollary \ref{completion of top rings II}.
%\end{proof}

\begin{remark}\label{remark on smooth dynamical systems}
(Smooth dynamical systems). We call a dynamical system $(A,G,\alpha)$ \emph{smooth} if $G$ is a Lie group and the group homomorphism $\alpha:G\rightarrow\Aut(A)$ induces a smooth action of $G$ on $A$.

A classical example of such a smooth dynamical system is induced by a smooth action $\sigma:M\times G\rightarrow M$ of a Lie group $G$ on a manifold $M$. In particular, each principal bundle $(P,M,G,q,\sigma)$ induces
a smooth dynamical system $(C^{\infty}(P),G,\alpha)$, consisting of the Fr\'echet algebra of smooth functions on the total space $P$, the structure group $G$ and a group homomorphism $\alpha:G\rightarrow\Aut(C^{\infty}(P))$, induced by the smooth action $\sigma:P\times G\rightarrow P$ of $G$ on $P$. For this and further (non-classical) examples of smooth dynamical systems we refer the interested reader to the examples of Section \ref{a geometric approach to NCPB} or \cite{Wa11a}.
\end{remark}

\begin{lemma}\label{dynamical system and trivial algebra bundles}
Let $(A,G,\alpha)$ be a \emph{(}smooth\emph{)} dynamical system and $M$ a manifold. Then the map
\[\beta:G\times C^{\infty}(M,A)\rightarrow C^{\infty}(M,A),\,\,\,(g,f)\mapsto\alpha(g)\circ f
\]defines a \emph{(}smooth\emph{)} continuous action of $G$ on $C^{\infty}(M,A)$ by algebra automorphisms. In particular, the triple $(C^{\infty}(M,A),G,\beta)$ is a \emph{(}smooth\emph{)} dynamical system.
\end{lemma}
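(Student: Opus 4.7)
The plan is to separate the statement into three parts: (i) for each fixed $g\in G$, the map $\beta(g,\cdot)$ is a continuous algebra automorphism of $C^\infty(M,A)$; (ii) the group-action axioms for $\beta$; and (iii) the (smooth) continuity of the joint map $\beta$.

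For (i), fix $g\in G$. Because $\alpha(g)\in\Aut(A)$ is a continuous algebra automorphism, postcomposition $f\mapsto\alpha(g)\circ f$ sends smooth $A$-valued functions on $M$ to smooth $A$-valued functions on $M$, and a pointwise calculation shows it is a unital algebra homomorphism. A two-sided inverse is given by postcomposition with $\alpha(g^{-1})$, so $\beta(g,\cdot)$ is an algebra automorphism. Its continuity with respect to the smooth compact-open topology follows from the functorial embedding
\[C^\infty(M,A)\hookrightarrow\prod_{n\in\mathbb{N}_0}C(T^nM,T^nA)\]
used in the proof of Proposition \ref{C(M,A) loc. con. algebra}, since postcomposition with $\alpha(g)$ corresponds on the $n$-th factor to postcomposition with $T^n\alpha(g)$, which is continuous on $C(T^nM,T^nA)$. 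The group-action axioms $\beta(e,f)=f$ and $\beta(g_1g_2,f)=\beta(g_1,\beta(g_2,f))$ in (ii) follow immediately from $\alpha$ being a group homomorphism into $\Aut(A)$.

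The main task is (iii). My plan is to invoke the smooth exponential law from Appendix B (Lemma \ref{smooth exp law}): a map $G\times C^\infty(M,A)\to C^\infty(M,A)$ is continuous (respectively smooth) if and only if its adjoint
\[\beta^\vee:G\times C^\infty(M,A)\times M\to A,\qquad (g,f,m)\mapsto \alpha(g)(f(m))\]
is. Now $\beta^\vee$ factors as $\alpha\circ(\id_G\times\ev_M)$, where $\ev_M:C^\infty(M,A)\times M\to A$ is the joint evaluation map, which is continuous (in fact smooth) by \cite[Proposition I.2]{NeWa07}, already invoked in the proof of Proposition \ref{inverting 1}. Since $\alpha$ is continuous (respectively smooth) by hypothesis and compositions preserve this regularity, $\beta^\vee$ inherits it, and the exponential law transfers it back to the required (smooth) continuity of $\beta$.

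The only delicate point is matching the hypotheses of the exponential law and of joint-evaluation smoothness to the present setting, where the second factor is the infinite-dimensional locally convex algebra $C^\infty(M,A)$ and $G$ may itself be an infinite-dimensional Lie group. Once the appropriate versions from Appendix B are applied, the rest of the argument is essentially a diagram chase, so I expect no further obstacle beyond a careful bookkeeping of regularity classes.
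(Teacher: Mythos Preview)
Your proposal is correct and follows essentially the same route as the paper: the key step~(iii) is identical to the paper's argument, namely reducing the (smooth) continuity of $\beta$ via the exponential law (Lemma~\ref{smooth exp law}) to that of $\beta^{\wedge}=\alpha\circ(\id_G\times\ev_M)$, with the smoothness of $\ev_M$ supplied by \cite[Proposition I.2]{NeWa07}. The paper simply dispatches your parts~(i) and~(ii) with the word ``clearly'', so your additional verification there is extra care rather than a different approach.
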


\begin{proof}
We first note that the map
\[\ev_M:C^{\infty}(M,A)\times M\rightarrow A,\,\,\,(f,m)\mapsto f(m)
\]is smooth (cf. \cite[Proposition I.2]{NeWa07}). Then Lemma \ref{smooth exp law} implies that the map $\beta$ is smooth if and only if the map
\[\beta^{\wedge}:G\times C^{\infty}(M,A)\times M\rightarrow A,\,\,\,(g,m,f)\mapsto\alpha(g,f(m))
\]is smooth. Since $\beta^{\wedge}=\alpha\circ(\id_G\times\ev_M)$, we conclude that $\beta^{\wedge}$ is smooth as a composition of smooth maps. Clearly, $\beta$ defines an action of $G$ on $C^{\infty}(M,A)$ by algebra automorphisms.
\end{proof}

\begin{proposition}\label{T^n action on spec again}
If $(A,G,\alpha)$ is a dynamical system and $z\in Z$, then the map
\[\alpha_{\{z\}}:G\times A_{\{z\}}\rightarrow A_{\{z\}},\,\,\,(g,[f]\underline{})\mapsto[\alpha(g)\circ f]
\]defines a continuous action of $G$ on $A_{\{z\}}$ by algebra automorphisms. In particular, the triple $(A_{\{z\}},G,\alpha_{\{z\}})$ is a dynamical system.
\end{proposition}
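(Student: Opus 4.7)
The plan is to obtain $\alpha_{\{z\}}$ as the action induced on the quotient $A_{\{z\}}=A\{t\}/I_z$ by the natural $G$-action on $A\{t\}=C^\infty(\mathbb{R},A)$ supplied by Lemma \ref{dynamical system and trivial algebra bundles}. Writing $\beta:G\times C^\infty(\mathbb{R},A)\to C^\infty(\mathbb{R},A)$, $(g,f)\mapsto\alpha(g)\circ f$, that lemma already provides a continuous $G$-action by algebra automorphisms; the whole question is to transport it through $\pi_{\{z\}}$.

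First I would verify that $I_z=\overline{\langle f_z\rangle}$ is $G$-invariant under $\beta$. Since $z\in Z=C_A^G$, we have $\alpha(g)(z)=z$ and $\alpha(g)(1_A)=1_A$ for all $g\in G$, hence
\[
\beta_g(f_z)(t)=\alpha(g)(1_A-tz)=1_A-tz=f_z(t),
\]
so $\beta_g$ fixes the generator $f_z$. Because $\beta_g$ is an algebra automorphism it maps the two-sided ideal $\langle f_z\rangle$ into itself, and because $\beta_g$ is continuous it maps the closure $I_z$ into itself. Applying the same argument to $g^{-1}$ shows $\beta_g(I_z)=I_z$.

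Next, this invariance together with the universal property of the quotient yields a well-defined map $\alpha_{\{z\}}(g,\cdot):A_{\{z\}}\to A_{\{z\}}$ satisfying
\[
\pi_{\{z\}}\circ\beta_g=\alpha_{\{z\}}(g,\cdot)\circ\pi_{\{z\}},
\]
which is precisely the formula $(g,[f])\mapsto[\alpha(g)\circ f]$. Since $\beta_g$ is an algebra automorphism of $A\{t\}$ and $\pi_{\{z\}}$ is a surjective algebra homomorphism, $\alpha_{\{z\}}(g,\cdot)$ is an algebra homomorphism; the same argument applied to $g^{-1}$ provides a two-sided inverse, so each $\alpha_{\{z\}}(g,\cdot)$ is an algebra automorphism, and the map $g\mapsto\alpha_{\{z\}}(g,\cdot)$ is clearly a group homomorphism because $\beta$ is.

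Finally I would establish joint continuity of $\alpha_{\{z\}}$. Consider the commutative diagram
\[
\xymatrix{G\times C^\infty(\mathbb{R},A) \ar[r]^{\qquad\beta}\ar[d]_{\id_G\times\pi_{\{z\}}} & C^\infty(\mathbb{R},A) \ar[d]^{\pi_{\{z\}}}\\
G\times A_{\{z\}} \ar[r]^{\qquad\alpha_{\{z\}}} & A_{\{z\}}.}
\]
The top composite $\pi_{\{z\}}\circ\beta$ is continuous by Lemma \ref{dynamical system and trivial algebra bundles} and the continuity of $\pi_{\{z\}}$. The vertical map $\id_G\times\pi_{\{z\}}$ is a continuous surjection which is moreover open, since $\pi_{\{z\}}$ is an open quotient map and products of open maps with identities are open. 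Hence $\alpha_{\{z\}}$ is continuous by the universal property of the quotient topology on $G\times A_{\{z\}}$.

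The only delicate point I anticipate is the openness argument in the last step: one has to know that $\id_G\times\pi_{\{z\}}$ is a quotient map, which ultimately rests on $\pi_{\{z\}}$ being open (true for any locally convex quotient) together with the fact that the product of the identity on $G$ with an open surjection is again an open quotient. Everything else is bookkeeping once the $G$-invariance of $I_z$ has been secured from $z\in Z$.
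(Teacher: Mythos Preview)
Your proof is correct and follows essentially the same route as the paper: both arguments lift the $G$-action to $C^\infty(\mathbb{R},A)$ via Lemma \ref{dynamical system and trivial algebra bundles}, use $z\in Z$ to see that $\alpha(g)(1_A-tz)=1_A-tz$ (hence $I_z$ is $G$-invariant and $\alpha_{\{z\}}$ is well-defined), and deduce continuity from the relation $\alpha_{\{z\}}\circ(\id_G\times\pi_{\{z\}})=\pi_{\{z\}}\circ\beta$ together with the fact that $\id_G\times\pi_{\{z\}}$ is continuous, surjective and open. Your version simply spells out the intermediate steps in more detail than the paper does.
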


\begin{proof}
If $g\in G$, then $\alpha(g)(1_A-tz)=1_A-tz$ implies that the map $\alpha_{\{z\}}$ is well-defined. Further, a simple calculation shows that the map $\alpha_{\{z\}}$ defines an action of $G$ on $A_{\{z\}}$ by algebra automorphisms. That this action is continuous is a consequence of the continuity of the map 
\[\beta:G\times C^{\infty}(\mathbb{R},A)\rightarrow C^{\infty}(\mathbb{R},A),\,\,\,(g,f)\mapsto\alpha(g)\circ f
\](cf. Lemma \ref{dynamical system and trivial algebra bundles} applied to $M=\mathbb{R}$), $\alpha_{\{z\}}\circ(\id_G\times\pi_{\{z\}})=\pi_{\{z\}}\circ\beta$ and the fact that $\id_G\times\pi_{\{z\}}$ is continuous, surjective and open.
\end{proof}

\begin{definition}\label{smooth localized dynamical system}
(Smooth localization of dynamical systems). Let $(A,G,\alpha)$ be a dynamical system and $z\in Z$. We call the dynamical system $(A_{\{z\}},G,\alpha_{\{z\}})$ of Proposition \ref{T^n action on spec again} the \emph{smooth localization of $(A,G,\alpha)$} associated to the element $z\in Z$.
\end{definition}

\begin{example}\label{example on smooth localization}
Let $(P,M,G,q,\sigma)$ be a principal bundle and $(C^{\infty}(P),G,\alpha)$ be the induced smooth dynamical system (cf. Remark \ref{remark on smooth dynamical systems}). Further, let $U$ be an open subset of $M$ such that $P_U$ is a trivial principal $G$-bundle over $U$, i.e., such that $P_U\cong U\times G$ holds as $G$-manifolds. If $f$ is a $U$-defining function (cf. Theorem \ref{whitneys theorem}), then $h:=f\circ q$ is a $P_U$-defining function lying in the subalgebra $C^{\infty}(P)^G$. Thus, we conclude from Corollary \ref{C(M)_f=C(M_f)} that the map
\[\phi_U:C^{\infty}(P)_{\{h\}}\rightarrow C^{\infty}(P_U),\,\,\,[F]\mapsto\left(p\mapsto F\left(\frac{1}{h(p)},p\right)\right)
\]is an isomorphism of unital Fr\'{e}chet algebras. Moreover, the map $\phi_U$ is $G$-equivariant. In fact, we have
\[(\phi_U\circ\alpha_{\{h\}})(g,[F])=\alpha(g,\phi_U([F]))
\]for all $g\in G$ and $[F]\in C^{\infty}(P)_{\{h\}}$. In particular, the corresponding localized dynamical system $(C^{\infty}(P)_{\{h\}},G,\alpha_{\{h\}})$ of Proposition \ref{T^n action on spec again} carries the structure of a smooth dynamical system, since it is isomorphic to the smooth dynamical system $(C^{\infty}(P_U),G,\alpha_U)$ corresponding to the trivial principal $G$-bundle $P_U$ over $U$.
\end{example}

In the remaining part of this section we will be concerned with the spectrum of $A_{\{z\}}$:

\begin{lemma}\label{T^n action on spec II again}
If $(A,G,\alpha)$ is a dynamical system and $z\in Z$, then the map 
\[\sigma_{\{z\}}:\Gamma^{\emph{cont}}_{A_{\{z\}}}\times G\rightarrow\Gamma^{\emph{cont}}_{A_{\{z\}}},\,\,\,(\chi,g)\mapsto \chi\circ\alpha_{\{z\}}(g)
\]defines an action of $G$ on $\Gamma^{\emph{cont}}_{A_{\{z\}}}$.
\end{lemma}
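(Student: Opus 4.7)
The plan is to reduce the statement to two small verifications: that $\sigma_{\{z\}}$ is well-defined, and that it satisfies the two axioms of a (right) group action. Since $\alpha_{\{z\}}$ has just been constructed in Proposition \ref{T^n action on spec again}, both verifications should be immediate consequences of what is already on the page.

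First I would address well-definedness. By Proposition \ref{T^n action on spec again}, for each $g\in G$ the map $\alpha_{\{z\}}(g)\colon A_{\{z\}}\to A_{\{z\}}$ is a continuous algebra automorphism; in particular it is bijective and unital. Consequently, for any $\chi\in\Gamma^{\text{cont}}_{A_{\{z\}}}$ the composition $\chi\circ\alpha_{\{z\}}(g)$ is a continuous unital algebra homomorphism into $\mathbb{C}$, and the bijectivity of $\alpha_{\{z\}}(g)$ together with $\chi\neq 0$ guarantees $\chi\circ\alpha_{\{z\}}(g)\neq 0$, so the composition indeed lies in $\Gamma^{\text{cont}}_{A_{\{z\}}}$.

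Second I would check the action axioms. Because $\alpha\colon G\to\Aut(A)$ is a group homomorphism and the assignment $\alpha\mapsto\alpha_{\{z\}}$ in Proposition \ref{T^n action on spec again} is defined pointwise by post-composition with representatives, $\alpha_{\{z\}}\colon G\to\Aut(A_{\{z\}})$ is itself a group homomorphism; in particular $\alpha_{\{z\}}(e)=\id_{A_{\{z\}}}$ and $\alpha_{\{z\}}(gh)=\alpha_{\{z\}}(g)\circ\alpha_{\{z\}}(h)$. Hence
\[\sigma_{\{z\}}(\chi,e)=\chi\circ\id=\chi\]
and
\[\sigma_{\{z\}}\bigl(\sigma_{\{z\}}(\chi,g),h\bigr)=\chi\circ\alpha_{\{z\}}(g)\circ\alpha_{\{z\}}(h)=\chi\circ\alpha_{\{z\}}(gh)=\sigma_{\{z\}}(\chi,gh),\]
confirming that $\sigma_{\{z\}}$ is a right action. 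There is no genuine obstacle here; the statement is essentially a bookkeeping consequence of Proposition \ref{T^n action on spec again}, and the only thing to notice is that the order of composition in $\chi\circ\alpha_{\{z\}}(g)$ is exactly what turns the left action $\alpha_{\{z\}}$ into a right action on the spectrum.
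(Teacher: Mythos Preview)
Your proposal is correct and matches the paper's approach: the paper simply states that the claim follows from a simple calculation left to the reader, and what you have written is precisely that calculation spelled out. Your observations about well-definedness and the action axioms are exactly the routine verifications intended.
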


\begin{proof}
This claim follows from a simple calculation and is therefore left to the reader.
\end{proof}

\begin{remark}\label{spec A_z}
($\Gamma^{\text{cont}}_{A_{\{z\}}}$). Let $A$ be a complete unital locally convex algebra such that $\Gamma^{\text{cont}}_A$ is locally equicontinuous. Then Theorem \ref{spec A_a} implies that the map
\[\Phi_{\{z\}}:D_A(z)\rightarrow\Gamma^{\text{cont}}_{A_{\{z\}}},\,\,\,\Phi(\chi)([f]):=\chi\left(f\left(\frac{1}{\chi(z)}\right)\right)
\]is a homeomorphism.
\end{remark}

\section{Noncommutative Principal Torus Bundles}\label{a geometric approach to NCPB}

The main goal of this section is to present a geometrically oriented approach to the noncommutative geometry of principal torus bundles. As already mentioned in the Introduction, a trivial noncommutative principal torus bundles is a dynamical system $(A,\mathbb{T}^n,\alpha)$ with the additional property that each isotypic component contains an invertible element. Since the freeness property of a group action is a local condition (cf. Remark \ref{free=locally free}), our main idea is inspired by the classical setting: Loosely speaking, a dynamical system $(A,\mathbb{T}^n,\alpha)$ is called a noncommutative principal $\mathbb{T}^n$-bundle, if it is ``locally" a trivial noncommutative principal $\mathbb{T}^n$-bundle, i.e., Section \ref{ACNCPT^nB} enters the picture. We prove that this approach extends the classical theory of principal torus bundles and present some noncommutative examples. Indeed, we first show that each trivial noncommutative principal torus bundle carries the structure of a noncommutative principal torus bundle in its own right. We further show that examples are provided by sections of algebra bundles with trivial noncommutative principal torus bundle as fibre, sections of algebra bundles which are pull-backs of principal torus bundles and sections of trivial equivariant algebra bundles. At the end of this section we present a very concrete example.  %For the case $G=\SU_2(\mathbb{C})$ keep Remark \ref{remark on NCPT^nB again} (b) in mind.

%and present some noncommutative examples like sections of algebra bundles with trivial noncommutative principal $\mathbb{T}^n$-bundle as fibre, sections of algebra bundles which are pull-backs of %algebra bundles along 
%principal $\mathbb{T}^n$-bundles and sections of trivial equivariant algebra bundles in the next section.

\begin{notation}
Given a dynamical system $(A,G,\alpha)$, we (again) write $Z$ for the fixed point algebra of the induced action of $G$ on the center $C_A$ of $A$, i.e., $Z:=C_A^G$. In particular, if $P$ is a manifold, $G$ a Lie group group and $(C^{\infty}(P),G,\alpha)$ a (smooth) dynamical system, then $Z=C^{\infty}(P)^G$.
\end{notation}

\begin{definition}\label{NCPT^nB again}
(Noncommutative principal torus bundles). We call a (smooth) dynamical system $(A,\mathbb{T}^n,\alpha)$ a \emph{\emph{(}smooth\emph{)} noncommutative principal $\mathbb{T}^n$-bundle} if for each $\chi\in\Gamma_Z$ there exists an element $z\in Z$ with $\chi(z)\neq 0$ such that the corresponding localized dynamical system $(A_{\{z\}},\mathbb{T}^n,\alpha_{\{z\}})$ of Proposition \ref{T^n action on spec again} is a (smooth) trivial noncommutative principal $\mathbb{T}^n$-bundle (cf. Definition \ref{Trivial NCP T^n-Bundles}). %in the appropriate sense (cf. Definition \ref{trivial noncomm T^n-bundles}, Definition \ref{C_nC_m V} or Definition \ref{trivial noncomm su2-bundles}).
\end{definition}

\begin{remark}
The previous definition of noncommutative principal torus bundles is inspired by the classical setting. In fact, we will see in Theorem \ref{NCT^nB for manifold again} that it actually agrees with the classical definition of principal torus bundles if we identify open subsets $U$ of the base manifold with the corresponding $U$-defining functions and use Corollary \ref{C(M)_f=C(M_f)}.
\end{remark}
%From a classical point of view the condition in Definition \ref{NCPT^nB again} means that there exists an open cover of $\Gamma_{Z}$ of the form $(D(z_i))_{i\in I}$, 
%such that $(A,G,\alpha)$ looks ``locally" like a trivial NCP $G$-bundle. In fact,

\begin{remark}\label{remark on NCPT^nB again}
(An equivalent point of view). Given $z\in Z$, we recall that 
\[D(z):=\{\chi\in\Gamma_Z:\,\chi(z)\neq 0\}.
\]Then a short observation shows that a (smooth) dynamical system $(A,\mathbb{T}^n,\alpha)$ is a (smooth) noncommutative principal $\mathbb{T}^n$-bundle if and only if there exists a family of elements $(z_i)_{i\in I}\subseteq Z$ satisfying the following two conditions:
\begin{itemize}
\item[(i)]
The family $(D(z_i))_{i\in I}$ is an open covering of $\Gamma_Z$.
\item[(ii)]
The localized dynamical systems $(A_{\{z_i\}},\mathbb{T}^n,\alpha_{\{z_i\}})$ are (smooth) trivial noncommutative principal $\mathbb{T}^n$-bundles.
\end{itemize}
\end{remark}

%In fact, this point of view agrees with the classical definition of principal bundles (cf. Definition \ref{principal bundles I}) if we identify open subsets $U$ of the base manifold with the corresponding $U$-defining functions and use Corollary \ref{C(U)=C(M)_f_U}.
%(b) For the case $G=\SU_2(\mathbb{C})$ note that the algebra $A_{\{z\}}$ carries the structure of a $^*$-algebra which is naturally induced from the $^*$-algebra $A$ of the dynamical system $(A,G,\alpha)$ only if the element $z\in Z$ is self-adjoint, i.e., if $z=z^*$. Anyway, this condition will be satisfied in our examples. (Otherwise, we should consider $^*$-closed ideals in Definition \ref{sm. loc.}). 

\begin{lemma}\label{T^n action on spec III again}
Let $A$ be a commutative unital locally convex algebra and $(A,\mathbb{T}^n,\alpha)$ be a noncommutative principal $\mathbb{T}^n$-bundle. Further, let $\chi\in\Gamma_Z$ and $z\in Z$ with $\chi(z)\neq 0$ such that the corresponding \emph{(}smooth\emph{)} localized dynamical system $(A_{\{z\}},\mathbb{T}^n,\alpha_{\{z\}})$ is a trivial noncommutative principal $\mathbb{T}^n$-bundle. Then the $\mathbb{T}^n$-action map 
\[\sigma_{\{z\}}:\Gamma^{\emph{cont}}_{A_{\{z\}}}\times \mathbb{T}^n\rightarrow\Gamma^{\emph{cont}}_{A_{\{z\}}},\,\,\,(\chi,t)\mapsto \chi\circ\alpha_{\{z\}}(t)
\]of Lemma \ref{T^n action on spec II again} is free.
\end{lemma}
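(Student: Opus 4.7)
The plan is to chase the fixed-point condition $\chi\circ\alpha_{\{z\}}(t)=\chi$ through an invertible element in each isotypic component and to deduce $t^{\mathbf{k}}=1$ for every $\mathbf{k}\in\mathbb{Z}^n$, which forces $t=\mathbf{1}$. This is a purely algebraic argument once the triviality hypothesis is in place; the geometric/topological work has already been done by the localization procedure of Section~\ref{ACNCPT^nB}.

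First I would fix an arbitrary pair $(\chi,t)\in\Gamma^{\emph{cont}}_{A_{\{z\}}}\times\mathbb{T}^n$ with $\sigma_{\{z\}}(\chi,t)=\chi$, i.e.\ $\chi(\alpha_{\{z\}}(t).a)=\chi(a)$ for all $a\in A_{\{z\}}$. Since $(A_{\{z\}},\mathbb{T}^n,\alpha_{\{z\}})$ is a trivial noncommutative principal $\mathbb{T}^n$-bundle, for every $\mathbf{k}\in\mathbb{Z}^n$ the isotypic component $(A_{\{z\}})_{\mathbf{k}}$ contains an invertible element $u_{\mathbf{k}}$ (cf.\ Definition~\ref{Trivial NCP T^n-Bundles}). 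By the very definition of the $\mathbf{k}$-th isotypic component we have $\alpha_{\{z\}}(t).u_{\mathbf{k}}=t^{\mathbf{k}}\cdot u_{\mathbf{k}}$, and evaluating the fixed-point identity on $u_{\mathbf{k}}$ gives
\[
t^{\mathbf{k}}\cdot\chi(u_{\mathbf{k}})=\chi(\alpha_{\{z\}}(t).u_{\mathbf{k}})=\chi(u_{\mathbf{k}}).
\]

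Next I would exploit the invertibility of $u_{\mathbf{k}}$: since $\chi$ is a unital algebra homomorphism to $\mathbb{C}$, it maps units to units, so $\chi(u_{\mathbf{k}})\in\mathbb{C}^{\times}$. Dividing the displayed equation by $\chi(u_{\mathbf{k}})$ yields $t^{\mathbf{k}}=1$ for every $\mathbf{k}\in\mathbb{Z}^n$. Specialising $\mathbf{k}$ to each standard basis vector $e_i\in\mathbb{Z}^n$ gives $t_i=1$ for all $i=1,\ldots,n$, hence $t=\mathbf{1}$, which is precisely the statement that $\sigma_{\{z\}}$ is free.

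There is really no serious obstacle here; the only point that deserves care is making sure the argument is insensitive to whether the action is merely continuous or actually smooth, but since the isotypic components $(A_{\{z\}})_{\mathbf{k}}$ are defined directly as the $\mathbf{k}$-eigenspaces of the continuous $\mathbb{T}^n$-action (and the triviality assumption provides invertible elements inside each of them), no Peter--Weyl type completion is needed to run the argument. Note also that the hypothesis of commutativity of $A$ is used only to guarantee that $\Gamma^{\emph{cont}}_{A_{\{z\}}}$ is well-behaved as the test space for freeness; the chain of equalities above does not itself invoke commutativity.
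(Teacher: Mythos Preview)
Your proof is correct and is essentially the argument the paper invokes: the paper simply cites \cite[Proposition 1.4]{Wa11b}, which asserts that a trivial noncommutative principal $\mathbb{T}^n$-bundle induces a free $\mathbb{T}^n$-action on its spectrum, and your isotypic-component computation with invertible elements is precisely the content of that proposition. In other words, you have unpacked the one-line citation into its underlying proof.
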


\begin{proof}
The assertion follows from fact that each trivial noncommutative principal torus bundle defines a free action of $\mathbb{T}^n$ on its spectrum (cf. \cite[Proposition 1.4]{Wa11b}).
\end{proof}

\section*{Reconstruction of Principal Torus Bundles}

In this part of the section we show how to recover the classical definition of principal torus bundles. We start with some general result relating smooth group actions and the corresponding smooth dynamical systems.

\begin{lemma}\label{extending lemma for dynamical systems}
Let $P$ be a compact manifold, $G$ a Lie group and $(C^{\infty}(P),G,\alpha)$ a smooth dynamical system. Then each character $\chi:Z\rightarrow\mathbb{C}$ is the restriction of an evaluation homomorphism 
\[\delta_p:C^{\infty}(P)\rightarrow\mathbb{C},\,\,\,f\mapsto f(p)\,\,\,\text{for some}\,\,\,p\in P.
\]
\end{lemma}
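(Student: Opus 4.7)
The strategy is to show that the maximal ideal $\mathfrak{m} := \ker\chi$ of $Z = C^{\infty}(P)^G$ generates a proper ideal in the ambient algebra $C^{\infty}(P)$; once this is established, the conclusion follows from the standard description of characters of $C^{\infty}(P)$. Indeed, if $J := \mathfrak{m} \cdot C^{\infty}(P)$ is a proper ideal of $C^{\infty}(P)$, then $J$ is contained in a maximal ideal. Since $P$ is compact, $C^{\infty}(P)$ is a commutative CIA (cf.\ Remark~\ref{CIA stuff}(b)), so every maximal ideal is the kernel of a character (Remark~\ref{CIA stuff}(c)), which by Lemma~\ref{spec of C(M,K) set} has the form $\ker\delta_p$ for some $p \in P$. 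Consequently $\mathfrak{m} \subseteq \ker(\delta_p|_Z)$; since $\delta_p|_Z(1) = 1$ the latter is proper, and maximality of $\mathfrak{m}$ in $Z$ forces equality. Two characters into $\mathbb{C}$ with the same kernel must agree, so $\chi = \delta_p|_Z$.

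To verify the properness of $J$, I would argue by contradiction. Assume $1 \in J$, so that $1 = \sum_{i=1}^k z_i f_i$ for some $z_i \in \mathfrak{m}$ and $f_i \in C^{\infty}(P)$. Evaluation at an arbitrary point shows that the $z_i$ have no common zero on $P$. Each $\alpha(g)$, being an algebra automorphism of $C^{\infty}(P)$, preserves invertibility and hence the spectrum $f(P)$ of every element $f$; in particular it preserves real-valuedness and therefore commutes with complex conjugation. Thus $Z$ is closed under conjugation, and
\[
h := \sum_{i=1}^k z_i \overline{z_i} = \sum_{i=1}^k |z_i|^2
\]
lies in $\mathfrak{m}$ (since $\mathfrak{m}$ is a $Z$-ideal containing each $z_i$). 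However $h(p) > 0$ for every $p \in P$, and compactness of $P$ forces $h$ to be bounded away from zero, so $h$ is invertible in $C^{\infty}(P)$. Since $\alpha(g)(h) = h$ and $\alpha(g)$ is an automorphism, $\alpha(g)(h^{-1}) = h^{-1}$, giving $h^{-1} \in Z$; therefore $1 = h\cdot h^{-1} \in \mathfrak{m}$, contradicting $\chi(1) = 1$.

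The decisive step is the properness of $J$. For compact $G$ a Haar-averaging projection $C^{\infty}(P) \to Z$ would supply a one-line argument, but for a general Lie group this tool is unavailable, and the sum-of-squares construction above is the natural substitute; it relies essentially on the compactness of $P$ together with the $*$-closedness of $Z$, which in turn comes from the spectrum-preserving property of algebra automorphisms of $C^{\infty}(P)$.
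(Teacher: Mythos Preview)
Your proof is correct and follows essentially the same route as the paper's: both arguments hinge on forming the sum of squares $h=\sum_i |z_i|^2$ of finitely many elements of $\ker\chi$ with no common zero, using that every algebra automorphism of $C^{\infty}(P)$ is a $*$-automorphism to conclude $h\in Z$, and then observing that $h$ is invertible in $Z$ to obtain a contradiction. The only cosmetic difference is that you phrase the goal as properness of the ideal $\mathfrak{m}\cdot C^{\infty}(P)$, whereas the paper argues directly that the elements of $\ker\chi$ must have a common zero; the technical content is identical.
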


\begin{proof}
Let $\chi:Z\rightarrow\mathbb{C}$ be a character. %Since $C^{\infty}(P)$ is a complete commutative CIA, so is $Z$ (cf. Lemma\ref{sub algebras of CIA}). In particular, we conclude from Lemma \ref{cont of char of CIA} that $\chi$ is continuous and therefore that $I:=\ker(\chi)$ is a closed maximal proper ideal in $Z$. 
If all functions of $I:=\ker\chi$ vanish in the point $p$ in $P$, then the maximality of $I$ implies that $I=(\ker\delta_p)\cap Z$, i.e., that $\chi={\delta_p}_{\mid Z}$. So we have to show that such a point exists. Let us assume that this is not the case. From that we shall derive the contradiction $I=Z$:

(i) If such a point does not exist, then for each $p\in P$ there exists a function $f_p\in I$ with $f_p(p)\neq 0$. The family $(f_p^{-1}(\mathbb{C}^{\times}))_{p\in P}$ is an open cover of $P$, so that there exist $p_1,\ldots,p_n\in P$ and a smooth $G$-invariant function $f_P:=\sum_{i=1}^n \vert f_{p_i}\vert^2>0$ on $P$. Here, the last statement follows from the fact that each algebra automorphism of $C^{\infty}(P)$ is automatically a *-automorphism (cf. \cite[Remark 1.8]{Wa11b}).

(ii) In view of part (i) we obtain a function $f_P\in I\cap Z$ which is nowhere zero. Thus, $f_P$ is invertible in $C^{\infty}(P)$. We claim that its inverse $h_P:=f_P^{-1}=\frac{1}{f_P}$ is also $G$-invariant: For this we choose an arbitrary element $g\in G$ and note that $f_P\cdot h_P=1$ implies
\[\alpha(g)(h_P)\cdot f_P=\alpha(g)(h_P\cdot f_P)=1.
\]In particular, $\alpha(g)(h_P)=h_P$. Since $g\in G$ was arbitrary, we conclude that $h_P\in Z$.

(iii) Finally, part (ii) leads to the contradiction $f_P\in I\cap Z^{\times}$, i.e., $I=Z$ as desired.
\end{proof}

\begin{proposition}\label{extension of char on fixed point algebras VI}
Let $P$ be a compact manifold, $G$ a compact Lie group and $(C^{\infty}(P),G,\alpha)$ a smooth dynamical system. Then the map
\[\Phi:P/G\rightarrow\Gamma_Z,\,\,\,p.G\mapsto \delta_p
\]is a homeomorphism.
\end{proposition}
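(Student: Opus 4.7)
The plan is to exhibit $\Phi$ as a continuous bijection from a compact space to a Hausdorff space, whence the conclusion is automatic. There are four items to check: well-definedness, surjectivity, injectivity, and continuity.

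First, $\Phi$ is well-defined because elements of $Z$ are $G$-invariant: for $z\in Z$ and $g\in G$, writing the action on $P$ as $p.g$ and recalling that $\alpha(g)(z)=z$ means $z(p.g)=z(p)$, we get $\delta_{p.g}|_Z=\delta_p|_Z$. Surjectivity is precisely the content of Lemma \ref{extending lemma for dynamical systems}: every character of $Z$ restricts to an evaluation in some $p\in P$.

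For injectivity, I need that $Z$ separates $G$-orbits. Let $p,p'\in P$ lie in distinct orbits. Since $G$ is compact and acts smoothly on the compact manifold $P$, the orbits $p.G$ and $p'.G$ are disjoint compact (embedded) submanifolds, so by a standard smooth Urysohn argument there exists $f\in C^{\infty}(P)$ with $f\equiv 0$ on $p.G$ and $f\equiv 1$ on $p'.G$. Averaging against the normalized Haar measure $dg$ on $G$,
\[
\tilde f(q):=\int_G f(q.g)\,dg,
\]
yields a smooth $G$-invariant function on $P$; since it is invariant, it lies in the center $C^{\infty}(P)$ and then in $Z=C^{\infty}(P)^G$. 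By construction $\tilde f(p)=0$ and $\tilde f(p')=1$, so $\delta_p|_Z\neq\delta_{p'}|_Z$. Continuity of $\Phi$ follows because the map $P\to\Gamma_Z,\ p\mapsto \delta_p|_Z$ is continuous in the topology of pointwise convergence (each $z\in Z$ is continuous on $P$), and it factors through the quotient $P/G$, so $\Phi$ is continuous by the universal property of the quotient topology.

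To conclude, $P$ is compact, hence $P/G$ is compact, while $\Gamma_Z$ is Hausdorff as a subspace of $\mathbb{C}^Z$ with the product topology. A continuous bijection from a compact space to a Hausdorff space is automatically a homeomorphism. The main obstacle I expect is the injectivity step: one must ensure that the smooth Urysohn construction plus Haar averaging really delivers a function in $Z$ (not merely a continuous $G$-invariant function). This hinges on the smoothness of the $G$-action on $P$ together with the compactness of $G$, both of which are part of our hypotheses, and on the fact (used already in the proof of Lemma \ref{extending lemma for dynamical systems}) that $G$-averaging preserves $C^{\infty}(P)$.
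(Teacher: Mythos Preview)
Your proof is correct and follows essentially the same route as the paper: well-definedness by $G$-invariance, surjectivity from Lemma~\ref{extending lemma for dynamical systems}, injectivity via Haar-averaging a smooth separating function, continuity through the quotient map, and the compact-to-Hausdorff conclusion. One small correction: the smoothness of the $G$-action on $P$ is not literally part of the hypotheses but a consequence of $(C^{\infty}(P),G,\alpha)$ being a smooth dynamical system, and the paper invokes \cite[Proposition~2.7]{Wa11c} at exactly this point to ensure that the averaged function is smooth.
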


\begin{proof}
We divide the proof of this proposition into four parts:

(i) We first note that the quotient $P/G$ is a compact Hausdorff space. Further, the map $\Phi$ is well-defined since each function $f\in Z$ is $G$-invariant. 

(ii) Proposition \ref{spec of C(M) top} and the definition of the quotient topology on $P/G$ implies that $\Phi$ is continuous.

(iii) The surjectivity of $\Phi$ is a consequence of Lemma \ref{extending lemma for dynamical systems} (alternatively we can use \cite[Corollary 2.5]{Wa11d}). To show that $\Phi$ is injective, we choose elements $p,p'\in P$ with $p.G\neq p'.G$. Since $P$ is a manifold, there exist disjoint $G$-invariant open subsets $U$ and $V$ of $P$ containing the compact subsets $p.G$ and $p'.G$, respectively. Next, we choose a positive smooth function $f$ on $P$ which is equal to $1$ on $p.G$ and vanishes outside $U$. Then the function 
\[F:P\rightarrow\mathbb{C},\,\,\,F(q):=\int_Gf(q.g)\,dg,
\]where $dg$ denotes the normalized Haar measure on $G$, defines a smooth $G$-invariant function satisfying 
\[F(p)=1\neq 0=F(p').
\]Here, we have used that the smooth action map $\alpha$ induces a smooth action of $G$ on $P$ (cf. \cite[Proposition 2.7]{Wa11c}). Hence,
\[\delta_p(F)=F(p)\neq F(p')=\delta_{p'}(F)
\]implies that $\delta_p\neq\delta_{p'}$, i.e., $\Phi$ is injective. 

(iv) So far we have seen that $\Phi$ is a bijective continuous map from a compact Hausdorff space onto a Hausdorff space. Thus, the claim follows from a well-known theorem from topology guaranteeing that $\Phi$ is a homeomorphism.
\end{proof}

For a compact manifold $P$, a compact Lie group $G$ and a dynamical system $(C^{\infty}(P),G,\alpha)$ we write $\pr:P\rightarrow P/G$, $p\mapsto p.G$ for the canonical quotient map. Moreover, we identify $D(z)$ with an open subset of $P/G$ via the homeomorphism of Proposition \ref{extension of char on fixed point algebras VI} and define
\[P_{D(z)}:=\pr^{-1}(D(z)).
\]For the following lemma we recall that $D_A(z):=\{\chi\in\Gamma_A:\,\chi(z)\neq 0\}$:

\begin{lemma}\label{C(P) localized II again}
Let $P$ be a compact manifold, $G$ a compact Lie group and $(C^{\infty}(P),G,\alpha)$ a dynamical system. If $A=C^{\infty}(P)$ and $z\in Z$, then the map
\[\Phi_z:P_{D(z)}\rightarrow D_A(z),\,\,\,p\mapsto\delta_p
\]is a homeomorphism.
\end{lemma}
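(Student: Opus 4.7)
The plan is to reduce this to an unfolding of definitions combined with the homeomorphism $P \cong \Gamma_{C^\infty(P)}$ from Proposition \ref{spec of C(M) top}. The key observation is that $P_{D(z)}$ is nothing but the open subset $\{p \in P : z(p) \neq 0\}$, so $\Phi_z$ is just the restriction to an open subset of a known global homeomorphism.

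First, I would identify $P_{D(z)}$ concretely. By definition $P_{D(z)} = \pr^{-1}(D(z))$ and, via the identification $P/G \cong \Gamma_Z$, $p.G \mapsto {\delta_p}_{|Z}$ of Proposition \ref{extension of char on fixed point algebras VI}, one has $p \in P_{D(z)}$ if and only if ${\delta_p}_{|Z} \in D(z)$, i.e.\ if and only if $\delta_p(z) = z(p) \neq 0$. Hence
\[
P_{D(z)} = \{p \in P : z(p) \neq 0\}.
\]

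Next, recall that by Proposition \ref{spec of C(M) top} the map $\Phi: P \to \Gamma_A$, $p \mapsto \delta_p$ is a homeomorphism. The condition $\delta_p \in D_A(z)$ translates to $z(p) \neq 0$, so $\Phi$ sends $P_{D(z)}$ bijectively onto $D_A(z)$, and $\Phi_z$ is precisely the restriction of $\Phi$ to this open subset. Since restrictions of homeomorphisms to open subsets (with image the corresponding open subset) are again homeomorphisms, the conclusion follows immediately.

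There is essentially no obstacle here; the lemma is a bookkeeping step. The only thing one should take care of is the correct use of the identification from Proposition \ref{extension of char on fixed point algebras VI}, which is valid because $P$ is a compact manifold and $G$ a compact Lie group, exactly the standing hypotheses of the lemma.
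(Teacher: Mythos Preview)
Your proof is correct and follows essentially the same approach as the paper, which simply notes that the lemma is an easy consequence of Proposition~\ref{spec of C(M) top}. You have merely made explicit the unpacking of the definition of $P_{D(z)}$ via Proposition~\ref{extension of char on fixed point algebras VI} and the restriction argument, which the paper leaves to the reader.
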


\begin{proof}
This lemma is an easy consequence of Proposition \ref{spec of C(M) top}.
\end{proof}

\begin{proposition}\label{C(P) localized III again}
Suppose we are in the situation of Lemma \ref{C(P) localized II again}. Then the map 
\[\Psi_{\{z\}}:P_{D(z)}\rightarrow\Gamma^{\emph{cont}}_{C^{\infty}(P)_{\{z\}}},\,\,\,\Psi_{\{z\}}(p)([F]):=F\left(\frac{1}{z(p)},p\right)
\]is a homeomorphism. In particular, $\Gamma^{\emph{cont}}_{C^{\infty}(P)_{\{z\}}}$ becomes a smooth manifold when endowed with the smooth structure turning the map $\Psi_{\{z\}}$ into a diffeomorphism.
\end{proposition}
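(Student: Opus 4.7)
The plan is to realize $\Psi_{\{z\}}$ as a composition of two homeomorphisms which are already available, namely the homeomorphism $\Phi_z:P_{D(z)}\to D_A(z)$ from Lemma \ref{C(P) localized II again} and the homeomorphism $\Phi_{\{z\}}:D_A(z)\to\Gamma^{\text{cont}}_{C^{\infty}(P)_{\{z\}}}$ provided by Theorem \ref{spec A_a}. To apply the latter, I first need to verify that $A:=C^{\infty}(P)$ meets the hypotheses of Theorem \ref{spec A_a}. Since $P$ is compact, $A$ is a complete Fr\'echet algebra; moreover, it is a CIA by Remark \ref{CIA stuff} (b), so its continuous spectrum is equicontinuous by Remark \ref{equicont} (a). In particular, $\Gamma^{\text{cont}}_A$ is locally equicontinuous and Theorem \ref{spec A_a} applies to yield the desired homeomorphism $\Phi_{\{z\}}$ on $D_A(z)=D(z)$.

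The second step is to identify $\Psi_{\{z\}}$ with the composition $\Phi_{\{z\}}\circ\Phi_z$. Given $p\in P_{D(z)}$ and $[f]\in C^{\infty}(P)_{\{z\}}$ with representative $f\in A\{t\}=C^{\infty}(\mathbb{R},A)$, one computes
\[(\Phi_{\{z\}}\circ\Phi_z)(p)([f])=\Phi_{\{z\}}(\delta_p)([f])=\delta_p\!\left(f\!\left(\tfrac{1}{\delta_p(z)}\right)\right)=f\!\left(\tfrac{1}{z(p)}\right)\!(p).\]
Under the smooth exponential law (cf. Lemma \ref{smooth exp law}), $f\in C^{\infty}(\mathbb{R},C^{\infty}(P))$ corresponds to a unique $F\in C^{\infty}(\mathbb{R}\times P)$ with $F(t,q)=f(t)(q)$, so the right-hand side equals $F(\tfrac{1}{z(p)},p)$. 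This is exactly the defining formula of $\Psi_{\{z\}}(p)([F])$, so $\Psi_{\{z\}}=\Phi_{\{z\}}\circ\Phi_z$ is a homeomorphism.

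For the ``in particular'' assertion, note that $P_{D(z)}=\pr^{-1}(D(z))$ is the preimage of an open subset of $P/G$ under the (continuous) quotient map, hence an open subset of the compact manifold $P$, and thus itself a manifold. The smooth structure on $\Gamma^{\text{cont}}_{C^{\infty}(P)_{\{z\}}}$ is then simply defined by declaring $\Psi_{\{z\}}$ a diffeomorphism, i.e.\ by transporting the smooth structure from $P_{D(z)}$ through the homeomorphism just established.

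The main obstacle is purely bookkeeping: one has to be careful with the identification between $C^{\infty}(\mathbb{R},C^{\infty}(P))$ and $C^{\infty}(\mathbb{R}\times P)$ supplied by the exponential law so that the formula for $\Psi_{\{z\}}$ matches the formula produced by Theorem \ref{spec A_a}. Beyond this verification, all the substantive work (equicontinuity of the spectrum, the homeomorphism property for $\Phi_{\{z\}}$, and the homeomorphism $p\mapsto\delta_p$ from $P_{D(z)}$ onto $D_A(z)$) has already been carried out in the preceding results, so no further analysis is needed.
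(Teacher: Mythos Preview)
Your proof is correct and follows the same route as the paper: you factor $\Psi_{\{z\}}=\Phi_{\{z\}}\circ\Phi_z$ using Lemma \ref{C(P) localized II again} and Theorem \ref{spec A_a} (via Remark \ref{spec A_z}), exactly as the paper does. Your additional care in verifying the equicontinuity hypothesis for $C^{\infty}(P)$ and in matching the formulas through the exponential law is welcome but not strictly demanded, since the paper simply cites Remark \ref{spec A_z} for the applicability of $\Phi_{\{z\}}$.
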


\begin{proof}
The claim is a direct consequence of Remark \ref{spec A_z}) and Lemma \ref{C(P) localized II again}. Indeed, we just have to note that $\Psi_{\{z\}}=\Phi_{\{z\}}\circ\Phi_z$.. 
\end{proof}

\begin{remark}\label{free=locally free}
(Freeness is a local condition). Let $\sigma: X\times G\rightarrow X$ be an action of a topological group $G$ on a topological space $X$ and let $\pr:X\rightarrow X/G$ be the corresponding quotient map. Then a short observation shows that the action $\sigma$ is free if and only if there exists an open cover $(U_i)_{i\in I}$ of the quotient space $X/G$ (with respect to the quotient topology) such that each restriction map
\[\sigma_i:X_{U_i}\times G\rightarrow X_{U_i},\,\,\,\sigma_i(x,g):=\sigma(x,g),
\]where $X_{U_i}:=\pr^{-1}(U_i)$, is free.
\end{remark}

\begin{theorem}\label{NCT^nB for manifold again}
\emph{(}Reconstruction Theorem\emph{)}. For a manifold $P$, the following assertions hold:
\begin{itemize}
\item[(a)]
If $P$ is compact and $(C^{\infty}(P),\mathbb{T}^n,\alpha)$ is a smooth noncommutative principal $\mathbb{T}^n$-bundle, then we obtain a principal $\mathbb{T}^n$-bundle $(P,P/\mathbb{T}^n,\mathbb{T}^n,\pr,\sigma)$, where
\begin{align*}
\sigma:P\times\mathbb{T}^n\rightarrow P,\,\,\,(\delta_p,t)\mapsto\delta_p\circ\alpha(t).\notag
\end{align*}
Here, we have identified $P$ with the set of characters via the map $\Phi$ from Proposition \ref{spec of C(M) top}.
\item[(b)]
Conversely, if $(P,M,\mathbb{T}^n,q,\sigma)$ is a principal $\mathbb{T}^n$-bundle, then the corresponding smooth dynamical system $(C^{\infty}(P),\mathbb{T}^n,\alpha)$ is a smooth noncommutative principal $\mathbb{T}^n$-bundle.
\end{itemize}
\end{theorem}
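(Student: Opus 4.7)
The proof naturally splits into the two directions. \textbf{For part (a)}, since $\mathbb{T}^n$ is compact and $P$ is compact, the induced action of $\mathbb{T}^n$ on $P$ is automatically proper, so it suffices to prove freeness; once this is established, the classical quotient theorem for free actions of compact Lie groups on manifolds yields that $P/\mathbb{T}^n$ inherits a smooth manifold structure and $(P,P/\mathbb{T}^n,\mathbb{T}^n,\pr,\sigma)$ is a principal $\mathbb{T}^n$-bundle. To prove freeness, I first invoke Proposition \ref{extension of char on fixed point algebras VI} to identify $P/\mathbb{T}^n$ with $\Gamma_Z$, where $Z=C^{\infty}(P)^{\mathbb{T}^n}$. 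By Remark \ref{remark on NCPT^nB again}, I pick a family $(z_i)_{i\in I}\subseteq Z$ with $(D(z_i))_{i\in I}$ covering $\Gamma_Z$ and each localized system $(C^{\infty}(P)_{\{z_i\}},\mathbb{T}^n,\alpha_{\{z_i\}})$ a trivial noncommutative principal $\mathbb{T}^n$-bundle. Lemma \ref{T^n action on spec III again} applied to each $z_i$ then gives a free $\mathbb{T}^n$-action on $\Gamma^{\text{cont}}_{C^{\infty}(P)_{\{z_i\}}}$. Via the homeomorphism $\Psi_{\{z_i\}}$ of Proposition \ref{C(P) localized III again}, this transfers to a free $\mathbb{T}^n$-action on $P_{D(z_i)}=\pr^{-1}(D(z_i))$, and since the $P_{D(z_i)}$ form a $\mathbb{T}^n$-invariant open cover of $P$, Remark \ref{free=locally free} delivers freeness of the $\mathbb{T}^n$-action on all of $P$.

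\textbf{For part (b)}, let $\chi\in\Gamma_Z$. Since $\mathbb{T}^n$-invariant smooth functions on $P$ descend to $M$, one has $Z=q^{*}C^{\infty}(M)$, and $\chi$ corresponds to some point $m\in M$. I choose an open neighborhood $U\subseteq M$ of $m$ such that $P_U$ is trivial, that is, diffeomorphic to $U\times\mathbb{T}^n$ as a $\mathbb{T}^n$-manifold, and pick a $U$-defining function $f\in C^{\infty}(M,\mathbb{R})$ by Whitney's theorem (Theorem \ref{whitneys theorem}). Set $h:=f\circ q$; then $h\in Z$ and $\chi(h)=f(m)\neq 0$. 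Example \ref{example on smooth localization} shows that $(C^{\infty}(P)_{\{h\}},\mathbb{T}^n,\alpha_{\{h\}})$ is $\mathbb{T}^n$-equivariantly isomorphic, as a smooth dynamical system, to the one associated to $P_U\cong U\times\mathbb{T}^n$. The isotypic component of index $\mathbf{k}\in\mathbb{Z}^n$ of $C^{\infty}(U\times\mathbb{T}^n)$ contains the invertible element $(u,t)\mapsto t^{\mathbf{k}}$, so by Definition \ref{Trivial NCP T^n-Bundles} this is a smooth trivial noncommutative principal $\mathbb{T}^n$-bundle, and the claim follows.

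\textbf{The main obstacle} will be the equivariance check implicit in the transfer step of part (a): one must verify that $\Psi_{\{z\}}:P_{D(z)}\to\Gamma^{\text{cont}}_{C^{\infty}(P)_{\{z\}}}$ intertwines the geometric $\mathbb{T}^n$-action on $P_{D(z)}$ with the pullback action $\chi\mapsto\chi\circ\alpha_{\{z\}}(t)$ on the spectrum of the localized algebra. This reduces to a direct calculation using the explicit formula $\Psi_{\{z\}}(p)([F])=F(1/z(p),p)$: the invariance $z(p.t)=z(p)$ (since $z\in Z$) ensures that the first argument is unchanged, while the action $\alpha(t)\circ F$ precisely reproduces the translate $p\mapsto p.t$ in the second argument. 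Once this equivariance is in place, the remainder is bookkeeping: chaining the homeomorphisms of Propositions \ref{spec of C(M) top}, \ref{extension of char on fixed point algebras VI}, and \ref{C(P) localized III again}, and invoking the standard quotient theorem for free actions of compact Lie groups.
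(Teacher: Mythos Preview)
Your proposal is correct and follows essentially the same route as the paper's proof: for (a) you reduce to freeness via the Quotient Theorem, verify freeness locally by transferring the free action on $\Gamma^{\text{cont}}_{C^{\infty}(P)_{\{z_i\}}}$ (Lemma \ref{T^n action on spec III again}) through the homeomorphism $\Psi_{\{z_i\}}$ of Proposition \ref{C(P) localized III again}, and your ``main obstacle'' equivariance computation is exactly the displayed identity the paper checks; for (b) your use of Example \ref{example on smooth localization} packages precisely the paper's appeal to Corollary \ref{C(M)_f=C(M_f)} plus equivariance. The only point you leave implicit is that the induced action $\sigma$ on $P$ is \emph{smooth} (not just continuous), which the paper secures by citing \cite[Proposition 2.7]{Wa11c}; this is needed before invoking the smooth Quotient Theorem, so you should add a sentence to that effect.
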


\begin{proof}
(a) We first note that the induced action map $\sigma$ is actually smooth (cf. \cite[Proposition 2.7]{Wa11c}). Since $\mathbb{T}^n$ is compact, $\sigma$ is proper and in view of the Quotient Theorem (cf. \cite[Kapitel VIII, Abschnitt 21]{To00}), it remains to verify the freeness of $\sigma$: For this, we first note that the map $\sigma$ is free if and only if there exists an open cover of $\Gamma_Z$ of the form $(D(z_i))_{i\in I}$ such that each restriction map
\[\sigma_i:P_{D(z_i)}\times\mathbb{T}^n\rightarrow P_{D(z_i)},\,\,\,\sigma_i(\delta_p,g):=\sigma(\delta_p,g)
\]is free (cf. Remark \ref{free=locally free}). Moreover, Proposition \ref{C(P) localized III again} implies that
\[\Psi_{\{z\}}(\sigma(\delta_p,t))=\Psi_{\{z\}}(\delta_p\circ\alpha(t))=\Psi_{\{z\}}(\delta_p)\circ\alpha_{\{z\}}(t)=\sigma_{\{z\}}(\Psi_{\{z\}}(\delta_p),t)
\]holds for each $z\in Z$, $p\in P$ and $t\in\mathbb{T}^n$. Therefore, the freeness of $\sigma$ is a consequence of Remark \ref{remark on NCPT^nB again} (i) and Lemma \ref{T^n action on spec III again}.

(b) Conversely, we have to show that the condition of Definition \ref{NCPT^nB again} is satisfied: This will be done in the following three steps:

(i) We first note that $Z\cong C^{\infty}(M)$ (cf. \cite[Proposition 2.4]{Wa11c}). Hence, $\Gamma_Z$ is homeomorphic to $M$ by Proposition \ref{spec of C(M) top}.

(ii) Next, we choose an open cover $(U_i)_{i\in I}$ of $M$ such that each $P_{U_i}$ is a trivial principal $G$-bundle over $U_i$, i.e., such that $P_{U_i}\cong U_i\times\mathbb{T}^n$ holds for each $i\in I$. Further, we choose for all $i\in I$ a $U_i$-defining function $f_i$ and note that each function $h_i:=f_i\circ q$ is a (smooth) $P_{U_i}$-defining function.

(iii) For $p\in P$ we choose $i\in I$ with $q(p)\in U_i$. Then $h_i$ is an element in $Z$ with $h_i(p)\neq 0$ and we conclude from Corollary \ref{C(M)_f=C(M_f)} that the map
\[\phi_i:C^{\infty}(P)_{\{h_i\}}\rightarrow C^{\infty}(P_{U_i}),\,\,\,[F]\mapsto\left(p\mapsto F\left(\frac{1}{h_i(p)},p\right)\right)
\]is an isomorphism of unital Fr\'{e}chet algebras. Moreover, the map $\phi_i$ is $\mathbb{T}^n$-equivariant. In fact, we have
\[(\phi_i\circ\alpha_{\{h_i\}})(g,[F])=\alpha(g,\phi_i([F]))
\]for all $t\in\mathbb{T}^n$ and $[F]\in C^{\infty}(P)_{\{h_i\}}$. In particular, the corresponding localized dynamical system $(C^{\infty}(P)_{\{h_i\}},\mathbb{T}^n,\alpha_{\{h_i\}})$ carries the structure of a smooth trivial noncommutative principal $\mathbb{T}^n$-bundle, since it is isomorphic to the smooth trivial noncommutative principal $\mathbb{T}^n$-bundle $(C^{\infty}(P_{U_j}),\mathbb{T}^n,\alpha_{U_i})$ corresponding to the trivial principal $\mathbb{T}^n$-bundle $P_{U_j}$ over $U_i$.
\end{proof}

%\section{Examples of Noncommutative Principal Torus Bundles}\label{EONCPTB}

\section*{Example 1: Trivial Noncommutative Principal Bundles}\index{Trivial NCP!Bundles}

We show that each trivial noncommutative principal $\mathbb{T}^n$-bundle carries the structure of a noncommutative principal $\mathbb{T}^n$-bundle:

\begin{theorem}\label{TNCTB are NCTB}
\emph{(}Trivial noncommutative principal torus bundles\emph{)}. Each trivial noncommutative principal $\mathbb{T}^n$-bundle $(A,\mathbb{T}^n,\alpha)$ carries the structure of a noncommutative principal $\mathbb{T}^n$-bundle.
\end{theorem}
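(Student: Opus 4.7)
The plan is to exhibit a single distinguished element $z\in Z$ that simultaneously witnesses the condition in Definition \ref{NCPT^nB again} for every character $\chi\in\Gamma_Z$. The natural (and essentially only obvious) candidate is $z=1_A$, since $1_A$ is central and fixed by every automorphism, hence $1_A\in Z$, and every character satisfies $\chi(1_A)=1\neq 0$. In the language of Remark \ref{remark on NCPT^nB again}, this corresponds to the trivial one-element open covering $\Gamma_Z = D(1_A)$ of the spectrum.

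With this choice, the task reduces to showing that the localized dynamical system $(A_{\{1_A\}},\mathbb{T}^n,\alpha_{\{1_A\}})$ is again a (smooth) trivial noncommutative principal $\mathbb{T}^n$-bundle. The key input is Corollary \ref{inverting 1 iso}, which provides an isomorphism of complete locally convex algebras
\[
\varphi:A_{\{1_A\}}\longrightarrow A,\qquad [f]\longmapsto f(1).
\]
What remains is to promote $\varphi$ to an isomorphism of dynamical systems, i.e.\ to verify $\mathbb{T}^n$-equivariance. But by the very definition of $\alpha_{\{1_A\}}$ in Proposition \ref{T^n action on spec again} we have
\[
\varphi\bigl(\alpha_{\{1_A\}}(t)[f]\bigr)=\varphi\bigl([\alpha(t)\circ f]\bigr)=(\alpha(t)\circ f)(1)=\alpha(t)(f(1))=\alpha(t)\bigl(\varphi([f])\bigr),
\]
so $\varphi$ intertwines $\alpha_{\{1_A\}}$ and $\alpha$. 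Consequently $(A_{\{1_A\}},\mathbb{T}^n,\alpha_{\{1_A\}})$ is isomorphic as a (smooth) dynamical system to $(A,\mathbb{T}^n,\alpha)$, which by hypothesis is a trivial noncommutative principal $\mathbb{T}^n$-bundle. In particular each isotypic component of the former contains an invertible element, so the triviality condition of Definition \ref{Trivial NCP T^n-Bundles} is satisfied.

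Given any $\chi\in\Gamma_Z$, the element $z:=1_A$ therefore meets the requirements of Definition \ref{NCPT^nB again}, and the theorem follows. No step here is genuinely obstructed: the only mild subtlety is the equivariance check for $\varphi$, which, as indicated, follows at once from the fact that evaluation at a point commutes with post-composition with an algebra automorphism. In the smooth setting, one additionally notes that the action on $A_{\{1_A\}}$ is smooth precisely because the corresponding action on $A$ is, transported through $\varphi$; this is automatic from the construction in Proposition \ref{T^n action on spec again}.
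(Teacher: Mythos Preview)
Your proof is correct and follows essentially the same approach as the paper: choose $z=1_A$, invoke Corollary \ref{inverting 1 iso} to identify $A_{\{1_A\}}$ with $A$, and conclude. If anything, your argument is more complete than the paper's, since you explicitly verify the $\mathbb{T}^n$-equivariance of the isomorphism $\varphi$, a step the paper leaves implicit.
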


\begin{proof}
If $(A,\mathbb{T}^n,\alpha)$ is a trivial noncommutative principal $\mathbb{T}^n$-bundle, then $1_A\in Z$ and $\chi(1_A)=1\neq 0$ holds for each $\chi\in\Gamma_Z$. In particular, Corollary \ref{inverting 1 iso} implies that $A_{\{1_A\}}\cong A$ holds as unital locally convex algebras. Therefore, $(A,\mathbb{T}^n,\alpha)$ is a noncommutative principal $\mathbb{T}^n$-bundle in its own right.
\end{proof}

\section*{Example 2: Sections of Algebra Bundles with Trivial\\Noncommutative Principal Torus Bundles as Fibre}\label{examples of NCP T^n-bundles II}

We show that if $A$ is a unital Fr\'echet algebra and $(A,\mathbb{T}^n,\alpha)$ a smooth trivial noncommutative principal $\mathbb{T}^n$-bundle such that $Z=C_A^{\mathbb{T}^n}$ is isomorphic to $\mathbb{C}$, then the algebra of sections of each algebra bundle with ``fibre" $(A,\mathbb{T}^n,\alpha)$ is a noncommutative principal $\mathbb{T}^n$-bundle. We start with the following proposition:

\begin{proposition}\label{trivial NCP T^n-bundles from trivial algebra bundles}
If $(A,\mathbb{T}^n,\alpha)$ is a \emph{(}smooth\emph{)} trivial noncommutative principal $\mathbb{T}^n$-bundle and $M$ a manifold, then the triple $(C^{\infty}(M,A),\mathbb{T}^n,\beta)$ of Lemma \ref{dynamical system and trivial algebra bundles} is also a \emph{(}smooth\emph{)} trivial noncommutative principal $\mathbb{T}^n$-bundle.
\end{proposition}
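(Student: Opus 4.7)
The plan is to identify the isotypic components of $(C^{\infty}(M,A),\mathbb{T}^n,\beta)$ explicitly and then exhibit invertible elements in each of them via constant functions. The smoothness of $\beta$ (in the smooth case) is already granted by Lemma \ref{dynamical system and trivial algebra bundles}, so there is nothing extra to check on that side.

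First I would compute the $\mathbf{k}$-isotypic component of $C^{\infty}(M,A)$ under $\beta$ for $\mathbf{k}\in\mathbb{Z}^n$. By the definition of $\beta$ we have $f\in C^{\infty}(M,A)_{\mathbf{k}}$ if and only if $\alpha(t)\circ f=t^{\mathbf{k}}f$ for all $t\in\mathbb{T}^n$, which, evaluated at $m\in M$, is equivalent to $\alpha(t).f(m)=t^{\mathbf{k}}f(m)$ for every $m$ and every $t$. This is precisely the condition $f(m)\in A_{\mathbf{k}}$ for all $m\in M$, so
\[
C^{\infty}(M,A)_{\mathbf{k}}\;=\;C^{\infty}(M,A_{\mathbf{k}}).
\]

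Second, since $(A,\mathbb{T}^n,\alpha)$ is a trivial noncommutative principal $\mathbb{T}^n$-bundle, each isotypic component $A_{\mathbf{k}}$ contains an invertible element $u_{\mathbf{k}}\in A^{\times}$. Consider the constant function $\hat{u}_{\mathbf{k}}:M\to A$, $m\mapsto u_{\mathbf{k}}$. It is smooth, lies in $C^{\infty}(M,A_{\mathbf{k}})=C^{\infty}(M,A)_{\mathbf{k}}$, and admits the constant function $m\mapsto u_{\mathbf{k}}^{-1}$ as a two-sided inverse in $C^{\infty}(M,A)$ (the inverse is obviously smooth as a constant map). Thus each isotypic component of $C^{\infty}(M,A)$ contains an invertible element, which is exactly Definition \ref{Trivial NCP T^n-Bundles}.

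There is no real obstacle here; the only point requiring a bit of care is the pointwise identification of the isotypic components, but this follows at once from the pointwise definition of $\beta$ and the fact that the relation $\alpha(t).f(m)=t^{\mathbf{k}}f(m)$ holds for every $t$ if and only if it holds at each fixed $m$ separately. In the smooth case one additionally invokes Lemma \ref{dynamical system and trivial algebra bundles} to know that $\beta$ is a smooth action, so the constructed trivial noncommutative principal $\mathbb{T}^n$-bundle is smooth whenever $(A,\mathbb{T}^n,\alpha)$ is.
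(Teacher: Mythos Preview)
Your proof is correct and follows essentially the same approach as the paper: identify $C^{\infty}(M,A)_{\mathbf{k}}=C^{\infty}(M,A_{\mathbf{k}})$ and use the embedding of $A$ into $C^{\infty}(M,A)$ via constant functions to produce invertible elements in each isotypic component. The paper's proof is simply a more terse version of what you wrote.
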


\begin{proof}
The claim directly follows from the fact that the algebra $A$ is naturally embedded in $C^{\infty}(M,A)$ through the constant maps. In fact, if ${\bf k}\in\mathbb{Z}^n$, then the corresponding isotypic component $C^{\infty}(M,A)_{\bf k}$ is equal to $C^{\infty}(M,A_{\bf k})$ and thus contains invertible elements.
%isotypic component of $C^{\infty}(M,A)$ corresponding to $k\in\mathbb{Z}^n$ is given by $C^{\infty}(M,A_k)$, and thus contains invertible elements.
\end{proof}

\begin{definition}\label{automorphism of dynamical systems}
(The automorphism group of a dynamical system). Let $(A,G,\alpha)$ be a dynamical system. The group
\[\Aut_G(A):=\{\varphi\in\Aut(A):\,(\forall g\in G)\,\alpha(g)\circ\varphi=\varphi\circ\alpha(g)\}
\]is called the \emph{automorphism group of} the dynamical system $(A,G,\alpha)$.
\end{definition}

\begin{example}
If $A=C^{\infty}(M\times G)$, then [Ne08b], Proposition 1.4.8 implies that 
\[\Aut_G(A)=C^{\infty}(M,G)\rtimes_{\gamma}\Diff(M),
\]where $\gamma:\Diff(M)\rightarrow\Aut(C^{\infty}(M,G))$, $\gamma(\varphi).f:=f\circ\varphi^{-1}$. In particular, if $A=C^{\infty}(G)$, then 
\[\Aut_G(A)\cong G.
\]
\end{example}

\begin{example}\label{G-automorphism group of quantumtori}
If $\mathbb{T}^n_{\theta}$ is the smooth noncommutative $n$-torus and $(\mathbb{T}^n_{\theta},\mathbb{T}^n,\alpha)$ the corresponding smooth dynamical system of Example \ref{NC n-tori as NCPTB} (b), then
\[\Aut_{\mathbb{T}^n}(\mathbb{T}^n_{\theta})\cong \mathbb{T}^n.
\]Indeed, we first recall that $\mathbb{T}^n_{\theta}$ is generated by unitaries $U_1,\ldots, U_n$. If now $\varphi\in\Aut_{\mathbb{T}^n}(\mathbb{T}^n_{\theta})$, $z\in\mathbb{T}^n$ and $U_r$ ($1\leq r\leq n$) is such a 
unitary, then 
\[z.\varphi(U_r)=\varphi(z.U_r)=\varphi(z_r\cdot U_r)=z_r\cdot\varphi(U_r)
\]leads to $\varphi(U_r)=\lambda_r\cdot U_r$ for some $\lambda_r\in\mathbb{C}^{\times}$. Since $U_r$ is a unitary and $\varphi$ is a $^*$-automorphism ($\mathbb{T}^n_{\theta}$ is a $^*$-algebra), we conclude that $\lambda_r\in\mathbb{T}$. In particular, each element in $\Aut_{\mathbb{T}^n}(\mathbb{T}^n_{\theta})$ corresponds to an element in $\mathbb{T}^n$ and vice versa. %Thus, $\Aut_{\mathbb{T}^n}(\mathbb{T}^n_{\theta})\cong \mathbb{T}^n$.
\end{example}

\begin{proposition}\label{cocycle description of bundles}
\emph{(}Cocycle description of bundles\emph{)}. Let $(A,G,\alpha)$ be a smooth dynamical system and $M$ a manifold. Further, let $(U_i)_{i \in I}$ be an open cover of $M$ and $U_{ij}:=U_i\cap U_j$ for $i,j\in I$. If $(g_{ij})_{i,j \in I}$ is a collection of functions $g_{ij}\in C^{\infty}(U_{ij},\Aut_G(A))$ satisfying 
\[g_{ii}={\bf 1}\,\,\,\text{and}\,\,\,g_{ij}g_{jk}=g_{ik}\,\,\,\text{on}\,\,\,U_{ijk}:=U_i\cap U_j\cap U_k,
\]then the following assertions hold:
\begin{itemize}
\item[(a)]
There exists an algebra bundle $(\mathbb{A},M,A,q)$ and bundle charts $\varphi_i:U_i\times A\rightarrow\mathbb{A}_{U_i}$ such that
\[(\varphi_i^{-1}\circ\varphi_j)(x,a)=(x,g_{ij}(x).a).
\]
\item[(b)]
The map
\[\sigma:G\times\mathbb{A}\rightarrow\mathbb{A},\,\,\,(g,a)\mapsto\varphi_i(x,\alpha(g).a_0),
\]for $i\in I$ with $x=q(a)\in U_i$ and $a_0\in A$ with $\varphi_i(x,a_0)=a$, defines a smooth action of $G$ on $\mathbb{A}$ by fibrewise algebra automorphisms.
\end{itemize}
\end{proposition}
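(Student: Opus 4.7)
The plan is to construct $\mathbb{A}$ as the quotient of a disjoint union by a cocycle-defined equivalence relation, in the classical style of reconstructing a fibre bundle from transition data, and then to transport the algebra structure of $A$ and the $G$-action on $A$ through the bundle charts. Concretely, I will first set $\widetilde{\mathbb{A}}:=\bigsqcup_{i\in I}U_i\times A\times\{i\}$ and declare $(x,a,i)\sim(y,b,j)$ iff $x=y\in U_{ij}$ and $b=g_{ji}(x).a$. The cocycle identities $g_{ii}=\mathbf{1}$ and $g_{ij}g_{jk}=g_{ik}$ make this an equivalence relation. Let $\mathbb{A}:=\widetilde{\mathbb{A}}/\sim$, endowed with the quotient topology, and let $q:\mathbb{A}\to M$ send $[x,a,i]$ to $x$; the injections $U_i\times A\hookrightarrow\widetilde{\mathbb{A}}$ descend to bijections $\varphi_i:U_i\times A\to \mathbb{A}_{U_i}$ which by construction satisfy $(\varphi_i^{-1}\circ\varphi_j)(x,a)=(x,g_{ij}(x).a)$.

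Next I would promote $\mathbb{A}$ to a smooth (infinite-dimensional) manifold by declaring the $\varphi_i$ to be diffeomorphisms: this is legitimate because the change-of-chart maps $(x,a)\mapsto(x,g_{ij}(x).a)$ are smooth, since smoothness of $g_{ij}:U_{ij}\to\Aut_G(A)\subseteq\Aut(A)$ together with the smoothness of the evaluation map $\Aut(A)\times A\to A$ (coming from the fact that $\alpha$ and hence the action by any automorphism is smooth by the hypothesis on $(A,G,\alpha)$, plus the definition of the smooth structure on $\Aut_G(A)$ inherited from the one on $\Aut(A)$) gives exactly this. The algebra structure on each fibre $\mathbb{A}_x$ is transported from $A$ via any $\varphi_{i,x}$ with $x\in U_i$; it is independent of $i$ because $g_{ij}(x)\in\Aut(A)$ is an algebra automorphism. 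The maps $\varphi_{i,x}:A\to\mathbb{A}_x$ are by construction algebra isomorphisms and $q\circ\varphi_i=\pr_{U_i}$, so $(\mathbb{A},M,A,q)$ is an algebra bundle with bundle atlas $(\varphi_i,U_i)_{i\in I}$ and transition functions $g_{ij}$.

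For part (b), I would define $\sigma$ chart-by-chart by $\sigma(g,\varphi_i(x,a_0)):=\varphi_i(x,\alpha(g).a_0)$ and check well-definedness via
\[\varphi_j(x,\alpha(g).a_0')=\varphi_j\bigl(x,\alpha(g).(g_{ji}(x).a_0)\bigr)=\varphi_j\bigl(x,g_{ji}(x).(\alpha(g).a_0)\bigr)=\varphi_i(x,\alpha(g).a_0),\]
where the crucial middle step uses precisely that $g_{ji}(x)\in\Aut_G(A)$ commutes with $\alpha(g)$. Smoothness of $\sigma$ is now a local statement in each chart: in $U_i\times A$ it equals $(g,x,a)\mapsto(x,\alpha(g).a)$, which is smooth because $(A,G,\alpha)$ is a smooth dynamical system. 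The axioms of a group action ($\sigma(e,\cdot)=\id$, $\sigma(gh,\cdot)=\sigma(g,\sigma(h,\cdot))$) follow pointwise from the corresponding properties of $\alpha$ on $A$, and since each $\alpha(g)$ is an algebra automorphism, $\sigma(g,\cdot)_{\mid\mathbb{A}_x}$ is a fibrewise algebra automorphism.

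I expect the main obstacle to be bookkeeping rather than genuine difficulty: one has to be careful with the smooth structure on $\Aut_G(A)\subseteq\Aut(A)$ and with justifying smoothness of the pointwise action map on the total space via the local trivializations; this is where the hypothesis that $(A,G,\alpha)$ is a \emph{smooth} dynamical system (so that $\alpha:G\times A\to A$ is smooth) enters decisively, together with the smoothness of each $g_{ij}$ with values in $\Aut_G(A)$. The purely algebraic verifications (equivalence relation, compatibility of algebra structures on fibres, well-definedness of $\sigma$, action axioms) are immediate from the cocycle identities and the definition of $\Aut_G(A)$.
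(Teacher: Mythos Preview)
Your proposal is correct and follows essentially the same route as the paper. For part (a) the paper simply cites Steenrod's classical cocycle construction \cite[Part I, Section 3, Theorem 3.2]{St51}, which is exactly the quotient-of-disjoint-union procedure you spell out; for part (b) the paper performs the identical well-definedness computation using $g_{ji}(x)\in\Aut_G(A)$ and then remarks that smoothness follows from the local description, just as you do.
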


\begin{proof}
(a) A proof of the first statement can be found in \cite[Part I, Section 3, Theorem 3.2]{St51}.

(b) The crucial point is to show that the map $\sigma$ is well-defined: For this let $i,j\in I$ with $x=q(a)\in U_{ij}$, $a_0\in A$ with $\varphi_i(x,a_0)=a$ and $a'_0\in A$ with $\varphi_j(x,a'_0)=a$. Then $a_0=g_{ij}(x).a'_0$ leads to
\begin{align}
\varphi_i(x,\alpha(g).a_0)&=\varphi_i(x,\alpha(g)(g_{ij}(x).a'_0))=\varphi_i(x,g_{ij}(x)(\alpha(g).a'_0))=\varphi_j(x,\alpha(g).a'_0).\notag
\end{align}
Further, a short calculation shows that $\sigma$ defines an action of $G$ on $\mathbb{A}$ by fibrewise algebra automorphisms. Its smoothness follows from the local description by a smooth function.
\end{proof}

\begin{proposition}\label{dynamical system and algebra bundles}
Suppose we are in the situation of Proposition \ref{cocycle description of bundles}. Then the map
\[\beta:G\times\Gamma\mathbb{A}\rightarrow\Gamma\mathbb{A},\,\,\,\beta(g,s)(m):=\sigma(g,s(m))
\]defines a smooth action of $G$ on $\Gamma\mathbb{A}$ by algebra automorphisms. In particular, the triple $(\Gamma\mathbb{A},G,\beta)$ is a smooth dynamical system.
\end{proposition}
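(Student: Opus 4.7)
The plan is to verify the three required properties in turn: that $\beta$ is well defined as a map into $\Gamma\mathbb{A}$, that $\beta$ satisfies the action axioms and each $\beta(g,\cdot)$ is an algebra automorphism, and finally that $\beta$ is smooth as a map of (infinite-dimensional) manifolds. The first two steps reduce to pointwise consequences of the corresponding facts for $\sigma$, so the real content is the smoothness assertion.

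First I would observe that, since $\sigma$ acts fibrewise by Proposition \ref{cocycle description of bundles}(b), we have $q(\sigma(g, s(m))) = q(s(m)) = m$ for every $g \in G$, $m \in M$ and $s \in \Gamma\mathbb{A}$, so $\beta(g,s)$ is indeed a section of $(\mathbb{A}, M, A, q)$; its smoothness as a map $M \to \mathbb{A}$ follows from smoothness of $\sigma$, of $s$, and the chain rule. That each $\beta(g,\cdot)$ is a unital algebra automorphism and that $\beta(e,s)=s$ and $\beta(gh,s)=\beta(g,\beta(h,s))$ hold then follow pointwise from the fibrewise algebra-automorphism property and the action axioms for $\sigma$.

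The key step is smoothness, which I expect to be the main obstacle. Fix a bundle atlas $(\varphi_i, U_i)_{i \in I}$. The local description of $\sigma$ in Proposition \ref{cocycle description of bundles}(b) reads $\sigma(g, \varphi_i(x, a_0)) = \varphi_i(x, \alpha(g).a_0)$ for $x \in U_i$ and $a_0 \in A$. Writing $s_i := \Phi_i(s)$ and applying $\pr_A \circ \varphi_i^{-1}$ to both sides yields
\[\Phi_i(\beta(g,s))(x) \;=\; \alpha(g).s_i(x) \;=\; (\alpha(g) \circ s_i)(x) \qquad (x \in U_i),\]
so $\Phi_i \circ \beta = \beta_i \circ (\id_G \times \Phi_i)$, where $\beta_i : G \times C^{\infty}(U_i, A) \to C^{\infty}(U_i, A)$, $(g, f) \mapsto \alpha(g) \circ f$, is precisely the smooth action provided by Lemma \ref{dynamical system and trivial algebra bundles} applied with manifold $U_i$.

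With this factorisation in hand, I would invoke Remark \ref{smooth structure on sections}: smoothness of $\beta$ is equivalent to smoothness of $\Phi_I \circ \beta$, which in turn reduces to the smoothness of each component $\Phi_i \circ \beta$. But $\Phi_i$ is continuous and linear, hence smooth, and $\beta_i$ is smooth, so the composition $\beta_i \circ (\id_G \times \Phi_i)$ is smooth. The entire difficulty of the smoothness verification is thus absorbed into the two black boxes Remark \ref{smooth structure on sections} and Lemma \ref{dynamical system and trivial algebra bundles}; no further hands-on estimates on $\Gamma\mathbb{A}$ are required.
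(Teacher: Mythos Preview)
Your proof is correct and follows essentially the same approach as the paper: both reduce the action and automorphism properties to the fibrewise statement in Proposition~\ref{cocycle description of bundles}(b), and both establish smoothness via Remark~\ref{smooth structure on sections} by factoring $\Phi_i\circ\beta$ as $\beta_i\circ(\id_G\times\Phi_i)$ with $\beta_i$ the smooth action of Lemma~\ref{dynamical system and trivial algebra bundles}. Your version is slightly more explicit about well-definedness and the local computation, but the argument is the same.
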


\begin{proof}
That the map $\beta$ defines an action of $G$ on $\Gamma\mathbb{A}$ by algebra automorphisms is a consequence of Proposition \ref{cocycle description of bundles} (b). To verify the smoothness of $\beta$, we first choose a bundle atlas $(\varphi_i,U_i)_{i\in I}$ of $(\mathbb{A},M,A,q)$ and use the definition of the smooth structure on $\Gamma\mathbb{A}$: In fact, Remark \ref{smooth structure on sections} implies that the map $\beta$ is smooth if and only if each map 
\[\Phi_i\circ\beta:G\times\Gamma\mathbb{A}\rightarrow C^{\infty}(U_i,A),\,\,\,(g,s)\mapsto\alpha(g)\circ s_i
\]is smooth. Next, we recall that each map
\[\alpha_i:G\times C^{\infty}(U_i,A)\rightarrow C^{\infty}(U_i,A),\,\,\,(g,f)\mapsto\alpha(g)\circ f
\]is smooth by Lemma \ref{dynamical system and trivial algebra bundles}. Since $\Phi_i\circ\beta=\alpha_i\circ(\id_G\times\Phi_i)$ for each $i\in I$, each map $\Phi_i\circ\beta$ is smooth as a composition of smooth maps.
\end{proof}

%\noindent
%\newpage

\begin{remark}
For the next theorem we recall that the smooth dynamical systems $(\mathbb{T}^n_{\theta},\mathbb{T}^n,\alpha)$ of Example \ref{NC n-tori as NCPTB} (b) provide a class of examples of trivial noncommutative principal $\mathbb{T}^n$-bundles for which $C_{\mathbb{T}^n_{\theta}}^{\mathbb{T}^n}$ is isomorphic to $\mathbb{C}$. %Another class with this property is given by the trivial NCP $C_m\times C_m$-bundles $(\M_m(\mathbb{C}),C_m\times C_m,\alpha)$ of Example \ref{the matrix algebra}.
\end{remark}

\begin{theorem}\label{NCP G_bundle with fibre trivial NCP G bundle}
Let $A$ be a unital Fr\'echet algebra and $(A,\mathbb{T}^n,\alpha)$ a smooth trivial noncommutative principal $\mathbb{T}^n$-bundle such that $C_A^{\mathbb{T}^n}$ is isomorphic to $\mathbb{C}$. Further, let $M$ be a manifold, $(U_i)_{i \in I}$ an open cover of $M$ and $U_{ij}:=U_i\cap U_j$ for $i,j\in I$. If $(g_{ij})_{i,j \in I}$ is a collection of functions $g_{ij}\in C^{\infty}(U_{ij},\Aut_{\mathbb{T}^n}(A))$ satisfying 
\[g_{ii}={\bf 1}\,\,\,\text{and}\,\,\,g_{ij}g_{jk}=g_{ik}\,\,\,\text{on}\,\,\,U_{ijk}:=U_i\cap U_j\cap U_k,
\]then the smooth dynamical system $(\Gamma\mathbb{A},\mathbb{T}^n,\beta)$ of Proposition \ref{dynamical system and algebra bundles} is a noncommutative principal $\mathbb{T}^n$-bundle.
\end{theorem}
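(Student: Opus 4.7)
The plan is to verify Definition \ref{NCPT^nB again} directly. Writing $Z:=C_{\Gamma\mathbb{A}}^{\mathbb{T}^n}$, I need to produce, for each $\chi\in\Gamma_Z$, an element $z\in Z$ with $\chi(z)\neq 0$ such that the localized dynamical system $((\Gamma\mathbb{A})_{\{z\}},\mathbb{T}^n,\beta_{\{z\}})$ is a smooth trivial noncommutative principal $\mathbb{T}^n$-bundle. The candidate for $z$ will come from $M$: the unit section $\mathbf{1}\in\Gamma\mathbb{A}$ is central and $\mathbb{T}^n$-fixed, so $f\mapsto f\cdot\mathbf{1}$ embeds $C^{\infty}(M)$ into $Z$. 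Given $\chi\in\Gamma_Z$, the restriction $\chi|_{C^{\infty}(M)}$ is a non-zero character (since $\chi(\mathbf{1})=1$), and hence equals $\delta_m$ for some $m\in M$ by Lemma \ref{spec of C(M,K) set}. I then pick $i\in I$ with $m\in U_i$ and a $U_i$-defining function $f_i\in C^{\infty}(M,\mathbb{R})$ via Whitney's theorem (Theorem \ref{whitneys theorem}); the element $z:=f_i\cdot\mathbf{1}\in Z$ satisfies $\chi(z)=f_i(m)\neq 0$.

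For this $z$, Corollary \ref{algebra section 3} applied to $(\mathbb{A},M,A,q)$ and $f_i$ delivers an isomorphism of unital Fr\'echet algebras
\[
\phi_i:(\Gamma\mathbb{A})_{\{z\}}\longrightarrow\Gamma\mathbb{A}_{U_i},\qquad [F]\mapsto F\circ\bigl(\tfrac{1}{f_i}\times\id_{U_i}\bigr).
\]
Unwinding the definitions $\beta_{\{z\}}(t,[F])=[\beta(t)\circ F]$ and $\beta(t)(s)(m)=\sigma(t,s(m))$ together with the $\mathbb{T}^n$-invariance of $f_i$, a direct computation shows that $\phi_i$ intertwines $\beta_{\{z\}}$ with the restriction $\beta_{U_i}$ of $\beta$ to $\Gamma\mathbb{A}_{U_i}$, so $\phi_i$ is an isomorphism of smooth dynamical systems.

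To complete the argument I identify $(\Gamma\mathbb{A}_{U_i},\mathbb{T}^n,\beta_{U_i})$ as a smooth trivial noncommutative principal $\mathbb{T}^n$-bundle: the bundle chart $\varphi_i:U_i\times A\to\mathbb{A}_{U_i}$ is $\mathbb{T}^n$-equivariant by the very definition of $\sigma$ in Proposition \ref{cocycle description of bundles} (b), hence induces an equivariant isomorphism
\[
(\Gamma\mathbb{A}_{U_i},\mathbb{T}^n,\beta_{U_i})\cong(C^{\infty}(U_i,A),\mathbb{T}^n,\beta)
\]
onto the smooth dynamical system of Lemma \ref{dynamical system and trivial algebra bundles}. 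Proposition \ref{trivial NCP T^n-bundles from trivial algebra bundles} then yields triviality, and composing with $\phi_i$ finishes the verification. I expect the principal obstacle to be the equivariance bookkeeping: one must unwind three actions (the quotient action $\beta_{\{z\}}$, the restriction $\beta_{U_i}$, and the pointwise action on $C^{\infty}(U_i,A)$) along both isomorphisms and confirm at each step compatibility with the fibrewise formula for $\sigma$.
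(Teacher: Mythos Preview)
Your proposal is correct and follows essentially the same route as the paper: pick a $U_i$-defining function $f_i$, invoke Corollary \ref{algebra section 3} to identify the localization with $\Gamma\mathbb{A}_{U_i}$, check $\mathbb{T}^n$-equivariance, pass through the bundle chart to $C^{\infty}(U_i,A)$, and finish with Proposition \ref{trivial NCP T^n-bundles from trivial algebra bundles}. The only difference is that the paper first asserts $Z\cong C^{\infty}(M)$ outright (this is where the hypothesis $C_A^{\mathbb{T}^n}\cong\mathbb{C}$ enters), whereas you work only with the embedding $C^{\infty}(M)\hookrightarrow Z$ and restrict $\chi$ to it; your variant is slightly more parsimonious but lands in exactly the same place.
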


\begin{proof}
%The proof is divided into two parts:
(i) We first note that $Z:=C_{\Gamma\mathbb{A}}^{\mathbb{T}^n}\cong C^{\infty}(M)$. In particular, the spectrum $\Gamma_Z$ is homeomorphic to $M$.

(ii) Next, we choose for each $i\in I$ a $U_i$-defining function $f_i$. Further, for $m\in M$, we choose $i\in I$ with $m\in U_i$. Then $f_i$ is an element in $C^{\infty}(M)$ with $f_i(m)\neq 0$ and we conclude from Corollary \ref{algebra section 3} that the map
\[\phi_{U_i}:\Gamma\mathbb{A}_{\{f_i\}}\rightarrow\Gamma\mathbb{A}_{U_i},\,\,\,[F]\mapsto F\circ\left(\frac{1}{f_i}\times\id_{U_i}\right)
\]is an isomorphism of unital Fr\'echet algebras. Moreover, the map $\phi_{U_i}$ is $\mathbb{T}^n$-equivariant. In fact, we have
\[(\phi_{U_i}\circ\beta_{\{f_i\}})(t,[F])=\beta(t,\phi_{U_i}([F]))
\]for all $t\in\mathbb{T}^n$ and $[F]\in\Gamma\mathbb{A}_{\{f_i\}}$. Since the natural isomorphism between the space $\Gamma\mathbb{A}_{U_i}$ and $C^{\infty}(U_i,A)$ is also a $\mathbb{T}^n$-equivariant isomorphism of unital Fr\'echet algebras (cf. Proposition \ref{cocycle description of bundles} (a)), it follows from Proposition \ref{trivial NCP T^n-bundles from trivial algebra bundles} that the corresponding dynamical system $(\Gamma\mathbb{A}_{\{f_i\}},\mathbb{T}^n,\beta_{\{f_i\}})$ of Proposition \ref{T^n action on spec again} carries the structure of a smooth trivial noncommutative principal $\mathbb{T}^n$-bundle.
\end{proof}

\begin{example}\label{non-triviality of the previous construction}
(Non-triviality of the previous construction). In this example we show that the previous construction actually leads to non-trivial examples. For this we apply Theorem \ref{NCP G_bundle with fibre trivial NCP G bundle} to the trivial noncommutative principal $\mathbb{T}^n$-bundle $(\mathbb{T}^n_{\theta},\mathbb{T}^n,\alpha)$. In view of Example \ref{G-automorphism group of quantumtori} we have 
\begin{align}
\Aut_{\mathbb{T}^n}(\mathbb{T}^n_{\theta})\cong \mathbb{T}^n.\label{referenz G-automorphism group of quantumtori}
\end{align}
In particular, a similar argument as in \cite[Proposition 2.1.14]{Wa11a} implies that there is a one-to-one correspondence between the algebra bundles arising from Proposition \ref{cocycle description of bundles} and principal $\mathbb{T}^n$-bundles. Thus, if $(\mathbb{A},M,\mathbb{T}^n_{\theta},q)$ is such an algebra bundle which corresponds to a non-trivial principal $\mathbb{T}^n$-bundle, then also $(\mathbb{A},M,\mathbb{T}^n_{\theta},q)$ is non-trivial as algebra bundle. We claim that the associated smooth dynamical system $(\Gamma\mathbb{A},\mathbb{T}^n,\beta)$ of Proposition \ref{dynamical system and algebra bundles} is a non-trivial noncommutative principal $\mathbb{T}^n$-bundle. To prove this claim we assume the converse, i.e., that $(\Gamma\mathbb{A},\mathbb{T}^n,\beta)$ is a trivial noncommutative principal $\mathbb{T}^n$-bundle. For this, we rename the open subsets $U_i$ of $M$ of Theorem \ref{NCP G_bundle with fibre trivial NCP G bundle} to $O_i$ in order to avoid an abuse of notation and proceed as follows:

(i) We first recall that $\mathbb{T}^n_{\theta}$ is generated by unitaries $U_1,\ldots,U_n$ and that its elements are given by (norm-convergent) sums
\[a=\sum_{{\bf k}\in\mathbb{Z}^n}a_{\bf k}U^{\bf k},\,\,\,\text{with}\,\,\,(a_{\bf k})_{{\bf k}\in\mathbb{Z}^n}\in S(\mathbb{Z}^n).
\]Here, 
\[U^{\bf k}:=U^{k_1}_1\cdots U^{k_n}_n.
\]

(ii) For each $1\leq r\leq n$ let $(\Gamma\mathbb{A})_r$ be the isotypic component corresponding to the canonical basis element $e_r=(0,\ldots,1,\ldots,0)$ of $\mathbb{Z}^n$. Then the definition of the action $\beta$ implies that
\begin{align}
(\Gamma\mathbb{A})_r&=\{s\in\Gamma\mathbb{A}:\,(\forall z\in\mathbb{T}^n)\,z.s=z_r\cdot s\}\notag\\
&=\{s\in\Gamma\mathbb{A}:\,(\forall m\in M)\,s(m)\in(\mathbb{T}^n_{\theta})_{m,r}\},\notag
\end{align}
where $(\mathbb{T}^n_{\theta})_{m,r}$ denotes the isotypic component of the fibre $(\mathbb{T}^n_{\theta})_m$ corresponding to $e_r$.
Since $(\Gamma\mathbb{A},\mathbb{T}^n,\beta)$ is assumed to be a trivial noncommutative principal $\mathbb{T}^n$-bundle, we may choose in each space $(\Gamma\mathbb{A})_r$ an invertible element $s_r:M\rightarrow\Gamma\mathbb{A}$.

(iii) If $s_{r,i}:=\pr_{\mathbb{T}^n_{\theta}}\circ\varphi_i^{-1}\circ {s_r}_{\mid O_i}:O_i\rightarrow \mathbb{T}^n_{\theta}$ (cf. Construction \ref{top on space of sections}), then $s_{r,i}=\lambda_{r,i}\cdot U_r$ for some smooth function $\lambda_{r,i}:O_i\rightarrow\mathbb{C}^{\times}$. Moreover, the compatibility property for $(s_{r,i})_{i\in I}$ and (\ref{referenz G-automorphism group of quantumtori}) imply that $\vert\lambda_{r,j}\vert=\vert\lambda_{r,i}\vert$ for all $i,j\in I$. Hence, we may choose a section $s_r:M\rightarrow\Gamma\mathbb{A}$ which is locally given by $\lambda_{r,i}\cdot U_r$ for some smooth function $\lambda_{r,i}:O_i\rightarrow\mathbb{T}$.

(iv) We now show that the map
\[\varphi:M\times\mathbb{T}^n_{\theta}\rightarrow\mathbb{A},\,\,\,\left(m,a=\sum_{{\bf k}\in\mathbb{Z}^n}a_{\bf k}U^{\bf k}\right)\mapsto\sum_{{\bf k}\in\mathbb{Z}^n}a_{\bf k}s_1(m)^{k_1}\cdots s_n(m)^{k_n}
\]is an equivalence of algebra bundles over $M$. Indeed, $\varphi$ is bijective and fibrewise an algebra automorphism. Moreover, the map $\varphi$ is smooth if and only if the map 
\[\psi_i:=\pr_{\mathbb{T}^n_{\theta}}\circ\varphi_i^{-1}\circ\varphi_{\mid O_i\times \mathbb{T}^n_{\theta}}:O_i\times \mathbb{T}^n_{\theta}\rightarrow\mathbb{T}^n_{\theta}, \left(x,a=\sum_{{\bf k}\in\mathbb{Z}^n}a_{\bf k}U^{\bf k}\right)\mapsto\sum_{{\bf k}\in\mathbb{Z}^n}a_{\bf k}s_{1,i}(x)^{k_1}\cdots s_{n,i}(x)^{k_n}
\]is smooth for each $i\in I$. Since 
\[\psi_i(x,a)=\sum_{{\bf k}\in\mathbb{Z}^n}a_{\bf k}\lambda_{1,i}(x)^{k_1}\cdots\lambda_{n,i}(x)^{k_n}U^{\bf k},
\]we conclude that $\psi_i=\alpha\circ((\lambda_{1,i},\ldots,\lambda_{n,i})\times\id_{\mathbb{T}^n_{\theta}})$, i.e., that $\psi_i$ is smooth as a composition of smooth maps. A similar argument shows the smoothness of the inverse map.

(v) We finally achieve the desired contradiction: In view of part (iv), $\mathbb{A}$ is a trivial algebra bundle contradicting the construction of $\mathbb{A}$, i.e., that $\mathbb{A}$ is non-trivial as algebra bundle. This proves the claim.
\end{example}

\begin{remark}\label{nontriviality of the bundle corresponding to A^2}
In this remark we want to point out that there exist non-trivial algebra bundles which are trivial as noncommutative principal torus bundles: In fact, \cite[Corollary 12.7]{GVF01} implies that the $2$-tori $A^2_{\theta}$ are mutually non-isomorphic for $0\leq\theta\leq\frac{1}{2}$. Further, if $\theta$ is rational, $\theta=\frac{n}{m}$, $n\in\mathbb{Z}$, $m\in\mathbb{N}$ relatively prime, then $A^2_{\theta}$ is isomorphic to the algebra of continuous sections of an algebra bundle over $\mathbb{T}^2$ with fibre $M_m(\mathbb{C})$ (cf. \cite[Proposition 12.2]{GVF01} or \cite{Wa11a}, Proposition E.2.5 for the smooth case). Therefore, the (continuous) bundle corresponding to such a rational quantum torus $A^2_{\theta}$, $\theta$ rational with $0<\theta\leq\frac{1}{2}$, is non-trivial as algebra bundle. Since the smooth noncommutative $2$-torus $\mathbb{T}^2_{\theta}$ is a dense subalgebra of $A^2_{\theta}$, the same conclusion holds for the (smooth) bundle corresponding to $\mathbb{T}^2_{\theta}$. Nevertheless, the associated dynamical systems $(A^2_{\theta},\mathbb{T}^n,\alpha)$ and $(\mathbb{T}^2_{\theta},\mathbb{T}^n,\alpha)$ of Example \ref{NC n-tori as NCPTB} are trivial noncommutative principal $\mathbb{T}^2$-bundles.
\end{remark}

\section*{Example 3: Sections of Algebra Bundles which are Pull-Backs of Principal Torus Bundles}\label{examples of NCP T^n-bundles III}

We show that if $A$ is a unital Fr\'echet algebra with trivial center, $(\mathbb{A},M,A,q)$ an algebra bundle and $(P,M,\mathbb{T}^n,\pi,\sigma)$ a principal $\mathbb{T}^n$-bundle, then the algebra of sections of the pull-back bundle 
\[\pi^{*}(\mathbb{A}):=\{(p,a)\in P\times\mathbb{A}:\,\pi(p)=q(a)\}
\]is a noncommutative principal $\mathbb{T}^n$-bundle. We start with the following lemma:

\begin{lemma}\label{pull-back 0}
If $A$ is a unital locally convex algebra and $M$ a manifold, then the map
\[\alpha:G\times C^{\infty}(M\times\mathbb{T}^n,A)\rightarrow C^{\infty}(M\times\mathbb{T}^n,A),\,\,\,(t,f)\mapsto (t.f)(m,t'):=f(m,tt')
\]defines a smooth action of $\mathbb{T}^n$ on $C^{\infty}(M\times \mathbb{T}^n,A)$ by algebra automorphisms. In particular, the triple $(C^{\infty}(M\times \mathbb{T}^n,A),\mathbb{T}^n,\alpha)$ is a smooth trivial noncommutative principal $\mathbb{T}^n$-bundle.
\end{lemma}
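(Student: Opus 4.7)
The plan is to verify the three assertions in order: action, smoothness, and triviality; all three are rather short once the right tools are invoked.

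First, I would observe that for each $t \in \mathbb{T}^n$ the map $\alpha(t)$ is the pull-back along the diffeomorphism $\id_M \times L_t : M \times \mathbb{T}^n \to M \times \mathbb{T}^n$, where $L_t(t') := tt'$. Hence $\alpha(t)$ is automatically a continuous algebra automorphism of $C^\infty(M \times \mathbb{T}^n, A)$, and the identities $\alpha(1) = \id$, $\alpha(tt') = \alpha(t) \circ \alpha(t')$ follow from functoriality of pull-back together with $L_{tt'} = L_t \circ L_{t'}$. So $\alpha$ is an action by algebra automorphisms.

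For smoothness, I would invoke the smooth exponential law (Lemma \ref{smooth exp law}): the map $\alpha$ is smooth if and only if its adjoint
\[
\alpha^\wedge : \mathbb{T}^n \times C^\infty(M \times \mathbb{T}^n, A) \times M \times \mathbb{T}^n \longrightarrow A,\quad (t,f,m,t') \longmapsto f(m,tt'),
\]
is smooth. But $\alpha^\wedge$ factors as $\mathrm{ev}_{M \times \mathbb{T}^n} \circ (\id_{C^\infty} \times \id_M \times \mu) \circ \tau$, where $\mu : \mathbb{T}^n \times \mathbb{T}^n \to \mathbb{T}^n$ is the group multiplication, $\tau$ is a suitable permutation of factors, and $\mathrm{ev}_{M \times \mathbb{T}^n}$ is the evaluation map, which is smooth by \cite[Proposition I.2]{NeWa07}. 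Thus $\alpha^\wedge$ is smooth as a composition of smooth maps, and consequently $\alpha$ is smooth.

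For triviality, I need to exhibit an invertible element in every isotypic component $C^\infty(M \times \mathbb{T}^n, A)_{\mathbf{k}}$ with ${\bf k} \in \mathbb{Z}^n$. The natural candidate is
\[
f_{\bf k} : M \times \mathbb{T}^n \longrightarrow A, \qquad f_{\bf k}(m, t') := (t')^{\bf k} \cdot 1_A,
\]
where $(t')^{\bf k} := (t_1')^{k_1} \cdots (t_n')^{k_n}$. Then $\alpha(t)(f_{\bf k})(m,t') = (tt')^{\bf k} \cdot 1_A = t^{\bf k} f_{\bf k}(m,t')$, so $f_{\bf k} \in C^\infty(M \times \mathbb{T}^n, A)_{\bf k}$, and $f_{\bf k} \cdot f_{-{\bf k}} = \mathbf{1}$ shows that $f_{\bf k}$ is invertible. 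By Definition \ref{Trivial NCP T^n-Bundles}, this proves that $(C^\infty(M \times \mathbb{T}^n, A), \mathbb{T}^n, \alpha)$ is a smooth trivial noncommutative principal $\mathbb{T}^n$-bundle. No step looks genuinely difficult; the only point requiring care is bookkeeping in the exponential-law argument for smoothness, which is routine once the evaluation map is known to be smooth.
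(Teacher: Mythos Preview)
Your proof is correct and follows essentially the same approach as the paper: the paper's proof simply notes that $C^\infty(M\times\mathbb{T}^n)$ embeds into $C^\infty(M\times\mathbb{T}^n,A)$ via the unit of $A$ (which is exactly your $f_{\bf k}$) and defers the smoothness argument to Lemma~\ref{dynamical system and trivial algebra bundles}, whose proof is the same exponential-law-plus-evaluation argument you wrote out. You have merely unpacked in detail what the paper sketches in two lines.
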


\begin{proof}
For the proof we just have to note that the algebra $C^{\infty}(M\times\mathbb{T}^n)$ is naturally embedded in $C^{\infty}(M\times \mathbb{T}^n,A)$ through the unit element of $A$. The smoothness of the map $\alpha$ can be proved similarly to Lemma \ref{dynamical system and trivial algebra bundles}.
%If $\chi_k$ is the character corresponding to $k\in\mathbb{Z}^n$, then the claim directly follows from the fact that the isotypic component $C^{\infty}(M\times\mathbb{T}^n,A)_k$ is given by $\chi_k\cdot C^{\infty}(M,A)$, and thus contains invertible elements.
\end{proof}

\begin{lemma}\label{pull-back I}
If $(\mathbb{A},M,A,q)$ is an algebra bundle, $(P,M,G,\pi,\sigma)$ a principal bundle and $\pi^{*}(\mathbb{A})$ the pull-back bundle over $P$, then the map
\[\sigma:\pi^{*}(\mathbb{A})\times G\rightarrow\pi^{*}(\mathbb{A}),\,\,\,((p,a),g)\mapsto(p.g,a)
\]defines a smooth action of $G$ on $\pi^{*}(\mathbb{A})$.
\end{lemma}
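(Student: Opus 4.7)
The plan is to verify three items in turn: well-definedness of the map as taking values in $\pi^{*}(\mathbb{A})$; the group action axioms; and smoothness. Writing $\sigma_P:P\times G\rightarrow P$ for the principal $G$-action (so that $p.g=\sigma_P(p,g)$), the map under consideration is $\sigma((p,a),g)=(\sigma_P(p,g),a)$.

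First, well-definedness. If $(p,a)\in\pi^{*}(\mathbb{A})$, then $\pi(p)=q(a)$ by definition of the pull-back. Since $\pi$ is the bundle projection of the principal bundle $(P,M,G,\pi,\sigma_P)$, it is $G$-invariant, so $\pi(p.g)=\pi(p)=q(a)$, which gives $(p.g,a)\in\pi^{*}(\mathbb{A})$ as required. Second, the action axioms follow immediately from those of $\sigma_P$: one checks $\sigma((p,a),e)=(\sigma_P(p,e),a)=(p,a)$ and
\[
\sigma\bigl(\sigma((p,a),g),h\bigr)=(\sigma_P(\sigma_P(p,g),h),a)=(\sigma_P(p,gh),a)=\sigma((p,a),gh).
\]

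Third, smoothness. Consider the ambient smooth map
\[
\widetilde{\sigma}:(P\times\mathbb{A})\times G\rightarrow P\times\mathbb{A},\,\,\,(p,a,g)\mapsto (\sigma_P(p,g),a),
\]
which is smooth as a product of the smooth action $\sigma_P$ and the identity on $\mathbb{A}$. The pull-back $\pi^{*}(\mathbb{A})$ is a closed submanifold of $P\times\mathbb{A}$ (it is a smooth fibre bundle over $P$ with the same local trivializations as $\mathbb{A}$). Therefore the restriction of $\widetilde{\sigma}$ to $\pi^{*}(\mathbb{A})\times G$ is smooth as a map into $P\times\mathbb{A}$, and by well-definedness its image lies inside the closed submanifold $\pi^{*}(\mathbb{A})$, so the corestriction $\sigma:\pi^{*}(\mathbb{A})\times G\rightarrow\pi^{*}(\mathbb{A})$ is smooth.

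There is no substantial obstacle here: the lemma is essentially the assertion that the $G$-action on the first factor of the pull-back descends from the ambient product, and every nontrivial ingredient (the $G$-invariance of $\pi$, smoothness of $\sigma_P$, and the submanifold structure of $\pi^{*}(\mathbb{A})$) is part of the standard differential-geometric setup for principal bundles and pull-back bundles.
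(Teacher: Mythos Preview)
Your proof is correct and follows essentially the same route as the paper, which merely notes well-definedness and says smoothness ``follows from local considerations'' with details left to the reader. Your global argument (extending to the ambient product $P\times\mathbb{A}$ and corestricting to the closed submanifold $\pi^{*}(\mathbb{A})$) is a clean way to fill in those details; the paper's phrasing suggests working in local trivializations instead, but both are routine and equivalent.
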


\begin{proof}
We first note that the map $\sigma$ is well-defined. Its smoothness follows from local considerations and we leave the details to the reader.
\end{proof}

\begin{proposition}\label{pull-back II}
Suppose we are in the situation of Lemma \ref{pull-back I}. If $\mathcal{A}:=\Gamma\pi^{*}(\mathbb{A})$, then the map
\[\alpha:G\times\mathcal{A}\rightarrow\mathcal{A},\,\,\,\alpha(g,s)(p):=\sigma(s(p.g),g^{-1})
\]defines a smooth action of $G$ on $\mathcal{A}$ by algebra automorphisms. In particular, the triple $(\mathcal{A},G,\alpha)$ is a smooth dynamical system.
\end{proposition}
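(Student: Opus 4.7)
The plan is to follow closely the template used in Proposition \ref{dynamical system and algebra bundles}. That is, I would (i) verify that the formula defines a smooth section, (ii) check the group action axioms together with the algebra automorphism property by an elementary fibrewise computation, and (iii) reduce smoothness to a statement in local bundle coordinates via Remark \ref{smooth structure on sections} and then conclude with the smooth exponential law (Lemma \ref{smooth exp law}).

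First I would check that $\alpha(g,s)$ is a smooth section. Since $s(p.g)\in\pi^{*}(\mathbb{A})_{p.g}=\{p.g\}\times\mathbb{A}_{\pi(p)}$ and $\sigma$ from Lemma \ref{pull-back I} only modifies the $P$-coordinate, we have $\alpha(g,s)(p)\in\pi^{*}(\mathbb{A})_p$ as required, and smoothness follows from $s$, the right translation $R_g:P\to P$, and $\sigma(\,\cdot\,,g^{-1})$ all being smooth. For the action axioms, $\alpha(e,s)=s$ is immediate, and associativity follows from the computation
\[
\alpha(g,\alpha(h,s))(p)=\sigma(\alpha(h,s)(p.g),g^{-1})=\sigma(\sigma(s(p.g.h),h^{-1}),g^{-1})=\sigma(s(p.gh),(gh)^{-1}),
\]
where we used that $\sigma$ is a right $G$-action. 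For the algebra automorphism property, observe that in the fibre $\pi^{*}(\mathbb{A})_{p.g}=\{p.g\}\times\mathbb{A}_{\pi(p)}$ the multiplication is inherited from $\mathbb{A}_{\pi(p)}$, while $\sigma(\,\cdot\,,g^{-1})$ identifies this fibre with $\pi^{*}(\mathbb{A})_p$ via the identity on the algebra component; this is a fibrewise algebra isomorphism, so multiplicativity of $\alpha(g)$ on sections follows pointwise, with inverse $\alpha(g^{-1})$.

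The main work lies in the smoothness of $\alpha:G\times\mathcal{A}\to\mathcal{A}$. I would choose a bundle atlas $(\varphi_i,U_i)_{i\in I}$ for $(\mathbb{A},M,A,q)$ and lift it to a bundle atlas $(\tilde\varphi_i,V_i)_{i\in I}$ of $\pi^{*}(\mathbb{A})$ over $P$, where $V_i:=\pi^{-1}(U_i)$ is $G$-invariant and $\tilde\varphi_i(p,a):=(p,\varphi_i(\pi(p),a))$. Writing $s_i:=\Phi_i(s)\in C^{\infty}(V_i,A)$ for $s\in\mathcal{A}$, a direct computation using $\pi\circ R_g=\pi$ and the explicit form of the charts yields the clean local formula
\[
(\alpha(g,s))_i=s_i\circ R_g\qquad\text{on }V_i.
\]
By Remark \ref{smooth structure on sections}, smoothness of $\alpha$ is equivalent to smoothness of each map
\[
\Phi_i\circ\alpha:G\times\mathcal{A}\to C^{\infty}(V_i,A),\qquad (g,s)\mapsto s_i\circ R_g,
\]
and by Lemma \ref{smooth exp law} this is in turn equivalent to smoothness of the associated map $G\times\mathcal{A}\times V_i\to A$, $(g,s,p)\mapsto s_i(p.g)$. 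The latter factors as the evaluation map $\ev:C^{\infty}(V_i,A)\times V_i\to A$, $(f,q)\mapsto f(q)$, which is smooth by \cite[Proposition I.2]{NeWa07}, composed with the smooth map $(g,s,p)\mapsto(\Phi_i(s),p.g)$; here $\Phi_i$ is continuous linear (hence smooth) and the principal action $P\times G\to P$ is smooth by assumption.

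The principal obstacle is establishing the local formula $(\alpha(g,s))_i=s_i\circ R_g$ correctly; this hinges on the fact that the charts $\tilde\varphi_i$ are \emph{pullbacks} of the charts of $\mathbb{A}$ along $\pi$, so that the $G$-action on $\pi^{*}(\mathbb{A})$ acts trivially on the fibre coordinate and the problem collapses to right translation on $V_i$. Once this reduction is in place, the smoothness argument becomes a routine application of the smooth exponential law together with the standard smoothness of evaluation maps on smooth function spaces.
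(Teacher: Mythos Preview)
Your proposal is correct and follows essentially the same route as the paper: both reduce smoothness via Remark~\ref{smooth structure on sections} to the local maps $\Phi_i\circ\alpha:(g,s)\mapsto s_i\circ R_g$ and then conclude. The paper simply asserts that the auxiliary map $(g,f)\mapsto f\circ(\sigma_g)_{|V_i}$ on $C^{\infty}(V_i,A)$ is smooth and factors $\Phi_i\circ\alpha$ through it, whereas you go one step further and justify this via Lemma~\ref{smooth exp law} and the smoothness of the evaluation map; this extra unwinding is a harmless (and arguably welcome) elaboration rather than a different argument. One cosmetic difference: the paper additionally requires the atlas to satisfy that each $V_i=\pi^{-1}(U_i)$ is trivial as a principal $G$-bundle, but this is not actually used in the smoothness proof (it is needed only later in Theorem~\ref{pull-back IV}), so your omission of this hypothesis is fine here.
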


\begin{proof}
An easy calculation shows that the map $\alpha$ is a well-defined action of $G$ on $\mathcal{A}$ by algebra automorphisms. Next, we choose a bundle atlas $(\varphi_i,U_i)_{i\in I}$ of $(\mathbb{A},M,A,q)$ with the additional property that each $V_i:=\pi^{-1}(U_i)$ is trivial. Then $(\pi^{*}(\varphi_i),V_i)_{i\in I}$ is a bundle atlas of the pull-back bundle $\pi^{*}(\mathbb{A})$ over $P$ and we can use the definition of the smooth structure on $\mathcal{A}$ to verify the smoothness of the map $\alpha$: In fact, Remark \ref{smooth structure on sections} implies that $\alpha$ is smooth if and only if each map 
\[\Phi_i\circ\alpha:G\times\mathcal{A}\rightarrow C^{\infty}(V_i,A),\,\,\,(g,s)\mapsto s_i\circ(\sigma_g)_{\mid V_i}
\]is smooth. For this we note that each map
\[\alpha_i:G\times C^{\infty}(V_i,A)\rightarrow C^{\infty}(V_i,A),\,\,\,(g,f)\mapsto f\circ(\sigma_g)_{\mid V_i}
\]is smooth and further that $\Phi_i\circ\alpha=\alpha_i\circ(\id_G\times\Phi_i)$ holds for each $i\in I$. Thus, each map $\Phi_i\circ\beta$ is smooth as a composition of smooth maps.
\end{proof}

\begin{theorem}\label{pull-back IV}
Let $A$ be a unital Fr\'echet algebra with trivial center, $(\mathbb{A},M,A,q)$ an algebra bundle and $(P,M,\mathbb{T}^n,\pi,\sigma)$ a principal bundle. If $\pi^{*}(\mathbb{A})$ is the pull-back bundle over $P$ and $\mathcal{A}:=\Gamma\pi^{*}(\mathbb{A})$, then the smooth dynamical system $(\mathcal{A},\mathbb{T}^n,\alpha)$ of Proposition \ref{pull-back II} is a smooth noncommutative principal $\mathbb{T}^n$-bundle.
\end{theorem}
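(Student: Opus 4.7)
The plan is to combine the smooth localization result of Section \ref{SLSAB} with the trivializations of both bundles over a suitably small open set in $M$, and then appeal to Lemma \ref{pull-back 0}.

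First I would identify the fixed point algebra $Z$. Since $A$ has trivial center, every fibre $\mathbb{A}_m$ has center $\mathbb{C}\cdot 1_{\mathbb{A}_m}$, so the center $C_{\mathcal{A}}$ of $\mathcal{A}=\Gamma\pi^{*}(\mathbb{A})$ consists precisely of the scalar-valued sections $p\mapsto \lambda(p)\cdot 1_{\mathbb{A}_{\pi(p)}}$; this yields an isomorphism $C_{\mathcal{A}}\cong C^{\infty}(P)$. The formula in Proposition \ref{pull-back II} shows that the induced action on $C^{\infty}(P)$ is $(g.\lambda)(p)=\lambda(p.g)$, so $Z\cong C^{\infty}(P)^{\mathbb{T}^n}\cong C^{\infty}(M)$ via pullback along $\pi$, and hence $\Gamma_Z\cong M$ by Proposition \ref{spec of C(M) top}.

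Next, fix $\chi\in\Gamma_Z$ corresponding to $m\in M$, and choose an open neighbourhood $U$ of $m$ such that both $\mathbb{A}_U$ and $P_U:=\pi^{-1}(U)$ are trivial. By Theorem \ref{whitneys theorem}, choose a $U$-defining function $f\in C^{\infty}(M,\mathbb{R})$ and set $z:=f\circ\pi\in Z$, so that $\chi(z)=f(m)\neq 0$. Since $f\circ\pi$ is a $P_U$-defining function, Corollary \ref{algebra section 3} applied to the pull-back algebra bundle $\pi^{*}(\mathbb{A})\to P$ gives an isomorphism of unital Fr\'echet algebras
\[
\phi_{P_U}:\mathcal{A}_{\{z\}}\xrightarrow{\cong}\Gamma\pi^{*}(\mathbb{A})_{P_U},\qquad [F]\mapsto F\circ\bigl(\tfrac{1}{f\circ\pi}\times\id_{P_U}\bigr),
\]
and exactly as in Example \ref{example on smooth localization} one verifies that $\phi_{P_U}$ intertwines $\alpha_{\{z\}}$ with the restricted action on $\Gamma\pi^{*}(\mathbb{A})_{P_U}$.

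Then I would use the trivializations of $P|_U$ and $\mathbb{A}|_U$ to identify $\pi^{*}(\mathbb{A})|_{P_U}$ with the trivial algebra bundle $(U\times\mathbb{T}^n)\times A$, giving an isomorphism $\Gamma\pi^{*}(\mathbb{A})_{P_U}\cong C^{\infty}(U\times\mathbb{T}^n,A)$ of unital Fr\'echet algebras. Under this identification, the principal action $(u,t).g=(u,tg)$ together with the formula $\alpha(g,s)(p)=\sigma(s(p.g),g^{-1})$ from Proposition \ref{pull-back II} transports to the action $(g.h)(u,t)=h(u,tg)$ on $C^{\infty}(U\times\mathbb{T}^n,A)$. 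This is precisely the smooth trivial noncommutative principal $\mathbb{T}^n$-bundle of Lemma \ref{pull-back 0}, so by Remark \ref{remark on NCPT^nB again} the dynamical system $(\mathcal{A},\mathbb{T}^n,\alpha)$ is a smooth noncommutative principal $\mathbb{T}^n$-bundle.

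The main obstacle is the careful bookkeeping of the two $\mathbb{T}^n$-equivariant isomorphisms: one must chase the definition of $\alpha$ on $\pi^{*}(\mathbb{A})$ through the localization map $\phi_{P_U}$ and through the combined trivialization of $P$ and $\mathbb{A}$ over $U$, and check that the resulting action on $C^{\infty}(U\times\mathbb{T}^n,A)$ is the right-translation action required by Lemma \ref{pull-back 0}. The individual computations are routine, but the notational overhead of keeping track of $\sigma$, $\pi$, and the two trivializing charts simultaneously is where mistakes are likely.
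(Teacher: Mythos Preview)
Your proposal is correct and follows essentially the same route as the paper: identify $Z\cong C^{\infty}(M)$ via the trivial center of $A$, pick for each point of $M$ a neighbourhood $U$ trivializing both bundles and a $U$-defining function $f$, apply Corollary \ref{algebra section 3} to the localizing element $z=f\circ\pi$ to obtain an equivariant isomorphism $\mathcal{A}_{\{z\}}\cong\Gamma\pi^{*}(\mathbb{A})_{P_U}$, and then use the combined trivialization to land in the situation of Lemma \ref{pull-back 0}. The paper's write-up is slightly more compressed (it asserts the equivariance of the two isomorphisms rather than chasing them), but the argument is the same.
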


\begin{proof}
%The proof is divided into three parts:
(i) We first note that $C_{\mathcal{A}}\cong C^{\infty}(P)$ and therefore that $Z=C_{\mathcal{A}}^{\mathbb{T}^n}\cong C^{\infty}(M)$ (cf. \cite[Proposition 2.4]{Wa11c}). In particular, the spectrum $\Gamma_Z$ is homeomorphic to $M$.

(ii) Next, we choose an open cover $(U_i)_{i\in I}$ of $M$ such that both $\mathbb{A}_{U_i}$ and $V_i:=P_{U_i}$ are trivial, i.e., such that $\mathbb{A}_{U_i}\cong U_i\times A$ and \mbox{$V_i\cong U_i\times\mathbb{T}^n$} hold for each $i\in I$. Further, we choose for each $i\in I$ a $U_i$-defining function $f_i$ and note that each function $h_i:=f_i\circ q$ is a (smooth) $V_i$-defining function $f_i$.

(iii) For $p\in P$ we choose $i\in I$ with $q(p)\in U_i$. Then $h_i$ is an element in $Z$ with $h_i(p)\neq 0$ and we conclude from Corollary \ref{algebra section 3} that the map
\[\phi_{V_i}:\mathcal{A}_{\{h_i\}}\rightarrow\mathcal{A}_{V_i},\,\,\,[F]\mapsto F\circ\left(\frac{1}{h_i}\times\id_{V_i}\right)
\]is an isomorphism of unital Fr\'echet algebras. Moreover, the map $\phi_{V_i}$ is $\mathbb{T}^n$-equivariant. In fact, we have
\[(\phi_{V_i}\circ\alpha_{\{h_i\}})(t,[F])=\alpha(t,\phi_{V_i}([F]))
\]for all $t\in\mathbb{T}^n$ and $[F]\in\mathcal{A}_{\{h_i\}}$. Since the natural isomorphism between the space $\mathcal{A}_{V_i}$ and $C^{\infty}(U_i\times\mathbb{T}^n,A)$ is also a $\mathbb{T}^n$-equivariant isomorphism of unital Fr\'echet algebras, the dynamical system $(\mathcal{A}_{\{h_i\}},\mathbb{T}^n,\alpha_{\{h_i\}})$ carries the structure of a smooth trivial noncommutative principal $\mathbb{T}^n$-bundle (cf. Lemma \ref{pull-back 0}).
\end{proof}

\begin{example}\label{non-triviality of the previous construction for example 3}
(Non-triviality of the previous construction). In this example we show that the previous construction actually leads to non-trivial examples. Therefore let $\SU_3(\mathbb{C})$ be the special unitary group of rank $3$. It is a well-known fact that $\SU_3(\mathbb{C})$ is a 1-connected, i.e., connected and simply-connceted, Lie group (cf. \cite[Proposition 16.1.3]{HiNe10}). Moreover, $\SU_3(\mathbb{C})$ contains a maximal torus of dimension 2. Identifying $\mathbb{T}^2$ with this torus leads to a non-trivial principal bundle $(\SU_3(\mathbb{C}),\SU_3(\mathbb{C})/\mathbb{T}^2,\mathbb{T}^2,\pr,\sigma)$, where \mbox{$\pr:\SU_3(\mathbb{C})\rightarrow\SU_3(\mathbb{C})/\mathbb{T}^2$} denotes the canonical quotient map and $\sigma$ is the natural subgroup action. Next, let us consider the trivial algebra bundle over $\SU_3(\mathbb{C})/\mathbb{T}^2$ with fibre $\M_n(\mathbb{C})$. The pull-back along $(\SU_3(\mathbb{C}),\SU_3(\mathbb{C})/\mathbb{T}^2,\mathbb{T}^2,\pr,\sigma)$ leads to the trivial algebra bundle over $\SU_3(\mathbb{C})$ with fibre $\M_n(\mathbb{C})$. We claim that the associated smooth dynamical system $(\mathcal{A},\mathbb{T}^2,\alpha)$ of Proposition \ref{pull-back II} is a non-trivial noncommutative principal $\mathbb{T}^2$-bundle. Here we have, of course, \mbox{$\mathcal{A}=C^{\infty}(\SU_3(\mathbb{C}),\M_n(\mathbb{C}))$}. To prove this claim we assume the converse, i.e., that $(\mathcal{A},\mathbb{T}^2,\alpha)$ is a trivial noncommutative principal $\mathbb{T}^2$-bundle and proceed as follows:

(i) Since $(\mathcal{A},\mathbb{T}^2,\alpha)$ is assumed to be a trivial noncommutative principal $\mathbb{T}^2$-bundle, there exist invertible elements $F\in\mathcal{A}_{(1,0)}$ and $F'\in\mathcal{A}_{(0,1)}$. 

(ii) Part (i) now implies that $f:=\Det(F)\in C^{\infty}(\SU_3(\mathbb{C}))$. Since $F$ is invertible, so is $f$, i.e., $f$ takes values in $\mathbb{C}^{\times}$. In view of \cite[Proposition 1.9]{Wa11b}, we may assume that $f$ takes values in $\mathbb{T}$. Moreover, the function $f$ satisfies
\[(t_1,t_2).f=(t_1,t_2).\Det(F)=\Det(t_1\cdot F)=t_1^n\cdot\Det(F)=t_1^n\cdot f.
\]The same construction applied to $F'$ gives an invertible element $f'\in C^{\infty}(\SU_3(\mathbb{C}))$ which takes values in $\mathbb{T}$ and satisfies
\[(t_1,t_2).f'=(t_1,t_2).\Det(F')=\Det(t_2\cdot F')=t_2^n\cdot\Det(F')=t_2^n\cdot f'.
\]Here we have used that the action of $\mathbb{T}^2$ on $\mathcal{A}$ restricts to an action on $C_{\mathcal{A}}\cong C^{\infty}(\SU_3(\mathbb{C}))$.

(iii) Let $p_n:\mathbb{T}\rightarrow\mathbb{T}$, $t\mapsto t^n$ be the $n$-fold covering map of the $1$-torus. Since $\SU_3(\mathbb{C})$ is simply connected, we can lift $f$ to a smooth function $\widetilde{f}:\SU_3(\mathbb{C})\rightarrow\mathbb{T}$ satisfying $p_n\circ\widetilde{f}=f$. Further, a short observation shows that $(t_1,t_2).\widetilde{f}=t_1\cdot\widetilde{f}$. In particular, we obtain an invertible element $\widetilde{f}\in C^{\infty}(\SU_3(\mathbb{C}))_{(1,0)}$ which takes values in $\mathbb{T}$. A similar construction leads to an invertible element $\widetilde{f'}\in C^{\infty}(\SU_3(\mathbb{C}))_{(0,1)}$ which takes values in $\mathbb{T}$. 

(iv) Finally, part (iii) leads to the desired contradiction: Indeed, we conclude that the smooth functions $\widetilde{f}$ and $\widetilde{f'}$ define an equivalence of principal $\mathbb{T}^2$-bundles over $\SU_3(\mathbb{C})/\mathbb{T}^2$
given through the map
\[\varphi:\SU_3(\mathbb{C})\rightarrow \SU_3(\mathbb{C})/\mathbb{T}^2\times\mathbb{T}^2/,\,\,\,M\mapsto(\pr(M),\widetilde{f}(M),\widetilde{f'}(M)).
\]This is not possible, since $\SU_3(\mathbb{C})/\mathbb{T}^2$ is simply-connected.
\end{example}

\begin{remark}
(a) For a discussion on infinitesimal objects for the noncommutative space $C^{\infty}(\SU_3(\mathbb{C}),\M_n(\mathbb{C}))$ we refer to [Mas08], Section 3 or \cite[Section 11.3]{Wa11a}. 

(b) The same argument as in Example \ref{non-triviality of the previous construction for example 3} works for every principal bundle $(P,M,\mathbb{T}^n,q,\sigma)$ which has a simply connected total space $P$.
\end{remark}

\section*{Example 4: Sections of Trivial Equivariant Algebra Bundles}\label{examples of NCP T^n-bundles V}

We now consider again a principal bundle $(P,M,G,q,\sigma)$ and, in addition, a unital locally convex algebra $A$. If $\pi:G\times A\rightarrow A$ defines a smooth action of $G$ on $A$ by algebra automorphisms, then 
\[(p,a).g:=(p.g,\pi(g^{-1}).a)
\]defines a (free) action of $G$ on $P\times A$ and one easily verifies that the trivial algebra bundle $(P\times A,P,A,q_P)$ is $G$-equivariant. Moreover, a short observation shows that the map
\[\alpha:G\times C^{\infty}(P,A)\rightarrow C^{\infty}(P,A),\,\,\,(g.f)(p):=\pi(g).f(p.g)
\]defines a smooth action of $G$ on $C^{\infty}(P,A)$ by algebra automorphisms. We recall that the corresponding fixed point algebra 
%\[C^{\infty}(P,A)^G=\{f:P\rightarrow A:\,(\forall g\in G)\,f(p.g)=\pi(g^{-1}).f(p)\}
%\]
is isomorphic (as $C^{\infty}(M)$-module) to the space of sections of the associated algebra bundle
\[\mathbb{A}:=P\times_{\pi}A:=P\times_GA:=(P\times A)/G
\]over $M$. In fact, we refer to \cite[Construction 2.1.13]{Wa11a} if $A$ is finite-dimensional. If $A$ is not finite-dimensional, then one can use that bundle charts for $(P,M,G,q,\sigma)$ induces bundle charts for the associated algebra bundle.

\begin{lemma}\label{V.1}
The above situation applied to the trivial principal bundle $(M\times\mathbb{T}^n,M,\mathbb{T}^n,q_M,\sigma_{\mathbb{T}^n})$ leads to a smooth trivial noncommutative principal $\mathbb{T}^n$-bundle $(C^{\infty}(M\times\mathbb{T}^n,A),\mathbb{T}^n,\alpha)$ with fixed point algebra $C^{\infty}(M,A)$.
\end{lemma}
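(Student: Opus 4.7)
The plan is to verify the two defining properties of a smooth trivial noncommutative principal $\mathbb{T}^n$-bundle in Definition \ref{Trivial NCP T^n-Bundles}, using the explicit formula $(g.f)(m,t)=\pi(g).f(m,tg)$ for the action $\alpha$.

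First I would identify the fixed point algebra. The paragraph preceding the lemma recalls that $C^{\infty}(P,A)^G$ is isomorphic as a $C^{\infty}(M)$-module to the space of sections of the associated bundle $P\times_\pi A$. Specializing to the trivial principal bundle $P=M\times\mathbb{T}^n$, the associated algebra bundle $(M\times\mathbb{T}^n)\times_{\mathbb{T}^n}A$ is the trivial bundle $M\times A$ over $M$ (the diagonal action is free and admits $M\times\{1\}\times A$ as a global slice), whose smooth sections are canonically $C^{\infty}(M,A)$. Moreover, a direct check on the level of functions shows that $f$ is $\alpha$-invariant if and only if $f(m,t)=\pi(t^{-1}).f(m,1)$, so the identification is by $F\mapsto((m,t)\mapsto\pi(t^{-1}).F(m))$ and is visibly an isomorphism of unital locally convex algebras.

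Next I would exhibit an invertible element in each isotypic component. For ${\bf k}\in\mathbb{Z}^n$, define
\[f_{\bf k}:M\times\mathbb{T}^n\to A,\qquad f_{\bf k}(m,t):=t^{\bf k}\cdot 1_A.\]
Since each $\pi(g)$ is an algebra automorphism it fixes $1_A$, and scalars commute with $\pi(g)$, so
\[(g.f_{\bf k})(m,t)=\pi(g).f_{\bf k}(m,tg)=(tg)^{\bf k}\cdot 1_A=g^{\bf k}\cdot f_{\bf k}(m,t),\]
which shows $f_{\bf k}\in C^{\infty}(M\times\mathbb{T}^n,A)_{\bf k}$. Since $f_{\bf k}\cdot f_{-{\bf k}}\equiv 1_A$, the element $f_{\bf k}$ is invertible in $C^{\infty}(M\times\mathbb{T}^n,A)$, which is exactly the requirement in Definition \ref{Trivial NCP T^n-Bundles}.

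There is essentially no obstacle here: the fixed point algebra identification is handled by the general construction recalled immediately before the statement, and the invertible isotypic elements are furnished by the global $\mathbb{T}^n$-coordinate $(m,t)\mapsto t$ on $P$, which exists precisely because the underlying principal bundle is trivial. The only mild point worth writing out carefully is that $\pi(g)$ acts trivially on the scalar $(tg)^{\bf k}\cdot 1_A$, so that the two ``torus factors'' (the one in the target of $\pi$ and the one coming from $P$) decouple on the distinguished elements $f_{\bf k}$; this is exactly what makes the computation independent of the choice of $\pi$.
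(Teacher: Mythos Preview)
Your proof is correct and follows essentially the same approach as the paper: the paper's (very terse) argument is precisely that $C^{\infty}(M\times\mathbb{T}^n)$ embeds in $C^{\infty}(M\times\mathbb{T}^n,A)$ through the unit $1_A$, which amounts to your functions $f_{\bf k}(m,t)=t^{\bf k}\cdot 1_A$. Your write-up is more explicit (you also verify the fixed point algebra and spell out why $\pi(g)$ acts trivially on $1_A$); the only thing the paper adds that you omit is a remark that the smoothness of $\alpha$ is proved as in Lemma~\ref{dynamical system and trivial algebra bundles}, though this is arguably already asserted in the paragraph preceding the lemma.
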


\begin{proof}
For the proof we again just have to note that the algebra $C^{\infty}(M\times\mathbb{T}^n)$ is naturally embedded in $C^{\infty}(M\times\mathbb{T}^n,A)$ through the unit element of $A$. The smoothness of the map $\alpha$ can be proved similarly to Lemma \ref{dynamical system and trivial algebra bundles}.
\end{proof}

\begin{theorem}\label{V.2}
If $(P,M,\mathbb{T}^n,\pi,\sigma)$ is a principal bundle, $A$ a unital Fr\'echet algebra with trivial center and $\pi:G\times A\rightarrow A$ a smooth action of $G$ on $A$, then the smooth dynamical system $(C^{\infty}(P,A),G,\alpha)$ is a smooth NCP $G$-bundle.
\end{theorem}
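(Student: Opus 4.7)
The plan is to mimic the proof of Theorem~\ref{pull-back IV}, with $C^{\infty}(P,A)$ playing the role of $\Gamma\pi^{*}(\mathbb{A})$ and the trivial algebra bundle $(P\times A,P,A,q_P)$ over $P$ playing the role of the pull-back bundle. First I would identify the commutative fixed-point algebra. Because $A$ has trivial center, the center of $C^{\infty}(P,A)$ is naturally $C^{\infty}(P)$, and the restriction of $\alpha$ to this center agrees with the action induced by $\sigma$ (since the twist $\pi(g)$ acts trivially on $\mathbb{C}\cdot 1_A$). Hence $Z:=C_{C^{\infty}(P,A)}^{\mathbb{T}^n}\cong C^{\infty}(P)^{\mathbb{T}^n}\cong C^{\infty}(M)$ (as in the proof of Theorem~\ref{pull-back IV}), so Proposition~\ref{spec of C(M) top} gives $\Gamma_Z\cong M$.

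Next I would construct the covering family required by Remark~\ref{remark on NCPT^nB again}. For each $m\in M$, choose a trivializing open neighbourhood $U\subseteq M$ such that $P_U:=\pi^{-1}(U)\cong U\times\mathbb{T}^n$ as $\mathbb{T}^n$-manifolds, pick a $U$-defining function $f\in C^{\infty}(M,\mathbb{R})$ via Theorem~\ref{whitneys theorem}, and set $h:=f\circ\pi$. Then $h$ lies in $Z$, its zero set in $P$ is the complement of $P_U$, and $\chi(h)\neq 0$ for the character $\chi\in\Gamma_Z$ corresponding to $m$. Applying Corollary~\ref{algebra section 3} to the trivial algebra bundle $(P\times A,P,A,q_P)$ with the $P_U$-defining function $h$ yields an isomorphism
\[
\phi_h:C^{\infty}(P,A)_{\{h\}}\longrightarrow C^{\infty}(P_U,A),\qquad [F]\mapsto F\circ\bigl(\tfrac{1}{h}\times\id_{P_U}\bigr),
\]
of unital Fr\'echet algebras. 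A direct calculation shows $\phi_h$ is $\mathbb{T}^n$-equivariant with respect to $\alpha_{\{h\}}$ and the restriction of $\alpha$, since the twisting factor $\pi(g)\in\Aut(A)$ commutes with evaluation in the $\mathbb{R}$-coordinate.

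Finally, under the trivialization $P_U\cong U\times\mathbb{T}^n$ of $\mathbb{T}^n$-manifolds, the target $C^{\infty}(P_U,A)$ becomes $C^{\infty}(U\times\mathbb{T}^n,A)$ carrying exactly the action considered in Lemma~\ref{V.1}, which exhibits it as a smooth trivial noncommutative principal $\mathbb{T}^n$-bundle. Consequently every localized dynamical system $(C^{\infty}(P,A)_{\{h\}},\mathbb{T}^n,\alpha_{\{h\}})$ is smoothly trivial, and as $U$ ranges over a trivializing cover of $M$ the sets $D(h)$ form an open cover of $\Gamma_Z\cong M$, so Remark~\ref{remark on NCPT^nB again} concludes the argument. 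The main obstacle is the careful bookkeeping of the $\mathbb{T}^n$-actions along the chain of identifications: one must verify that the twisting by $\pi$ survives intact so that, after passing to $U\times\mathbb{T}^n$, the induced action coincides on the nose with the one from Lemma~\ref{V.1} rather than with a gauge-equivalent variant.
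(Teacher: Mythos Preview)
Your proposal is correct and follows essentially the same route as the paper's own proof: identify $Z\cong C^{\infty}(M)$ (using that $A$ has trivial center), choose a trivializing cover $(U_i)$ with $U_i$-defining functions $f_i$, set $h_i:=f_i\circ\pi\in Z$, apply Corollary~\ref{algebra section 3} to the trivial algebra bundle $P\times A\to P$ to obtain $\mathbb{T}^n$-equivariant isomorphisms $C^{\infty}(P,A)_{\{h_i\}}\cong C^{\infty}(P_{U_i},A)\cong C^{\infty}(U_i\times\mathbb{T}^n,A)$, and invoke Lemma~\ref{V.1}. Your added remark about tracking the twist $\pi$ through the trivialization is the only point the paper leaves implicit, and it is indeed routine.
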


\begin{proof}
(i) We first note that $Z\cong C^{\infty}(M)$ (cf. \cite[Proposition 2.4]{Wa11c}). Hence, $\Gamma_Z$ is homeomorphic to $M$ by Proposition \ref{spec of C(M) top}.

(ii) Next, we choose an open cover $(U_i)_{i\in I}$ of $M$ such that each $P_i:=P_{U_i}$ is a trivial principal $\mathbb{T}^n$-bundle over $U_i$, i.e., such that $P_i\cong U_i\times \mathbb{T}^n$ holds for each $i\in I$. Further, we choose for all $i\in I$ a $U_i$-defining function $f_i$ and note that each function $h_i:=f_i\circ q$ is a (smooth) $P_i$-defining function $f_i$.

(iii) For $p\in P$ we choose $i\in I$ with $q(p)\in U_i$. Then $h_i$ is an element in $Z$ with $h_i(p)\neq 0$ and we conclude from Corollary \ref{algebra section 3} that the map
\[\phi_{P_i}:C^{\infty}(P,A)_{\{h_i\}}\rightarrow C^{\infty}(P_i,A),\,\,\,[F]\mapsto F\circ\left(\frac{1}{h_i}\times\id_{P_i}\right)
\]is an isomorphism of unital Fr\'echet algebras. Moreover, the map $\phi_{P_i}$ is $\mathbb{T}^n$-equivariant. In fact, we have
\[(\phi_{P_i}\circ\alpha_{\{h_i\}})(t,[F])=\alpha(t,\phi_{P_i}([F]))
\]for all $t\in\mathbb{T}^n$ and $[F]\in C^{\infty}(P,A)_{\{h_i\}}$. Since the natural isomorphism between $C^{\infty}(P_i,A)$ and $C^{\infty}(U_i\times\mathbb{T}^n,A)$ is also a $\mathbb{T}^n$-equivariant isomorphism of unital Fr\'echet algebras, the dynamical system $(C^{\infty}(P_i,A)_{\{h_i\}},\mathbb{T}^n,\alpha_{\{h_i\}})$ carries the structure of a smooth trivial noncommutative principal $\mathbb{T}^n$-bundle (cf. Lemma \ref{V.1}).
\end{proof}

%\begin{example}\label{V.3}
%We want to apply Theorem \ref{V.2} to the the non-trivial principal bundle (covering) defined by the natural action of $C_m\times C_m$ on $\mathbb{T}^2$, i.e., by 
%\[(z_1,z_2).(\zeta^k,\zeta^l):=(\zeta^k\cdot z_1,\zeta^l\cdot z_2)
%\]for $k,l=0,1,\ldots,m-1$, the algebra $M_m(\mathbb{C})$ and the action of $C_m\times C_m$ on $M_m(\mathbb{C})$ defined by 
%\[(\zeta^k,\zeta^l).A:=R^lS^kAS^{-k}R^{-l}
%\]for $k,l=0,1,\ldots,m-1$. Here, $R$ and $S$ are defined as in Proposition \ref{rational quantumtori} ($\theta=\frac{1}{m}$). The corresponding smooth dynamical system $(C^{\infty}(\mathbb{T}^2,M_m(\mathbb{C})),C_m\times C_m,\alpha)$ is a NCP $C_m\times C_m$-bundle with fixed point algebra the rational quantum torus $\mathbb{T}^2_{\frac{1}{m}}$ (cf. Proposition \ref{rational quantumtori}). According to Example \ref{the matrix algebra}, the algebra $M_m(\mathbb{C})$ carries the structure of a trivial NCP $C_m\times C_m$-bundle. Therefore, it turns out that the same holds for $(C^{\infty}(\mathbb{T}^2,M_m(\mathbb{C})),C_m\times C_m,\alpha)$ since $M_m(\mathbb{C})$ is naturally embedded in $C^{\infty}(\mathbb{T}^2,M_m(\mathbb{C}))$ through the constant maps.
%\end{example}

\begin{remark}\label{non-triviality of the previous construction for example 4}
(Non-triviality of the previous construction). Non-trivial examples can be constructed similarly as in Example \ref{non-triviality of the previous construction for example 3}.
\end{remark}

\section*{Example 5: A Very Concrete Example of a Noncommutative Principal Torus Bundle}\label{examples of NCP T^n-bundles IV}

Let $\pi:\mathbb{R}\rightarrow\mathbb{T}$ be the universal covering of $\mathbb{T}$, $A=C^{\infty}(\mathbb{T}^2)$ and note that the map
\[\gamma:\mathbb{Z}\rightarrow\Aut(A),\,\,\,(\gamma(n).f)(z_1,z_2)=f(z_1,z_1^nz_2)
\]defines a (smooth) action of $\mathbb{Z}$ on $A$. We thus can form the associated algebra bundle
\begin{align}
q:\mathbb{A}:=\mathbb{R}\times_{\gamma} A\rightarrow\mathbb{T},\,\,\,[r,f]\mapsto\pi(r),\label{associated algebra bundle}
\end{align}
with fibre $A$.

\begin{proposition}
The space $\Gamma\mathbb{A}$ of section of the bundle \emph{(}\ref{associated algebra bundle}\emph{)} carries the structure of a non-trivial noncommutative principal bundle $\mathbb{T}$-bundle.
\end{proposition}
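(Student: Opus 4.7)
The plan is to split the argument into a local triviality part, handled by the localisation machinery of Sections \ref{SLSAB} and \ref{ACNCPT^nB}, and a non-triviality part obtained from a winding-number obstruction for sections of an associated line bundle.

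First I would make the $\mathbb{T}$-action on $\Gamma\mathbb{A}$ explicit. On $A=C^\infty(\mathbb{T}^2)$ introduce the smooth action $\tau(t).f(z_1,z_2):=f(z_1,tz_2)$; a direct computation shows $\tau(t)\circ\gamma(n)=\gamma(n)\circ\tau(t)$, so $\gamma(\mathbb{Z})\subseteq\Aut_{\mathbb{T}}(A)$. Viewing the associated bundle $\mathbb{R}\times_\gamma A$ as arising from a cocycle in $\Aut_\mathbb{T}(A)$ in the sense of Proposition \ref{cocycle description of bundles}, $\tau$ descends to a smooth fibrewise action on $\mathbb{A}$, and Proposition \ref{dynamical system and algebra bundles} then yields a smooth dynamical system $(\Gamma\mathbb{A},\mathbb{T},\beta)$. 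I would also record that $(A,\mathbb{T},\tau)$ is itself a smooth trivial noncommutative principal $\mathbb{T}$-bundle, since the isotypic component $A_k=C^\infty(\mathbb{T})\cdot z_2^k$ contains the invertible element $z_2^k$.

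To verify local triviality I would first identify $Z$. Since each fibre is commutative, $C_{\Gamma\mathbb{A}}=\Gamma\mathbb{A}$ and $Z=(\Gamma\mathbb{A})^{\mathbb{T}}$. Because $\gamma(n)$ fixes $A^{\mathbb{T}}\cong C^\infty(\mathbb{T})$ pointwise, the sub-algebra-bundle $\mathbb{A}^{\mathbb{T}}\to\mathbb{T}$ is globally trivial, hence $Z\cong C^\infty(\mathbb{T}^2)$ and $\Gamma_Z\cong\mathbb{T}^2$. Given $\chi\in\Gamma_Z$ sitting over $w_0\in\mathbb{T}$, pick an open neighbourhood $U\subseteq\mathbb{T}$ of $w_0$ over which $\mathbb{A}$ trivialises, let $f_U$ be a $U$-defining function (Theorem \ref{whitneys theorem}) regarded as a base-variable element of $Z$, and note $\chi(f_U)\neq 0$. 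Corollary \ref{algebra section 3} together with the trivialisation $\mathbb{A}|_U\cong U\times A$ yields a $\mathbb{T}$-equivariant isomorphism $\Gamma\mathbb{A}_{\{f_U\}}\cong C^\infty(U,A)$ of unital Fréchet algebras, and Proposition \ref{trivial NCP T^n-bundles from trivial algebra bundles} then certifies that this localisation is a smooth trivial NCP $\mathbb{T}$-bundle. Since the sets $D(f_U)=U\times\mathbb{T}$ cover $\Gamma_Z$ as $U$ ranges over trivialising opens, Remark \ref{remark on NCPT^nB again} delivers the local triviality condition.

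For non-triviality I will show that the isotypic component $\Gamma\mathbb{A}_1$ contains no invertible element. A section $s\in\Gamma\mathbb{A}_1$ lifts uniquely to a smooth map $\Psi\colon\mathbb{R}\times\mathbb{T}\to\mathbb{C}$ with $s(\pi(r))$ represented by $\Psi(r,z_1)\,z_2\in A_1$, and $\gamma$-equivariance translates into the cocycle identity $\Psi(r+1,z_1)=z_1\cdot\Psi(r,z_1)$. Invertibility of $s$ in $\Gamma\mathbb{A}$ forces $\Psi(r,\cdot)\colon\mathbb{T}\to\mathbb{C}^{\times}$ to be nowhere vanishing for every $r$, so its winding number $w(r)\in\mathbb{Z}$ is defined; continuity of $\Psi$ in $r$ makes $w$ locally constant and hence constant on $\mathbb{R}$, while the cocycle identity forces $w(r+1)=w(r)+1$, a contradiction. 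This winding-number obstruction is the main difficulty and the geometric heart of the argument; the preceding steps are essentially bookkeeping on top of Corollary \ref{algebra section 3} and Proposition \ref{trivial NCP T^n-bundles from trivial algebra bundles}.
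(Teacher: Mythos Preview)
Your argument is correct, and it takes a genuinely different route from the paper's own proof.

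The paper proceeds geometrically: it identifies $\Gamma\mathbb{A}\cong C^\infty(\mathbb{R},A)^{\mathbb{Z}}\cong C^\infty(M)$, where $M=(\mathbb{R}\times\mathbb{T}^2)/\mathbb{Z}$ is the Heisenberg nilmanifold, exhibits a free proper $\mathbb{T}$-action on $M$ making it a classical principal $\mathbb{T}$-bundle over $\mathbb{T}^2$, and then invokes the Reconstruction Theorem \ref{NCT^nB for manifold again}(b). Non-triviality is obtained by computing $\pi_1(M)\cong\mathbb{Z}\ltimes_S\mathbb{Z}^2$, which is non-abelian and hence $M\not\cong\mathbb{T}^3$.

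By contrast, you stay inside the algebraic framework throughout. For local triviality you work directly with the section algebra and the localisation machinery of Corollary~\ref{algebra section 3} and Proposition~\ref{trivial NCP T^n-bundles from trivial algebra bundles}; note that Theorem~\ref{NCP G_bundle with fibre trivial NCP G bundle} does not apply verbatim here since $C_A^{\mathbb{T}}\cong C^\infty(\mathbb{T})\neq\mathbb{C}$, but your argument correctly bypasses that hypothesis by identifying $Z\cong C^\infty(\mathbb{T}^2)$ and covering $\Gamma_Z$ with sets $D(f_U)=U\times\mathbb{T}$. For non-triviality, your winding-number obstruction is elementary and direct: it shows concretely that $(\Gamma\mathbb{A})_1$ contains no invertible element, without ever naming the underlying manifold. (A minor point: with the paper's convention $\tilde{s}(r+1)=\gamma(-1).\tilde{s}(r)$, one gets $\Psi(r+1,z_1)=z_1^{-1}\Psi(r,z_1)$ rather than $z_1\Psi(r,z_1)$, but this only flips the sign of the winding-number jump and does not affect the contradiction.)

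The paper's approach buys an explicit geometric identification with a well-known nilmanifold and connects the example to classical bundle theory; your approach is more self-contained, avoids any fundamental-group computation, and illustrates that non-triviality can be detected purely at the level of isotypic components.
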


\begin{proof}
The proof of this claim is divided into the following five steps:

(i) We first note that the map
\[\Psi:C^{\infty}(\mathbb{R},A)^{\mathbb{Z}}\rightarrow\Gamma\mathbb{A},\,\,\,\Psi(f)(\pi(r)):=[r,f(r)],
\]is an isomorphism of unital Fr\'echet algebras. Indeed, both $C^{\infty}(\mathbb{R},A)^{\mathbb{Z}}$ and $\Gamma\mathbb{A}$ are unital Fr\'echet algebras. An easy calculation shows that the map $\Phi$ is an isomorphism of unital algebras, and its continuity is a consequence of the topology on $\Gamma\mathbb{A}$ (cf. Definition \ref{top on space of sections}). Finally, the Open Mapping Theorem (cf. \cite[Chapter III, Section 2.2]{Sch99}) implies that $\Psi$ is open.

(ii) From Lemma \ref{smooth exp law}, we conclude that
\[C^{\infty}(\mathbb{R},A)^{\mathbb{Z}}=\{f:\mathbb{R}\rightarrow A:\,(\forall r\in\mathbb{R},n\in\mathbb{Z})\,f(r+n)=\gamma(-n).f(r)\}
\]is isomorphic (as a unital Fr\'echet algebra) to
\begin{align}
\{f^{\wedge}:\mathbb{R}\times\mathbb{T}^2\rightarrow\mathbb{C}:\,(\forall r\in\mathbb{R},(z_1,z_2)\in\mathbb{T}^2,n\in\mathbb{Z})\,f^{\wedge}(r+n,z_1,z_2)=f^{\wedge}(r,z_1,z_1^{-n}z_2)\}.\label{equation example}
\end{align}

(iii) The action of $\mathbb{Z}$ on $\mathbb{R}\times\mathbb{T}^2$ corresponding to (\ref{equation example}) is given by 
\[\sigma:\mathbb{R}\times\mathbb{T}^2\times\mathbb{Z}\rightarrow\mathbb{R}\times\mathbb{T}^2,\,\,\,((r,z_1,z_2),n)\mapsto(r+n,z_1,z_1^nz_2).
\]Moreover, this action is free. To verify its properness, we take two compact subsets $K$ and $L$ of $\mathbb{R}\times\mathbb{T}^2$. Then there exists $n_0\in\mathbb{N}$ such that the projection of $K$ onto $\mathbb{R}$ is contained in the interval $[-n_0,n_0]$. This implies that the set $\{n\in\mathbb{Z}:\,K\cap L.n\neq\emptyset\}$ is finite and thus that $\sigma$ is proper (cf. [Ne08b], Example 1.2.2 (a)). In view of the Quotient Theorem (cf. \cite[Kapitel VIII, Abschnitt 21]{To00}), we get a $\mathbb{Z}$-principal bundle
\[(\mathbb{R}\times\mathbb{T}^2,M,\mathbb{Z},\pr,\sigma),
\]where $M:=(\mathbb{R}\times\mathbb{T}^2)/\mathbb{Z}$ and $\pr:\mathbb{R}\times\mathbb{T}^2\rightarrow M$ denotes the canonical quotient map. In particular, we conclude from (i), (ii) and the previous discussion that $\Gamma\mathbb{A}$ is canonically isomorphic to $C^{\infty}(M)$. 

%\[\Gamma\mathbb{A}\cong C^{\infty}(\mathbb{R}\times\mathbb{T}^2)^{\mathbb{Z}}\cong C^{\infty}((\mathbb{R}\times\mathbb{T}^2)/\mathbb{Z}).
%\]

(iv) Next, one easily verifies that the map
\[\sigma':M\times\mathbb{T}\rightarrow M,\,\,\,([(r,z_1,z_2)],z)\mapsto [(r,z_1,z_2z)]
\]defines a smooth action of $\mathbb{T}$ on $M$, which is is free and proper. We thus get a $\mathbb{T}$-principal bundle $(M,M/\mathbb{T},\mathbb{T},\pr',\sigma')$, where $\pr':M\rightarrow M/\mathbb{T}$ denotes the canonical quotient map. In particular, Theorem \ref{NCT^nB for manifold again} (b) implies that the triple $(C^{\infty}(M),\mathbb{T},\alpha)$ is a smooth noncommutative principal bundle $\mathbb{T}$-bundle. 

(v) It remains to show that $M$ is non-trivial as principal $\mathbb{T}$-bundle over $M/\mathbb{T}\cong\mathbb{T}^2$. For this it is enough to show that $\pi_1(M)\ncong\mathbb{Z}^3$. Indeed, if $M\cong\mathbb{T}^3$, then $\pi_1(M)\cong\mathbb{Z}^3$. We proceed as follows: Let $\mathbb{R}\ltimes_S\mathbb{R}^2$ be the (left-) semidirect product of $\mathbb{R}$ and $\mathbb{R}^2$ defined by the homomorphism
\[S:\mathbb{R}\rightarrow\Aut(\mathbb{R}^2),\,\,\,S(r)(x,y):=(x,y+rx).
\]Then the assignment
\[(r,x,y).n:=(n,0,0)(r,x,y)=(r+n,x,y+rx),\,\,\,r,x,y\in\mathbb{R}, n\in\mathbb{Z},
\]is the lifting of the action $\sigma$ of part (iii) to (the universal covering) $\mathbb{R}^3$ ($\cong\mathbb{R}\ltimes_S\mathbb{R}^2$ as manifolds). In particular, the homogeneous space defined by the discrete subgroup $\mathbb{Z}\ltimes_S\mathbb{Z}^2$ of $\mathbb{R}\ltimes_S\mathbb{R}^2$ is diffeomorphic to $M$, i.e.,
\[(\mathbb{R}\ltimes_S\mathbb{R}^2)/(\mathbb{Z}\ltimes_S\mathbb{Z}^2)\cong M.
\]We thus conclude from the corresponding exact sequence of homotopy groups (cf. [Br93], Theorem VII.6.7 or [Ne08b], Theorem 6.3.17) that the map
\[\delta_1:\pi_1(M)\rightarrow\pi_0(\mathbb{Z}\ltimes_S\mathbb{Z}^2)\cong\mathbb{Z}\ltimes_S\mathbb{Z}^2,\,\,\,\delta_1([\gamma]):=(n,m,m'),
\]where $(n,m,m')\in\mathbb{Z}\ltimes_S\mathbb{Z}^2$ is such that $\widetilde{\gamma}(1)=\widetilde{\gamma}(0).(n,m,m')$ holds for a continuous lift \mbox{$\widetilde{\gamma}:[0,1]\rightarrow\mathbb{R}\ltimes_S\mathbb{R}^2$} of the loop $\gamma$ in $M$ with $\widetilde{\gamma}(0)=(0,0,0)$, is an isomorphism of groups. This proves the claim.
\end{proof}

\section{Outlook: Towards a Classification of Noncommutative Principal Torus Bundles}

In Section \ref{a geometric approach to NCPB} we have introduced the concept of (non-trivial) noncommutative principal torus bundles. Since we gave a complete classification of trivial noncommutative principal torus bundles in \cite[Section 4]{Wa11b}, it is, of course, a natural ambition to work out a classification theory for (non-trivial) noncommutative principal torus bundles. To be more precise, the problem is the following:

\begin{open problem}\label{open problem classification of NCP torus bundles}
(Classification of noncommutative principal torus bundles). Let $B$ be a unital locally convex algebra. Work out a good classification theory for noncommutative principal $\mathbb{T}^n$-bundles $(A,\mathbb{T}^n,\alpha)$ for which $A^{\mathbb{T}^n}=B$, i.e., classify all noncommutative principal $\mathbb{T}^n$-bundles $(A,\mathbb{T}^n,\alpha)$ for which $A^{\mathbb{T}^n}=B$.%classification concept clear?
\end{open problem}

As is well-know from classical differential geometry, the relation between locally and globally defined objects is important for many constructions and applications. For example, a (non-trivial) principal bundle $(P,M,G,q,\sigma)$ can be considered as a geometric object that is glued together from local pieces which are trivial, i.e., which are of the form $U\times G$ for some (arbitrary small) open subset $U$ of $M$. This approach immediately leads to the concept of $G$-valued cocycles and therefore to a cohomology theory, called the \v Cech cohomology of the pair $(M,G)$. This cohomology theory gives a complete classification of principal bundles with structure group $G$ and base manifold $M$ (cf. \cite[Kapitel IX.5]{To00}).

%In fact, one way to think about a principal bundle $(P,M,G,q,\sigma)$ is to view it as the result of a procedure of gluing together small parts of the form $U\times G$, where $U$ denotes an arbitrary (small) open subset of $M$. This approach immediately leads to the concept of $G$-valued cocycles and therefore to a cohomology theory, called the \v Cech cohomology of the pair $(M,G)$. This cohomology theory gives a complete classification of principal bundles with structure group $G$ and base manifold $M$ (cf. [To00], Kapitel IX.5).

%Wie wir bereits gesehen haben, ist die Beziehung zwischen lokal und global definierten Objekten ist f\"ur viele Anwendungen entscheidend. Eine M\"oglichkeit sich ein klassisches Hauptfaserb\"undel $(P,M,G,q,\sigma)$ vorzustellen, besteht darin es sich als das Ergebnis einer Prozedur des "`Zusammenklebens"' vieler kleiner Teilst\"ucke der Form $U\times G$ zu denken, wobei $U$ eine beliebige (kleine) offene Teilmenge von $M$ bezeichnet. Dieser Ansatz f\"uhrt unmittelbar auf das Konzept von $G$-wertigen Kozykeln und somit zu einer (topologischen) Kohomologietheorie, genannt \v Cech Kohomologie des Paares $(M,G)$, die die vollst\"andige Klassifikation aller Hauptfaserb\"undel mit gegebener Strukturgruppe $G$ und Basis $M$ erlaubt.

\begin{open problem}
(A noncommutative \v Cech cohomology). In view of the previous discussion, a possible approach to the Open Problem \ref{open problem classification of NCP torus bundles} is to work out a \emph{noncommutative version} of the \v Cech cohomology. For this, it is worth to study the paper \cite{Du96}. Moreover, ideas of the classification methods for loop algebras of \cite{ABP04} might be of particular interest. Also, a possible point of view for the relation between locally and globally defined objects in the noncommutative world can be found in \cite{CaMa00}; their subspaces are described by ideals and one glues algebras along ideals, using a pull-back construction. %We recall that ideals of a commutative $C^*$-algebra correspond to compact subspaces of the spectrum, whereas our construction from Definition \ref{sm. loc.} correspond to open subspaces. c.f. Corollary \ref{C(U)=C(M)_f_U}.
\end{open problem}

%Ein m\"oglicher Ansatz zur Bew\"altigung der oben genannten Aufgabe besteht deshalb darin, eine nichtkommutative Variante der \v Cech Kohomologie zu finden, die die vollst\"andige Klassifikation von nichtkommutativen $\mathbb{T}^n$-Haupfaserb\"undeln erlaubt.  Hierbei k\"onnte die Arbeit [Du95] von Bedeutung sein.

%A main principle of classical geometry is to study objects with the help of local considerations. For example, manifolds, in particular bundles, can be thought of objects that are glued together from local pieces which are trivial in a certain sense. Unfortunately, there is no obvious generalization to noncommutative spaces. A possible point of view can be found in [CaMa99]; there subspaces are described by ideals and one glues algebras along ideals, using a pull-back construction. Note that ideals of a commutative $C^*$-algebra correspond to compact subspaces of the spectrum, whereas our construction from Definition \ref{sm. loc.} correspond to open subspaces. c.f. Corollary \ref{C(U)=C(M)_f_U}.

\begin{remark}
(Classification of locally trivial Hopf-Galois Extensions). A classification procedure for noncommutative principal torus bundles may as well lead to a classification theory of so-called \emph{locally trivial Hopf--Galois extensions} (cf. P.M. Hajac (editor): Quantum symmetries in noncommutative geometry, in preparation).
\end{remark}

\begin{appendix}

\section{The Projective Tensor Product}

This part of the appendix is devoted to some results concerning the projective tensor product of locally convex spaces. 

\begin{definition}\label{proj. tensor product III}
(The projective tensor product). Let $E$ and $F$ be two locally convex spaces. Further, let $E\odot F$ be the algebraic tensor product of $E$ and $F$. The \emph{projective tensor product} $E\otimes F$ of $E$ and $F$ is the algebraic tensor product $E\odot F$ equipped with the strongest locally convex topology for which the canonical ``projection"
\[\pi:E\times F\rightarrow E\odot F,\,\,\,(e,f)\mapsto e\odot f
\]is continuous. The corresponding completion of $E\otimes F$ with respect to this topology is denoted by $E\widehat{\otimes}F$. %A nice reference for the projective tensor product topology can be found in [Gro55].
\end{definition}

\begin{remark}\label{universal property of projective tensor product topology}
(Universal property of the projective tensor product). We recall that the projective tensor product $E\otimes F$ of two locally convex spaces $E$ and $F$ has the following universal property: For every locally convex space $Z$ the canonical (algebraic) isomorphism of the space of bilinear maps of $E\times F$ into $Z$ onto the space of linear maps of $E\otimes F$ into $Z$ induces an (algebraic) isomorphism of the space of continuous bilinear maps of $E\times F$ into $Z$ onto the space of continuous linear maps of $E\otimes F$ into $Z$. In particular, a bilinear map $\varphi:E\times F\rightarrow Z$ is continuous if and only if there exists a continuous linear map $\psi:E\otimes F\rightarrow Z$ satisfying $\psi\circ\pi=\varphi$. A nice reference for these statements can be found in \cite[Part III]{Tre67}.
\end{remark}

%\begin{remark}\label{associativity of projective tensor product topology}{\bf(Associativity of the projective tensor product).}
%In the following we shall make use of the associativity of projective tensor product: 
%Let $E$, $F$ and $G$ be three locally convex spaces. In order to prove the associativity of $\otimes$, it is enough to show that $E\otimes(F\otimes G)$ is naturally isomorphic to $E\otimes F\otimes G$. This can easily be done by showing that every (continuous) trilinear map of $E\times F\times G$ into a locally convex space $Z$ factors uniquely through a (continuous) linear map of $E\otimes(F\otimes G)$ into $Z$.
%\end{remark}

\begin{lemma}\label{continuity of maps EoF to E'oF'}
Let $E,E',F$ and $F'$ be four locally convex spaces. Further, let $\alpha:E\rightarrow E'$ and $\beta:F\rightarrow F'$ be two continuous linear maps. Then the linear map
\[\alpha\otimes \beta:E\otimes F\rightarrow E'\otimes F',\,\,\,\alpha\otimes\beta(e\otimes f):=\alpha(e)\otimes\beta(f)
\]is continuous.
\end{lemma}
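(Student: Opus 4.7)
The plan is to obtain the continuity of $\alpha\otimes\beta$ as a direct application of the universal property of the projective tensor product recalled in Remark \ref{universal property of projective tensor product topology}. To this end, I would first consider the bilinear map
\[
\varphi:E\times F\rightarrow E'\otimes F',\qquad \varphi(e,f):=\alpha(e)\otimes\beta(f),
\]
which is the composition $\varphi=\pi'\circ(\alpha\times\beta)$, where $\pi':E'\times F'\rightarrow E'\otimes F'$ denotes the canonical bilinear projection and $\alpha\times\beta:E\times F\rightarrow E'\times F'$ the product map.

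Next, I would verify that $\varphi$ is continuous: the map $\alpha\times\beta$ is continuous because $\alpha$ and $\beta$ are, and $\pi'$ is continuous by the very definition of the projective tensor product topology on $E'\otimes F'$ (cf. Definition \ref{proj. tensor product III}). Hence $\varphi$ is a continuous bilinear map into the locally convex space $E'\otimes F'$.

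By the universal property stated in Remark \ref{universal property of projective tensor product topology}, there exists a unique continuous linear map $\psi:E\otimes F\rightarrow E'\otimes F'$ satisfying $\psi\circ\pi=\varphi$, where $\pi:E\times F\rightarrow E\otimes F$ is the canonical projection. On elementary tensors this map satisfies
\[
\psi(e\otimes f)=\varphi(e,f)=\alpha(e)\otimes\beta(f),
\]
so $\psi$ coincides with $\alpha\otimes\beta$ on the dense subspace spanned by elementary tensors, and therefore $\alpha\otimes\beta=\psi$ as linear maps. Consequently $\alpha\otimes\beta$ is continuous. Since each step reduces to an invocation of a standard categorical fact about $\otimes$, I do not anticipate any real obstacle; the only point requiring a moment of care is to ensure that the linear map produced by the universal property is genuinely the map $\alpha\otimes\beta$, which is immediate once one checks its action on elementary tensors.
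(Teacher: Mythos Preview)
Your argument is correct and matches the paper's proof exactly: both observe that $\pi'\circ(\alpha\times\beta)$ is continuous as a composition of continuous maps and then invoke the universal property of the projective tensor product. One small wording issue: the elementary tensors \emph{span} $E\otimes F$ (not merely a dense subspace), so the identification $\psi=\alpha\otimes\beta$ follows immediately by linearity without any density argument.
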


\begin{proof}
If $\pi':E'\times F'\rightarrow E'\otimes F',\,\,\,(e',f')\mapsto e'\otimes f'$ denotes the canonical ``projection", then the claim follows from the fact that $\pi'\circ(\alpha\times\beta)$ is continuous as a composition of continuous maps.
\end{proof}

\begin{proposition}\label{AoB as l.c. algebra}
Let $A$ and $B$ be two locally convex algebras. Then $A\otimes B$ becomes a locally convex algebra, when equipped with the multiplication
\[m:A\otimes B\times A\otimes B\rightarrow A\otimes B,\,\,\,m(a\otimes b, a'\otimes b'):=aa'\otimes bb'.
\]
%,\,\,\,\text{for}\,\,\,a,a'\in A\,\,\,\text{and}\,\,\,b,b'\in B.
\end{proposition}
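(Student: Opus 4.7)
The plan is to build on what has already been established in the appendix. Algebraically, the multiplication $(a\otimes b)(a'\otimes b') := aa'\otimes bb'$ extends by bilinearity to a well-defined associative product on the algebraic tensor product $A\odot B$; this is a standard consequence of the universal property of the algebraic tensor product applied to the $4$-linear map $(a,b,a',b')\mapsto (aa')\otimes(bb')$. So the only real content of the proposition is that the bilinear map $m:(A\otimes B)\times(A\otimes B)\to A\otimes B$ is continuous for the projective topology.

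By the universal property recorded in Remark \ref{universal property of projective tensor product topology}, continuity of $m$ is equivalent to continuity of the induced linear map $\tilde m:(A\otimes B)\otimes(A\otimes B)\to A\otimes B$. I would construct $\tilde m$ as the composition of two continuous linear maps. On the right, the continuous bilinear multiplications of $A$ and $B$ yield, by the universal property, continuous linear maps $m_A:A\otimes A\to A$ and $m_B:B\otimes B\to B$. Lemma \ref{continuity of maps EoF to E'oF'} then gives continuity of
\[
m_A\otimes m_B : (A\otimes A)\otimes(B\otimes B)\longrightarrow A\otimes B.
\]

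On the left, I need a continuous linear ``rearrangement'' $\sigma:(A\otimes B)\otimes(A\otimes B)\to (A\otimes A)\otimes(B\otimes B)$ sending $(a\otimes b)\otimes(a'\otimes b')$ to $(a\otimes a')\otimes(b\otimes b')$. To produce $\sigma$ I would first observe that the $4$-linear map
\[
\Phi:A\times B\times A\times B\longrightarrow (A\otimes A)\otimes(B\otimes B),\qquad (a,b,a',b')\mapsto (a\otimes a')\otimes(b\otimes b'),
\]
is continuous, since it is a permutation of variables followed by the canonical (continuous) projection onto the $4$-fold projective tensor product. Iterated application of the universal property of Remark \ref{universal property of projective tensor product topology} then yields a continuous linear map $A\otimes B\otimes A\otimes B\to (A\otimes A)\otimes(B\otimes B)$; composing with the canonical associativity isomorphism $(A\otimes B)\otimes(A\otimes B)\cong A\otimes B\otimes A\otimes B$ (itself continuous by the same iterated universal property argument) produces $\sigma$. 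Then $\tilde m := (m_A\otimes m_B)\circ\sigma$ is continuous and agrees with the linear map induced by $m$ on simple tensors; since these span the tensor product, $\tilde m$ is indeed the linear map corresponding to $m$, so $m$ is continuous.

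The step I expect to be most delicate is the construction of $\sigma$: while morally just ``rearranging factors,'' it does require the multilinear version of the universal property, which is not stated explicitly in the appendix. I would handle this cleanly by iterating Remark \ref{universal property of projective tensor product topology}: a continuous $4$-linear map factors as a continuous bilinear map in the outer pair of variables, each of whose partial evaluations is continuous bilinear in the remaining two, so two successive applications of the bilinear universal property suffice. The remaining verifications (that the product is associative and that $A\otimes B$ is unital when $A$ and $B$ are) then hold on simple tensors by direct computation and extend by bilinearity and density.
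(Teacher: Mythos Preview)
Your proposal is correct and follows essentially the same route as the paper: reduce continuity of $m$ via the universal property to continuity of a linear map on the fourfold tensor product, rearrange factors to identify this map with $m_A\otimes m_B$, and conclude by Lemma~\ref{continuity of maps EoF to E'oF'}. The paper simply invokes ``the canonical isomorphism $A\otimes B\otimes A\otimes B\cong A\otimes A\otimes B\otimes B$'' without further comment, whereas you take more care to justify the continuity of this rearrangement $\sigma$ by iterating the bilinear universal property; your treatment of that step is in fact more thorough than the paper's.
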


\begin{proof}
(i) Suppose that the algebra structure on $A$ is given by the continuous bilinear map 
\[m_A:A\times A\rightarrow A,\,\,\,aa':=m_A(a,a')
\]and that the algebra structure on $B$ is given by the continuous bilinear map 
\[m_B:B\times B\rightarrow B,\,\,\,bb':=m_B(b,b').
\]Then Remark \ref{universal property of projective tensor product topology} implies that the map $m_A$, resp. $m_B$, induces a continuous linear 
\[m^A:A\otimes A\rightarrow A,\,\,\,a\otimes a'\mapsto aa',
\]resp.
\[m^B:B\otimes B\rightarrow B,\,\,\,b\otimes b'\mapsto bb'.
\]

(ii) To show that the map $m$ defines the structure of a locally convex algebra on $A\otimes B$, we use Remark \ref{universal property of projective tensor product topology} again: Indeed, the bilinear map $m$ is continuous if and only if the linear map 
\[n:A\otimes A\otimes B\otimes B\rightarrow A\otimes B,\,\,\,n(a\otimes a'\otimes b\otimes b'):=aa'\otimes bb'
\]is continuous. Here, we have used the canonical isomorphism 
\[A\otimes B\otimes A\otimes B\cong A\otimes A\otimes B\otimes B.
\]Now, a short observations shows that $n=m_A\otimes m_B$, and therefore Lemma \ref{continuity of maps EoF to E'oF'} implies that $m$ is continuous.
\end{proof}

\section{The Smooth Exponential Law}

In the last part of the appendix we discuss some useful results on the smooth exponential law which will be used several times in this paper.

For arbitrary sets $X$ and $Y$, let $Y^X$ be the set of all mappings from $X$ to $Y$. Then the following ``exponential law" holds: For any sets $X,Y$ and $Z$, we have
\[Z^{X\times Y}\cong(Z^Y)^X
\]as sets. To be more specific, the map
\[Z^{X\times Y}\cong(Z^Y)^X,\,\,\,f\rightarrow f^{\vee}
\]is a bijection, where
\[f^{\vee}:X\rightarrow Z^Y,\,\,\,f^{\vee}(x):=f(x,\cdot).
\]The inverse map is given by
\[(Z^Y)^X\rightarrow Z^{X\times Y},\,\,\,g\mapsto g^{\wedge},
\]where
\[g^{\wedge}:X\times Y\rightarrow Z,\,\,\,g^{\wedge}(x,y):=g(x)(y).
\]Next we consider an important topology for spaces of smooth functions:

\begin{definition}\label{smooth compact open topology}
(Smooth compact open topology). If $M$ is a locally convex manifold and $E$ a locally convex space, then the \emph{smooth compact open topology} on $C^{\infty}(M,E)$ is defined by the embedding
\[C^{\infty}(M,E)\hookrightarrow\prod_{n\in\mathbb{N}_0}C(T^nM,T^nE),\,\,\,f\mapsto(T^nf)_{n\in\mathbb{N}_0},
\]where the spaces $C(T^nM,T^nE)$ carry the compact open topology. Since $T^nE$ is a locally convex space isomorphic to $E^{2^n}$, the spaces $C(T^nM,T^nE)$ are locally convex and we thus obtain a locally convex topology on $C^{\infty}(M,E)$.
\end{definition}

It is natural to ask if there exists an ``exponential law"  for the space of smooth functions endowed with the smooth compact open topology, i.e., if we always have
\begin{align}
C^{\infty}(M\times N,E)\cong C^{\infty}(M,C^{\infty}(N,E))\label{exp law}
\end{align}
as a locally convex space, for all locally convex smooth manifolds $M$, $N$ and locally convex spaces $E$. In general, the answer is \emph{no}. Nevertheless, it can be shown that (\ref{exp law}) holds if $N$ is a finite-dimensional manifold over $\mathbb{R}$. To be more precise:

\begin{lemma}\label{smooth exp law}
Let $M$ be a real locally convex smooth manifold, $N$ a finite-dimensional smooth manifold and $E$ a topological %$\mathbb{R}$
vector space. Then a map $f:M\rightarrow C^{\infty}(N,E)$ is smooth if and only if the map
\[f^{\wedge}:M\times N\rightarrow E,\,\,\,f^{\wedge}(m,n):=f(m)(n)
\]is smooth. Moreover, the following map is an isomorphism of locally convex spaces:
\[C^{\infty}(M,C^{\infty}(N,E))\rightarrow C^{\infty}(M\times N,E),\,\,\,f\mapsto f^{\wedge}.
\]
\end{lemma}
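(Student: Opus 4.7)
The plan is to establish the lemma in four stages, treating the set-theoretic bijection first, then the equivalence of smoothness in each direction, and finally upgrading this to an isomorphism of locally convex spaces. Throughout, the finite dimensionality of $N$ is the crucial hypothesis, because it allows us to use local coordinate arguments on $N$, it guarantees that the evaluation map $\ev_N: C^{\infty}(N,E) \times N \rightarrow E$ is smooth (as established in \cite[Proposition I.2]{NeWa07} which is cited earlier in the paper), and it ensures that the tangent bundle $TN$ is again a finite-dimensional manifold so that induction on the order of differentiation is tractable.

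The first step is to notice that the assignment $f\mapsto f^{\wedge}$ is a linear bijection on the underlying vector spaces by the elementary exponential law for sets, so the only content of the lemma is the compatibility with smoothness and with the locally convex structure. For the implication ``$f$ smooth $\Rightarrow f^{\wedge}$ smooth'', I would write $f^{\wedge}=\ev_N\circ(f\times\id_N)$ and invoke the smoothness of $\ev_N$ together with the chain rule, noting that $f\times\id_N$ is smooth because $f$ is smooth and $\id_N$ is smooth. The converse direction is the technically harder step: given that $f^{\wedge}$ is smooth, one must show that $f$ is smooth as a map into the locally convex space $C^{\infty}(N,E)$ carrying the smooth compact open topology of Definition \ref{smooth compact open topology}. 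The idea is to exhibit the iterated tangent maps $T^k f$ using the corresponding partial tangent maps of $f^{\wedge}$ with respect to $M$, and verify that the resulting families $(T^k f)_{k\in\mathbb{N}_0}$ land continuously in the product $\prod_{k\in\mathbb{N}_0} C(T^k M, T^k C^{\infty}(N,E))$ by using the embedding $C^{\infty}(N,E)\hookrightarrow \prod_{n\in\mathbb{N}_0} C(T^n N, T^n E)$.

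To make the last point work, I would proceed by induction on $k$. The base case $k=0$ requires checking continuity of $f$: for a compact $K\subseteq T^n N$, the map $m\mapsto T^n f(m)_{\mid K}$ is continuous because $(T^n f^{\wedge})_{\mid M\times K}$ is continuous (finite-dimensionality of $N$ ensures $T^n N$ is a manifold again) and because continuous maps on products with locally compact factors correspond to continuous maps into the compact-open function space. The inductive step then reduces higher differentiability of $f$ to differentiability of $f^{\wedge}$ in the $M$-direction, using that partial differentials in $M$ and $N$ commute and that the smooth compact open topology is precisely engineered to control all iterated tangent maps simultaneously.

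Finally, for the topological isomorphism statement, once both directions are known to preserve smoothness, one must check that the linear bijection $f\mapsto f^{\wedge}$ and its inverse are continuous with respect to the smooth compact open topologies on both sides. This reduces by the definition of the smooth compact open topology to showing that for each $k$, the assignment on iterated tangent maps is continuous, which follows from the standard exponential law in the compact-open topology for continuous maps (valid since the relevant tangent manifolds of $N$ are locally compact) applied levelwise. The main obstacle of the whole proof is the converse direction of the smoothness equivalence, since it forces one to construct the iterated tangent maps of $f$ from the given data on $f^{\wedge}$ and verify their continuity in the right target space; this is precisely where finite-dimensionality of $N$ is indispensable, and it is the only part that cannot be reduced to a formal manipulation. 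A complete argument along these lines is carried out in \cite[Theorem 3.28]{AGS} or \cite[Appendix]{Gl05}, so one may also simply cite the known reference.
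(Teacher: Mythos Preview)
Your proposal is substantially more detailed than what the paper actually does: the paper's entire proof of this lemma is the single sentence ``A proof can be found in \cite[Appendix, Lemma A3]{NeWa07}.'' So there is no argument in the paper to compare against; you have sketched the standard proof (evaluation map for the forward direction, inductive control of iterated tangent maps for the converse, levelwise compact-open exponential law for the homeomorphism) and then noted that one may also cite a reference, which is exactly what the paper does---only with \cite{NeWa07} rather than the sources you mention. Your outline is correct and follows the expected lines; the paper simply defers to the literature.
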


\begin{proof}
A proof can be found in \cite[Appendix, Lemma A3]{NeWa07}.
\end{proof}

\end{appendix}

\affiliationone{% in this example, two authors share an institution
   Stefan Wagner\\
   Westf\"alische Wilhelms-Universit\"at M\"unster\\
   Mathematisches Institut\\
   Einsteinstr.~62, D-48149 M\"unster\\ Germany\\
                \email{swagn\_02@uni-muenster.de}}

\end{document}